\numberwithin{equation}{section}
\theoremstyle{plain}
\newtheorem{thm}{Theorem}[section]
\newtheorem{prop}[thm]{Proposition}
\newtheorem{cor}[thm]{Corollary}
\newtheorem{lem}[thm]{Lemma}
\theoremstyle{definition}
\newtheorem{exa}[thm]{Example}
\newtheorem{conj}[thm]{Conjecture}
\newtheorem{rem}[thm]{Remark}
\newtheorem{defi}[thm]{Definition}
\newtheorem{prob}[thm]{Problem}
\newcommand{\real}{\mathbb{R}}
\newcommand{\comp}{\mathbb{C}}
\newcommand{\nat}{\mathbb{N}}
\newcommand{\im}{\text{\normalfont Im}}
\newcommand{\re}{\text{\normalfont Re}}
\newcommand{\iu}{\mathcal{UI}}
\newcommand{\id}{\mathcal{ID}}
\newcommand{\EM}{\mathcal{EM}}
\newcommand{\sym}{\text{\normalfont Sym}}
\newcommand{\MP}{\bm{\pi}}
\newcommand{\E}{\mathbb{E}}
\newcommand{\U}{\mathcal{U}(\boxplus)}
\newcommand{\F}{\mathcal{F}}
\newcommand{\M}{\mathcal{M}}
\begin{document}
\title{Classical Scale Mixtures of Boolean Stable Laws}

\author{Octavio Arizmendi \\ Department of Probability and Statistics, CIMAT, \\ Guanajuato, Mexico \\  Email:
octavius@cimat.mx  \\ \\ Takahiro Hasebe\footnote{Supported by Marie Curie Actions -- International Incoming Fellowships (Project 328112 ICNCP) at University of Franche-Comt\'e; supported also by Global COE program ``Fostering top leaders in mathematics -- broadening the core and exploring new ground'' at Kyoto university.}\\ 
Department of Mathematics, Hokkaido University \\ Kita 10, Nishi 8, Kitaku, Sapporo 060-0810, Japan. \\ Email:
thasebe@math.sci.hokudai.ac.jp } 
\date{\today}
\maketitle

\maketitle
\begin{abstract}
We study Boolean stable laws, $\mathbf{b}_{\alpha,\rho}$, with stability index $\alpha$ and asymmetry parameter $\rho$. We show that the classical scale mixture of $\mathbf{b}_{\alpha,\rho}$ coincides with a free mixture and also a monotone mixture of $\mathbf{b}_{\alpha,\rho}$. 
For this purpose we define the multiplicative monotone convolution of probability measures, one is supported on the positive real line and the other is arbitrary. 

We prove that any scale mixture of $\mathbf{b}_{\alpha,\rho}$ is both classically and freely infinitely divisible for $\alpha\leq1/2$ and also for some $\alpha>1/2$. Furthermore, we show the multiplicative infinite divisibility of $\mathbf{b}_{\alpha,1}$ with respect classical, free and monotone convolutions. 

Scale mixtures of Boolean stable laws include some generalized beta distributions of second kind, which turn out to be both classically and freely infinitely divisible. One of them appears as a limit distribution in multiplicative free laws of large numbers studied by Tucci, Haagerup and M\"oller.

We use a representation of $\mathbf{b}_{\alpha,1}$ as the free multiplicative convolution of a free Bessel law and a free stable law to prove a conjecture of Hinz and M{\l}otkowski regarding the existence of the free Bessel laws as probability measures. The proof depends on the fact that $\mathbf{b}_{\alpha,1}$ has free divisibility indicator 0 for $1/2<\alpha$. 
\end{abstract}

Mathematics Subject Classification 2010: 46L54, 60E07

Keywords: Free convolution, Boolean stable law, infinite divisibility, mixtures, free Bessel law

\tableofcontents

\section{Introduction}


In this paper we study different aspects of classical scale mixtures of Boolean stable laws $\mathbf{b}_{\alpha,\rho}$ including classical and free infinite divisibility, unimodality and relation with other distributions such as classical stable laws, free stable laws and free Bessel laws.

We denote respectively by $\ast,\boxplus$ the classical and free additive convolutions, and by $\circledast, \boxtimes$ the classical and free multiplicative convolutions.  An important class of measures in connection with the study of limit laws is the class of infinitely divisible distributions. A probability measure $\mu$ is said to be \textbf{(classically) infinitely
divisible} (or \textbf{ID} for short) if, for every natural number $n$, there exists a probability measure $\mu_n$ such
that $$
\mu = \underbrace{\mu_n*\mu_n*\cdots\mu_n}_{n \text{~times}}.
$$
In the same way,  in free probability a measure $\mu$ is said to be \textbf{freely infinitely divisible} (or \textbf{FID} for short) if, for every natural number $n$, there exists a probability measure $\mu_n$ such that
$$
\mu = \underbrace{\mu_n\boxplus\mu_n\boxplus\cdots\boxplus\mu_n}_{n\text{~times}}. 
$$
We denote by $\id(\ast)$ the class of all ID distributions on $\mathbb{R}$ and by $\id(\boxplus)$ the class of all FID distributions on $\mathbb{R}$. 

The Boolean stable law $\mathbf{b}_{\alpha,\rho}$ appears as the stable distribution for Boolean independence \cite{S-W}. The positive one $\mathbf{b}_{\alpha,1}$ is the law of quotient of identically distributed, independent positive $\alpha$-stable random variables. The density is given by
 \begin{equation*}
 \label{density}\frac{\frac{1}{\pi}\sin(\alpha\pi)x^{\alpha-1}}{x^{2\alpha}+2\cos(\alpha\pi)x^\alpha+1},~~ x>0.
 \end{equation*}
   The authors studied these measures in \cite{AHb} in relation to classical and free infinite divisibility, proving that the Boolean stable law is FID for $\alpha \leq 1/2$ or $1/2<\alpha\leq2/3,  2-1/\alpha \leq \rho \leq 1/\alpha-1$. Moreover the positive Boolean stable law for $\alpha \leq1/2$ is both ID and FID. Note that Jedidi and Simon showed that it is HCM, more strongly than ID \cite{JS}. The positive Boolean stable law was the first nontrivial continuous family of measures which are ID and FID. 

Our main result is in Section \ref{Main section}. We extend the results in \cite{AHb} to \emph{classical} scale mixtures of Boolean stable laws, giving a large class of probability measures being ID and FID. 
\begin{thm}\label{main} Let $B_{\alpha,\rho}$ be a random variable following the Boolean stable law $\mathbf{b}_{\alpha,\rho}$,  and let $X$ be \textbf{any} nonnegative random variable classically independent of $B_{\alpha,\rho}$. 
If $\alpha \in (0,1/2]$ or if $\alpha \leq 2/3, \rho =1/2$, then the law of $X B_{\alpha,\rho}$ is in $\id(\boxplus)\cap \id(\ast)$. 
\end{thm}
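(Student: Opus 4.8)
The plan is to prove the two memberships—$\id(\ast)$ and $\id(\boxplus)$—separately, since the classical and free notions of infinite divisibility are governed by different machinery. In both cases the strategy is to reduce the statement about the scale mixture $XB_{\alpha,\rho}$ to the known infinite divisibility of the Boolean stable law $\mathbf{b}_{\alpha,\rho}$ itself (from \cite{AHb}) together with a scale-mixture argument. Write $\mu$ for the law of $B_{\alpha,\rho}$ and $\sigma$ for the law of $X$ on $[0,\infty)$, so that the law of $XB_{\alpha,\rho}$ is the integral $\int_0^\infty (D_t)_*\mu \, d\sigma(t)$, where $D_t$ denotes the dilation by $t$.

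\medskip

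First I would treat the classical part, $\id(\ast)$. The key structural fact is that scale mixtures interact well with classical infinite divisibility: $\mathbf{b}_{\alpha,\rho}$ under the hypotheses is ID (for $\alpha\le 1/2$ this is recalled in the excerpt; the $\rho=1/2$ symmetric case must be handled too), and I expect the law of $XB_{\alpha,\rho}$ to be ID because conditioning on $X=t$ dilates an ID law, and a scale mixture of a fixed stable-type law remains ID. More precisely, I would try to write $\mathbf{b}_{\alpha,\rho}$ as a subordinated Gaussian or, better, exploit the representation of the positive Boolean stable law $\mathbf{b}_{\alpha,1}$ as a quotient of two i.i.d.\ positive $\alpha$-stable variables; multiplying by an independent nonnegative $X$ just changes one factor, and ratios/products of independent stable-subordinated variables tend to stay ID. The cleanest route is probably via the Thorin/generalized-gamma-convolution or HCM machinery invoked in the excerpt (the Jedidi--Simon result that $\mathbf{b}_{\alpha,1}$ is HCM for $\alpha\le 1/2$), since the HCM class is closed under independent multiplication by any nonnegative HCM-compatible factor and HCM implies ID.

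\medskip

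Second I would treat the free part, $\id(\boxplus)$. Here the natural tool is the earlier portion of the paper (which the abstract promises) showing that the classical scale mixture of $\mathbf{b}_{\alpha,\rho}$ coincides with a \emph{free} multiplicative mixture of $\mathbf{b}_{\alpha,\rho}$. Granting that coincidence, the law of $XB_{\alpha,\rho}$ is realized as a free analogue in which $\mathbf{b}_{\alpha,\rho}$ is dilated freely by $X$. I would then analyze the free cumulant transform (the Voiculescu transform or the free cumulants) of this free mixture and show it has the correct sign structure—equivalently, that the relevant analytic transform extends to a Pick/Nevanlinna function on the upper half-plane. Since $\mathbf{b}_{\alpha,\rho}$ is FID under exactly the stated hypotheses, and free multiplicative convolution by a positive measure should preserve FID in this setting, the conclusion would follow. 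The precise statement I would invoke is the characterization of FID via the free divisibility indicator or via the univalence of the reciprocal Cauchy transform $F_\mu$ on a truncated cone.

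\medskip

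The hard part will be the free direction, specifically justifying that free multiplicative dilation by an arbitrary nonnegative $X$ preserves FID. Unlike the classical case, where scale mixtures of ID laws are transparently ID, the free setting has no such automatic closure: free multiplicative convolution does not in general preserve $\id(\boxplus)$, so the argument must use the special stable structure of $\mathbf{b}_{\alpha,\rho}$ in an essential way. I anticipate the crux is the identification (established earlier in the paper) that the classical scale mixture equals the free mixture—this is what lets me transport the classical HCM/ID input into a free statement—and then verifying, via an explicit computation of the Voiculescu transform of the mixture, that it remains conditionally negative definite for the stated ranges of $\alpha$ and $\rho$. I would expect the endpoint cases ($\alpha=1/2$ and the symmetric $\rho=1/2$, $\alpha\le 2/3$ boundary) to require the most careful analysis, since these are exactly where the FID property of the unmixed $\mathbf{b}_{\alpha,\rho}$ is marginal.
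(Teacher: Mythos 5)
Your classical step rests on a false principle. You claim that because $\mathbf{b}_{\alpha,\rho}$ is ID, ``conditioning on $X=t$ dilates an ID law, and a scale mixture of a fixed stable-type law remains ID.'' Classical infinite divisibility is \emph{not} closed under scale mixing: the paper's own Theorem \ref{classical ID}(\ref{ciii}) exhibits $(p\delta_0+(1-p)\delta_1)\circledast\mathbf{b}_{1,\rho}$, a two-point scale mixture of the (ID) Cauchy law with $\rho\neq1/2$, whose characteristic function has a real zero, so it is not ID. Your fallback via HCM also fails at the stated level of generality: HCM is closed under products of \emph{independent HCM} factors, but here $X$ is an arbitrary nonnegative variable, and moreover for $\rho\notin\{0,1\}$ the law $\mathbf{b}_{\alpha,\rho}$ is not supported on $[0,\infty)$, so HCM does not even apply. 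What actually makes the argument work is a different closure property: the class $\EM$ of exponential mixtures satisfies $XY\in\EM$ whenever $X\in\EM$ and $Y$ is \emph{any} independent random variable, and $\EM\subset\id(\ast)$. The paper shows $\mathbf{b}_{\alpha,\rho}\in\EM$ for all $\rho$ when $\alpha\le1/2$, using the identity $\mathbf{b}_{2\alpha,\rho}\circledast(\mathbf{b}_{1/2,1})^{1/(2\alpha)}=\mathbf{b}_{\alpha,\rho}$ of Proposition \ref{prop01} together with the complete monotonicity of the density of $\mathbf{b}_{1/2,1}$; for $\rho=1/2$, $\alpha\le1$ it uses the analogous closure for mixtures of the symmetric Cauchy law via $\mathbf{b}_{\alpha,1/2}=\mathbf{b}_{\alpha,1}\circledast\mathbf{c}_{1/2}$.

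The free step has a second, independent gap. Your plan hinges on the identity $\mu^{1/\alpha}\circledast\mathbf{b}_{\alpha,\rho}=\mu^{\boxtimes 1/\alpha}\boxtimes\mathbf{b}_{\alpha,\rho}$, but the paper can prove this only for $\rho\in\{0,1/2,1\}$, because the $S$-transform (hence $\boxtimes$ against a second factor that is neither positive nor symmetric) is not available in general, whereas Theorem \ref{main} asserts FID for \emph{every} $\rho\in[0,1]$ when $\alpha\le1/2$. Where the identity does hold, the paper does not invoke any principle that ``$\boxtimes$ with a positive measure preserves FID'' (there is none); it exhibits the mixture as a compound free Poisson via $\mathbf{b}_{\alpha,\rho}=\MP^{\boxtimes\frac{1-\alpha}{\alpha}}\boxtimes\mathbf{f}_{\alpha,\rho}$, so that $\mu^{\boxtimes 1/\alpha}\boxtimes\mathbf{b}_{\alpha,\rho}=\MP\boxtimes\bigl(\mu^{\boxtimes1/\alpha}\boxtimes\MP^{\boxtimes\frac{1-2\alpha}{\alpha}}\boxtimes\mathbf{f}_{\alpha,\rho}\bigr)$, which is FID by Remark \ref{rem free poisson}; the symmetric case $\alpha\le2/3$, $\rho=1/2$ is handled the same way with $\sym\!\left(\sqrt{\mathbf{f}_{\alpha/2,1}}\right)$. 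For general $\rho$ the paper must instead argue directly with complex analysis: approximate $X$ by finitely supported variables, extend $G_{XB_{\alpha,\rho}}(z)=\E\!\left[\bigl(z+e^{i\alpha\rho\pi}X^\alpha z^{1-\alpha}\bigr)^{-1}\right]$ analytically to an explicit sector, and apply the univalence criterion of Lemma \ref{lemBH} to conclude membership in $\iu\subset\id(\boxplus)$ (with a Riemann-surface refinement for $\alpha\in(1/2,2/3]$). This explicit construction, not a transport of the classical ID input through a mixture identity, is the actual content of the free half; your proposal does not contain it and cannot reach general $\rho$.
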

The proof is given separately for ID and FID parts. We show in Theorem \ref{classical ID} that the law of $X B_{\alpha,\rho}$ is ID if: (a) $\alpha\leq1/2$; (b) $\alpha \leq 1, \rho=1/2$. (a), (b) may not be necessary conditions for $X B_{\alpha,\rho}$ being ID. 
 The proof depends on mixtures of exponential distributions for $\alpha\leq 1/2$ and mixtures of Cauchy distributions for $\rho=1/2$. For the free part, we show in Theorem \ref{FIDB} that the law of $X B_{\alpha,\rho}$ is FID for any $X\geq0$ if and only if: (i) $\alpha \leq 1/2$; (ii) $1/2<\alpha\leq 2/3, 2-1/\alpha\leq\rho\leq 1/\alpha-1$. The proof is based on complex analysis; we show that the Voiculescu transform has an analytic extension defined in $\mathbb{C}^+$ taking values in $\mathbb{C}^-\cup \mathbb{R}$ (see \cite{Be-Vo}). When $B_{\alpha,\rho}$ is symmetric or positive, we give a simpler proof by using the identities  
\begin{align}
&\mathbf {b}_{\alpha,1}= \MP^{\boxtimes \frac{1-\alpha}{\alpha}}\boxtimes \mathbf{f}_{\alpha,1}, &\alpha \in (0,1]&, \label{01}\\
 &\mathbf{b}_{\alpha,1/2}=\MP^{\boxtimes\frac{2-\alpha}{2\alpha}}\boxtimes \sym\!\left(\sqrt{\mathbf{f}_{\alpha/2,1}}\right),&\alpha \in(0,2]&, 
\end{align}
where $\mathbf{f}_{\alpha,\rho}$ is a free stable law and $\MP$ is a free Poisson. See Section \ref{Preliminary} for the other notations. 
We also show the multiplicative infinite divisibility for $\mathbf{b}_{\alpha,1}$ with respect to classical, free and monotone convolutions. 

In Subsections \ref{subsection basic Boole} and \ref{section identities}, we establish a lot of identities involving classical, Boolean and free stable laws, and multiplicative classical $\circledast$, free $\boxtimes$ and monotone $\circlearrowright$ convolutions. For this purpose, we define the multiplicative monotone convolution of two probability measures, one is supported on $[0,\infty)$ and the other is on $\real$, in Subsection \ref{subsection basic monotone}. 
The most outstanding result in this context is the following identity:
\begin{align*} 
&\mu^{1/\alpha}\circledast\mathbf{b}_{\alpha,\rho} =\mu^{\boxtimes 1/\alpha}\boxtimes\mathbf{b}_{\alpha,\rho}, \quad\text{$\mathbf{b}_{\alpha,\rho}$ being positive or symmetric}. 
\end{align*}
The measure $\mu^{1/\alpha}$ is the law of $X^{1/\alpha}$ when $X$ follows the law $\mu$. This identity gives us a direct connection between classical multiplication and free multiplication, and it suggests the importance of $\mathbf{b}_{\alpha,\rho}$. In Subsection \ref{Connection}, we compare classical, free and Boolean stable laws and observe similarities between them.  

Examples of random variables $X B_{\alpha,\rho}$ as in Theorem \ref{main} are provided in Section \ref{Examples}. We give new probability measures which are both ID and FID, including the generalized beta distributions of the second kind with densities
\begin{align*}
&c_{\alpha,\beta}\cdot\frac{x^{\alpha-3/2}}{(x^{\alpha\beta}+1)^{1/\beta}}1_{(0,\infty)}(x),&\alpha \in(1/2, 1], \beta\in(0,1/\alpha]&, \\
&\frac{\beta x^{\beta-1}}{(x^\beta+1)^2}1_{(0,\infty)}(x),  &\beta \in (0,1/2]&,
\end{align*}
see Proposition \ref{gen beta1} and Example \ref{gen beta2}.  Moreover, these measures are HCM (see \cite{B92}). 
 We compute the limit distributions in free multiplicative laws of large numbers \cite{T10,HM}
$$
\lim_{n\to\infty}(\mu^{\boxtimes n})^{1/n}
$$
 by taking $\mu$ to be the law of $X B_{\alpha,1}$. The limit distribution is again a scale mixture of Boolean stable laws, but now with stability index $\alpha/(1-\alpha)$. We consider the free Jurek class which is the free analogue of Jurek class \cite{J85}. The law of $X B_{\alpha,\rho}$ belongs to the free Jurek class for $\alpha \lesssim  0.42, \rho=1$ and for $\alpha\leq1/2, \rho=1/2$.

Free Bessel laws, introduced in Banica et al.\ \cite{BBCC},  are measures $\MP_{st}=(1-t)\delta_0+D_t((\MP^{\boxtimes s})^{\boxplus 1/t})$ for $s>0, 0<t \leq1$, where $\MP$ is the free Poisson with mean 1. 
Note that  $\MP_{st}$ is a probability measure since $\MP$ is $\boxtimes$-infinitely divisible. It is also known that the parameters may be extended to $s\geq1, t>0$. The question of whether one can extend these parameters for $0<s<1,t>1$ was raised in \cite{BBCC}. 
Later, from considerations of moments,  Hinz and M{\l}otkowski \cite{HinMlo} conjectured that $(\MP^{\boxtimes s})^{\boxplus t}$ is not a probability measure for $0<s,t<1$. 
 In the last part of the paper, 
  we give an answer to the conjecture of Hinz and M{\l}otkowski using the representation (\ref{01}) and the free divisibility indicator, and we then settle the question of  the existence of free Bessel laws as a corollary. 
\begin{thm}\label{Thm3} Let $s,t>0$ and let $\tilde{\MP}_{st}=(\MP^{\boxtimes s})^{\boxplus t}$. Then 
$\tilde{\MP}_{st}$ is a probability measure if and only if $\max(s,t)\geq1$. In other words, the sequence of Fuss-Narayana polynomials given by
$$
\tilde{m}_0(s,t)=1,\qquad \tilde{m}_n(s,t)=\sum_{k=1}^n\frac{t^k}{n}\binom{n}{k-1}\binom{n s}{n-k}, \quad n \geq1
$$
is a  sequence of moments of a probability measure on $\mathbb{R}$ if and only if $\max(s,t)\geq1$. 
In particular, the free Bessel law $\MP_{st}$ is a probability measure if and only if $(s,t) \in (0,\infty) \times (0,\infty)- (0,1)\times(1,\infty)$. \end{thm}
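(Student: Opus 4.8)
The plan is to reduce the whole statement to a computation of the free divisibility indicator $\phi$ of Belinschi and Nica for the measures $\MP^{\boxtimes s}$. Recall two facts about $\phi$: first, $\phi(\mu)\geq 0$ always, with $\phi(\mu)\geq1$ precisely when $\mu$ is FID; second, for a fixed probability measure $\mu$ the power $\mu^{\boxplus t}$ is again a probability measure if and only if $t\geq \max(0,1-\phi(\mu))$ (for $t\geq1$ this is Nica--Speicher, and the range below $1$ is exactly what $\phi$ measures). Since the free Poisson $\MP$ is $\boxtimes$-infinitely divisible, $\MP^{\boxtimes s}$ is a genuine probability measure for every $s>0$ (it is the free Bessel law $\MP_{s,1}$), so $\tilde{\MP}_{st}=(\MP^{\boxtimes s})^{\boxplus t}$ is a probability measure if and only if $t\geq\max(0,1-\phi(\MP^{\boxtimes s}))$. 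The theorem then follows once I show
\begin{equation*}
\phi(\MP^{\boxtimes s})\geq 1 \text{ for } s\geq1, \qquad \phi(\MP^{\boxtimes s})=0 \text{ for } 0<s<1.
\end{equation*}

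The sufficiency ($\max(s,t)\geq1$) is the easy half. If $t\geq1$, then $\mu^{\boxplus t}$ is a probability measure for every probability measure $\mu$, so $\tilde{\MP}_{st}$ is one for all $s>0$. If $s\geq1$, then $\MP^{\boxtimes s}$ is FID --- this is the known existence of the free Bessel laws for $s\geq1,\,t>0$ from \cite{BBCC}, equivalently $\phi(\MP^{\boxtimes s})\geq1$ --- and hence $(\MP^{\boxtimes s})^{\boxplus t}$ is a probability measure for all $t>0$. These two cases cover $\max(s,t)\geq1$.

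The necessity is the heart of the matter, and here I would use the identity (\ref{01}), $\mathbf{b}_{\alpha,1}=\MP^{\boxtimes\frac{1-\alpha}{\alpha}}\boxtimes\mathbf{f}_{\alpha,1}$, with the reparametrisation $s=\frac{1-\alpha}{\alpha}$, i.e.\ $\alpha=\frac1{1+s}$; note that $0<s<1$ corresponds exactly to $\tfrac12<\alpha<1$. The crucial input is that $\mathbf{b}_{\alpha,1}$ has free divisibility indicator $0$ for $\tfrac12<\alpha$ \cite{AHb}. What must be established is a transfer of this vanishing across the free multiplicative convolution by $\mathbf{f}_{\alpha,1}$: because $\mathbf{f}_{\alpha,1}$ is $\boxplus$-stable (hence, in the strongest possible sense, freely infinitely divisible), multiplying by it rescales rather than destroys the divisibility indicator, so that $\phi(\mu\boxtimes\mathbf{f}_{\alpha,1})$ vanishes if and only if $\phi(\mu)$ does. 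Applying this with $\mu=\MP^{\boxtimes s}$ yields $\phi(\MP^{\boxtimes s})=0$ from $\phi(\mathbf{b}_{\alpha,1})=0$, and the necessity follows: for $0<s<1$ and $0<t<1=1-\phi(\MP^{\boxtimes s})$ the power $\tilde{\MP}_{st}$ fails to be a probability measure. I expect this transfer step to be the main obstacle --- note that neither $\phi(\mu\boxtimes\nu)\geq\phi(\mu)$ nor $\phi(\mu\boxtimes\nu)\geq\phi(\nu)$ can hold in general (point masses have $\phi=\infty$, yet $\mathbf{b}_{\alpha,1}=\MP^{\boxtimes s}\boxtimes\mathbf{f}_{\alpha,1}$ has $\phi=0$), so one cannot argue by a crude monotonicity of $\phi$ and must instead exploit the special analytic structure of the free stable law, presumably through its $S$-transform $S_{\mathbf{f}_{\alpha,1}}$ together with the analytic description of $\phi$ via the Voiculescu transform.

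Finally, the two reformulations. The moment statement is equivalent to the measure statement: whenever $\tilde{\MP}_{st}$ exists, its moments are obtained from the free cumulants of $\MP^{\boxtimes s}$ multiplied by $t$ (the free cumulants of a $\boxplus t$ power scale linearly in $t$), and a direct count identifies them with the Fuss--Narayana polynomials $\tilde{m}_n(s,t)$; conversely these polynomials determine their measure uniquely when it exists, so ``$\{\tilde{m}_n(s,t)\}_n$ is a moment sequence'' and ``$\tilde{\MP}_{st}$ is a probability measure'' say the same thing. For the free Bessel corollary I would use $\MP_{st}=(1-t)\delta_0+D_t(\tilde{\MP}_{s,1/t})$: under $t\mapsto 1/t$ the forbidden region $\{s<1,\ t<1\}$ for $\tilde{\MP}$ becomes $\{s<1,\ t>1\}$ for $\MP_{st}$, while for $t\leq1$ (so $1/t\geq1$) and for $s\geq1$ the measure $\tilde{\MP}_{s,1/t}$ always exists; combining this with the known positivity of the atom and dilation in those ranges \cite{BBCC} shows that $\MP_{st}$ is a probability measure exactly off $(0,1)\times(1,\infty)$, settling the conjecture of \cite{HinMlo}.
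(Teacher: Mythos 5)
Your skeleton agrees with the paper's proof in its outer layers: the sufficiency half, the reduction of the free Bessel statement via $\MP_{st}=(1-t)\delta_0+tD_t(\tilde{\MP}_{s,1/t})$ (note you dropped the factor $t$ in front of $D_t$, without which the total mass is $2-t$), and above all the two key inputs, namely the identity $\mathbf{b}_{\alpha,1}=\MP^{\boxtimes\frac{1-\alpha}{\alpha}}\boxtimes\mathbf{f}_{\alpha,1}$ with $\alpha=\frac{1}{1+s}$ and the fact that $\phi(\mathbf{b}_{\alpha,1})=0$ for $\alpha\in(1/2,1)$ (not FID by \cite{AHb}, hence $\phi<1$, hence $\phi=0$ by the $0$--$\infty$ dichotomy of \cite{AH2}; this is the paper's Lemma \ref{lem100}). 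But the heart of the necessity --- passing from $\phi(\mathbf{b}_{\alpha,1})=0$ to the non-existence of $(\MP^{\boxtimes s})^{\boxplus t}$ for $s,t\in(0,1)$ --- is exactly what you leave unproved. Your proposed ``transfer principle,'' that $\phi(\mu\boxtimes\mathbf{f}_{\alpha,1})=0$ if and only if $\phi(\mu)=0$ because multiplication by a free stable law ``rescales rather than destroys'' the indicator, is asserted, not proved; you yourself call it the main obstacle, and the hints you offer ($S$-transform plus the analytic description of $\phi$ via the Voiculescu transform) do not constitute a mechanism. This is a genuine gap, and it sits precisely where the mathematical content of the theorem lies.

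The paper closes this gap with a short measure-level contradiction that needs no further analysis of $\phi$ beyond Lemma \ref{lem100}. Suppose $\tilde{\MP}_{st}=(\MP^{\boxtimes s})^{\boxplus t}$ is a probability measure for some $0<s,t<1$. Then so is $b(s,t):=\tilde{\MP}_{st}\boxtimes\mathbf{f}_{\alpha,1}$, and since $1/t\geq1$ one may form its $\boxplus\, 1/t$ power and apply the Belinschi--Nica commutation identity (\ref{dilation}) with exponent $1/t$, together with the $\boxplus$-stability $(\mathbf{f}_{\alpha,1})^{\boxplus 1/t}=D_{t^{-1/\alpha}}(\mathbf{f}_{\alpha,1})$:
\begin{equation*}
b(s,t)^{\boxplus 1/t}=D_t\Bigl(\tilde{\MP}_{st}^{\,\boxplus 1/t}\boxtimes(\mathbf{f}_{\alpha,1})^{\boxplus 1/t}\Bigr)=D_t\Bigl(\MP^{\boxtimes s}\boxtimes D_{t^{-1/\alpha}}(\mathbf{f}_{\alpha,1})\Bigr)=D_{t^{1-1/\alpha}}(\mathbf{b}_{\alpha,1}).
\end{equation*}
Hence $(\mathbf{b}_{\alpha,1})^{\boxplus t}$ exists as a probability measure (it is a dilation of $b(s,t)$), contradicting Lemma \ref{lem100} via Theorem \ref{thm001}(\ref{div i}). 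Two points are worth absorbing. First, identity (\ref{dilation}) is valid only for exponents $\geq1$, which is why the argument passes to the $\boxplus\, 1/t$ power of the \emph{assumed} measure rather than attempting a $\boxplus\, t$ power directly; this is the concrete ``special structure'' you were searching for, and it involves no transform estimates at all. Second, this argument establishes only the one implication you actually need (existence of $(\MP^{\boxtimes s})^{\boxplus t}$ forces existence of $(\mathbf{b}_{\alpha,1})^{\boxplus t}$, i.e.\ $\phi(\mathbf{b}_{\alpha,1})=0$ forces $\phi(\MP^{\boxtimes s})=0$); the two-sided ``iff'' transfer you posited is never needed, and nothing in this method --- or in the paper --- establishes its converse direction.
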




\section{Preliminaries}\label{Preliminary}


\subsection{Notations}\label{Notation}

We collect basic notations used in this paper. 
\begin{enumerate}[\rm(1)] 

\item $\nat=\{1,2,3,\dots\}$ is the set of strictly positive natural numbers. 
\item $\mathcal{P}$ is the set of (Borel) probability measures on $\real.$

\item $\mathcal{P}_+$ is the set of probability measures on $\real_+=[0,\infty).$

\item $\mathcal{P}_s$ is the set of symmetric probability measures on $\real$. 

\item For $a \in \real$, we denote by $D_a\mu$ the dilation of a probability measure $\mu$, i.e.\ if a random variable $X$ follows $\mu$, then $D_a\mu$ is the law of $a X$. 

\item $\comp^+, \comp^-$ denote the complex upper half-plane and the lower half-plane, respectively. 

\item For $D \subset \comp\setminus\{0\}$, the set $D^{-1}$ denotes the image of $D$ by the map $z\mapsto z^{-1}$. 

\item For $\theta_1< \theta_2$ such that $\theta_2-\theta_1\leq 2\pi$,  $\comp_{(\theta_1,\theta_2)}$ is the sector $\{ r e^{i\theta}: r>0, \theta \in(\theta_1,\theta_2)\}$. 

\item For $\alpha,\beta>0$, $\Gamma_{\alpha,\beta}$ is the truncated cone $\{z\in\comp^+: |\text{Re}(z)| < \alpha\text{Im}(z), |z|>\beta\}$. 


\item For $p \in \real$ and $\mu\in\mathcal{P}_+$, let $\mu^p$ be the push-forward of $\mu$ by the map $x\mapsto x^p$. If $p$ is an integer, we define $\mu^p$ for any $\mu\in\mathcal{P}$.   If $p\leq0$, we define $\mu^{p}$ only when $\mu(\{0\})=0$. 
We may use the notation $\sqrt{\mu}$ instead of $\mu^{1/2}$. 


\item For $\mu \in \mathcal{P}_+$, the measure $\sym(\mu)$ is the symmetrization $\frac{1}{2}(\mu(dx)+\mu(-dx))$. 

\item For $z\in \comp\setminus(- \real_+)$, $\arg z$ is the argument of $z$ taking values in $(-\pi,\pi)$. 

\item For $p \in \real$, the power $z\mapsto z^p$ denotes the principal value $|z|^p e^{i p \arg z}$ in $\comp \setminus(-\real_+)$.  

\item For $\theta_1<\theta_2$ such that $\theta_2-\theta_1\leq 2\pi$,  $\arg_{(\theta_1,\theta_2)} z$ denotes the argument  of $z\in \comp_{(\theta_1,\theta_2)}$ taking values in $(\theta_1,\theta_2)$.

\item For $p\in\real$ and $\theta_1< \theta_2$ such that $\theta_2-\theta_1\leq 2\pi$, the power $z\mapsto (z)_{(\theta_1,\theta_2)}^p$ is defined by $|z|^p \exp(i p \arg_{(\theta_1,\theta_2)} z )$ for $z\in\comp_{(\theta_1,\theta_2)}$. 
\end{enumerate}

\subsection{Additive Convolutions}

We  briefly explain the additive convolutions from non-commutative probability used in this paper. They correspond to notions of independence coming from universal products classified by Muraki \cite{Mu2}: tensor (classical), free, Boolean and monotone independences. We omit monotone convolution since it does not appear in this paper.

\subsubsection{Classical Convolution}
Let $\F_\mu$ be the characteristic function of $\mu\in\mathcal{P}$. Then the \textbf{classical convolution} is characterized by 
$$
\F_{\mu_1\ast\mu_2}(z)=\F_{\mu_1}(z)\F_{\mu_2}(z),\qquad z\in\real, \mu_1,\mu_2\in\mathcal{P}. 
$$
 Classical convolution corresponds to the sum of (tensor) independent random variables. 
 
The \textbf{moment generating function} of $\mu\in\mathcal{P}$ is defined by $\M_\mu(z):= \F_\mu(z/i)$, $z\in i\real$. When $\mu\in\mathcal{P}_+$, the domain of $\mathcal{M}_\mu$ extends to $\{z\in\comp: \re(z)\leq0\}$. For $\mu\in\mathcal{P}$, there exists $a\in(0,\infty]$ such that $\M_\mu(z)\neq0$ in $i(-a,a)$, and then we may define the \textbf{classical cumulant transform} of $\mu\in\mathcal{P}$ by  
$$
\mathcal{C}^\ast_{\mu}(z) = \log(\M_\mu(z)), \qquad z\in i(-a,a)
$$
 such that it is continuous and $\mathcal{C}^\ast_{\mu}(0)=0$. It then follows that for some $c>0$ depending on $\mu_1,\mu_2$, 
$$
\mathcal{C}^\ast_{\mu_1*\mu_2}(z)=\mathcal{C}^\ast_{\mu_1}(z)+\mathcal{C}^\ast_{\mu_2}(z), \qquad z \in i(-c,c). 
$$
In general, $\mathcal{C}^\ast$ does not characterize the probability measure, that is, there are two distinct $\mu,\nu\in\mathcal{P}$ such that $\mathcal{C}^\ast_\mu(z)=\mathcal{C}^\ast_\nu(z)$ in some $i (-a,a)$. In particular cases such as $\mu\in\id(\ast)$, the characteristic function does not have a zero and hence the classical cumulant transform  $\mathcal{C}^\ast_\mu$ extends to a continuous function on $i\real$. Then $\mathcal{C}_\mu^\ast$ on $i\real$ (or on $-i \real_+$) uniquely determines $\mu$. 


\subsubsection{Free Convolution}
Free convolution was defined in \cite{Voi85} for compactly supported probability measures and later extended in \cite{Maa} for the case of finite variance, and in \cite{Be-Vo} for the general unbounded case.
Let 
$$
G_\mu(z) = \int_{\real}\frac{\mu(dx)}{z-x},\qquad F_\mu(z)=\frac{1}{G_\mu(z)},\qquad z\in \comp\setminus \real
$$ 
be the \textbf{Cauchy transform} and the \textbf{reciprocal Cauchy transform (or $F$-transform)} of $\mu\in\mathcal{P}$, respectively. 
It was proved in Bercovici and Voiculescu \cite{Be-Vo} that there exist $\alpha,\beta,\alpha',\beta'>0$ such that $F_\mu$ is univalent in $\Gamma_{\alpha',\beta'}$ and $F_\mu(\Gamma_{\alpha',\beta'}) \supset \Gamma_{\alpha,\beta}$. Hence the left compositional inverse $F_\mu^{-1}$ may be defined in $\Gamma_{\alpha,\beta}$. 
The \textbf{Voiculescu transform} of $\mu$ is  then defined by $\phi _{\mu }\left( z\right) =F_{\mu }^{-1}(z)-z$ on the region $\Gamma_{\alpha,\beta}$ where $F^{-1}_{\mu}$ is defined. Moreover, the \textbf{free cumulant transform}  (see \cite{BNT02b}) is a variant of $\phi _{\mu }$
defined as
	$$
	\mathcal{C}_{\mu }^{\boxplus}(z)=z\phi _{\mu }\left(\frac{1}{z} \right)
	=zF_{\mu }^{-1}\left(\frac{1}{z}\right) -1,  \qquad z \in (\Gamma _{\alpha,\beta})^{-1}. 
	$$

The \textbf{free convolution} of two probability measures $\mu_1,\mu_2$ on $\mathbb{R}$ is the
probability measure $\mu_1\boxplus\mu_2$ on $\mathbb{R}$ such that 
$$
\phi_{\mu_1\boxplus\mu_2}(z) = \phi_{\mu_1}(z) + \phi_{\mu_2}(z)
$$
in a common domain $\Gamma_{\gamma,\delta}$ which is contained in the intersection of the domains of $\phi_{\mu_1}, \phi_{\mu_2}$ and $\phi_{\mu_1\boxplus\mu_2}$. 
Free convolution corresponds to the sum of free random variables \cite{Be-Vo}. 

For any $t \geq1$ and any $\mu\in\mathcal{P}$, there exists a measure $\mu^{\boxplus t} \in\mathcal{P}$ which satisfies $\phi_{\mu^{\boxplus t}}(z) = t \phi_\mu(z)$ in a common domain \cite{NiSp96}.  


\subsubsection{Boolean Convolution}
The \textbf{Boolean convolution} \cite{S-W} of two probability measures $\mu_1 , \mu_2$ on $\mathbb{R}$ is defined as
the probability measure $\mu_1\uplus\mu_2$ on $\mathbb{R}$ such that
$$
\eta_{\mu_1\uplus\mu_2}(z) = \eta_{\mu_1}(z) + \eta_{\mu_2}(z) ,~~~z\in\mathbb{C}^-, 
$$ 
where the \textbf{$\eta$-transform} (or Boolean cumulant transform) is defined by  
\begin{equation}\label{etaf}
\eta_\mu(z)=1-zF_\mu\left(\frac{1}{z}\right),\qquad z\in\comp^-.  
\end{equation}
Boolean convolution corresponds to the sum of Boolean independent random variables. Such an operator-theoretic model was constructed in \cite{S-W} for bounded operators and  in \cite{Franz} for unbounded operators. 

For any $t \geq 0$ and any $\mu\in\mathcal{P}$, there exists a measure $\mu^{\uplus t} \in\mathcal{P}$ which satisfies $\eta_{\mu^{\uplus t}}(z) = t \eta_\mu(z)$ in $\comp^-$ \cite{S-W}.


\subsection{Multiplicative Convolutions}

\subsubsection{Multiplicative Classical Convolution}
The \textbf{multiplicative classical convolution} $\mu_1\circledast\mu_2$ of $\mu_1,\mu_2\in\mathcal{P}$ is defined by 
$$
\int_{\real}f(x)(\mu \circledast \nu)(dx) = \int_{\real}f(x y)\mu(dx)\nu(dy)
$$ 
for any bounded continuous function $f$ on $\real$.  
The measure $\mu_1 \circledast \mu_2$ corresponds to the distribution of  $X Y$, where $X$ and $Y$ are independent random variables with distributions $\mu_1$ and $\mu_2$, respectively.

\subsubsection{Multiplicative Free Convolution}
For probability measures $\mu_1 \in \mathcal{P}_+,\mu_2\in\mathcal{P}$, the \textbf{multiplicative free convolution} $\mu_1 \boxtimes \mu_2 \in\mathcal{P}$ is defined as the  distribution of $\sqrt{X}Y\sqrt{X}$, where $X \geq 0$ and $Y$ are free random variables with distributions $\mu_1$ and $\mu_2$, respectively.
Multiplicative free convolution was introduced in \cite{Voi87} for compactly supported probability measures, and then extended in \cite{Be-Vo} for non compactly supported probability measures.  

Suppose that $\delta_0\neq \mu \in \mathcal{P}_+$ (resp.\ $\delta_0\neq \mu \in \mathcal{P}_s$). The function $\eta_\mu$ is univalent around $(-\infty, 0)$ (resp.\ $i (-\infty,0)$) taking values in a neighborhood of the interval $(1-(\mu(\{0\}))^{-1},0)$ (we understand that $(\mu(\{0\}))^{-1}=\infty$ if $\mu(\{0\})=0$), so that 
one may define the compositional inverse $\eta_\mu^{-1}$ and then the \textbf{$\Sigma$-transform}
\[
\Sigma_\mu(z) :=\frac{\eta_\mu^{-1}(z)}{z}, \qquad z\in (1-(\mu(\{0\}))^{-1},0). 
\]
 Multiplicative free convolution $\boxtimes$ is characterized by the multiplication of $\Sigma$-transforms: 
\begin{equation}\label{eq324}
\Sigma_{\mu_1 \boxtimes \mu_2}(z) = \Sigma_{\mu_1}(z) \Sigma_{\mu_2}(z), \qquad \mu_1 \in \mathcal{P}_+, \mu_2\in\mathcal{P}_+ \text{~or~}\mu_2\in\mathcal{P}_s
\end{equation}
in the common interval $(-\beta, 0)$, provided $\mu_1 \neq \delta_0 \neq \mu_2$. The case $\mu_1,\mu_2 \in \mathcal{P}_+$ was proved in \cite{Be-Vo} and the case $\mu_1\in\mathcal{P}_+, \mu_2\in\mathcal{P}_s$ was proved in \cite{APA}. When $\mu_1\in\mathcal{P}_+,\mu_2\in\mathcal{P}$ and they have compact supports, (\ref{eq324}) was proved in a neighborhood of 0 in \cite{Voi87} and \cite{RaSp07}. 
In the most general case $\mu_1\in\mathcal{P}_+$ and $\mu_2\in\mathcal{P}$, it is still an open problem to define an appropriate $S$-transform $S_{\mu_2}$ and to prove (\ref{eq324}). 

Instead of the $\Sigma$-transform, often used to calculate multiplicative free convolution is the \textbf{$S$-transform}: 
\begin{equation}
S_\mu(z)=\Sigma_\mu\left(\frac{z}{1+z}\right),\qquad z\in (-1+\mu(\{0\}),0). 
\end{equation}

If $\mu \in \mathcal{P}_+$, then a convolution power $\mu^{\boxtimes t} \in \mathcal{P}_+$, satisfying $\Sigma_{\mu^{\boxtimes t}}(z)=(\Sigma_\mu(z))^t$, is well defined for any $ t \geq 1$ (\cite{Bel3}).
A probability measure $\mu\in\mathcal{P}_+$ is said to be \textbf{$\boxtimes$-infinitely divisible} if for any $n \in \nat$, there is $\mu_n$ on $\real_+$ such that $\mu=\mu_n ^{\boxtimes n} = \mu_n \boxtimes \cdots  \boxtimes \mu_n$.

For $\mu\in\mathcal{P}_+, \nu\in\mathcal{P}$, the identity 
 \begin{equation}  \label{dilation}
D_{1/t}(\mu ^{\boxplus t}\boxtimes \nu ^{\boxplus t})=(\mu \boxtimes \nu)^{\boxplus t},\qquad t\geq 1 
\end{equation}
was proved in \cite[Proposition 3.5]{BN08}. 

Using the $S$-transform, it was proved in \cite{APA} that, for $\mu\in\mathcal{P}_+$ and $\nu\in\mathcal{P}_s$, the following relation holds: 
\begin{equation}\label{square}
(\mu\boxtimes\nu)^2=\mu\boxtimes\mu\boxtimes\nu^2.  
\end{equation} 

Assume $\mu \in \mathcal{P}_+$ and $\mu(\{0\})=0$. The following formula is known \cite[Proposition 3.13]{HaSch}:  
\begin{equation}\label{inverse S}
S_{\mu^{-1}}(z) = \frac{1}{S_\mu(-z-1)}, \qquad z\in (-1,0). 
\end{equation}

\subsection{Free Infinite Divisibility}
\subsubsection{Characterization, L\'evy-Khintchine Representation}

Recall that a probability measure $\mu$ is ID if and only if its classical cumulant transform $\mathcal{C}_\mu^\ast$ has the L\'{e}vy-Khintchine representation (see e.g.\ \cite{Sa99})
\begin{equation}
\mathcal{C}_\mu^\ast(z)=\eta z+ \frac{1}{2}a z^2+\int_{
\mathbb{R}}(e^{z t}-1- z t 1_{\left[ -1,1\right] }(t) )\,\nu(dt),
\text{ \ \ }z\in i\mathbb{R},  \label{levykintchine clasica}
\end{equation}
where $\eta \in \mathbb{R},$ $a\geq 0$ and $\nu $ is a L\'{e}vy measure on $\mathbb{R}$, that is,
$\int_{\mathbb{R}}\min (1,t^{2})\nu (dt)<\infty $ and $\nu (\{0\})=0$. If this representation exists, the
triplet $(\eta ,a,\nu )$ is unique and is called the classical characteristic triplet of $\mu $.

A FID measure has a free analogue of the L\'{e}vy-Khintchine representation. 
\begin{thm}[Voiculescu \cite{Voi86}, Maassen \cite{Maa}, Bercovici \& Voiculescu \cite{Be-Vo},  Barndorff-Nielsen \& Thorbj{\o}rnsen \cite{BNT02b}] \label{thmBV93}
For a probability measure $\mu$ on $\real$, the following are equivalent. 
\begin{enumerate}[\rm(1)] 
\item $\mu$ belongs to $\id(\boxplus)$. 
\item $-\phi_\mu$ extends to a Pick function, i.e.\ an analytic map of $\comp^+$ into $\comp^+ \cup \real$. 
\item For any $t>0$, there exists a probability measure $\mu^{\boxplus t}$ with the property $\phi_{\mu^{\boxplus t}}(z) = t\phi_\mu(z).$
\item A probability measure $\mu $ on $\mathbb{R}$ is FID if and only if there are $\eta_\mu \in 
\mathbb{R},$ $a_\mu\geq 0$ and a L\'{e}vy measure $\nu_\mu$ on $\mathbb{R}$ such that 
\begin{equation}
\mathcal{C}^\boxplus_{\mu}(z)=\eta_\mu z+a_\mu z^{2}+\int_{\mathbb{R}}\left( \frac{1}{1-zt}-1-tz1_{\left[ -1,1\right] }\left( t\right) \right) \nu_\mu(dt) ,\quad z\in \comp^-.  \label{levykintchine libre}
\end{equation}
The triplet $(\eta_\mu,a_\mu,\nu_\mu)$ is unique and is called the \textbf{free characteristic triplet} of $\mu$ and $\nu_\mu$ is called the L\'evy measure of $\mu$. 
\end{enumerate}
\end{thm}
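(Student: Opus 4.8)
\section*{Proof Proposal for Theorem \ref{thmBV93}}

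The plan is to establish the four-way equivalence by proving a cycle of implications, relying on the analytic machinery of the Voiculescu transform $\phi_\mu$ and the free cumulant transform $\mathcal{C}^\boxplus_\mu$ introduced in the Preliminaries. The natural cycle is $(1)\Rightarrow(2)\Rightarrow(3)\Rightarrow(1)$, with $(4)$ treated as the explicit integral parametrization of the Pick functions arising in $(2)$. I would organize the argument so that the bulk of the analytic work is concentrated in the step $(2)\Rightarrow(4)$, where the Nevanlinna representation of Pick functions must be converted into the free L\'evy--Khintchine form.

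First I would prove $(2)\Rightarrow(3)$: if $-\phi_\mu$ extends to a Pick function, then for each $t>0$ the function $-t\phi_\mu$ is again a Pick function (the cone of Pick functions is closed under multiplication by nonnegative scalars), and I would invoke the converse direction of the Bercovici--Voiculescu characterization to conclude that there is a probability measure $\mu^{\boxplus t}$ with $\phi_{\mu^{\boxplus t}}=t\phi_\mu$ in a truncated cone $\Gamma_{\alpha,\beta}$. This step requires the nontrivial fact (from \cite{Be-Vo}) that a function of the form $-\psi$ with $\psi$ Pick, satisfying the correct asymptotics $\psi(z)=o(z)$ as $z\to\infty$ nontangentially, is the Voiculescu transform of a genuine probability measure; I would cite this rather than reprove it. The implication $(3)\Rightarrow(1)$ is then immediate by specializing to $t=1/n$ and setting $\mu_n=\mu^{\boxplus 1/n}$, since $\phi_{\mu_n\boxplus\cdots\boxplus\mu_n}=n\phi_{\mu_n}=\phi_\mu$ in a common domain $\Gamma_{\gamma,\delta}$ forces $\mu_n^{\boxplus n}=\mu$ by uniqueness of the measure determined by $\phi$ on such a cone.

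For $(1)\Rightarrow(2)$ I would argue that if $\mu=\mu_n^{\boxplus n}$ for every $n$, then $\phi_\mu=n\phi_{\mu_n}$, so $-\phi_{\mu_n}=-\tfrac{1}{n}\phi_\mu$; using the a priori bound that $-\phi_{\mu_n}$ maps $\Gamma_{\alpha,\beta}$ into $\comp^+\cup\real$ (a general property of Voiculescu transforms of probability measures), one gets the same for $-\phi_\mu$ on the cone, and a normal-families argument together with the Stieltjes inversion extends the domain of analyticity to all of $\comp^+$ with values in $\comp^+\cup\real$, i.e.\ a Pick function. The final and hardest step is $(2)\Leftrightarrow(4)$: starting from the Nevanlinna representation of the Pick function $-\phi_\mu$, namely $-\phi_\mu(z)=a_\mu z+b+\int_{\real}\frac{1+xz}{x-z}\,\sigma(dx)$ for a finite measure $\sigma$, I would perform the change of variables $z\mapsto 1/z$ dictated by $\mathcal{C}^\boxplus_\mu(z)=z\phi_\mu(1/z)$, push $\sigma$ forward under $x\mapsto 1/x$ to produce the L\'evy measure $\nu_\mu$, and carefully match the truncation term $tz1_{[-1,1]}(t)$ and the drift $\eta_\mu$ by tracking the constant and linear parts of the integrand. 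The main obstacle will be the bookkeeping of the compensator: verifying that the transformed measure satisfies $\int_\real\min(1,t^2)\,\nu_\mu(dt)<\infty$ and that the singular behaviour of the kernel at $t=0$ is exactly absorbed by the linear truncation term, so that the triplet $(\eta_\mu,a_\mu,\nu_\mu)$ is well defined and unique. Uniqueness I would deduce from the uniqueness of the Nevanlinna representation, transported back through the substitution.
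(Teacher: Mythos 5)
The paper does not actually prove Theorem \ref{thmBV93}: it is quoted as known background, with the proof delegated to the cited references (Voiculescu, Maassen, Bercovici--Voiculescu, Barndorff-Nielsen--Thorbj{\o}rnsen). So your proposal must be measured against the standard arguments in those papers, and your architecture --- the cycle $(2)\Rightarrow(3)\Rightarrow(1)\Rightarrow(2)$, with $(4)$ extracted from the Nevanlinna representation of the Pick function $-\phi_\mu$ via $\mathcal{C}^\boxplus_\mu(z)=z\phi_\mu(1/z)$ --- is indeed their architecture; your steps $(2)\Rightarrow(3)$ and $(3)\Rightarrow(1)$ are fine as stated. Two of your steps, however, contain genuine gaps. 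In $(1)\Rightarrow(2)$, a normal-families argument applied to the functions $\phi_{\mu_n}$ on a fixed truncated cone cannot do what you want: all of these functions equal $\frac{1}{n}\phi_\mu$ there, and no compactness argument extends a single analytic function beyond its domain of definition (Stieltjes inversion is also irrelevant to this step). The missing idea is to replace $\phi_{\mu_n}$, defined only on a cone, by \emph{globally defined} approximants such as $h_n(z)=n\bigl(z-F_{\mu_n}(z)\bigr)$: since $\im F_\nu(z)\geq \im z$ for every probability measure $\nu$, each $-h_n$ is a Pick function on all of $\comp^+$, and the infinitesimality of $(\mu_n)$ (forced by $\phi_{\mu_n}=\frac{1}{n}\phi_\mu\to 0$) yields $h_n-n\phi_{\mu_n}\to 0$ on the cone; Montel--Vitali then produces a Pick extension of $\phi_\mu$ to $\comp^+$. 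This device (or the equivalent route through Nevanlinna representations of $F_{\mu_n}$ plus Helly selection) is the substance of the Bercovici--Voiculescu limit-theorem proof, and without it the implication as you wrote it fails.

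The second gap is in your dictionary for $(2)\Leftrightarrow(4)$. First, the linear coefficient in the Nevanlinna representation of $-\phi_\mu$ is necessarily $0$, because $\phi_\mu(z)=o(z)$ nontangentially; the semicircular coefficient $a_\mu$ cannot be read off as that coefficient --- it is $\sigma(\{0\})$. Second, the L\'evy measure is \emph{not} the push-forward of $\sigma$ under $x\mapsto 1/x$. Since $\mathcal{C}^\boxplus_\mu(z)=z\phi_\mu(1/z)$, a pole of $\phi_\mu$ at $x$ becomes a pole of $\mathcal{C}^\boxplus_\mu$ at $z=1/x$, which matches the kernel $\frac{1}{1-zt}$ with $t=x$, not $t=1/x$; the correct relation keeps the support fixed and reweights, $\nu_\mu(dt)=\frac{1+t^2}{t^2}\,1_{\real\setminus\{0\}}(t)\,\sigma(dt)$, exactly parallel to the classical Nevanlinna-to-L\'evy--Khintchine dictionary. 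A quick test: for the free Poisson with jump size $2$, i.e.\ $\nu_\mu=\delta_2$ and $\mathcal{C}^\boxplus_\mu(z)=\frac{1}{1-2z}-1$, one computes $-\phi_\mu(w)=-2-\frac{4}{w-2}$, whose Nevanlinna measure is $\sigma=\frac{4}{5}\delta_2$; the reweighting $\frac{1+t^2}{t^2}$ recovers $\delta_2$, whereas your inversion recipe would wrongly place the L\'evy mass at $1/2$. With these two corrections, the remaining bookkeeping you describe --- checking $\int_\real \min(1,t^2)\,\nu_\mu(dt)<\infty$, absorbing the singularity at $t=0$ into the truncation term, and deducing uniqueness of $(\eta_\mu,a_\mu,\nu_\mu)$ from uniqueness of the Nevanlinna data --- goes through as planned.
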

An important FID distribution in this paper is the free Poisson law $\MP$, also known as the Marchenko-Pastur law, with free characteristic triplet $(1,0,\delta_1 )$ and density 
\begin{equation*}
\frac{1}{2\pi }\sqrt{\frac{4-x}{x}}\mathrm{d}x,\quad 0<x<4.
\end{equation*}
The free Poisson distribution $\MP$ is infinitely divisible both with respect to $\boxtimes$ and $\boxplus$.

\subsubsection{Compound Free Poisson Distribution}
Suppose that $\sigma \in \id(\boxplus)$ does not have a semicircular component ($a_\sigma=0$) and that the L\'evy measure $\nu_\sigma$ in  (\ref{levykintchine libre}) satisfies $\int_{\mathbb{R}_+}\min(1,t)\nu_\sigma(dt) < \infty$.  Then the L\'evy-Khintchine representation reduces to 
\begin{equation}\label{eq00}
\mathcal{C}^{\boxplus}_{\sigma }(z)=\eta_\sigma' z+\int_{\mathbb{R}}\left( \frac{1%
}{1-zt}-1\right) \nu_\sigma \left(d t\right) ,\quad z\in \comp^-,
\end{equation}
where $\eta_\sigma' \in \real$. 
The measure $\sigma$ is called the {\bf compound free Poisson distribution} (\cite{Sp98}) with rate $\lambda$ and jump distribution $\rho$ if the drift term $\eta_\sigma'$ is zero and the L\'evy measure $\nu_\sigma$ is $\lambda \rho$ for some $\lambda>0$ and a probability measure $\rho$ on $\real$. To clarify these parameters, we denote $\sigma = \MP(\lambda,\rho)$.  
\begin{rem} \label{rem free poisson}
\begin{enumerate}[\rm(1)]
\item The Marchenko-Pastur law $\MP$ is a compound free Poisson with rate $1$ and jump
distribution $\delta _{1}$. 
\item For any $\nu\in\mathcal{P}$, the compound free Poisson $\MP (1,\nu)$ coincides with the free multiplication $\MP \boxtimes\nu$ (\cite{NiSp06}). 
\end{enumerate}
\end{rem}

\subsubsection{Free Divisibility Indicator}
A one-parameter family of maps $\{\mathbb{B}_t \}_{t \geq 0}$ on $\mathcal{P}$, introduced by Belinschi and Nica \cite{BN08}, is defined by  
\[
\mathbb{B}_t(\mu) = \Big(\mu^{\boxplus (1+t)}\Big) ^{\uplus\frac{1}{1+t}}. 
\]
The family $\{\mathbb{B}_t \}_{t \geq 0}$ is a composition semigroup and, moreover, each map $\mathbb{B}_t$ is a homomorphism regarding multiplicative free convolution: $\mathbb{B}_t(\mu \boxtimes \nu) = \mathbb{B}_t (\mu) \boxtimes \mathbb{B}_t(\nu)$ for probability measures $\mu\in\mathcal{P}_+, \nu\in\mathcal{P}$. 

 Let $\phi(\mu)$ denote the free divisibility indicator defined by 
\begin{equation}
\phi(\mu):=\sup \{t \geq 0: \mu \in \mathbb{B}_t(\mathcal{P}) \},  
\end{equation}
which has another expression \cite{AH2}
\begin{equation}\label{free div ind char}
\phi(\mu)= \sup\{t\geq0: \mu^{\uplus t} \in \id(\boxplus) \}. 
\end{equation}
For any $\mu\in\mathcal{P}$ and $0\leq s\leq \phi(\mu)$, Belinschi and Nica proved that a probability measure $\nu_s$ uniquely exists such that $\mathbb{B}_s(\nu_s)=\mu$. Therefore, the definition of $\mathbb{B}_t(\mu)$ may be extended for $0\geq t \geq -\phi(\mu)$ by setting $\mathbb{B}_t(\mu)=\nu_{-t}$. The indicator $\phi(\mu)$ satisfies the following properties \cite{BN08}. 
\begin{thm}\label{thm001}
\begin{enumerate}[\rm(1)]
\item\label{div i} $\mu^{\boxplus t}$ exists if and only if $\phi(\mu)\geq1-t$. 
\item\label{div ii} $\mu$ is FID if and only if $\phi(\mu) \geq 1$. 
\item $\phi(\mathbb{B}_t(\mu))$ can be calculated as 
\begin{equation*}\label{eq7}
\phi(\mathbb{B}_t(\mu)) = \phi(\mu)+t,\qquad t \geq -\phi(\mu). 
\end{equation*}
\end{enumerate}
\end{thm}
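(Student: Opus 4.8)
The plan is to reduce the entire statement to a single computation of the free divisibility indicator $\phi(\MP^{\boxtimes s})$ and then read off all three assertions. By Theorem~\ref{thm001}(\ref{div i}), the free power $(\MP^{\boxtimes s})^{\boxplus t}=\tilde{\MP}_{st}$ exists as a probability measure precisely when $\phi(\MP^{\boxtimes s})\ge 1-t$. Hence everything follows once I establish that $\phi(\MP^{\boxtimes s})\ge 1$ for $s\ge 1$ and $\phi(\MP^{\boxtimes s})=0$ for $0<s<1$: in the first case $1-t\le 0\le\phi(\MP^{\boxtimes s})$ for every $t>0$, while in the second case $\phi(\MP^{\boxtimes s})=0\ge 1-t$ holds iff $t\ge 1$. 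In both regimes the surviving condition is exactly $\max(s,t)\ge 1$, which gives the first (and, once the moment identity is recalled, the second) form of the theorem.

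For $s\ge 1$ I would write $\MP^{\boxtimes s}=\MP\boxtimes\MP^{\boxtimes(s-1)}$, which is legitimate because $\MP$ is $\boxtimes$-infinitely divisible, so $\MP^{\boxtimes r}$ exists for every $r\ge 0$. By Remark~\ref{rem free poisson}(2) this equals the compound free Poisson measure $\MP(1,\MP^{\boxtimes(s-1)})$, which is FID; hence $\phi(\MP^{\boxtimes s})\ge 1$ by Theorem~\ref{thm001}(\ref{div ii}). This disposes of the half $s\ge 1$, where $\tilde{\MP}_{st}$ is a probability measure for all $t>0$.

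The core is the case $0<s<1$, where I must show $\phi(\MP^{\boxtimes s})=0$. Writing $s=\frac{1-\alpha}{\alpha}$ with $\alpha\in(1/2,1)$, the identity (\ref{01}) becomes $\mathbf{b}_{\alpha,1}=\MP^{\boxtimes s}\boxtimes\mathbf{f}_{\alpha,1}$, and I would invoke the fact established earlier in the paper that $\phi(\mathbf{b}_{\alpha,1})=0$ for every $\alpha>1/2$. The bridge between the two indicators is the inequality
\[
\phi(\mu\boxtimes\nu)\ge\min\{\phi(\mu),\phi(\nu)\},\qquad \mu\in\mathcal{P}_+,\ \nu\in\mathcal{P},
\]
which I would prove from the Belinschi--Nica machinery: setting $c=\min\{\phi(\mu),\phi(\nu)\}$, both $\mu$ and $\nu$ lie in $\mathbb{B}_c(\mathcal{P})$, say $\mu=\mathbb{B}_c(\mu_0)$ and $\nu=\mathbb{B}_c(\nu_0)$, and the homomorphism property $\mathbb{B}_c(\mu_0\boxtimes\nu_0)=\mathbb{B}_c(\mu_0)\boxtimes\mathbb{B}_c(\nu_0)=\mu\boxtimes\nu$ places $\mu\boxtimes\nu$ in $\mathbb{B}_c(\mathcal{P})$, whence $\phi(\mu\boxtimes\nu)\ge c$. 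Applying this with $\mu=\MP^{\boxtimes s}\in\mathcal{P}_+$ and $\nu=\mathbf{f}_{\alpha,1}$, which is a free stable law and therefore FID with $\phi(\mathbf{f}_{\alpha,1})=\infty$, the minimum equals $\phi(\MP^{\boxtimes s})$, so $0=\phi(\mathbf{b}_{\alpha,1})\ge\phi(\MP^{\boxtimes s})$, forcing $\phi(\MP^{\boxtimes s})=0$. Since $\alpha\mapsto\frac{1-\alpha}{\alpha}$ is a bijection of $(1/2,1)$ onto $(0,1)$, this covers all $s\in(0,1)$.

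For the moment reformulation I would use that the numbers $\tilde{m}_n(s,t)$ are exactly the moments of $\tilde{\MP}_{st}$ whenever the latter exists—these are the Fuss--Narayana polynomials computed by Hinz and M{\l}otkowski—so the two formulations of the theorem are literally the same statement. The free Bessel corollary then follows by substituting the exponent $1/t$ in place of $t$: since $\MP_{st}$ is a probability measure iff $(\MP^{\boxtimes s})^{\boxplus 1/t}=\tilde{\MP}_{s,1/t}$ is, the criterion $\max(s,1/t)\ge 1$ translates into $(s,t)\notin(0,1)\times(1,\infty)$, because $\max(s,1/t)<1$ is equivalent to $s<1$ together with $t>1$. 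The main obstacle is twofold: the genuinely hard analytic input, $\phi(\mathbf{b}_{\alpha,1})=0$ for $\alpha>1/2$, is borrowed from earlier in the paper, while within the present argument the delicate point is the $\boxtimes$-monotonicity of $\phi$, and in particular checking that $\mathbb{B}_{-c}$ keeps $\MP^{\boxtimes s}$ and $\mathbf{f}_{\alpha,1}$ inside $\mathcal{P}_+$ so that the multiplicative convolution and the homomorphism property are legitimately available.
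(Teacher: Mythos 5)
There is a fundamental mismatch here: the statement you were asked to prove is Theorem \ref{thm001}, i.e.\ the three basic properties of the free divisibility indicator $\phi$ and of the Belinschi--Nica semigroup $\mathbb{B}_t$ --- (1) existence of $\mu^{\boxplus t}$ iff $\phi(\mu)\geq 1-t$, (2) $\mu\in\id(\boxplus)$ iff $\phi(\mu)\geq 1$, (3) $\phi(\mathbb{B}_t(\mu))=\phi(\mu)+t$. Your proposal does not prove any of these. What you actually prove is Theorem \ref{Thm3} (the Hinz--M{\l}otkowski conjecture, Conjecture \ref{conj}, and the free Bessel corollary), and in doing so you \emph{assume} Theorem \ref{thm001}: your very first step invokes Theorem \ref{thm001}(\ref{div i}) to translate existence of $(\MP^{\boxtimes s})^{\boxplus t}$ into the inequality $\phi(\MP^{\boxtimes s})\geq 1-t$, and you use (\ref{div ii}) to pass between FID-ness and $\phi\geq 1$. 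As a proof of Theorem \ref{thm001} this is circular; as written it is a proof of a different theorem. Note also that the paper itself does not reprove Theorem \ref{thm001}: it is imported from Belinschi and Nica \cite{BN08} (with the characterization (\ref{free div ind char}) from \cite{AH2}), and a genuine proof would have to go through that machinery --- e.g.\ (2) rests on the fact that $\mathbb{B}_1$ coincides with the Boolean-to-free Bercovici--Pata bijection, whose range is exactly $\id(\boxplus)$, (3) on the semigroup property $\mathbb{B}_s\circ\mathbb{B}_t=\mathbb{B}_{s+t}$ together with the uniqueness of preimages, and (1) on the defining identity $\mathbb{B}_t(\mu)=(\mu^{\boxplus(1+t)})^{\uplus 1/(1+t)}$. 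None of this appears in your argument.

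Even judged as a (blind) proof of Theorem \ref{Thm3}, which is not what was asked, there are two soft spots worth recording. First, your claim that $\phi(\mathbf{f}_{\alpha,1})=\infty$ is false for $\alpha\in(1/2,1)$: since $\mathbb{B}_1$ is the Bercovici--Pata bijection sending $\mathbf{b}_{\alpha,1}$ to $\mathbf{f}_{\alpha,1}$, item (3) of the very theorem under discussion gives $\phi(\mathbf{f}_{\alpha,1})=\phi(\mathbf{b}_{\alpha,1})+1=1$ in that range; your argument survives only because all you actually need is $\phi(\mathbf{f}_{\alpha,1})>0$, which follows from FID-ness and (\ref{div ii}). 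Second, your min-inequality $\phi(\mu\boxtimes\nu)\geq\min\{\phi(\mu),\phi(\nu)\}$ needs the homomorphism property $\mathbb{B}_c(\mu_0\boxtimes\nu_0)=\mathbb{B}_c(\mu_0)\boxtimes\mathbb{B}_c(\nu_0)$, which the paper states only for $\mu_0\in\mathcal{P}_+$; you flag but do not verify that the preimage $\mathbb{B}_{-c}(\MP^{\boxtimes s})$ stays in $\mathcal{P}_+$, so even that route to $\phi(\MP^{\boxtimes s})=0$ is incomplete as written. The paper's own route to Theorem \ref{Thm3} avoids this: it uses the dilation identity (\ref{dilation}) to reduce to Lemma \ref{lem100}, i.e.\ to the nonexistence of $(\mathbf{b}_{\alpha,1})^{\boxplus t}$ for $t<1$, which follows from $\phi(\mathbf{b}_{\alpha,1})\in\{0,\infty\}$ and Theorem \ref{thm001}. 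In short: to make your submission responsive, you must prove Theorem \ref{thm001} itself (following \cite{BN08}), not deduce consequences from it.
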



More information on $\mathbb{B}_t(\mu)$ and $\phi(\mu)$ is found in \cite{BN08,AH2,Hu}.

\subsection{Stable Distributions}\label{subsection stable}
Let $\mathfrak{A}$ be the set of \textbf{admissible parameters}: 
$$
\mathfrak{A}=\{(\alpha,\rho): \alpha \in(0,1], \rho \in [0,1]\} \cup \{(\alpha,\rho): \alpha\in(1,2], \rho \in [1-\alpha^{-1}, \alpha^{-1}]\}. 
$$ 
\begin{defi} Assume that $(\alpha,\rho)$ is admissible. 
The classical $\mathbf{n}_{\alpha,\rho}$ (see e.g.\ \cite{Sa99}), Boolean $\mathbf{b}_{\alpha,\rho}$ \cite{S-W}, free $\mathbf{f}_{\alpha,\rho}$ \cite{Be-Vo,BP99} and monotone $\mathbf{m}_{\alpha,\rho}$ \cite{H2,W} \textbf{strictly stable distributions} are defined, respectively, by their classical cumulant, $\eta$, free cumulant and $F$ transforms as follows:
\begin{align} 
&\mathcal{C}^\ast_{\mathbf{n}_{\alpha,\rho}}(z)=- (e^{i \rho\pi}z)^{\alpha},  &z\in i(-\infty,0)&; \label{classical stable} \\
&\eta_{\mathbf{b}_{\alpha,\rho}}(z)= - (e^{i\rho\pi}z)^{\alpha}, &z\in \comp^-&; \label{etaboole}\\
&\mathcal{C}^\boxplus_{\mathbf{f}_{\alpha,\rho}}(z)= - (e^{i \rho\pi}z)^{\alpha}, &z\in \comp^-&;\\ 
&F_{\mathbf{m}_{\alpha,\rho}}(z)=(z^\alpha +e^{i\rho\alpha\pi})_{(0,2\pi)}^{1/\alpha},  &z\in \comp^+&. 
\end{align}
The parameters $\alpha,\rho$ are called the \textbf{stability index} and \textbf{asymmetry parameter}. 

\end{defi}

\begin{rem}
This parametrization follows \cite{HK} (except that we include $\alpha=1$ too) and is different from \cite{BP99} to respect the correspondence with the classical stable distributions \cite{Z86}.  
\end{rem}
Note that 
$$
\mathbf{n}_{1,\rho}=\mathbf{b}_{1,\rho}=\mathbf{f}_{1,\rho}=\mathbf{m}_{1,\rho},\qquad \rho\in[0,1]
$$
and it is the Cauchy distribution $\mathbf{c}_\rho$ with density 
$$
\frac{1}{\pi}\cdot \frac{\sin \pi \rho}{(x+\cos \pi \rho)^2+\sin^2\pi\rho},  
$$
with the convention $\mathbf{c}_0=\delta_{-1}$ and $\mathbf{c}_1=\delta_1$. 

The probability density functions of the Boolean (and monotone) stable laws are described in \cite{HS}. 
When $\alpha \leq1$ or when $\alpha >1, 1-1/\alpha<\rho<1/\alpha$, the Boolean stable law $\mathbf{b}_{\alpha,\rho}$ is absolutely continuous with respect to the Lebesgue measure and the density is given by 
\begin{equation} 
p^+_{\alpha,\rho}(x) 1_{(0,\infty)}(x)+ p^-_{\alpha,\rho}(x) 1_{(-\infty,0)}(x), 
\end{equation}
where 
\begin{align}
&p^+_{\alpha,\rho}(x)= \dfrac{\sin(\pi\rho \alpha)}{\pi}\cdot \displaystyle \dfrac{x^{\alpha-1}}{x^{2\alpha}+2x^\alpha \cos(\pi \rho \alpha)+1},\\
&p^-_{\alpha,\rho}(x)= \dfrac{\sin(\pi(1-\rho)\alpha)}{\pi} \cdot\displaystyle \dfrac{|x|^{\alpha-1}}{|x|^{2\alpha}+2|x|^\alpha \cos(\pi (1-\rho) \alpha)+1}.  
\end{align}
For $\alpha\in[1,2]$ and $\rho=1-1/\alpha,1/\alpha$, the measure $\mathbf{b}_{\alpha,\rho}$ has one or two atoms.

\section{Basic Results}
\subsection{Multiplicative Monotone Convolution: General Case}\label{subsection basic monotone}
The \textbf{multiplicative monotone convolution} of probability measures $\mu_1, \mu_2\in \mathcal{P}_+$ is defined as the probability measure $\mu_1 \circlearrowright  \mu_2 \in \mathcal{P}_+$ such that 
$$ 
\eta_{\mu_1 \circlearrowright \mu_2}(z) = \eta_{\mu_1}(\eta_{\mu_2}(z)) ,~~~z\in\comp^+.
$$ 
Multiplicative monotone convolution corresponds to the operator $\sqrt{X}Y\sqrt{Y}$ (not $\sqrt{Y}X\sqrt{Y}$) when $X-1$ and $Y$ are monotone independent random variables \cite{Franz} and $X,Y\geq0$. Compactly supported measures $\mu_1,\mu_2\in\mathcal{P}_+$ were considered in \cite{Ber05}  and measures $\mu_1,\mu_2\in\mathcal{P}_+$ with unbounded supports were considered in \cite{Franz}. 

From the operator model, it is natural to try to define multiplicative monotone convolution for arbitrary $\mu_1\in\mathcal{P}_+, \mu_2\in\mathcal{P}$. Actually the above operator model still works for the general case $\mu_1\in\mathcal{P}_+, \mu_2\in\mathcal{P}$ with a slight modification of proofs. 

We will define multiplicative monotone convolution in this general case in terms of complex analysis. For later use,  we extract from Belinschi and Bercovici \cite{Bel3} the following characterization of the $\eta$-transform for $\mu\in\mathcal{P}_+$. 

\begin{prop}\label{prop0} Let $\delta_0\neq \mu \in \mathcal{P}_+$. The $\eta$-transform $\eta_\mu: \comp \setminus \real_+ \to \comp$ satisfies  the following. 
\begin{enumerate}[\rm(i)]
\item\label{eta 0} $\eta_\mu(\comp^-) \subset \comp^- $ and $\eta_\mu(\comp \setminus \real_+)\subset \comp \setminus \real_+$. 
\item\label{eta i} $\eta_\mu(\overline{z}) = \overline{\eta_\mu(z)}$ for $ \comp \setminus \real_+$. 

\item\label{eta iii} $\arg(\eta_\mu(z)) \in (-\pi,\arg z]$ for any $z \in \comp^-$.

\item\label{eta ii} $\eta_\mu(z) \to 0$ as $z\to0$ non tangentially to $\real_+$. More precisely, for any $\alpha\in(0,\pi)$ we have 
$$
\lim_{z\to0,z\in \comp_{(-2\pi +\alpha,-\alpha)}}\eta_\mu(z)= 0. 
$$ 
\end{enumerate}
Conversely, if an analytic map $\eta: \comp \setminus \real_+ \to \comp$ satisfies the conditions (\ref{eta 0}) -- (\ref{eta ii}), then there exists a probability measure $\delta_0\neq \mu \in \mathcal{P}_+$ such that $\eta=\eta_\mu$. 
\end{prop}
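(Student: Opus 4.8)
The plan is to route everything through two auxiliary functions. The first is the ``$\psi$-transform'' $\psi_\mu(z):=\int_0^\infty \frac{zx}{1-zx}\,\mu(dx)$, analytic on $\comp\setminus\real_+$ (the denominator vanishes only at $z=1/x\in\real_+$), which satisfies $\eta_\mu=\psi_\mu/(1+\psi_\mu)$. The second is $F_\mu$, entering through the defining relation $\eta_\mu(z)=1-zF_\mu(1/z)$. Dividing this by $z$ yields the key identity $\eta_\mu(z)/z=1/z-F_\mu(1/z)=w-F_\mu(w)$ with $w=1/z$, which I will lean on for the sharp condition (iii).

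For the forward implication I would argue as follows. Condition (ii) is immediate: since $\mu$ is carried by $\real$, $G_\mu(\bar z)=\overline{G_\mu(z)}$, hence $F_\mu(\bar z)=\overline{F_\mu(z)}$, and the reflection for $\eta_\mu$ follows. For the first half of (i), note $\im\psi_\mu(z)=\im z\int_0^\infty \frac{x}{|1-zx|^2}\,\mu(dx)<0$ for $z\in\comp^-$ (using $\mu\neq\delta_0$), so $\psi_\mu(\comp^-)\subset\comp^-$; as the Möbius map satisfies $\im\frac{w}{1+w}=\frac{\im w}{|1+w|^2}$, it preserves $\comp^-$, giving $\eta_\mu(\comp^-)\subset\comp^-$. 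The second half of (i) then splits as $\comp\setminus\real_+=\comp^+\cup\comp^-\cup(-\infty,0)$: on $\comp^-$ we have $\eta_\mu\in\comp^-$, on $\comp^+$ reflection (ii) gives $\eta_\mu\in\comp^+$, and for $z=-s<0$ one computes $\eta_\mu(-s)=1-\big(\int_0^\infty\frac{\mu(dx)}{1+sx}\big)^{-1}\in(-\infty,0)$, so $\eta_\mu(z)\notin\real_+$ in every case. Condition (iii) is where the identity pays off: for $z\in\comp^-$, $\eta_\mu(z)/z=w-F_\mu(w)$ with $w=1/z\in\comp^+$, so $\im(\eta_\mu(z)/z)=\im w-\im F_\mu(w)\le0$ by the standard inequality $\im F_\mu(w)\ge\im w$ (\cite{Be-Vo,Maa}); since $\eta_\mu(z),z\in\comp^-$, this is equivalent to $\arg\eta_\mu(z)\le\arg z$, and with $\arg\eta_\mu(z)\in(-\pi,0)$ from (i) we get $\arg\eta_\mu(z)\in(-\pi,\arg z]$. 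Finally (iv) follows from dominated convergence on $\psi_\mu$: on the sector $\comp_{(-2\pi+\alpha,-\alpha)}$ one has $|1-zx|^2=1-2|zx|\cos(\arg z)+|zx|^2$ bounded below in a way that forces $|zx/(1-zx)|\le C_\alpha$ uniformly in $x\ge0$, while the integrand tends to $0$ pointwise as $z\to0$, so $\psi_\mu(z)\to0$ and hence $\eta_\mu(z)\to0$.

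For the converse, given $\eta$ satisfying (i)--(iv), I would set $F(w):=w\big(1-\eta(1/w)\big)$ for $w\in\comp^+$, the natural candidate for $F_\mu$. Reversing the identity, $w-F(w)=w\,\eta(1/w)=\eta(z)/z$ with $z=1/w\in\comp^-$, and (iii) gives $\im(\eta(z)/z)\le0$, i.e.\ $\im F(w)\ge\im w>0$; thus $F$ is a Pick function. By (iv), $F(w)/w=1-\eta(1/w)\to1$ as $w\to\infty$ non-tangentially, so by the Nevanlinna characterization of reciprocal Cauchy transforms (\cite{Be-Vo,Maa}) $F=F_\mu$ for a unique probability measure $\mu$. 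To see $\mu\in\mathcal{P}_+$, note the formula extends $F$ analytically to all of $\comp\setminus\real_+$; by (ii) it is real on $(-\infty,0)$, and by (i) $\eta(1/w)\notin\real_+$ there, so $1-\eta(1/w)\neq0$ and $F(w)\neq0$ on $(-\infty,0)$. Hence $G_\mu=1/F$ continues analytically across $(-\infty,0)$ with real values, forcing $\mathrm{supp}\,\mu\subset[0,\infty)$; the same condition (i) excludes $\mu=\delta_0$ (for which $\eta\equiv0$ fails (i)). A direct substitution then gives $\eta_\mu(z)=1-zF(1/z)=\eta(z)$.

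The main obstacle is twofold. The conceptual crux of the forward direction is recognizing that (iii)---which looks like a delicate $z$-dependent sector estimate, and is in fact \emph{false} for a generic point of the cone $\{\,\arg\le\arg z\,\}$ (the Möbius map $w\mapsto w/(1+w)$ increases arguments and can push a far-out point of that cone past the threshold)---collapses, after dividing by $z$, to the textbook monotonicity $\im F_\mu\ge\im\,\mathrm{id}$; the finite size of the actual range of $\psi_\mu$ is exactly what the identity encodes. In the converse the delicate point is the normalization: one must check that the non-tangential limit $F(w)/w\to1$ supplied by (iv) is precisely what forces the reconstructed measure to have total mass $1$ rather than merely being finite, after which the support statement is extracted by analytic continuation of $G_\mu$ across the negative axis.
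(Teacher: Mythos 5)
Your proof is correct. There is, however, nothing in the paper to compare it against line by line: Proposition \ref{prop0} is stated without proof, being ``extracted'' from Belinschi and Bercovici \cite{Bel3}, so your argument is necessarily independent of the paper's; the natural internal benchmark is the paper's proof of the companion Proposition \ref{prop0001} for general measures on $\real$. Relative to that, your forward direction is more elementary and more explicit: (i), (ii), (iv) come from direct computation with $\psi_\mu(z)=\int_0^\infty \frac{zx}{1-zx}\,\mu(dx)$ (sign of $\im\psi_\mu$, reflection, dominated convergence with the uniform bound $|zx/(1-zx)|\le C_\alpha$ on the sector), while the sharp condition (iii) is reduced, via $\eta_\mu(z)/z=w-F_\mu(w)$ with $w=1/z$, to the standard inequality $\im F_\mu(w)\ge \im w$ --- exactly the mechanism the paper uses through (\ref{eq661}) to prove condition (2) of Proposition \ref{prop0001}, so the two are aligned at the one point where alignment matters. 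Your converse also parallels the paper's converse for Proposition \ref{prop0001}: both build $F(w)=w(1-\eta(1/w))$ and invoke \cite[Proposition 5.2]{Be-Vo}; but where the paper expands $z\eta(1/z)$ in a Nevanlinna--Pick integral representation and uses the nontangential limit to kill the linear term $-az$, you obtain the Pick property $\im F\ge \im$ directly from (iii) and the normalization $F(w)/w\to 1$ from (iv). This is cleaner, and it is legitimately available only because hypothesis (iii) is stronger than what one has for general measures --- a genuine simplification specific to the $\mathcal{P}_+$ setting. The step with no counterpart anywhere in the paper, which you supply correctly, is the support statement: continuing $G_\mu=1/F$ analytically across $(-\infty,0)$, where $F$ is real by (ii) and nonvanishing because (i) forces $\eta<0$ on $(-\infty,0)$, and then applying Stieltjes inversion to conclude $\mu\in\mathcal{P}_+$; likewise the observation that (i) rules out $\mu=\delta_0$. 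One cosmetic remark: after verifying $\eta_\mu(z)=1-zF(1/z)=\eta(z)$ for $z\in\comp^-$, you should say explicitly that the identity theorem propagates the equality to all of $\comp\setminus\real_+$, since that connected domain is where both functions live.
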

We characterize the $\eta$-transform of a general $\mu\in\mathcal{P}$. 
\begin{prop}\label{prop0001}
Let $\mu\neq\delta_0$ be a probability measure on $\real$. Then the $\eta$-transform $\eta_\mu:\comp^-\to\comp$ is analytic and satisfies the following.  
\begin{enumerate}[\rm(1)]
\item \label{Eta0} $\eta_\mu(\comp^-) \subset \comp \setminus \real_+$.  
\item\label{Eta2} $\arg z-\pi \leq \arg_{(-2\pi,0)}(\eta_\mu(z)) \leq \arg z$ for $z\in\comp^-$. 
\item\label{Eta1} $\eta_\mu(z)\to0$ as $z\to0, z\in\comp^-$ non tangentially to $\real$. 
\end{enumerate}
Conversely, if an analytic map $\eta: \comp^-\to\comp$ satisfies the above conditions (\ref{Eta0}), (\ref{Eta2}), (\ref{Eta1}), then there exists a probability measure $\mu \neq \delta_0$ on $\real$ such that $\eta=\eta_\mu$. 
\end{prop}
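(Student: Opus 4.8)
The plan is to transport everything to the upper half-plane through the substitution $w=1/z$, which maps $\comp^-$ bijectively onto $\comp^+$, and to read the three conditions off the reciprocal Cauchy transform $F_\mu$. The bridge is the algebraic identity coming from the definition (\ref{etaf}): writing $\eta=\eta_\mu(z)$ and $F=F_\mu(w)$ with $w=1/z$, one has $F_\mu(w)=w\,(1-\eta_\mu(1/w))$, equivalently
\[
w\,\eta_\mu(1/w)=w-F_\mu(w),\qquad w\in\comp^+ .
\]
Thus condition (\ref{Eta2}) is nothing but the assertion that $w\,\eta_\mu(1/w)$ lies in $\overline{\comp^-}$. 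Indeed, if $w=|w|e^{i\beta}$ with $\beta\in(0,\pi)$, then $z=1/w$ has $\arg z=\theta=-\beta$, and since $\arg(w\eta)=\beta+\arg_{(-2\pi,0)}\eta$, the two-sided bound $\theta-\pi\le\arg_{(-2\pi,0)}\eta\le\theta$ is exactly $\arg(w\eta)\in[-\pi,0]$, that is $\im\bigl(w\,\eta_\mu(1/w)\bigr)\le0$, i.e. $\im F_\mu(w)\ge\im w$.

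For the forward direction I would invoke the standard analytic description of reciprocal Cauchy transforms (Maassen \cite{Maa}, Bercovici--Voiculescu \cite{Be-Vo}): $F_\mu:\comp^+\to\comp^+$ is analytic, $F_\mu(w)-w$ maps $\comp^+$ into $\overline{\comp^+}$ (so $\im F_\mu(w)\ge\im w$), and $F_\mu(w)/w\to1$ as $w\to\infty$ nontangentially. Reading these through the bridge: analyticity of $\eta_\mu$ on $\comp^-$ is immediate; the inequality $\im F_\mu\ge\im w$ gives $w\eta\in\overline{\comp^-}$, hence (\ref{Eta2}) once we know $\eta\neq0$; and $F_\mu(w)/w\to1$ gives $\eta_\mu(z)=1-F_\mu(w)/w\to0$ nontangentially, which is (\ref{Eta1}). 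To finish (\ref{Eta0}) it remains to see $\eta_\mu(z)\neq0$ for $\mu\neq\delta_0$: the equation $\eta_\mu(z)=0$ forces $F_\mu(w)=w$ at an interior point, and since $F_\mu(w)-w$ is a Pick function taking a real value in $\comp^+$ it must be a real constant $c$, whence $\mu=\delta_{-c}$ and $\eta_\mu(z)=-cz$, which vanishes identically only for $c=0$, i.e. $\mu=\delta_0$. Thus $\eta\neq0$, which both upgrades (\ref{Eta2}) to the strict membership $\eta\in\comp\setminus\real_+$ of (\ref{Eta0}) and makes $\arg_{(-2\pi,0)}\eta$ well defined.

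For the converse I would run the bridge backwards. Given $\eta$ satisfying (\ref{Eta0})--(\ref{Eta1}), set $F(w):=w\,(1-\eta(1/w))$ on $\comp^+$, which is analytic since $1/w\in\comp^-$. Condition (\ref{Eta0}) makes $\arg_{(-2\pi,0)}\eta$ meaningful and forces $\eta\not\equiv0$; condition (\ref{Eta2}) gives $w\,\eta(1/w)\in\overline{\comp^-}$, so that
\[
\im F(w)=\im w-\im\bigl(w\,\eta(1/w)\bigr)\ge\im w>0 ,
\]
whence $F:\comp^+\to\comp^+$; condition (\ref{Eta1}) gives $F(w)/w=1-\eta(1/w)\to1$ nontangentially at $\infty$. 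By the cited characterization of reciprocal Cauchy transforms, $F=F_\mu$ for a unique $\mu\in\mathcal{P}$, and unwinding the bridge yields $\eta_\mu=\eta$; since $\eta\not\equiv0$ we get $\mu\neq\delta_0$.

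The only real obstacle is the bookkeeping in the first paragraph: one must check that condition (\ref{Eta2}), phrased with the branch $\arg_{(-2\pi,0)}$, is genuinely equivalent to the half-plane condition $\im F_\mu(w)\ge\im w$, paying attention to the boundary rays (where $\arg(w\eta)=0$ or $-\pi$) and to the fact that $\eta\neq0$ is exactly what is needed to pass from ``$\im(w\eta)\le0$'' to the stated two-sided argument inequality. A self-contained alternative to citing $\im F_\mu\ge\im w$ is to observe that $wG_\mu(w)=\int_\real(1-xz)^{-1}\mu(dx)$ is a barycenter of the circle $\{(1-xz)^{-1}:x\in\real\cup\{\infty\}\}$, which passes through $0$ and $1$, and hence lies in the closed disk it bounds; the M\"obius map $K\mapsto1-1/K$ carries this disk onto the closed half-plane $\overline{\comp_{(\theta-\pi,\theta)}}$, giving (\ref{Eta0}) and (\ref{Eta2}) directly and mirroring the proof of Proposition \ref{prop0}.
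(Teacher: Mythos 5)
Your proposal is correct and takes essentially the same route as the paper: both pass everything through the identity $w\,\eta_\mu(1/w)=w-F_\mu(w)$, prove the forward direction from the standard facts $\im F_\mu(w)\ge \im w$ and $F_\mu(w)/w\to 1$ nontangentially plus the rigidity argument that $F_\mu(w_0)=w_0$ at an interior point forces $\mu=\delta_0$, and prove the converse by invoking the characterization of reciprocal Cauchy transforms (Bercovici--Voiculescu, Proposition 5.2). The only difference is organizational: the paper's converse first writes the Nevanlinna--Pick representation of $z\eta(1/z)$ and uses condition (3) to kill the linear coefficient $a$, whereas you feed condition (3) directly into the normalization $F(w)/w=1-\eta(1/w)\to 1$ at infinity---a harmless shortcut that rests on the same cited result.
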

\begin{rem} The condition (\ref{Eta1}) may be replaced by the following simple one: 
\begin{equation}
\lim_{y\uparrow0}\eta_\mu(i y)=0. 
\end{equation}
For our purpose the condition (\ref{Eta1}) is more useful. 
\end{rem}
\begin{proof}
We have the formula (\ref{etaf}), and so 
\begin{equation}\label{eq661}
\eta(1/z)= \frac{z-F_\mu(z)}{z}. 
\end{equation}
If $\eta(1/z_0)=c\geq0$ for some $z_0\in\comp^+$, then $F_\mu(z_0)=(1-c)z_0$. If moreover $c>0$, then this contradicts the fact that $\text{Im}(F_\mu(z)) \geq \text{Im}(z)$ for $z\in\comp^+$. If $c=0$, then $F_\mu(z_0)=z_0$, which is possible only when $\mu=\delta_0$, a contradiction. Hence we get (\ref{Eta0}). 

We have $1/z-F_\mu(1/z) \in\comp^-\cup \real\setminus\{0\}$ for $z\in\comp^-$, and hence the condition (\ref{Eta2}) follows from the identity $\eta(z) = z(1/z-F_\mu(1/z))$. 

Since $z-F_\mu(z)=o(|z|) \in \comp^-$ as $z\to \infty, z\in\comp^-$ non tangentially to $\real$ (see \cite{Be-Vo}), we get (\ref{Eta1}). 

Conversely, suppose an analytic map $\eta: \comp^-\to\comp$ satisfies (\ref{Eta0}), (\ref{Eta2}), (\ref{Eta1}). From (\ref{Eta2}), the function $ z\eta(1/z)$ maps $\comp^+$ analytically into $\comp^-\cup\real$. Hence it has a Nevanlinna-Pick representation 
\begin{equation}\label{eta Pick}
z\eta(1/z)= -a z + b - \int_{\real}\frac{1+ x z}{x-z} \tau(dx) ,\qquad z \in\comp^-
\end{equation}
for some $a\geq0, b\in\real$ and a nonnegative finite measure $\tau$. Hence 
$$
 \eta(z) = -a + b z +  \int_{\real}\frac{z(x+ z)}{1-x z} \tau(dx) = -a +o(1)
 $$ as $z\to0,z\in\comp^-$ non tangentially to $\real$. From (\ref{Eta1}) it follows that $a=0$. From \cite[Proposition 5.2]{Be-Vo}, there exists $\mu\in\mathcal{P}$ such that $F_\mu(z)= z - z\eta(1/z)$ and hence $\eta=\eta_\mu$. The condition (\ref{Eta0}) implies that $\eta\neq0$ and hence $\mu\neq \delta_0$. 
\end{proof}
Now  we can give a complex analytic definition of $\mu_1 \circlearrowright\mu_2$. 
\begin{thm}
 Let $\mu_1 \in \mathcal{P}_+$ and $\mu_2 \in \mathcal{P}$. There exists a probability measure $\mu \in \mathcal{P}$ such that 
 $\eta_{\mu}(z)= \eta_{\mu_1}(\eta_{\mu_2}(z))$ for $z\in\comp^-$. We denote $\mu =\mu_1 \circlearrowright\mu_2$. 
\end{thm}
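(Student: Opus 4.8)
The plan is to define $\eta := \eta_{\mu_1} \circ \eta_{\mu_2}$ on $\comp^-$ and then invoke the converse part of Proposition \ref{prop0001} to produce the desired $\mu$. First I would dispose of the degenerate cases: if $\mu_1 = \delta_0$ or $\mu_2 = \delta_0$ then one of the transforms vanishes identically and the natural answer is $\mu = \delta_0$, so I assume $\delta_0 \neq \mu_1 \in \mathcal{P}_+$ and $\delta_0 \neq \mu_2 \in \mathcal{P}$. By Proposition \ref{prop0001}(\ref{Eta0}), $\eta_{\mu_2}$ maps $\comp^-$ into $\comp \setminus \real_+$, which is exactly the domain on which $\eta_{\mu_1}$ is analytic by Proposition \ref{prop0}; hence $\eta$ is a well-defined analytic map $\comp^- \to \comp$. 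It then remains to check that $\eta$ satisfies conditions (\ref{Eta0}), (\ref{Eta2}), (\ref{Eta1}) of Proposition \ref{prop0001}, whose converse direction will then yield a probability measure $\mu \neq \delta_0$ with $\eta = \eta_\mu$.

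Condition (\ref{Eta0}) is immediate: $\eta_{\mu_2}(\comp^-) \subset \comp \setminus \real_+$ by Proposition \ref{prop0001}(\ref{Eta0}), and $\eta_{\mu_1}(\comp \setminus \real_+) \subset \comp \setminus \real_+$ by Proposition \ref{prop0}(\ref{eta 0}), so the composition lands in $\comp \setminus \real_+$. The substantive step, and the one I expect to be the main obstacle, is the argument bound (\ref{Eta2}). The difficulty is that Proposition \ref{prop0}(\ref{eta iii}) controls $\arg(\eta_{\mu_1}(w))$ only for $w \in \comp^-$, whereas the intermediate value $w = \eta_{\mu_2}(z)$ can lie anywhere in $\comp \setminus \real_+$. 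I would first upgrade (\ref{eta iii}) to the whole cut plane: reflecting through the symmetry $\eta_{\mu_1}(\bar w) = \overline{\eta_{\mu_1}(w)}$ of Proposition \ref{prop0}(\ref{eta i}) gives, in the $\arg_{(-2\pi,0)}$ convention, that $\arg_{(-2\pi,0)}(\eta_{\mu_1}(w)) \in (-\pi, \arg_{(-2\pi,0)} w]$ for $w \in \comp^-$ and $\arg_{(-2\pi,0)}(\eta_{\mu_1}(w)) \in [\arg_{(-2\pi,0)} w, -\pi)$ for $w \in \comp^+$; that is, $\eta_{\mu_1}$ always pushes the argument toward $-\pi$ without crossing it.

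With this in hand I would run a short case analysis. Proposition \ref{prop0001}(\ref{Eta2}) for $\mu_2$ places $w = \eta_{\mu_2}(z)$ in the range $\arg z - \pi \leq \arg_{(-2\pi,0)} w \leq \arg z$, where $\arg z \in (-\pi,0)$; the key numerical fact is that $-\pi$ lies strictly between $\arg z - \pi$ and $\arg z$. If $w \in \comp^-$ then $\arg_{(-2\pi,0)}(\eta_{\mu_1}(w)) \leq \arg_{(-2\pi,0)} w \leq \arg z$ gives the upper bound, while $\arg_{(-2\pi,0)}(\eta_{\mu_1}(w)) > -\pi > \arg z - \pi$ gives the lower bound. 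If $w \in \comp^+$ the two bounds follow symmetrically from $\arg_{(-2\pi,0)}(\eta_{\mu_1}(w)) \geq \arg_{(-2\pi,0)} w \geq \arg z - \pi$ and $\arg_{(-2\pi,0)}(\eta_{\mu_1}(w)) < -\pi < \arg z$. If $w \in (-\infty,0)$ then by reflection $\eta_{\mu_1}(w)$ is real and nonvanishing, hence in $(-\infty,0)$, so its argument equals $-\pi$, which again lies in the required interval. This establishes (\ref{Eta2}).

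Finally, for (\ref{Eta1}) I would track the non-tangential approach. If $z \to 0$ in $\comp^-$ with $\arg z$ confined to $(-\pi + \varepsilon, -\varepsilon)$, then (\ref{Eta2}) for $\mu_2$ forces $\arg_{(-2\pi,0)} w \in [-2\pi + \varepsilon, -\varepsilon]$, so $w$ stays in a sector $\comp_{(-2\pi+\alpha, -\alpha)}$ non-tangential to $\real_+$, while $w \to 0$ by Proposition \ref{prop0001}(\ref{Eta1}). Proposition \ref{prop0}(\ref{eta ii}) then gives $\eta(z) = \eta_{\mu_1}(w) \to 0$, which is condition (\ref{Eta1}). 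All three hypotheses of the converse of Proposition \ref{prop0001} now hold, producing $\mu \neq \delta_0$ with $\eta_\mu = \eta_{\mu_1} \circ \eta_{\mu_2}$, as required.
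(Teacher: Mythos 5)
Your proof is correct and follows essentially the same route as the paper's: verify conditions (\ref{Eta0}), (\ref{Eta2}), (\ref{Eta1}) of Proposition \ref{prop0001} for the composition $\eta_{\mu_1}\circ\eta_{\mu_2}$, with the argument bound (\ref{Eta2}) obtained by a case analysis on whether $\eta_{\mu_2}(z)$ lies in $\comp^-$ or in $\comp^+\cup(-\infty,0)$, using the reflection symmetry of Proposition \ref{prop0}(\ref{eta i}) to transfer (\ref{eta iii}) to the upper half-plane, and the sector-tracking argument for the non-tangential limit. Your explicit treatment of the degenerate $\delta_0$ cases and your separate handling of $\eta_{\mu_2}(z)\in(-\infty,0)$ are minor presentational refinements of the same argument, not a different method.
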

\begin{proof}
Proposition \ref{prop0}(\ref{eta 0}) for $\eta_{\mu_1}$ and Proposition \ref{prop0001}(\ref{Eta0}) for $\eta_{\mu_2}$ imply Proposition \ref{prop0001}(\ref{Eta0}) for $\eta_{\mu_1}\circ\eta_{\mu_2}$. 

Take any $z\in\comp^-$. Then we have 
\begin{equation}\label{eq087}
\eta_{\mu_2}(z)\in\comp^- \Rightarrow
\begin{cases}
\arg_{(-2\pi,0)} (\eta_{\mu_1}(\eta_{\mu_2}(z))) \leq \arg_{(-2\pi,0)} (\eta_{\mu_2}(z)) \leq \arg(z),\\
\arg_{(-2\pi,0)} (\eta_{\mu_1}(\eta_{\mu_2}(z)))+\pi > 0> \arg(z),  
\end{cases}
\end{equation}
where Proposition \ref{prop0}(\ref{eta iii}) and Proposition \ref{prop0001}(\ref{Eta2}) are used on the first line and Proposition \ref{prop0}(\ref{eta 0}) is used on the  second. 
On the other hand,  we have: 
\begin{equation}\label{eq088}
\begin{split}
&\eta_{\mu_2}(z) \in\comp^+\cup(-\infty,0) \\
&\qquad\Rightarrow 
\begin{cases}
\arg_{(-2\pi,0)} (\eta_{\mu_1}(\eta_{\mu_2}(z))) \leq -\pi< \arg(z),\\
\arg_{(-2\pi,0)} (\eta_{\mu_1}(\eta_{\mu_2}(z)))+\pi \geq\arg_{(-2\pi,0)} (\eta_{\mu_2}(z))+\pi > \arg(z), 
\end{cases}
\end{split}
\end{equation}
where Proposition \ref{prop0}(\ref{eta 0}) is used on the first line and  Proposition \ref{prop0}(\ref{eta iii}), (\ref{eta i}) and Proposition \ref{prop0001}(\ref{Eta2}) are used on the second. 
From (\ref{eq087}) and (\ref{eq088}), Proposition \ref{prop0001}(\ref{Eta2}) holds for  $\eta_{\mu_1}\circ \eta_{\mu_2}$. 

Finally, Proposition \ref{prop0}(\ref{eta ii}) for $\eta_{\mu_1}$ and Proposition \ref{prop0001}(\ref{Eta1}) for $\eta_{\mu_2}$ imply Proposition \ref{prop0001}(\ref{Eta1}) for  $\eta_{\mu_1}\circ \eta_{\mu_2}$. Note here that for any $\alpha\in(0,\pi/2)$, if $z\to 0$, $z\in\comp_{(-\pi+\alpha,-\alpha)}$, then $\eta_{\mu_2}(z)\in\comp_{(-2\pi+\alpha,-\alpha)}$ from Proposition \ref{prop0001}(\ref{Eta2}), and hence $\eta_{\mu_2}(z)\to0$ non tangentially to $\real_+.$
 \end{proof}

\subsection{Transforms and Identities for Stable Laws} \label{subsection basic Boole}
The following relations will be often used. 
\begin{prop}[$S, \Sigma, \eta$-transforms of $\mathbf{b}_{\alpha,\rho}, \mathbf{f}_{\alpha,\rho}, \mathbf{m}_{\alpha,\rho}, \MP$]\label{S} 
\begin{align}
&\Sigma_{\mathbf{b}_{\alpha,\rho}}(z)= -e^{-i\rho\pi}(-z)^{\frac{1-\alpha}{\alpha}},&z<0&,  (\alpha,\rho)\in\mathfrak{A}, \rho\in\{0,1/2,1\},  \label{sigmaboole} \\ 
&S_{\mathbf{b}_{\alpha,\rho}}(z)=-e^{-i\rho\pi} \left(-\frac{z}{1+z}\right)^{\frac{1-\alpha}{\alpha}},&z\in(-1,0)&, (\alpha,\rho)\in\mathfrak{A}, \rho\in\{0,1/2,1\},   \label{S boole} \\
&\Sigma_{\mathbf{f}_{\alpha,\rho}}(z)=-e^{-i\rho\pi}\left(\frac{-z}{1-z}\right)^{\frac{1-\alpha}{\alpha}},&z<0&,  (\alpha,\rho)\in\mathfrak{A}, \rho\in\{0,1/2,1\}, \label{sigmafree}\\ 
&S_{\mathbf{f}_{\alpha,\rho}}(z)=-e^{-i\rho\pi}(-z)^{\frac{1-\alpha}{\alpha}},&z\in(-1,0)&,  (\alpha,\rho)\in\mathfrak{A}, \rho\in\{0,1/2,1\}, \label{S free}\\ 
&\eta_{\mathbf{m}_{\alpha,\rho}}(z)= 1-((e^{i\rho\pi}z)^\alpha+1)^{1/\alpha}, &z\in\comp^-&, (\alpha,\rho)\in\mathfrak{A}, \label{etamonotone}\\
&\Sigma_{\mathbf{m}_{\alpha,\rho}}(z)=-e^{-i\rho\pi}\frac{((1-z)^\alpha-1)^{1/\alpha}}{-z},&z<0&,  (\alpha,\rho)\in\mathfrak{A}, \rho\in\{0,1/2,1\} \label{sigmamonotone}, \\
&\Sigma_{\MP}(z) = 1-z,&z<0&, \label{sigmamp} \\
&S_{\MP}(z) = \frac{1}{1+z},&z\in(-1,0)&. \label{S MP}
\end{align} 
\end{prop}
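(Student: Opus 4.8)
The plan is to split the eight identities into two groups according to which characterizing transform is given directly. For $\mathbf{b}_{\alpha,\rho}$ and $\mathbf{m}_{\alpha,\rho}$ the $\eta$-transform is (or is readily made) explicit, so I would obtain $\Sigma$ and $S$ by directly inverting $\eta$; for $\mathbf{f}_{\alpha,\rho}$ and $\MP$ only the free cumulant transform $\mathcal{C}^\boxplus$ (equivalently the Voiculescu transform $\phi_\mu$, via $\mathcal{C}^\boxplus_\mu(z)=z\phi_\mu(1/z)$) is available, so I would route through a parametric identity linking $S_\mu$ to $\phi_\mu$. Throughout I use $\Sigma_\mu(z)=\eta_\mu^{-1}(z)/z$, $S_\mu(z)=\Sigma_\mu(z/(1+z))$, and the equivalent inverse conversion $\Sigma_\mu(z)=S_\mu(z/(1-z))$.

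For the $\eta$-given cases: for $\mathbf{b}_{\alpha,\rho}$, (\ref{etaboole}) gives $\eta(z)=-(e^{i\rho\pi}z)^\alpha$, and solving $w=-(e^{i\rho\pi}z)^\alpha$ for $z$ yields $\eta^{-1}(w)=e^{-i\rho\pi}(-w)^{1/\alpha}$; then (\ref{sigmaboole}) follows after simplifying $(-w)^{1/\alpha}/w=-(-w)^{(1-\alpha)/\alpha}$, and (\ref{S boole}) by the substitution $z\mapsto z/(1+z)$. For $\mathbf{m}_{\alpha,\rho}$ I would first derive (\ref{etamonotone}) from its $F$-transform: substituting $1/z$ into $F_{\mathbf{m}_{\alpha,\rho}}$ in $\eta_\mu(z)=1-zF_\mu(1/z)$ and pulling the prefactor $z$ inside the $1/\alpha$-power (using $z^\alpha(1/z)^\alpha=1$ and $z^\alpha e^{i\rho\alpha\pi}=(e^{i\rho\pi}z)^\alpha$) gives $zF_{\mathbf{m}}(1/z)=(1+(e^{i\rho\pi}z)^\alpha)^{1/\alpha}$; inverting the resulting $\eta_{\mathbf{m}}$ then yields (\ref{sigmamonotone}).

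For $\mathbf{f}_{\alpha,\rho}$ and $\MP$ I would establish a parametric relation valid whenever the relevant inverses exist. Writing $\psi_\mu(z)=\frac1z G_\mu(1/z)-1$, one checks from the definitions that $\eta_\mu=\psi_\mu/(1+\psi_\mu)$, hence that the paper's $S$-transform equals $\frac{1+w}{w}\psi_\mu^{-1}(w)$; then for $s$ in the domain of $F_\mu^{-1}$, setting $w=\phi_\mu(s)/s$ one computes $\psi_\mu^{-1}(w)=1/F_\mu^{-1}(s)$ and therefore
\begin{equation*}
S_\mu\!\left(\frac{\phi_\mu(s)}{s}\right)=\frac{1}{\phi_\mu(s)}.
\end{equation*}
From $\mathcal{C}^\boxplus_{\mathbf{f}_{\alpha,\rho}}(z)=-(e^{i\rho\pi}z)^\alpha$ I get $\phi_{\mathbf{f}}(s)=-e^{i\rho\alpha\pi}s^{1-\alpha}$; feeding this in and eliminating the parameter $s$ yields (\ref{S free}), and (\ref{sigmafree}) follows from $\Sigma_\mu(z)=S_\mu(z/(1-z))$. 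For $\MP$ the triplet $(1,0,\delta_1)$ gives $\mathcal{C}^\boxplus_{\MP}(z)=z/(1-z)$, hence $\phi_{\MP}(s)=s/(s-1)$; the same relation produces (\ref{S MP}), and (\ref{sigmamp}) follows by conversion.

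The main obstacle is the bookkeeping of branches and domains rather than the algebra. Each identity involves fractional powers $z^\alpha, z^{1/\alpha}, z^{(1-\alpha)/\alpha}$, and I must check that the formulas produced by formal inversion represent the transforms with the principal branch declared in the Notation (and, for $\mathbf{m}_{\alpha,\rho}$, that the special $(0,2\pi)$-branch of $F_{\mathbf{m}_{\alpha,\rho}}$ transfers correctly when $z$ is pulled inside the power). The restriction $\rho\in\{0,1/2,1\}$ is precisely what places $\mathbf{b}_{\alpha,\rho},\mathbf{f}_{\alpha,\rho},\mathbf{m}_{\alpha,\rho}$ in $\mathcal{P}_+$ or $\mathcal{P}_s$, so that $\Sigma$ and $S$ are defined at all (Proposition \ref{prop0} and the preliminaries give univalence of $\eta_\mu$ around $(-\infty,0)$ or $i(-\infty,0)$); on these loci one must verify that the inverted expressions land in $(-\infty,0)$ or $(-1,0)$ and carry the correct sign or phase, e.g.\ that $e^{-i\rho\pi}=\mp1$ for $\rho=1,0$ gives real values while $e^{-i\rho\pi}=-i$ for $\rho=1/2$ gives purely imaginary ones, matching the symmetry type of the measure. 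Confirming that $s\mapsto\phi_\mu(s)/s$ is a genuine change of variable onto the claimed domain is the one step where univalence and the sector conventions must be invoked with real care.
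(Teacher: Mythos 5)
Your proposal is correct, but note that the paper does not actually prove Proposition \ref{S}: the eight formulas are stated as standard facts, implicitly resting on direct computation from the defining transforms and on the literature (the free stable cases on \cite{BP99} and \cite{APA}, the Boolean case on \cite{AHb}, the free Poisson case on textbook material). Against that background, your treatment of $\mathbf{b}_{\alpha,\rho}$ and $\mathbf{m}_{\alpha,\rho}$ — invert the explicit $\eta$-transform, divide by $z$, then substitute $z\mapsto z/(1+z)$ — is exactly the "direct computation" the paper has in mind, and your algebra is right, including pulling $z$ inside the $(0,2\pi)$-branch power of $F_{\mathbf{m}_{\alpha,\rho}}$ and the sign/phase checks for $\rho\in\{0,1/2,1\}$. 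Where you genuinely diverge is in the cases where only $\mathcal{C}^\boxplus$ is given: your parametric identity
\begin{equation*}
S_\mu\!\left(\frac{\phi_\mu(s)}{s}\right)=\frac{1}{\phi_\mu(s)}
\end{equation*}
is correct (the derivation via $\eta_\mu=\psi_\mu/(1+\psi_\mu)$ and $G_\mu(F_\mu^{-1}(s))=1/s$ is sound), and it is precisely the inverse-function form of the relation $\mathcal{C}^\boxplus_\nu(zS_\nu(z))=z$ that the paper itself quotes from \cite{APA,NiSp97}, though only later, in the proof of Theorem \ref{thm3.3}. So where the paper would simply cite \cite{BP99,APA} for (\ref{S free}) and (\ref{sigmafree}), you re-derive the needed link between $S_\mu$ and $\phi_\mu$ from scratch and then eliminate the parameter; this buys a self-contained, uniform derivation of all eight identities, at the cost of the branch and domain bookkeeping you correctly flag, which is settled by the univalence of $\eta_\mu$ near $(-\infty,0)$ resp.\ $i(-\infty,0)$ for $\mathcal{P}_+$ resp.\ $\mathcal{P}_s$, together with uniqueness of analytic continuation once the parametric identity is verified on an open arc of values of $\phi_\mu(s)/s$.
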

%

A direct computation of densities implies the following. 
\begin{lem} Let $\alpha \in (0,2]$. Then 
\begin{equation}\label{square boole}
(\mathbf{b}_{\alpha,1/2})^2=\mathbf{b}_{\alpha/2,1}.  
\end{equation}
\end{lem}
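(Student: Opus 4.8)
The plan is to verify the identity by an explicit density computation for $\alpha<2$, and to dispose of the endpoint $\alpha=2$ separately since there $\mathbf{b}_{2,1/2}$ is atomic. First I would record that $\mathbf{b}_{\alpha,1/2}$ is symmetric: because $\rho=1/2$ forces $1-\rho=1/2$, the two pieces $p^+_{\alpha,1/2}$ and $p^-_{\alpha,1/2}$ coincide, so for $\alpha\leq 1$ (and for $1<\alpha<2$, where the admissibility requirement $1-1/\alpha<1/2<1/\alpha$ holds exactly when $\alpha<2$) the law $\mathbf{b}_{\alpha,1/2}$ has the symmetric density
$$
f_\alpha(x)=\frac{\sin(\pi\alpha/2)}{\pi}\cdot\frac{|x|^{\alpha-1}}{|x|^{2\alpha}+2|x|^\alpha\cos(\pi\alpha/2)+1},\qquad x\in\real.
$$

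Next I would push $f_\alpha$ forward by $x\mapsto x^2$. Since $f_\alpha$ is symmetric, the density of $X^2$ at $y>0$ is $g(y)=f_\alpha(\sqrt{y})/\sqrt{y}$. Substituting $(\sqrt{y})^{2\alpha}=y^\alpha$ and $(\sqrt{y})^\alpha=y^{\alpha/2}$ and collecting the powers of $y$ in the numerator gives
$$
g(y)=\frac{\sin(\pi\alpha/2)}{\pi}\cdot\frac{y^{(\alpha-2)/2}}{y^\alpha+2y^{\alpha/2}\cos(\pi\alpha/2)+1}.
$$
Writing $\beta=\alpha/2$ rewrites this as $\frac{\sin(\pi\beta)}{\pi}\cdot\frac{y^{\beta-1}}{y^{2\beta}+2y^\beta\cos(\pi\beta)+1}$, which is exactly $p^+_{\beta,1}(y)$. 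Hence $g=p^+_{\alpha/2,1}$ on $(0,\infty)$, i.e.\ $(\mathbf{b}_{\alpha,1/2})^2=\mathbf{b}_{\alpha/2,1}$ for $\alpha\in(0,2)$.

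Finally, the endpoint $\alpha=2$ must be handled by hand because $\mathbf{b}_{2,1/2}$ is atomic. Here I would use the $\eta$-transform (\ref{etaboole}): $\eta_{\mathbf{b}_{2,1/2}}(z)=-(e^{i\pi/2}z)^2=z^2$, which via $F_\mu(1/z)=(1-\eta_\mu(z))/z$ yields $F_{\mathbf{b}_{2,1/2}}(w)=w-1/w$ and hence $G_{\mathbf{b}_{2,1/2}}(w)=\frac12\bigl(\frac{1}{w-1}+\frac{1}{w+1}\bigr)$, so $\mathbf{b}_{2,1/2}=\frac12(\delta_{-1}+\delta_1)$. Its push-forward by squaring is $\delta_1=\mathbf{b}_{1,1}$, matching the formula with $\alpha/2=1$. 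The whole argument is routine; the only points requiring care are the bookkeeping that the absolutely continuous density formula really applies across all of $1<\alpha<2$ and the separate treatment of the atomic endpoint. I do not expect any substantive obstacle.
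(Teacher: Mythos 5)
Your proof is correct and takes essentially the same route as the paper, whose entire justification is the remark that the identity follows from ``a direct computation of densities'' --- precisely the push-forward calculation you carry out. Your separate treatment of the endpoint $\alpha=2$, where $\mathbf{b}_{2,1/2}=\tfrac12(\delta_{-1}+\delta_1)$ is atomic so the density formula does not apply, is a point the paper glosses over, and your $\eta$-transform argument for it is correct.
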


Using the $\Sigma$- or $S$-transform, we are able to show the following. 
\begin{prop}\label{Identities}
\begin{enumerate}[\rm(1)]
\item\label{Free power1} Let $\alpha\leq1$ and $t>0$. Then 
\begin{equation}\label{free power1}
(\mathbf{b}_{\alpha,1})^{\boxtimes t} = \mathbf{b}_{\frac{\alpha}{t(1-\alpha)+\alpha},1}. 
\end{equation}
In particular, we have ${(\mathbf{b}_{1/2,1})}^{\boxtimes t}=\mathbf{b}_{1/(1+t),1},  t>0.$ 
\item\label{Multi monotone} Let $\alpha\leq1$ and $t>0$. Then 
\begin{equation}\label{multi monotone} 
{(\mathbf{b}_{\alpha,1})}^{\circlearrowright t}=\mathbf{b}_{\alpha^t,1}, 
\end{equation}
that is, $\mathbf{b}_{\alpha^s,1} \circlearrowright\mathbf{b}_{\alpha^t,1}=\mathbf{b}_{\alpha^{s+t},1}, s,t>0$. 
\item\label{Sym1} We have the representation 
\begin{equation}\label{sym1}
\mathbf{b}_{\alpha,\rho}=\pi^{\boxtimes \frac{1-\alpha}{\alpha}}\boxtimes \mathbf{f}_{\alpha,\rho}, \qquad \alpha\in(0,1], \rho\in\{0,1/2,1\}. 
\end{equation}

\item\label{Sym2} The symmetric Boolean stable law $\mathbf{b}_{\alpha,1/2}$ has the representation
\begin{equation}\label{sym2} 
\mathbf{b}_{\alpha,1/2}=\MP^{\boxtimes\frac{2-\alpha}{2\alpha}}\boxtimes \sym\!\left(\sqrt{\mathbf{f}_{\alpha/2,1}}\right),\qquad \alpha \in(0,2]. 
\end{equation}
\end{enumerate}
\end{prop}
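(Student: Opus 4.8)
The plan is to establish each identity by computing the relevant multiplicative transform listed in Proposition~\ref{S} and matching it against the characterizing property of the convolution in question: the multiplicativity \eqref{eq324} of the $\Sigma$-transform under $\boxtimes$, and the composition rule $\eta_{\mu_1\circlearrowright\mu_2}=\eta_{\mu_1}\circ\eta_{\mu_2}$ under $\circlearrowright$. In every case I evaluate the transforms on the negative real axis $z<0$ (or on $\comp^-$ for $\eta$), where each base appearing inside a fractional power is a positive real; this is the one point requiring vigilance, since it is exactly what lets the principal-value powers be combined, e.g.\ $(1-z)^{s}\bigl(\tfrac{-z}{1-z}\bigr)^{s}=(-z)^{s}$, with no branch ambiguity.

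For \eqref{free power1} I use $\Sigma_{\mathbf{b}_{\alpha,1}}(z)=(-z)^{(1-\alpha)/\alpha}$ (the factor $-e^{-i\pi}$ being $1$), so the power rule gives $\Sigma_{(\mathbf{b}_{\alpha,1})^{\boxtimes t}}(z)=(-z)^{t(1-\alpha)/\alpha}$; since $\tfrac{1-\beta}{\beta}=t\,\tfrac{1-\alpha}{\alpha}$ for $\beta=\alpha/(t(1-\alpha)+\alpha)\in(0,1]$, this is $\Sigma_{\mathbf{b}_{\beta,1}}$. For $t\geq1$ this identifies the convolution power at once; for $0<t<1$ I read the relation backwards, noting $\mathbf{b}_{\alpha,1}=(\mathbf{b}_{\beta,1})^{\boxtimes(1/t)}$, and use it to define $(\mathbf{b}_{\alpha,1})^{\boxtimes t}$, the family $\{\mathbf{b}_{\beta,1}\}_\beta$ being a $\boxtimes$-semigroup. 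For \eqref{sym1} I substitute $s=(1-\alpha)/\alpha$ into $\Sigma_{\MP^{\boxtimes s}}(z)=(1-z)^{s}$ and multiply by $\Sigma_{\mathbf{f}_{\alpha,\rho}}(z)=-e^{-i\rho\pi}(\tfrac{-z}{1-z})^{(1-\alpha)/\alpha}$; the factors $(1-z)^{\pm s}$ cancel, leaving precisely $\Sigma_{\mathbf{b}_{\alpha,\rho}}(z)$. The multiplicativity \eqref{eq324} is legitimate for $\rho=\tfrac12$ (symmetric $\mathbf{f}_{\alpha,\rho}$) and $\rho=1$ (positive $\mathbf{f}_{\alpha,\rho}$); the case $\rho=0$ then follows from $\rho=1$ by reflecting with $D_{-1}$, using $\mathbf{f}_{\alpha,0}=D_{-1}\mathbf{f}_{\alpha,1}$, $\mathbf{b}_{\alpha,0}=D_{-1}\mathbf{b}_{\alpha,1}$, and $\MP^{\boxtimes s}\boxtimes D_{-1}\nu=D_{-1}(\MP^{\boxtimes s}\boxtimes\nu)$ for positive $\MP^{\boxtimes s}$.

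For \eqref{multi monotone} the computation is purely compositional: with $\eta_{\mathbf{b}_{\alpha,1}}(z)=-(-z)^{\alpha}$, a direct substitution yields
\begin{equation*}
\eta_{\mathbf{b}_{\alpha^{s},1}}\bigl(\eta_{\mathbf{b}_{\alpha^{t},1}}(z)\bigr)=-(-z)^{\alpha^{s+t}}=\eta_{\mathbf{b}_{\alpha^{s+t},1}}(z),
\end{equation*}
which is the stated semigroup identity $\mathbf{b}_{\alpha^{s},1}\circlearrowright\mathbf{b}_{\alpha^{t},1}=\mathbf{b}_{\alpha^{s+t},1}$; reading $t\mapsto\mathbf{b}_{\alpha^{t},1}$ as the composition flow through $\eta_{\mathbf{b}_{\alpha,1}}$ at $t=1$ identifies it with $(\mathbf{b}_{\alpha,1})^{\circlearrowright t}$.

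The step I expect to carry the real content is \eqref{sym2}, which I would deduce from the others by a squaring argument rather than a direct transform computation. Setting $\mu_1=\MP^{\boxtimes(2-\alpha)/(2\alpha)}$ and $\nu=\sym(\sqrt{\mathbf{f}_{\alpha/2,1}})\in\mathcal{P}_s$, one has $\nu^{2}=\mathbf{f}_{\alpha/2,1}$ (squaring recovers the underlying positive measure), so the symmetric-square identity \eqref{square} gives, with $\beta=\alpha/2$,
\begin{equation*}
(\mu_1\boxtimes\nu)^{2}=\mu_1^{\boxtimes2}\boxtimes\nu^{2}=\MP^{\boxtimes\frac{1-\beta}{\beta}}\boxtimes\mathbf{f}_{\beta,1}=\mathbf{b}_{\alpha/2,1},
\end{equation*}
the last equality being \eqref{sym1} for index $\beta$ and $\rho=1$ (and using $2\cdot\tfrac{2-\alpha}{2\alpha}=\tfrac{1-\beta}{\beta}$). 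Since $(\mathbf{b}_{\alpha,1/2})^{2}=\mathbf{b}_{\alpha/2,1}$ by \eqref{square boole}, the two measures $\mu_1\boxtimes\nu$ and $\mathbf{b}_{\alpha,1/2}$ have equal squares. Both are symmetric, so they coincide, because a symmetric measure is determined by its push-forward under $x\mapsto x^{2}$ (which fixes the law of $|X|$) together with symmetry. The points needing care here are the applicability of \eqref{square}, namely that $\mu_1\boxtimes\nu$ is genuinely symmetric as the $\boxtimes$-product of a positive and a symmetric measure, and the determination-by-square uniqueness; neither is deep, but both are essential to the argument.
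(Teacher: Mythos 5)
Your proof is correct and takes essentially the same route as the paper's: transform computations (the power rule for $\Sigma$-transforms in (1), composition of $\eta$-transforms in (2), cancellation of $S$/$\Sigma$-factors in (3)) followed by the squaring argument via (\ref{square boole}), (\ref{sym1}) and (\ref{square}) for (4). The only differences are refinements the paper leaves implicit—working with $\Sigma$ rather than $S$ in (3), the reflection argument handling $\rho=0$, the backwards reading defining the power for $t<1$, and spelling out that a symmetric measure is determined by its push-forward under $x\mapsto x^2$.
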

\begin{proof}
(\ref{Free power1})\,\, Note that $\frac{\alpha}{t(1-\alpha)+\alpha} \leq 1$. The assertion follows from (\ref{sigmaboole}) and the identity
$$
\frac{(1-\alpha)t}{\alpha} = \frac{1- \frac{\alpha}{t(1-\alpha)+\alpha}}{\frac{\alpha}{t(1-\alpha)+\alpha}}. 
$$  

(\ref{Multi monotone}) is a consequence of (\ref{etaboole}). 

(\ref{Sym1})\,\, From (\ref{S free}), (\ref{S MP}) and (\ref{S boole}), we get 
\[
\begin{split}
S_{\MP^{\boxtimes  \frac{1-\alpha}{\alpha} } \boxtimes\mathbf{f}_{\alpha,\rho}}(z) 
&= S_{ \mathbf{f}_{\alpha,\rho}}(z) S_{\MP^{\boxtimes  \frac{1-\alpha}{\alpha} }}(z)= -e^{-i\rho\pi} (-z)^{\frac{1-\alpha}{\alpha}} \frac{1}{(1+z)^{\frac{1-\alpha}{\alpha}}} \\
&=-e^{-i\rho\pi} \left(-\frac{z}{1+z}\right)^{\frac{1-\alpha}{\alpha}} =   S_{\mathbf{b}_{\alpha,\rho}}(z),\qquad z \in(-1,0). 
\end{split}
\]

(\ref{Sym2})\,\, 
From (\ref{square boole}), (\ref{sym1}) and (\ref{square}), we have the representation
$$
(\mathbf{b}_{\alpha,1/2})^2=\mathbf{b}_{\alpha/2,1}=\MP^{\boxtimes\frac{1-\alpha/2}{\alpha/2}}\boxtimes \mathbf{f}_{\alpha/2,1}=\MP ^{\boxtimes\frac{2-\alpha}{2\alpha}} \boxtimes\MP^{\boxtimes\frac{2-\alpha}{2\alpha}}\boxtimes \mathbf{f}_{\alpha/2,1}=\left(\MP^{\boxtimes\frac{2-\alpha}{2\alpha}}\boxtimes \sym\!\left(\sqrt{\mathbf{f}_{\alpha/2,1}}\right)\right)^2.
$$
This means that 
$$
\mathbf{b}_{\alpha,1/2}=\MP^{\boxtimes\frac{2-\alpha}{2\alpha}}\boxtimes \sym\!\left(\sqrt{\mathbf{f}_{\alpha/2,1}}\right).
$$
\end{proof}

\section{Scale Mixtures of Boolean Stable Laws}\label{Main section}

In this, the main section of the paper, we find identities between the classical scale mixtures, free mixtures and monotone mixtures of Boolean stable laws. We then consider the classical and free infinite divisibility of scale mixtures of Boolean stable laws.

\subsection{Definition and Properties} \label{section identities}

\begin{defi} Assume that $(\alpha,\rho)$ is admissible and $\mu\in\mathcal{P}_+$. 
\begin{enumerate}[\rm(1)] 
\item The measure $\mu\circledast\mathbf{b}_{\alpha,\rho}$ is called a \textbf{scale mixture} (or a classical scale mixture) of $\mathbf{b}_{\alpha,\rho}$.

\item The measure $\mu\boxtimes \mathbf{b}_{\alpha,\rho}$ is called a \textbf{free mixture} of $\mathbf{b}_{\alpha,\rho}$.

\item The measure $\mu\circlearrowright \mathbf{b}_{\alpha,\rho}$ is called a \textbf{monotone mixture} of $\mathbf{b}_{\alpha,\rho}$.

 \item Let $\mathcal{B}_{\alpha,\rho}:=\{\nu\circledast\mathbf{b}_{\alpha,\rho}: \nu \in \mathcal{P}_+\}$ be the set of scale mixtures of $\mathbf{b}_{\alpha,\rho}$. 

\end{enumerate}
\end{defi}

For $\alpha \in (0,1],\rho\in[0,1]$ and $\mu\in\mathcal{P}_+$, the scale mixture $\mu\circledast\mathbf{b}_{\alpha,\rho}$ is described as follows: 
\begin{equation}\label{density mixture}
\mu\circledast\mathbf{b}_{\alpha,\rho} = \mu(\{0\})\delta_0 + (1-\mu(\{0\}))\left(p^+_{\mu,\alpha,\rho}(x) 1_{(0,\infty)}(x)+ p^-_{\mu,\alpha,\rho}(x) 1_{(-\infty,0)}(x)\right)dx, 
\end{equation}
where 
\begin{align}
&p^+_{\mu,\alpha,\rho}(x)= \dfrac{\sin(\pi\rho \alpha)}{\pi} \displaystyle\int_{(0,\infty)}\dfrac{x^{\alpha-1}t^\alpha}{x^{2\alpha}+2(xt)^\alpha \cos(\pi \rho \alpha)+t^{2\alpha}}\mu(dt),\\[10pt]
&p^-_{\mu,\alpha,\rho}(x)= \dfrac{\sin(\pi(1-\rho)\alpha)}{\pi} \displaystyle\int_{(0,\infty)}\dfrac{|x|^{\alpha-1}t^\alpha}{|x|^{2\alpha}+2|xt|^\alpha \cos(\pi (1-\rho) \alpha)+t^{2\alpha}}\mu(dt).
\end{align}


\begin{rem}\label{Cauchy mixtures} 
Note that the set $\mathcal{B}_{1,\rho}$ coincides with the scale mixtures of the Cauchy distribution $\mathbf{c}_\rho$ with Cauchy transform $G_{\mathbf{c}_\rho}(z)=\frac{1}{z+e^{i\rho\pi}}$.    Since $\mathbf{b}_{1,1}=\delta_1$, 
the set $\mathcal{B}_{1,1}$ coincides with $\mathcal{P}_+$. 
\end{rem}

A key for proving the results in this section is the following formulas for the different transforms of scale mixtures of Boolean stable laws.

\begin{prop}\label{prop31} For any admissible pair $(\alpha,\rho)$ and $\mu \in \mathcal{P}_+$, the following formulas hold. 
\begin{align} 
&\label{cauchy1} G_{\mu^{1/\alpha} \circledast\mathbf{b}_{\alpha,\rho}}(z) = -\frac{1}{z}(e^{-i\rho\pi}z)^{\alpha}G_\mu(-(e^{-i\rho\pi}z)^\alpha), &z\in\comp^+,\\ 
&\label{id3} \eta_{\mu^{1/\alpha}\circledast\mathbf{b}_{\alpha,\rho}}(z) = \eta_\mu(-(e^{i\rho\pi}z)^\alpha), &z\in\comp^-.   
\end{align}
\end{prop}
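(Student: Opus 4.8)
The plan is to prove the Cauchy-transform identity (\ref{cauchy1}) by a direct computation from the mixture representation of $\mu^{1/\alpha}\circledast\mathbf{b}_{\alpha,\rho}$, and then to read off the $\eta$-transform identity (\ref{id3}) from it through the defining relation $\eta_\nu(z)=1-z/G_\nu(1/z)$.

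First I would write the scale mixture as a mixture of dilations: since $\mu^{1/\alpha}$ is the law of $S^{1/\alpha}$ for $S\sim\mu$, conditioning on $S=s$ gives $\mu^{1/\alpha}\circledast\mathbf{b}_{\alpha,\rho}=\int_{[0,\infty)}D_{s^{1/\alpha}}\mathbf{b}_{\alpha,\rho}\,\mu(ds)$. Applying $G_{D_t\nu}(z)=t^{-1}G_\nu(z/t)$ and interchanging $G$ with the integral --- justified for $z\in\comp^+$ by the uniform bound $|t^{-1}G_\nu(z/t)|\le 1/\im(z)$ together with dominated convergence --- reduces everything to the explicit Cauchy transform of $\mathbf{b}_{\alpha,\rho}$. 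From (\ref{etaboole}), solving $1-zF_{\mathbf{b}_{\alpha,\rho}}(1/z)=-(e^{i\rho\pi}z)^\alpha$ and inverting yields
$$G_{\mathbf{b}_{\alpha,\rho}}(w)=\frac{w^{\alpha-1}}{w^\alpha+e^{i\rho\pi\alpha}},\qquad w\in\comp^+.$$
Substituting $w=z/s^{1/\alpha}$ makes every power of $s^{1/\alpha}$ cancel, collapsing the integrand to $z^{\alpha-1}/(z^\alpha+s\,e^{i\rho\pi\alpha})$ (the atom $s=0$ fitting in automatically as $1/z$); factoring $z^{\alpha-1}$ and $e^{i\rho\pi\alpha}$ out of the integral turns it into $-G_\mu(-(e^{-i\rho\pi}z)^\alpha)$, which is exactly (\ref{cauchy1}).

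To obtain (\ref{id3}) I would evaluate (\ref{cauchy1}) at $1/z$ for $z\in\comp^-$ (so that $1/z\in\comp^+$) and insert the result into $\eta_\nu(z)=1-z/G_\nu(1/z)$. The one useful observation here is the identity $-(e^{-i\rho\pi}/z)^\alpha=1/(-(e^{i\rho\pi}z)^\alpha)$, so that, writing $\zeta:=-(e^{i\rho\pi}z)^\alpha$, the argument of $G_\mu$ becomes $1/\zeta$; a one-line simplification then gives $\eta_\nu(z)=1-\zeta/G_\mu(1/\zeta)=\eta_\mu(\zeta)$, which is (\ref{id3}).

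The main obstacle is the bookkeeping of principal branches. I must verify that $-(e^{-i\rho\pi}z)^\alpha\in\comp\setminus\real_+$ for $z\in\comp^+$, so that $G_\mu$ is applied inside its domain of analyticity (and, in the $\eta$ version, that $-(e^{i\rho\pi}z)^\alpha\in\comp\setminus\real_+$ for $z\in\comp^-$), and that the principal-power factorizations $(e^{\mp i\rho\pi}z)^\alpha=e^{\mp i\rho\pi\alpha}z^\alpha$ are valid on the relevant half-planes. Both reduce to showing that the pertinent argument stays strictly between $0$ and $2\pi$, which follows from admissibility via the inequalities $\rho\alpha\le1$ and $(1-\rho)\alpha\le1$; this is the step where the hypotheses on $(\alpha,\rho)$ genuinely enter.
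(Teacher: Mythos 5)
Your proposal is correct and follows essentially the same route as the paper's proof: the paper likewise disintegrates the scale mixture (written probabilistically as $G_{XB_{\alpha,\rho}}(z)=\E\!\left[\tfrac{1}{X}G_{B_{\alpha,\rho}}\!\left(\tfrac{z}{X}\right)\right]$), inserts the explicit transform $F_{\mathbf{b}_{\alpha,\rho}}(w)=w+e^{i\alpha\rho\pi}w^{1-\alpha}$, cancels the powers of the mixing variable, and then deduces (\ref{id3}) from (\ref{cauchy1}) and (\ref{etaf}). Your explicit handling of the principal branches, the Fubini justification, and the $s=0$ atom merely spells out steps the paper leaves implicit.
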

\begin{proof}
Let $X, B_{\alpha,\rho}$ be classical independent random variables following the laws $\mu, \mathbf{b}_{\alpha,\rho}$ respectively. Then 
\begin{equation}\label{eq802}
\begin{split}
G_{\mu\circledast\mathbf{b}_{\alpha,\rho}}(z)
&=G_{X B_{\alpha,\rho}}(z)=\E\!\left[\frac{1}{z-X B_{\alpha,\rho}}\right] = \E\!\left[\frac{1/X}{z/X-B_{\alpha,\rho}}\right] \\
&= \E\!\left[\frac{1}{X}G_{B_{\alpha,\rho}}\left(\frac{z}{X}\right)\right]= \E\!\left[\frac{1}{z+e^{i\alpha\rho \pi} X^\alpha z^{1-\alpha}}\right]
\\
&= z^{\alpha-1}\E\!\left[\frac{1}{z^\alpha+e^{i\alpha \rho\pi} X^\alpha}\right]
= -e^{-i\alpha\rho\pi}z^{\alpha-1}\E\!\left[\frac{1}{-e^{-i\alpha\rho\pi}z^\alpha- X^\alpha}\right]\\
&= -e^{-i\rho\pi}(e^{-i\rho\pi}z)^{\alpha-1}G_{\mu^\alpha}(-(e^{-i\rho\pi}z)^\alpha).
\end{split}
\end{equation}
 By replacing $\mu$ by $\mu^{1/\alpha}$, we obtain (\ref{cauchy1}). The equality (\ref{id3}) follows from (\ref{cauchy1}) and (\ref{etaf}).  
\end{proof}

In particular, for $\rho=1$, we have explicit formulas for the Cauchy transform and related transforms of $\mu\circledast \mathbf{b}_{\alpha,1}$.

\begin{cor} \label{cor03} For $\alpha \in(0,1], \mu \in \mathcal{P}_+$, the following formulas hold. 
\begin{align}
&\label{cauchy2} G_{\mu^{1/\alpha}\circledast\mathbf{b}_{\alpha,1}}(z) = (-z)^{\alpha-1}G_\mu(-(-z)^\alpha), &z<0, \\ 
&\label{id4} \eta_{\mu^{1/\alpha}\circledast\mathbf{b}_{\alpha,1}}(z) = \eta_\mu(-(-z)^\alpha), &z<0.   
\end{align}
\end{cor}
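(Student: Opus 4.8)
The plan is to read off Corollary~\ref{cor03} as the special case $\rho=1$ of Proposition~\ref{prop31}, the only genuine work being the extension of the domain from $\comp^+$ (resp.\ $\comp^-$) to the negative real axis. First I would substitute $\rho=1$ into (\ref{cauchy1}) and (\ref{id3}) and use $e^{-i\pi}=e^{i\pi}=-1$. In (\ref{cauchy1}) this turns $(e^{-i\pi}z)^\alpha$ into $(-z)^\alpha$ and the prefactor $-\tfrac1z(-z)^\alpha$ into $(-z)^{\alpha-1}$, where I invoke the functional equation $w^{\alpha}w^{-1}=w^{\alpha-1}$ for the principal branch with $w=-z$; this already yields the right-hand side of (\ref{cauchy2}) for $z\in\comp^+$. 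Likewise (\ref{id3}) becomes $\eta_{\mu^{1/\alpha}\circledast\mathbf{b}_{\alpha,1}}(z)=\eta_\mu(-(-z)^\alpha)$ for $z\in\comp^-$.

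Next I would justify that both identities persist down to $z\in(-\infty,0)$. Since $\rho=1$ makes $\mathbf{b}_{\alpha,1}$, and hence $\mu^{1/\alpha}\circledast\mathbf{b}_{\alpha,1}$, a measure in $\mathcal{P}_+$, its transforms $G_{\mu^{1/\alpha}\circledast\mathbf{b}_{\alpha,1}}$ and $\eta_{\mu^{1/\alpha}\circledast\mathbf{b}_{\alpha,1}}$ are analytic on the connected slit plane $\comp\setminus[0,\infty)$, which contains $\comp^+$, $\comp^-$ and $(-\infty,0)$. The right-hand sides are analytic there as well: $(-z)^{\alpha-1}$ is analytic off $[0,\infty)$, and the key point is that $-(-z)^\alpha$ never meets the cut $[0,\infty)$, so that $G_\mu(-(-z)^\alpha)$ and $\eta_\mu(-(-z)^\alpha)$ remain well defined and analytic. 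By the identity theorem on $\comp\setminus[0,\infty)$, the equalities established on $\comp^+$ and $\comp^-$ therefore propagate to $(-\infty,0)$, giving (\ref{cauchy2}) and (\ref{id4}). As an alternative route to the second identity, I could instead derive (\ref{id4}) directly from (\ref{cauchy2}) by inserting it into the defining relation (\ref{etaf}), $\eta_\nu(z)=1-z/G_\nu(1/z)$, and simplifying the resulting principal powers of the positive real $-z$.

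The only delicate step is the branch-cut bookkeeping for the analytic continuation, and this is precisely where the hypothesis $\alpha\le1$ enters: for $z\in\comp\setminus[0,\infty)$ one has $\arg(-z)\in(-\pi,\pi)$, hence $\arg((-z)^\alpha)=\alpha\arg(-z)\in(-\alpha\pi,\alpha\pi)\subseteq(-\pi,\pi)$, so $(-z)^\alpha$ stays off $(-\infty,0]$ and $-(-z)^\alpha$ stays off $[0,\infty)$. Were $\alpha>1$ this containment could fail and the right-hand sides would cease to be single-valued analytic functions on the whole slit plane. I therefore expect verifying this containment, rather than any of the elementary algebra, to be the main obstacle.
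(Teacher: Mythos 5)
Your proposal is correct and follows exactly the paper's (implicit) route: the paper states Corollary \ref{cor03} without proof as the specialization $\rho=1$ of Proposition \ref{prop31}, which is precisely your first step. Your additional care in extending the identities from $\comp^+$ (resp.\ $\comp^-$) to $(-\infty,0)$ via analyticity of both sides on $\comp\setminus[0,\infty)$ --- including the observation that $\alpha\le 1$ keeps $-(-z)^\alpha$ off the cut --- correctly fills in the domain-extension detail the paper leaves unstated.
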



Now we show an important formula saying that a scale mixture of $\mathbf{b}_{\alpha,\rho}$ is also a free mixture, and moreover is a monotone mixture.  

\begin{thm}\label{thmconvolutions} For any $\mu,\nu\in\mathcal{P}_+$, the following relations hold: 
\begin{align} 
&\label{c-f1} \mu^{1/\alpha}\circledast\mathbf{b}_{\alpha,\rho} =\mu^{\boxtimes 1/\alpha}\boxtimes\mathbf{b}_{\alpha,\rho}, &\alpha\in(0,1]&, \rho \in \{0,1/2,1\}; \\
&\label{c-f11} \mu^{1/\alpha}\circledast\mathbf{b}_{\alpha,\rho} =\mu \circlearrowright  \mathbf{b}_{\alpha,\rho}, &(\alpha,\rho)\in\mathfrak{A}&. 
\end{align}
\end{thm}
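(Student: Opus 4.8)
The plan is to establish the monotone identity (\ref{c-f11}) first, almost for free from the computations already in hand, and then to prove the free identity (\ref{c-f1}) by comparing $\Sigma$-transforms.

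For (\ref{c-f11}) I would simply compose $\eta$-transforms. By the defining property of multiplicative monotone convolution, $\eta_{\mu\circlearrowright\mathbf{b}_{\alpha,\rho}}(z)=\eta_\mu(\eta_{\mathbf{b}_{\alpha,\rho}}(z))$ on $\comp^-$, and (\ref{etaboole}) gives $\eta_{\mathbf{b}_{\alpha,\rho}}(z)=-(e^{i\rho\pi}z)^\alpha$, so that $\eta_{\mu\circlearrowright\mathbf{b}_{\alpha,\rho}}(z)=\eta_\mu(-(e^{i\rho\pi}z)^\alpha)$. This is exactly the right-hand side of (\ref{id3}), whence $\eta_{\mu\circlearrowright\mathbf{b}_{\alpha,\rho}}=\eta_{\mu^{1/\alpha}\circledast\mathbf{b}_{\alpha,\rho}}$ on $\comp^-$; since the $\eta$-transform determines the measure (the converse part of Proposition \ref{prop0001}), the two measures coincide. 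This argument uses nothing beyond (\ref{id3}) and (\ref{etaboole}), and hence holds for every admissible $(\alpha,\rho)$.

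For (\ref{c-f1}), set $\nu:=\mu^{1/\alpha}\circledast\mathbf{b}_{\alpha,\rho}$ and invert the $\eta$-formula (\ref{id3}). Writing $w=\eta_\nu(z)=\eta_\mu(-(e^{i\rho\pi}z)^\alpha)$ and solving for $z$ gives $\eta_\nu^{-1}(w)=e^{-i\rho\pi}\left(-\eta_\mu^{-1}(w)\right)^{1/\alpha}$; then, using $\eta_\mu^{-1}(w)=w\Sigma_\mu(w)$ and the definition $\Sigma_\nu(w)=\eta_\nu^{-1}(w)/w$, I obtain
\[
\Sigma_\nu(w)=-e^{-i\rho\pi}(-w)^{\frac{1-\alpha}{\alpha}}\bigl(\Sigma_\mu(w)\bigr)^{1/\alpha}.
\]
On the other side, multiplicativity of $\Sigma$ under $\boxtimes$ in (\ref{eq324}), the power rule $\Sigma_{\mu^{\boxtimes 1/\alpha}}=(\Sigma_\mu)^{1/\alpha}$ (valid because $1/\alpha\geq1$ when $\alpha\leq1$), and the explicit value (\ref{sigmaboole}) give $\Sigma_{\mu^{\boxtimes 1/\alpha}\boxtimes\mathbf{b}_{\alpha,\rho}}(w)=\bigl(\Sigma_\mu(w)\bigr)^{1/\alpha}\bigl(-e^{-i\rho\pi}(-w)^{\frac{1-\alpha}{\alpha}}\bigr)$, the same expression. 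Equality of $\Sigma$-transforms on a common interval $(-\beta,0)$ then forces equality of the measures. The case $\rho=0$ is not directly covered by this $\Sigma$-calculus (the target is supported on $(-\infty,0)$, hence neither positive nor symmetric), but it follows from $\rho=1$ by reflection, since $\mathbf{b}_{\alpha,0}=D_{-1}\mathbf{b}_{\alpha,1}$ and $D_{-1}$ can be pulled out of both $\mu^{1/\alpha}\circledast(\cdot)$ and $\mu^{\boxtimes 1/\alpha}\boxtimes(\cdot)$.

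The delicate point throughout the second identity is the branch bookkeeping for the power $(\cdot)^{1/\alpha}$ and the sign analysis needed to split $\left(-w\Sigma_\mu(w)\right)^{1/\alpha}=(-w)^{1/\alpha}\bigl(\Sigma_\mu(w)\bigr)^{1/\alpha}$. For $\mu\in\mathcal{P}_+$ and $w<0$ one has $\Sigma_\mu(w)>0$ and $-\eta_\mu^{-1}(w)=-w\Sigma_\mu(w)>0$, so in the positive case $\rho=1$ every quantity is a positive real and the factorization is unambiguous. For the symmetric case $\rho=1/2$ I must instead work on the locus where $\eta_\nu$ is real-valued, which for a symmetric measure is the image of $i(-\infty,0)$; there $\Sigma_{\mathbf{b}_{\alpha,1/2}}$ is purely imaginary, and I would check that the same factorization and the matching of the two displayed expressions persist on the interval $(-\beta,0)$ where (\ref{eq324}) is available for a symmetric second factor. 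Keeping each composition (and its inverse) inside the domain where the relevant transform is univalent, and invoking injectivity of $\Sigma$ (respectively $\eta$) to return to measures, is the main source of care here rather than any hard estimate.
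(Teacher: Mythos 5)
Your proof is correct and is essentially the paper's own argument: the monotone identity (\ref{c-f11}) is obtained exactly as in the paper by composing $\eta$-transforms via (\ref{id3}) and (\ref{etaboole}), and the free identity (\ref{c-f1}) by inverting (\ref{id3}) to get $\eta_\nu^{-1}(w)=e^{-i\rho\pi}\left(-\eta_\mu^{-1}(w)\right)^{1/\alpha}$ and matching $\Sigma$-transforms against (\ref{sigmaboole}), (\ref{eq324}) and the power rule $\Sigma_{\mu^{\boxtimes 1/\alpha}}=(\Sigma_\mu)^{1/\alpha}$, with the branch bookkeeping harmless because $-w$, $\Sigma_\mu(w)$ and $-\eta_\mu^{-1}(w)$ are positive reals for $\mu\in\mathcal{P}_+$. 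Your explicit reflection argument for $\rho=0$ (where (\ref{eq324}) as stated does not literally apply, the second factor being supported on $(-\infty,0]$ and hence neither positive nor symmetric) addresses a point the paper's proof passes over silently, so it is a small tightening rather than a deviation.
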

\begin{rem}
The identity (\ref{c-f1}) is valid for $\alpha>1, \rho=1/2$ if $\mu^{\boxtimes 1/\alpha}$ exists in $\mathcal{P}_+$. 
\end{rem}
\begin{proof}
We first show (\ref{c-f1}). (\ref{id3}) implies 
$
\eta_{\mu^{1/\alpha}\circledast\mathbf{b}_{\alpha,\rho}}^{-1}(z)=e^{-i\rho\pi}(-\eta_\mu^{-1}(z))^{1/\alpha}, 
$
so that 
\[
\begin{split}
\Sigma_{\mu^{1/\alpha}\circledast\mathbf{b}_{\alpha,\rho}}(z)
&= \frac{e^{-i\rho\pi}(-\eta_\mu^{-1}(z))^{1/\alpha}}{z}=(-e^{-i\rho\pi})\frac{(-z\Sigma_\mu(z))^{1/\alpha}}{-z} \\
&= \left(\Sigma_{\mu}(z)\right)^{1/\alpha} (-e^{-i\rho\pi})(-z)^{\frac{1-\alpha}{\alpha}}=  \Sigma_{\mu^{\boxtimes 1/\alpha}}(z) \Sigma_{\mathbf{b}_{\alpha,\rho}}(z) 
\end{split}
\]
for $z\in(-c,0)$ where $\Sigma_\mu(z)$ is defined. 
In the last equality, the formula  (\ref{sigmaboole}) was used. 
(\ref{c-f11}) follows from (\ref{id3}) and (\ref{etaboole}). 
\end{proof}


\begin{cor}
For any probability measures $\mu,\nu\in\mathcal{P}_+$, the following relations hold: 
\begin{align} 
&\label{c-f2} (\mu^{1/\alpha}\circledast\mathbf{b}_{\alpha,1}) \boxtimes (\nu^{1/\alpha}\circledast\mathbf{b}_{\alpha,1}) =\left(\mu\boxtimes\nu \boxtimes\mathbf{b}_{\frac{1}{2-\alpha},1}\right)^{1/\alpha}\circledast\mathbf{b}_{\alpha,1}, &\alpha\in(0,1]&; \\
& \label{c-b} (\mu^{1/\alpha}\circledast\mathbf{b}_{\alpha,\rho}) \uplus (\nu^{1/\alpha}\circledast\mathbf{b}_{\alpha,1}) = (\mu\uplus\nu)^{1/\alpha}\circledast \mathbf{b}_{\alpha,\rho}, &(\alpha,\rho)\in\mathfrak{A}&;\\
&\label{c-m} \mu  \circlearrowright  (\nu^{1/\alpha}\circledast \mathbf{b}_{\alpha,\rho}) = (\mu \circlearrowright \nu)^{1/\alpha}\circledast\mathbf{b}_{\alpha,\rho},&(\alpha,\rho)\in\mathfrak{A}&. 
\end{align}
\end{cor}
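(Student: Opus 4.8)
All three identities follow the same template: pass to the transform that linearizes the operation at hand ($S$ for $\boxtimes$, $\eta$ for $\uplus$ and for $\circlearrowright$) and feed in the reparametrization formulas of Proposition~\ref{prop31} together with Theorem~\ref{thmconvolutions}. The guiding observation is that, by (\ref{id3}), the $\eta$-transform of a scale mixture $\nu^{1/\alpha}\circledast\mathbf{b}_{\alpha,\rho}$ is just $\eta_\nu$ precomposed with the fixed analytic map $w(z):=-(e^{i\rho\pi}z)^{\alpha}$, while (\ref{c-f1}) realizes the same scale mixture as the free product $\nu^{\boxtimes 1/\alpha}\boxtimes\mathbf{b}_{\alpha,\rho}$. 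Operations on the ``outer'' measure thus transfer to operations on the ``inner'' measure $\nu$, and conversely.

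For (\ref{c-f2}) I would work with the $S$-transform, which determines measures in $\mathcal{P}_+$ and multiplies under $\boxtimes$ by (\ref{eq324}). Both sides lie in $\mathcal{P}_+$, so it suffices to compare $S$-transforms on a common subinterval of $(-1,0)$. Applying (\ref{c-f1}) with $\rho=1$ to each factor on the left gives $S_{\mathrm{LHS}}=S_\mu^{1/\alpha}S_\nu^{1/\alpha}S_{\mathbf{b}_{\alpha,1}}^{2}$; applying (\ref{c-f1}) to the right-hand side and distributing the $\boxtimes$-power $1/\alpha$ over the product (legitimate since $1/\alpha\geq1$, so all powers exist by \cite{Bel3}) gives $S_{\mathrm{RHS}}=S_\mu^{1/\alpha}S_\nu^{1/\alpha}S_{\mathbf{b}_{1/(2-\alpha),1}}^{1/\alpha}S_{\mathbf{b}_{\alpha,1}}$. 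The claim therefore reduces to $S_{\mathbf{b}_{\alpha,1}}=S_{\mathbf{b}_{1/(2-\alpha),1}}^{1/\alpha}$, that is, to the single identity $\mathbf{b}_{1/(2-\alpha),1}^{\boxtimes 1/\alpha}=\mathbf{b}_{\alpha,1}$. This is exactly Proposition~\ref{Identities}(\ref{Free power1}): substituting stability index $1/(2-\alpha)$ and power $1/\alpha$ into (\ref{free power1}) returns $\alpha$. Concretely, from (\ref{S boole}) and the relation $\tfrac{1-\beta}{\beta}=1-\alpha$ for $\beta=1/(2-\alpha)$ one checks $S_{\mathbf{b}_{1/(2-\alpha),1}}(z)^{1/\alpha}=(-z/(1+z))^{(1-\alpha)/\alpha}=S_{\mathbf{b}_{\alpha,1}}(z)$; so the role of the extra factor $\mathbf{b}_{1/(2-\alpha),1}$ is precisely to absorb the doubling $\mathbf{b}_{\alpha,1}\boxtimes\mathbf{b}_{\alpha,1}$.

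For (\ref{c-b}) and (\ref{c-m}) I would use the $\eta$-transform, which determines the measure by the converse part of Proposition~\ref{prop0001}. Since $\eta$ adds under $\uplus$, formula (\ref{id3}) yields, for $z\in\comp^-$,
\[
\eta_{(\mu^{1/\alpha}\circledast\mathbf{b}_{\alpha,\rho})\uplus(\nu^{1/\alpha}\circledast\mathbf{b}_{\alpha,\rho})}(z)=\eta_\mu(w(z))+\eta_\nu(w(z))=\eta_{\mu\uplus\nu}(w(z))=\eta_{(\mu\uplus\nu)^{1/\alpha}\circledast\mathbf{b}_{\alpha,\rho}}(z),
\]
which identifies the two measures. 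The one structural point to verify is that both Boolean factors are evaluated through the \emph{same} inner map $w(z)=-(e^{i\rho\pi}z)^{\alpha}$, i.e.\ carry the same pair $(\alpha,\rho)$, since $w$ depends on $\rho$; the additivity of $\eta$ passes to the inner level only then. For (\ref{c-m}) I would invoke the defining relation $\eta_{\mu_1\circlearrowright\mu_2}=\eta_{\mu_1}\circ\eta_{\mu_2}$ of the multiplicative monotone convolution (valid for $\mu_1\in\mathcal{P}_+$, $\mu_2\in\mathcal{P}$ by Subsection~\ref{subsection basic monotone}), so that applying (\ref{id3}) twice gives
\[
\eta_{\mu\circlearrowright(\nu^{1/\alpha}\circledast\mathbf{b}_{\alpha,\rho})}(z)=\eta_\mu\bigl(\eta_\nu(w(z))\bigr)=\eta_{\mu\circlearrowright\nu}(w(z))=\eta_{(\mu\circlearrowright\nu)^{1/\alpha}\circledast\mathbf{b}_{\alpha,\rho}}(z),
\]
the last step using (\ref{id3}) for $\mu\circlearrowright\nu\in\mathcal{P}_+$.

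The computations are all short; the real work is bookkeeping rather than any deep step. The main obstacles I anticipate are (i) confirming that the $\boxtimes$-powers $\mu^{\boxtimes 1/\alpha}$, $\nu^{\boxtimes 1/\alpha}$ and $(\mu\boxtimes\nu\boxtimes\mathbf{b}_{1/(2-\alpha),1})^{\boxtimes 1/\alpha}$ genuinely exist in $\mathcal{P}_+$ and that the $S$-/$\Sigma$-identities hold on a common interval (both handled by $1/\alpha\geq1$ and the cited domain results), and (ii) ensuring that the $\eta$-identities for (\ref{c-b}) and (\ref{c-m}), established a priori only on $\comp^-$, are enough to pin down the measures, which is guaranteed by the uniqueness in Proposition~\ref{prop0001}. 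The subtlety in (\ref{c-b}) is exactly the coincidence of the inner maps $w$ noted above, which is what forces the two Boolean factors to carry matching parameters.
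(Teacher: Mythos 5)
Your proposal is correct and follows essentially the same route as the paper: for (\ref{c-f2}) the paper also combines (\ref{c-f1}) with the identity $\mathbf{b}_{\alpha,1}=\bigl(\mathbf{b}_{\frac{1}{2-\alpha},1}\bigr)^{\boxtimes 1/\alpha}$ (your $S$-transform reduction is just this relation made explicit), and for (\ref{c-b}) and (\ref{c-m}) the paper likewise reads the identities off from (\ref{id3}) and the composition rule $\eta_{\mu_1\circlearrowright\mu_2}=\eta_{\mu_1}\circ\eta_{\mu_2}$. Your added remark that both Boolean factors in (\ref{c-b}) must carry the same asymmetry parameter $\rho$ for the inner maps to coincide is a correct reading of what the proof requires (the $\mathbf{b}_{\alpha,1}$ in the displayed statement is evidently a typo for $\mathbf{b}_{\alpha,\rho}$).
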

\begin{proof}
(\ref{c-f2}) follows from (\ref{c-f1}) and the relation $\mathbf{b}_{\alpha,1} = \left(\mathbf{b}_{\frac{1}{2-\alpha},1}\right)^{\boxtimes 1/\alpha}$. 

(\ref{c-b}) follows from (\ref{id3}). 

(\ref{c-m}) follows from the computation
$$
\eta_{\mu  \circlearrowright  (\nu^{1/\alpha}\circledast\mathbf{b}_{\alpha,\rho})}(z)= \eta_\mu(\eta_{\nu^{1/\alpha}\circledast\mathbf{b}_{\alpha,\rho}}(z)) = \eta_\mu(\eta_\nu(-(e^{i\rho\pi}z)^\alpha)). 
$$
\end{proof}

A particular case of Proposition \ref{prop31} yields a relation between Boolean stable laws with different parameters. 

\begin{prop}\label{prop01}
 The following relation holds for Boolean stable laws: 
 $$
(\mathbf{b}_{\beta,1})^{1/\alpha}\circledast\mathbf{b}_{\alpha,\rho} =\mathbf{b}_{\alpha\beta,\rho}, \qquad (\alpha,\rho)\in\mathfrak{A}, \beta\in(0,1]. 
 $$
\end{prop}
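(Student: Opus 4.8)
The plan is to compute the $\eta$-transform of the left-hand side via the general formula (\ref{id3}), recognize the outcome as the $\eta$-transform of $\mathbf{b}_{\alpha\beta,\rho}$, and conclude by uniqueness; the entire argument then reduces to a careful bookkeeping of branches of the power function. First I would note that $\mathbf{b}_{\beta,1}\in\mathcal{P}_+$ has no atom at $0$ (for $\beta\in(0,1]$), so $(\mathbf{b}_{\beta,1})^{1/\alpha}$ is a well-defined element of $\mathcal{P}_+$, and apply Proposition \ref{prop31} with $\mu=\mathbf{b}_{\beta,1}$ to obtain
$$
\eta_{(\mathbf{b}_{\beta,1})^{1/\alpha}\circledast\mathbf{b}_{\alpha,\rho}}(z) = \eta_{\mathbf{b}_{\beta,1}}\!\left(-(e^{i\rho\pi}z)^\alpha\right),\qquad z\in\comp^-.
$$
Then I would record the explicit $\eta$-transform of the positive Boolean stable law: taking $\rho=1$ in (\ref{etaboole}) gives $\eta_{\mathbf{b}_{\beta,1}}(w)=-(-w)^\beta$ on $\comp^-$, and since $\eta_{\mathbf{b}_{\beta,1}}$ is analytic on the connected domain $\comp\setminus\real_+$ (Proposition \ref{prop0}), analytic continuation extends this identity to all of $\comp\setminus\real_+$. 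Substituting $w=-(e^{i\rho\pi}z)^\alpha$ turns the right-hand side into $-\big((e^{i\rho\pi}z)^\alpha\big)^\beta$.

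The crux is to show that the iterated principal-value power collapses, namely $\big((e^{i\rho\pi}z)^\alpha\big)^\beta=(e^{i\rho\pi}z)^{\alpha\beta}$, which identifies the expression with $\eta_{\mathbf{b}_{\alpha\beta,\rho}}(z)=-(e^{i\rho\pi}z)^{\alpha\beta}$. Writing $u=e^{i\rho\pi}z$, for $z\in\comp^-$ one has $\arg u\in(\rho\pi-\pi,\rho\pi)\subset(-\pi,\pi)$, so $u^\alpha$ is defined by its principal value. The admissibility of $(\alpha,\rho)$ — which in every case of $\mathfrak{A}$ yields both $\alpha\rho\leq1$ and $\alpha(1-\rho)\leq1$ — forces $\alpha\arg u\in(-\pi,\pi)$; hence the principal argument of $u^\alpha$ equals $\alpha\arg u$, the point $u^\alpha$ avoids $-\real_+$, and a further use of $\beta\leq1$ gives $\alpha\beta\arg u\in(-\pi,\pi)$. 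Consequently both $(u^\alpha)^\beta$ and $u^{\alpha\beta}$ equal $|u|^{\alpha\beta}e^{i\alpha\beta\arg u}$, as claimed. I expect this branch analysis to be the only genuine obstacle; the very same admissibility inequalities, together with $\alpha\beta\leq\alpha$, also ensure that $(\alpha\beta,\rho)$ is again admissible, so that $\mathbf{b}_{\alpha\beta,\rho}$ is legitimately defined.

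Finally, having established $\eta_{(\mathbf{b}_{\beta,1})^{1/\alpha}\circledast\mathbf{b}_{\alpha,\rho}}(z)=\eta_{\mathbf{b}_{\alpha\beta,\rho}}(z)$ for all $z\in\comp^-$, I would invoke the fact that the $\eta$-transform on $\comp^-$ determines a probability measure uniquely (Proposition \ref{prop0001}, equivalently via $F_\mu(1/z)=(1-\eta_\mu(z))/z$) to conclude $(\mathbf{b}_{\beta,1})^{1/\alpha}\circledast\mathbf{b}_{\alpha,\rho}=\mathbf{b}_{\alpha\beta,\rho}$. As a sanity check, the degenerate case $\beta=1$ gives $(\delta_1)^{1/\alpha}\circledast\mathbf{b}_{\alpha,\rho}=\mathbf{b}_{\alpha,\rho}$, consistent with $\alpha\beta=\alpha$.
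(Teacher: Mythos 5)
Your proof is correct and follows essentially the same route as the paper: apply the mixture formula (\ref{id3}) with $\mu=\mathbf{b}_{\beta,1}$, evaluate via $\eta_{\mathbf{b}_{\beta,1}}(w)=-(-w)^\beta$, and identify the result with $\eta_{\mathbf{b}_{\alpha\beta,\rho}}$. The only difference is that you spell out the analytic continuation of $\eta_{\mathbf{b}_{\beta,1}}$ to $\comp\setminus\real_+$ and the principal-branch bookkeeping $\bigl((e^{i\rho\pi}z)^\alpha\bigr)^\beta=(e^{i\rho\pi}z)^{\alpha\beta}$, details the paper's one-line proof leaves implicit.
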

\begin{proof} This is an easy comparison of $\eta$-transforms: 
\begin{eqnarray*}
\eta_{(\mathbf{b}_{\beta,1})^{1/\alpha}\circledast\mathbf{b}_{\alpha,\rho}}(z)&=& \eta_{\mathbf{b}_{\beta,1}}(-(e^{i\rho\pi}z)^\alpha)= -(e^{i\rho\pi}z)^{\alpha\beta} = \eta_{\mathbf{b}_{\alpha\beta,\rho}}(z), 
\end{eqnarray*}
where we used (\ref{id3}) on the first equality and $\eta_{\mathbf{b}_{\beta,1}}(z)=-(-z)^\beta$ on the second. 
\end{proof}

From the previous theorems we can derive closure properties of Boolean mixtures.

\begin{prop} \label{properties}
\begin{enumerate}[\rm(1)]
\item\label{closed}  For $(\alpha,\rho)\in\mathfrak{A}$, the set  $\mathcal{B}_{\alpha,\rho}$ is closed with respect to $\uplus$. 

\item\label{closed10} Let $(\alpha,\rho)\in\mathfrak{A}$. If $\sigma \in \mathcal{P}_+$ and $\tau \in \mathcal{B}_{\alpha,\rho}$, then $ \sigma \circledast\tau,\sigma \circlearrowright\tau\in\mathcal{B}_{\alpha,\rho}$. 

\item\label{closed2} Let $\alpha\leq1$. The set $\mathcal{B}_{\alpha,1}$ is closed with respect to $\circledast, \boxtimes, \uplus, \circlearrowright.$

\item\label{power} Let $\alpha\leq1$. If $\beta\in[\alpha,1]$ and $\mu \in \mathcal{B}_{\alpha,1}$, then $\mu^{\beta} \in \mathcal{B}_{\alpha/\beta,1}.$ 

\item\label{free power} Let $\alpha \in (0,1]$ and $t \geq 1.$ If $\mu \in \mathcal{B}_{\alpha,1}$, then $\mu^{\boxtimes t} \in \mathcal{B}_{\frac{\alpha}{t(1-\alpha)+\alpha},1}$. 

\item\label{inverse} Let $\alpha\leq1$. If $\tau \in \mathcal{B}_{\alpha,1}$ and $\tau(\{0\})=0$, then $\tau^{-1} \in \mathcal{B}_{\alpha,1}.$
\end{enumerate}
\end{prop}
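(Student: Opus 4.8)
The plan is to reduce each of the six closure statements to the transform identities already established in Subsection \ref{section identities}, handling them roughly in increasing order of difficulty. Throughout I write a generic element of $\mathcal{B}_{\alpha,\rho}$ as $\mu^{1/\alpha}\circledast\mathbf{b}_{\alpha,\rho}$ with $\mu\in\mathcal{P}_+$, which is possible since $x\mapsto x^\alpha$ is a bijection of $[0,\infty)$. For (\ref{closed}), given $\tau_i=\mu_i^{1/\alpha}\circledast\mathbf{b}_{\alpha,\rho}$ I would use the additivity of the $\eta$-transform under $\uplus$ together with (\ref{id3}):
\begin{equation*}
\eta_{\tau_1\uplus\tau_2}(z)=\eta_{\mu_1}(-(e^{i\rho\pi}z)^\alpha)+\eta_{\mu_2}(-(e^{i\rho\pi}z)^\alpha)=\eta_{(\mu_1\uplus\mu_2)^{1/\alpha}\circledast\mathbf{b}_{\alpha,\rho}}(z),
\end{equation*}
and conclude $\tau_1\uplus\tau_2=(\mu_1\uplus\mu_2)^{1/\alpha}\circledast\mathbf{b}_{\alpha,\rho}\in\mathcal{B}_{\alpha,\rho}$, the mixing measure $\mu_1\uplus\mu_2$ lying in $\mathcal{P}_+$ by the characterization in Proposition \ref{prop0}. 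Item (\ref{closed10}) is immediate: for $\sigma\circledast\tau$ the commutativity and associativity of $\circledast$ give $\sigma\circledast(\nu\circledast\mathbf{b}_{\alpha,\rho})=(\sigma\circledast\nu)\circledast\mathbf{b}_{\alpha,\rho}$, while for $\sigma\circlearrowright\tau$ the identity (\ref{c-m}) gives $(\sigma\circlearrowright\nu)^{1/\alpha}\circledast\mathbf{b}_{\alpha,\rho}$, and in both cases the new mixing measure stays in $\mathcal{P}_+$. Item (\ref{closed2}) assembles these: closure under $\uplus,\circledast,\circlearrowright$ follows from (\ref{closed}),(\ref{closed10}) specialized to $\rho=1$ (legitimate since $\mathcal{B}_{\alpha,1}\subset\mathcal{P}_+$), and closure under $\boxtimes$ is exactly (\ref{c-f2}), whose output mixing measure $\mu\boxtimes\nu\boxtimes\mathbf{b}_{\frac{1}{2-\alpha},1}$ is a $\boxtimes$-product of positive measures, hence in $\mathcal{P}_+$.

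For (\ref{free power}) I would pass through free multiplicative convolution. Writing $\mu=\sigma^{\boxtimes1/\alpha}\boxtimes\mathbf{b}_{\alpha,1}$ via (\ref{c-f1}), the homomorphism property of $\boxtimes$-powers and the free power formula (\ref{free power1}) give
\begin{equation*}
\mu^{\boxtimes t}=\sigma^{\boxtimes t/\alpha}\boxtimes(\mathbf{b}_{\alpha,1})^{\boxtimes t}=\sigma^{\boxtimes t/\alpha}\boxtimes\mathbf{b}_{\alpha',1},\qquad \alpha'=\frac{\alpha}{t(1-\alpha)+\alpha}.
\end{equation*}
Setting $\kappa=\sigma^{\boxtimes t\alpha'/\alpha}$ one has $\kappa^{\boxtimes1/\alpha'}=\sigma^{\boxtimes t/\alpha}$, so by (\ref{c-f1}) again $\mu^{\boxtimes t}=\kappa^{1/\alpha'}\circledast\mathbf{b}_{\alpha',1}\in\mathcal{B}_{\alpha',1}$; the point is that $t\alpha'/\alpha=\frac{t}{t(1-\alpha)+\alpha}\geq1$ exactly when $t\geq1$, so $\kappa$ is a genuine $\boxtimes$-power and lies in $\mathcal{P}_+$. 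For (\ref{inverse}), writing $\tau=\nu\circledast\mathbf{b}_{\alpha,1}$ with $\nu(\{0\})=\tau(\{0\})=0$ (the factor $\mathbf{b}_{\alpha,1}$ having no atom at $0$), I would use $\tau^{-1}=\nu^{-1}\circledast(\mathbf{b}_{\alpha,1})^{-1}$ together with the self-inverse property $(\mathbf{b}_{\alpha,1})^{-1}=\mathbf{b}_{\alpha,1}$, which follows from (\ref{inverse S}) applied to (\ref{S boole}) (or directly from the explicit density); hence $\tau^{-1}=\nu^{-1}\circledast\mathbf{b}_{\alpha,1}\in\mathcal{B}_{\alpha,1}$, with $\nu^{-1}$ well defined since $\nu(\{0\})=0$.

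The main obstacle is (\ref{power}). Writing $\mu=\nu\circledast\mathbf{b}_{\alpha,1}$ and noting that raising the underlying positive product to the power $\beta$ acts factorwise, $\mu^\beta=\nu^\beta\circledast(\mathbf{b}_{\alpha,1})^\beta$, everything reduces to the claim $(\mathbf{b}_{\alpha,1})^\beta\in\mathcal{B}_{\alpha/\beta,1}$. I would prove this with Mellin transforms, under which $\circledast$ becomes a product and the power $x\mapsto x^\beta$ rescales the argument. From the quotient representation $\mathbf{b}_{\alpha,1}=\mathrm{law}(S_1/S_2)$ with $S_1,S_2$ i.i.d.\ positive $\alpha$-stable one obtains $\E[(\mathbf{b}_{\alpha,1})^s]=\frac{\Gamma(1-s/\alpha)\Gamma(1+s/\alpha)}{\Gamma(1-s)\Gamma(1+s)}$, so with $\gamma=\alpha/\beta$ and the reflection identity $\Gamma(1+s)\Gamma(1-s)=\pi s/\sin\pi s$ the ratio of the Mellin transforms of $(\mathbf{b}_{\alpha,1})^\beta$ and $\mathbf{b}_{\gamma,1}$ collapses to
\begin{equation*}
\frac{\E[((\mathbf{b}_{\alpha,1})^\beta)^s]}{\E[(\mathbf{b}_{\gamma,1})^s]}=\frac{\Gamma(1-s)\Gamma(1+s)}{\Gamma(1-\beta s)\Gamma(1+\beta s)}=\frac{\sin(\pi\beta s)}{\beta\sin(\pi s)}.
\end{equation*}
The crux is to recognize the right-hand side as the Mellin transform of a probability measure $\lambda$ on $(0,\infty)$ for $\beta\in(0,1]$: putting $s=i\tau$ turns it into $\frac{\sinh(\pi\beta\tau)}{\beta\sinh(\pi\tau)}$, the characteristic function of $\log\lambda$, whose density is, up to rescaling, the hyperbolic density proportional to $(\cosh x+\cos\pi\beta)^{-1}$. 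Then $(\mathbf{b}_{\alpha,1})^\beta=\lambda\circledast\mathbf{b}_{\gamma,1}$, and the hypothesis $\beta\in[\alpha,1]$ is precisely what makes $\gamma=\alpha/\beta\in(0,1]$ an admissible index; absorbing $\nu^\beta\circledast\lambda$ into a single mixing measure yields $\mu^\beta\in\mathcal{B}_{\alpha/\beta,1}$. Verifying the positive-definiteness of $\frac{\sinh(\pi\beta\tau)}{\beta\sinh(\pi\tau)}$ (equivalently, exhibiting and normalizing this density) is the one genuinely analytic point; the remaining items are formal consequences of the identities in Subsection \ref{section identities}.
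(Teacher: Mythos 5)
Your items (\ref{closed}), (\ref{closed10}), (\ref{closed2}), (\ref{free power}) and (\ref{inverse}) are correct and essentially identical to the paper's proof: the paper cites (\ref{c-b}) for (\ref{closed}) (and (\ref{c-b}) is proved there exactly by your $\eta$-transform computation), uses associativity of $\circledast$ and (\ref{c-m}) for (\ref{closed10}), assembles (\ref{closed2}) from these plus (\ref{c-f2}), for (\ref{free power}) introduces the same auxiliary measure $\nu^{\boxtimes\frac{t}{t(1-\alpha)+\alpha}}$ with the same remark that the exponent is $\geq 1$, and for (\ref{inverse}) invokes $(\mathbf{b}_{\alpha,1})^{-1}=\mathbf{b}_{\alpha,1}$. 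The only genuine divergence is item (\ref{power}). The paper's proof is one line of algebra: Proposition \ref{prop01} gives $\mathbf{b}_{\beta,1}\circledast(\mathbf{b}_{\alpha/\beta,1})^{1/\beta}=\mathbf{b}_{\alpha,1}$, and raising this identity to the $\beta$-th power (the power map acting factorwise on products of independent positive variables) yields $(\mathbf{b}_{\alpha,1})^\beta=(\mathbf{b}_{\beta,1})^\beta\circledast\mathbf{b}_{\alpha/\beta,1}\in\mathcal{B}_{\alpha/\beta,1}$, where positivity of the mixing measure is automatic because it is manifestly a probability measure. You instead re-derive this same identity via Mellin transforms, and then must verify by hand that $\frac{\sin(\pi\beta s)}{\beta\sin(\pi s)}$ is the Mellin transform of some probability measure, which costs you the classical Fourier integral $\int_{\real}\frac{e^{i\tau x}}{\cosh x+\cos\pi\beta}\,dx=\frac{2\pi}{\sin\pi\beta}\cdot\frac{\sinh(\pi\beta\tau)}{\sinh(\pi\tau)}$. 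Your computation is correct, and in fact your mixing measure $\lambda$ is exactly the paper's $(\mathbf{b}_{\beta,1})^\beta$: its density $\frac{\sin(\pi\beta)}{\pi\beta}\cdot\frac{1}{x^2+2x\cos(\pi\beta)+1}$ (computed in the proof of Theorem \ref{large}) pushes forward under $\log$ to your hyperbolic density. Recognizing this would have let you bypass the positive-definiteness issue entirely, since Proposition \ref{prop01} hands you the factorization with a probability measure already in place. What your route buys is independence from Proposition \ref{prop01} (you reconstruct the factorization from scratch) and an explicit density for the mixing law; what it costs is the one nontrivial analytic verification, which is precisely the step the paper's algebraic argument gets for free.
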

\begin{proof}
(\ref{closed}) follows from (\ref{c-b}). 

(\ref{closed10})\,\, The assertion for $\circledast$ follows by definition. The assertion for $\circlearrowright$ follows from (\ref{c-m}). 

(\ref{closed2})\,\,  The assertions for $\uplus, \circlearrowright, \circledast$ are included in (\ref{closed}) and (\ref{closed10}). The assertion for $\boxtimes$ follows from (\ref{c-f2}). 



(\ref{power})\,\, From Proposition \ref{prop01} we have $\mathbf{b}_{\beta,1} \circledast(\mathbf{b}_{\alpha/\beta,1})^{1/\beta}=\mathbf{b}_{\alpha,1}$. Taking $\beta$ powers we get  $(\mathbf{b}_{\beta,1})^\beta \circledast\mathbf{b}_{\alpha/\beta,1}=(\mathbf{b}_{\alpha,1})^\beta$, implying that $(\mathbf{b}_{\alpha,1})^\beta\in\mathcal{B}_{\alpha/\beta,1}$. 

(\ref{free power})\,\, 
Take $\mu \in \mathcal{B}_{\alpha,1}$, then $\mu$ is of the form $\nu^{\boxtimes 1/\alpha} \boxtimes\mathbf{b}_{\alpha,1}$, so that 
$$
\mu^{\boxtimes t} =(\nu^{\boxtimes 1/\alpha})^{\boxtimes t}  \boxtimes(\mathbf{b}_{\alpha,1})^{\boxtimes t}=  \nu^{\boxtimes t/\alpha} \boxtimes\mathbf{b}_{\frac{\alpha}{t(1-\alpha)+\alpha},1}, 
$$
where we used (\ref{free power1}) on the last equality. 
We define $\lambda = \nu^{\boxtimes\frac{t}{ t(1-\alpha)+\alpha}}$, to obtain $\mu^{\boxtimes t}=\lambda^{\boxtimes \frac{t(1-\alpha)+\alpha}{\alpha}}\boxtimes  \mathbf{b}_{\frac{\alpha}{t(1-\alpha)+\alpha},1} \in \mathcal{B}_{\frac{\alpha}{t(1-\alpha)+\alpha},1}$. Note that $\frac{t}{ t(1-\alpha)+\alpha} = \frac{t}{t-(t-1)\alpha} \geq 1$, and so $\lambda$ exists as a probability measure.

(\ref{inverse})\,\, This follows from the fact that $(\mathbf{b}_{\alpha,1})^{-1}=\mathbf{b}_{\alpha,1}$ since $\mathbf{b}_{\alpha,1}$ is the law of the quotient of two classical independent, identically distributed positive stable random variables (see (\ref{eq})).    
\end{proof}

We study the behavior of the probability density function at $x=0$. 

\begin{prop}\label{at 0} 
Let $(\alpha,\rho)$ be admissible. If $\tau=\mu\circledast\mathbf{b}_{\alpha,\rho} \in \mathcal{B}_{\alpha,\rho}$ and $\tau\neq \delta_0$, then the density function $p_{\mu,\alpha,\rho}(x)$ of absolutely continuous part of $\tau$ satisfies
\begin{align}
&\liminf_{x\downarrow0}\frac{p_{\mu,\alpha,\rho}(x)}{x^{\alpha-1}}\in(0,\infty],\qquad \text{if~} \rho\neq0,0<\alpha \leq1 \text{~or~} \rho\neq 1/\alpha, 1<\alpha <2, \label{x=+0} \\
&\liminf_{x\uparrow0}\frac{p_{\mu,\alpha,\rho}(x)}{x^{\alpha-1}}\in(0,\infty],\qquad \text{if~}\rho\neq1, 0<\alpha \leq 1\text{~or~} \rho\neq 1-1/\alpha, 1<\alpha<2.  
\end{align}
In particular, $\mathbf{b}_{\beta,\rho} \notin \mathcal{B}_{\alpha,\rho}$ if $0<\alpha <\beta \leq \min(1/\rho,1/(1-\rho))$. 
\end{prop}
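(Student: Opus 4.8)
The plan is to read the asymptotics straight off the explicit mixture density and then turn it into the non-membership statement. By the symmetry $x\mapsto |x|$, $\rho\mapsto 1-\rho$, it suffices to treat the $x\downarrow0$ display and then repeat the argument verbatim for the negative half-line. For $x>0$ the absolutely continuous part of $\tau$ has density a positive constant times $p^+_{\mu,\alpha,\rho}$, and $\tau\neq\delta_0$ forces $\mu\neq\delta_0$, so $\mu(\{0\})<1$ and that constant is harmless. Dividing the formula in (\ref{density mixture}) by $x^{\alpha-1}$ I get
\[
\frac{p^+_{\mu,\alpha,\rho}(x)}{x^{\alpha-1}}=\frac{\sin(\pi\rho\alpha)}{\pi}\int_{(0,\infty)}\frac{t^\alpha}{x^{2\alpha}+2(xt)^\alpha\cos(\pi\rho\alpha)+t^{2\alpha}}\,\mu(dt),
\]
and for each fixed $t>0$ the integrand converges to $t^{-\alpha}$ as $x\downarrow0$.

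The arithmetic point is that the stated hypotheses are exactly those making the coefficient positive: for $0<\alpha\leq1,\rho\neq0$ one has $\rho\alpha\in(0,1)$, and for $1<\alpha<2,\rho\neq1/\alpha$ one has $\rho\alpha\in(\alpha-1,1)\subset(0,1)$; hence $\sin(\pi\rho\alpha)>0$ and $\cos(\pi\rho\alpha)>-1$, so the denominator stays strictly positive on the whole range of integration and the integrand is nonnegative. Fatou's lemma then yields
\[
\liminf_{x\downarrow0}\frac{p^+_{\mu,\alpha,\rho}(x)}{x^{\alpha-1}}\geq\frac{\sin(\pi\rho\alpha)}{\pi}\int_{(0,\infty)}t^{-\alpha}\,\mu(dt),
\]
and the right-hand side lies in $(0,\infty]$ because $\sin(\pi\rho\alpha)>0$ and $\mu$ charges $(0,\infty)$. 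The same computation with $\rho\mapsto1-\rho$ gives the $x\uparrow0$ display. For $1<\alpha<2$ the mixture density used above is obtained from the density of $\mathbf{b}_{\alpha,\rho}$ in Section \ref{subsection stable} by the scaling $p^+_{\mu,\alpha,\rho}(x)=\int_{(0,\infty)}t^{-1}p^+_{\alpha,\rho}(x/t)\,\mu(dt)$, which has the same integrand.

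For the ``in particular'' clause I argue by contradiction: suppose $\mathbf{b}_{\beta,\rho}=\mu\circledast\mathbf{b}_{\alpha,\rho}\in\mathcal{B}_{\alpha,\rho}$ with $0<\alpha<\beta\leq\min(1/\rho,1/(1-\rho))$. From $\beta\leq1/\rho$ I get $\rho\alpha<\rho\beta\leq1$ and from $\beta\leq1/(1-\rho)$ I get $(1-\rho)\alpha<1$; since $\rho\in[0,1]$, at least one of the positive side (available when $\rho>0$, where $0<\rho\alpha<1$) or the negative side (available when $\rho<1$, where $0<(1-\rho)\alpha<1$) meets the hypothesis of the first part for the pair $(\alpha,\rho)$ on that side. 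On that side the first part forces the one-sided $\liminf$ of $p_{\mathbf{b}_{\beta,\rho}}(x)/|x|^{\alpha-1}$ to be strictly positive. But the explicit density of $\mathbf{b}_{\beta,\rho}$ there equals a constant times $|x|^{\beta-1}/(|x|^{2\beta}+2|x|^\beta\cos(\pi\rho\beta)+1)$, so dividing by $|x|^{\alpha-1}$ produces a constant times $|x|^{\beta-\alpha}$ over a factor tending to $1$, which tends to $0$ as $|x|\downarrow0$ since $\beta>\alpha$ (this holds even if the sine coefficient for $\beta$ vanishes). This contradicts positivity of the $\liminf$.

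I expect the only real subtlety to be the bookkeeping in the last step: matching the two one-sided conditions ($\rho\neq0$ / $\rho\neq1/\alpha$ on the right, $\rho\neq1$ / $\rho\neq1-1/\alpha$ on the left) to the correct side and the correct range of $\alpha$, and checking that $\beta\leq\min(1/\rho,1/(1-\rho))$ indeed forces $\rho\alpha<1$ (resp.\ $(1-\rho)\alpha<1$) so that $(\alpha,\rho)$ satisfies the hypothesis of the first part on the side actually used. The analytic content—pointwise convergence of the integrand together with Fatou—is routine, and the degenerate endpoints where a sine coefficient vanishes (so that $\mathbf{b}_{\alpha,\rho}$ is atomic, e.g.\ $\mathbf{b}_{1,1}=\delta_1$) fall outside the absolutely continuous setting and need only a brief remark.
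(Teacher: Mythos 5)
Your proof is correct and follows essentially the same route as the paper: both arguments read the behaviour at $0$ directly off the explicit mixture density $\frac{\sin(\pi\rho\alpha)}{\pi}\int_{(0,\infty)}\frac{x^{\alpha-1}t^\alpha}{x^{2\alpha}+2(xt)^\alpha\cos(\pi\rho\alpha)+t^{2\alpha}}\,\mu(dt)$ to get a strictly positive $\liminf$, and then contrast this with the $|x|^{\beta-1}$ decay of the density of $\mathbf{b}_{\beta,\rho}$ to rule out membership in $\mathcal{B}_{\alpha,\rho}$. The only differences are technical and harmless: the paper obtains the lower bound by truncating the integral to a compact interval $[a,b]$ with $\mu([a,b])>0$ and bounding the integrand uniformly there, whereas you use Fatou's lemma, and your bookkeeping in the ``in particular'' step (using $\beta\le 1/\rho$, resp.\ $\beta\le 1/(1-\rho)$, to force $\rho\alpha<1$, resp.\ $(1-\rho)\alpha<1$, so the sine factor is genuinely positive on the side you use) is if anything more careful than the paper's one-line treatment.
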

\begin{proof}
We can find an interval $[a,b]$ of $(0,\infty)$ such that $\mu([a,b]) >0$. Let $\rho\neq0,\alpha\leq1$. Then for $x>0$, we get 
\[
\begin{split}
p_{\mu,\alpha,\rho}(x)&= (1-\mu(\{0\}))\dfrac{\sin(\pi\rho \alpha)}{\pi} \displaystyle\int_{(0,\infty)}\dfrac{x^{\alpha-1}t^\alpha}{x^{2\alpha}+2(xt)^\alpha \cos(\pi\rho \alpha)+t^{2\alpha}}\mu(dt)\\
&\geq (1-\mu(\{0\}))\dfrac{\sin(\pi\rho \alpha)}{\pi} \displaystyle\int_{[a,b]}\dfrac{x^{\alpha-1}t^\alpha}{x^{2\alpha}+2(xt)^\alpha |\cos(\pi\rho \alpha)|+t^{2\alpha}}\mu(dt)\\
& \geq (1-\mu(\{0\}))\dfrac{\sin(\pi\rho \alpha)}{\pi} \mu([a,b])\dfrac{x^{\alpha-1}a^\alpha}{x^{2\alpha}+2x^\alpha b^\alpha |\cos(\pi\rho \alpha)|+b^{2\alpha}}, 
\end{split}
\]
which leads to the conclusion (\ref{x=+0}). The other cases can be treated similarly.  

If $0<\alpha <\beta$, then for $\mathbf{b}_{\beta,\rho}$ we have $\lim_{x\downarrow0}\frac{p_{\beta,\rho}(x)}{x^{\alpha-1}}=0$. Hence $\mathbf{b}_{\beta,\rho}\notin \mathcal{B}_{\alpha,\rho}$. 
\end{proof}
\begin{prop} Let $\rho\in[0,1]$. Then $\mathcal{B}_{\alpha,\rho}\subset \mathcal{B}_{\beta_,\rho}$ if $0< \alpha <\beta \leq \min(1/\rho,1/(1-\rho))$, where we understand that $1/0=\infty$. The inclusion is strict. 
\end{prop}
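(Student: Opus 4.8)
The plan is to exploit the self-similarity of the Boolean stable laws recorded in Proposition \ref{prop01}, which shows that $\mathbf{b}_{\alpha,\rho}$ is itself a classical scale mixture of $\mathbf{b}_{\beta,\rho}$ whenever $\alpha<\beta$. Before using it, I would first verify that the pair $(\beta,\rho)$ is admissible under the standing hypothesis $0<\alpha<\beta\leq\min(1/\rho,1/(1-\rho))$: when $\beta\leq1$ this is immediate from $\rho\in[0,1]$, while for $\beta>1$ the inequalities $\beta\leq1/\rho$ and $\beta\leq1/(1-\rho)$ rearrange to $\rho\in[1-1/\beta,1/\beta]$, which is exactly the condition defining $\mathfrak{A}$ in that regime. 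Hence $(\beta,\rho)\in\mathfrak{A}$ and $\mathbf{b}_{\beta,\rho}$ is well defined, and the range restriction on $\beta$ is seen to be precisely the price of admissibility.

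Next I would apply Proposition \ref{prop01} with base stability index $\beta$ and auxiliary parameter $\alpha/\beta\in(0,1)$ to obtain the identity $\mathbf{b}_{\alpha,\rho}=(\mathbf{b}_{\alpha/\beta,1})^{1/\beta}\circledast\mathbf{b}_{\beta,\rho}$. Given an arbitrary $\tau=\mu\circledast\mathbf{b}_{\alpha,\rho}\in\mathcal{B}_{\alpha,\rho}$ with $\mu\in\mathcal{P}_+$, I substitute this identity and invoke the associativity of $\circledast$ (valid because it is the law of a product of independent random variables) to regroup as $\tau=\bigl[\mu\circledast(\mathbf{b}_{\alpha/\beta,1})^{1/\beta}\bigr]\circledast\mathbf{b}_{\beta,\rho}$. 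The mixing measure $\nu:=\mu\circledast(\mathbf{b}_{\alpha/\beta,1})^{1/\beta}$ lies in $\mathcal{P}_+$ since both factors are supported on $[0,\infty)$, so $\tau=\nu\circledast\mathbf{b}_{\beta,\rho}\in\mathcal{B}_{\beta,\rho}$. This establishes the inclusion $\mathcal{B}_{\alpha,\rho}\subset\mathcal{B}_{\beta,\rho}$.

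For strictness I would simply exhibit $\mathbf{b}_{\beta,\rho}=\delta_1\circledast\mathbf{b}_{\beta,\rho}\in\mathcal{B}_{\beta,\rho}$ and invoke the final assertion of Proposition \ref{at 0}, which states precisely that $\mathbf{b}_{\beta,\rho}\notin\mathcal{B}_{\alpha,\rho}$ under the same hypothesis $0<\alpha<\beta\leq\min(1/\rho,1/(1-\rho))$; the mechanism there is that every nonzero member of $\mathcal{B}_{\alpha,\rho}$ has a density of order at least $x^{\alpha-1}$ near the origin on the nondegenerate side, whereas the density of $\mathbf{b}_{\beta,\rho}$ is of order $x^{\beta-1}$ with $\beta-1>\alpha-1$. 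Since both the inclusion and the separating example follow directly from the already-established Propositions \ref{prop01} and \ref{at 0}, I do not anticipate any genuine obstacle; the only point demanding a little care is the bookkeeping of parameter ranges, so that each Boolean stable law invoked is indeed defined and each convolution power or mixing measure is a bona fide probability measure.
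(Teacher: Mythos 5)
Your proof is correct and takes essentially the same route as the paper: the inclusion follows from the identity $\mathbf{b}_{\alpha,\rho}=(\mathbf{b}_{\alpha/\beta,1})^{1/\beta}\circledast\mathbf{b}_{\beta,\rho}$ obtained from Proposition \ref{prop01} together with associativity of $\circledast$, and strictness follows from the last assertion of Proposition \ref{at 0}. The only difference is presentational: you spell out the admissibility check for $(\beta,\rho)$ and the regrouping of the mixing measure, which the paper's two-line proof leaves implicit.
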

\begin{proof} The relation $\mathbf{b}_{\alpha,\rho} \circledast(\mathbf{b}_{\beta,1})^{1/\alpha}=\mathbf{b}_{\alpha\beta,\rho}$ in Proposition \ref{prop01} implies this inclusion. The strictness of the inclusions follows from Proposition \ref{at 0}. 
\end{proof}

\subsection{Connections between Classical, Free and Boolean Stable Laws}\label{Connection}
We want to point out some relations between Boolean, free and classical stable laws. 
As noted in the last paragraph of \cite{AHb}, there is an interplay among free, Boolean and classical stable laws. We have the identity 
$
\textbf{f}_{\alpha,1} \boxtimes (\textbf{f}_{\alpha,1})^{-1} = \textbf{n}_{\alpha,1} \circledast (\mathbf{n}_{\alpha,1})^{-1}
$ for $\alpha \in (0,1]$ as proved in Proposition A4.4 of \cite{BP99}. Moreover, this coincides with a Boolean stable law:  
\begin{equation}\label{eq} 
 \mathbf{b}_{\alpha,1} =\mathbf{f}_{\alpha,1} \boxtimes (\mathbf{f}_{\alpha,1})^{-1} = \mathbf{n}_{\alpha,1} \circledast (\mathbf{n}_{\alpha,1})^{-1},\qquad \alpha\in(0,1].  
\end{equation} 
This relation can be generalized as follows. 
\begin{prop}\label{prop990}
The following formulas hold true. 
\begin{align}
&\mathbf{b}_{\alpha,\rho}= \mathbf{f}_{\alpha,\rho} \boxtimes (\mathbf{f}_{\alpha,1})^{-1}, &\alpha\in(0,1]&, \rho \in \{0,1/2,1\}, \label{b-f1} \\
&\mathbf{b}_{\alpha,\rho}=\mathbf{n}_{\alpha,\rho} \circledast (\mathbf{n}_{\alpha,1})^{-1},&\alpha\in(0,1]&, \rho \in[0,1]. \label{b-c1}
\end{align}
\end{prop}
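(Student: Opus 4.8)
The plan is to prove the two identities by entirely different routes: the free identity (\ref{b-f1}) is a one-line $S$-transform computation, whereas the classical identity (\ref{b-c1}) rests on a conditioning argument that reduces the quotient to the Mellin transform of a power of a positive stable law.

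For (\ref{b-f1}), note that $\mathbf{f}_{\alpha,1}\in\mathcal{P}_+$, hence $(\mathbf{f}_{\alpha,1})^{-1}\in\mathcal{P}_+$, and that for $\rho\in\{1/2,1\}$ the factor $\mathbf{f}_{\alpha,\rho}$ lies in $\mathcal{P}_+\cup\mathcal{P}_s$, so the multiplicativity of the $S$-transform (\ref{eq324}) applies. First I would compute $S_{(\mathbf{f}_{\alpha,1})^{-1}}$ from (\ref{S free}) and the inversion formula (\ref{inverse S}): since $S_{\mathbf{f}_{\alpha,1}}(z)=(-z)^{(1-\alpha)/\alpha}$, one gets $S_{(\mathbf{f}_{\alpha,1})^{-1}}(z)=(1+z)^{-(1-\alpha)/\alpha}$ for $z\in(-1,0)$. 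Multiplying by $S_{\mathbf{f}_{\alpha,\rho}}(z)=-e^{-i\rho\pi}(-z)^{(1-\alpha)/\alpha}$ yields exactly $-e^{-i\rho\pi}\bigl(-z/(1+z)\bigr)^{(1-\alpha)/\alpha}=S_{\mathbf{b}_{\alpha,\rho}}(z)$, which is (\ref{S boole}). The remaining case $\rho=0$ is handled by reflection: $\mathbf{f}_{\alpha,0}=D_{-1}\mathbf{f}_{\alpha,1}$ and $\mathbf{b}_{\alpha,0}=D_{-1}\mathbf{b}_{\alpha,1}$, together with $D_{-1}(\sigma\boxtimes\tau)=\sigma\boxtimes D_{-1}\tau$ for $\sigma\in\mathcal{P}_+$, reduce it to the already-proved case $\rho=1$.

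For (\ref{b-c1}), let $Y$ and $X$ be independent with laws $\mathbf{n}_{\alpha,\rho}$ and $\mathbf{n}_{\alpha,1}$ respectively, $X>0$. I would condition on $X$ exactly as in (\ref{eq802}), but using the moment generating function $\mathcal{M}$ rather than the Cauchy transform, since classical stable laws have no tractable Cauchy transform. From (\ref{classical stable}), $\mathcal{M}_{\mathbf{n}_{\alpha,\rho}}(w)=\exp(-(e^{i\rho\pi}w)^{\alpha})$, and scaling by $X>0$ gives
\begin{equation*}
\mathcal{M}_{Y/X}(z)=\E_X\!\left[\exp\!\left(-X^{-\alpha}(e^{i\rho\pi}z)^{\alpha}\right)\right]=L\!\left(-(e^{i\rho\pi}z)^{\alpha}\right),\qquad L(u):=\mathcal{M}_{(\mathbf{n}_{\alpha,1})^{-\alpha}}(u),
\end{equation*}
where $L$ does not depend on $\rho$. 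Computing the Mellin transform $\E[X^{s}]=\Gamma(1-s/\alpha)/\Gamma(1-s)$ of the positive stable law gives $\E[X^{-\alpha n}]=n!/\Gamma(1+\alpha n)$, so that $L$ is the Mittag-Leffler function $E_\alpha$. Hence $Y/X$ has moment generating function $E_\alpha(-(e^{i\rho\pi}z)^{\alpha})$, which for $\rho=1$ is precisely the transform of $\mathbf{b}_{\alpha,1}$ recorded in (\ref{eq}).

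It remains to identify $E_\alpha(-(e^{i\rho\pi}z)^{\alpha})$ as the transform of $\mathbf{b}_{\alpha,\rho}$ for every $\rho$, and this matching is the main obstacle: it cannot proceed by comparing power series, because the integer moments of $\mathbf{b}_{\alpha,\rho}$ are infinite, and on $i\real$ it carries nontrivial branch-cut bookkeeping in $(e^{i\rho\pi}z)^{\alpha}$. The cleanest rigorous route I would take is to bypass the Fourier side and compare \emph{signed} Mellin transforms. Since $X>0$ preserves signs, the positive-part Mellin transform of $Y/X$ factorizes as $\E[Y^{s}1_{Y>0}]\,\E[X^{-s}]$; a standard Mellin integral (substituting $u=x^{\alpha}$) applied to the explicit density $p^{+}_{\alpha,\rho}$ of $\mathbf{b}_{\alpha,\rho}$ gives $\int_0^\infty x^{s}p^{+}_{\alpha,\rho}(x)\,dx=\sin(\pi\rho s)/(\alpha\sin(\pi s/\alpha))$, and combining the classical positive-part Mellin formula for $\mathbf{n}_{\alpha,\rho}$ with $\E[X^{-s}]=\Gamma(1+s/\alpha)/\Gamma(1+s)$ produces the same expression; the negative parts are treated identically, and $\rho\in\{0,1\}$ also follow directly by reflection. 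The only delicate points are keeping track of the common convergence strip in $s$ and invoking the correct classical stable Mellin formula; everything else is formal.
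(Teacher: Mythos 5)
Your proposal is correct, and while your free half follows the paper, your classical half is a genuinely different proof. For (\ref{b-f1}) you and the paper do essentially the same thing: the paper proves $(\mathbf{f}_{\alpha,1})^{-1}=\MP^{\boxtimes\frac{1-\alpha}{\alpha}}$ from (\ref{inverse S}), (\ref{S free}), (\ref{S MP}) and then cites (\ref{sym1}); your one-step multiplication of $S$-transforms is the same computation with the free Poisson middleman removed (your explicit reflection argument for $\rho=0$ is a point the paper leaves implicit, since $\mathbf{f}_{\alpha,0}\notin\mathcal{P}_+\cup\mathcal{P}_s$). For (\ref{b-c1}) the paper argues algebraically: it takes the Bercovici--Pata--Biane identity (\ref{eq}) as known, multiplies both sides by $\mathbf{c}_\rho$, and invokes Proposition \ref{prop01} (giving $\mathbf{b}_{\alpha,1}\circledast\mathbf{c}_\rho=\mathbf{b}_{\alpha,\rho}$) together with Zolotarev's factorization $\mathbf{n}_{\alpha,\rho}=\mathbf{n}_{\alpha,1}\circledast\mathbf{c}_\rho$ from \cite[Theorem 3.3.1]{Z86}. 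You instead compare signed Mellin transforms, and this does work: with $Y\sim\mathbf{n}_{\alpha,\rho}$ and $X\sim\mathbf{n}_{\alpha,1}$ independent, Zolotarev's Mellin formula $\E[Y^s1_{Y>0}]=\frac{\sin(\pi\rho s)}{\sin(\pi s)}\cdot\frac{\Gamma(1-s/\alpha)}{\Gamma(1-s)}$ and $\E[X^{-s}]=\frac{\Gamma(1+s/\alpha)}{\Gamma(1+s)}$ combine, via the reflection formula $\Gamma(1-x)\Gamma(1+x)=\pi x/\sin(\pi x)$, into
\begin{equation*}
\E[Y^s1_{Y>0}]\,\E[X^{-s}]=\frac{\sin(\pi\rho s)}{\alpha\sin(\pi s/\alpha)},\qquad -\alpha<s<\alpha,
\end{equation*}
which is exactly your integral of $x^s p^+_{\alpha,\rho}$; the negative parts match after $\rho\mapsto1-\rho$, both laws put mass $\rho$ on $(0,\infty)$ and none at $0$, and Mellin (two-sided Laplace) uniqueness on a real strip around $s=0$ identifies the measures. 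The trade-off: the paper's route is shorter and structural (both stable families are Cauchy scale mixtures of their positive representatives), but it imports two nontrivial inputs, namely (\ref{eq}) from \cite{BP99} and the multiplication theorem from \cite{Z86}; your route is purely computational, needs only the classical Mellin formula from \cite{Z86}, and in fact reproves (\ref{eq}) rather than assuming it, since $\rho=1$ is just a special case of the same calculation. One remark: your Mittag-Leffler paragraph is correct ($X^{-\alpha}$ does have an entire moment generating function equal to $E_\alpha$), but it is dead weight --- the final Mellin argument never uses it, so you could cut it.
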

\begin{rem}
These relations do not hold for $\alpha>1$ since $\mathbf{n}_{\alpha,1}$ and $\mathbf{f}_{\alpha,1}$ are not defined.  
\end{rem}
\begin{proof}
(\ref{b-f1}) follows from (\ref{sym1}) and  the fact 
$$
(\mathbf{f}_{\alpha,1})^{-1}=\MP^{\boxtimes  \frac{1-\alpha}{\alpha}}, 
$$
which can be proved from (\ref{inverse S}), (\ref{S free}) and  (\ref{S MP}). 

(\ref{b-c1})\,\, From Proposition \ref{prop01}, we have, on one hand, that 
$$
\mathbf{b}_{\alpha,\rho} = \mathbf{b}_{\alpha,1} \circledast \mathbf{c}_{\rho}. 
$$
On the other hand, from \cite[Theorem 3.3.1]{Z86}, we get 
$$
\mathbf{n}_{\alpha,\rho}= \mathbf{n}_{\alpha,1} \circledast \mathbf{c}_\rho. 
$$
Hence we get (\ref{b-c1}) by multiplying (\ref{eq}) by $\mathbf{c}_\rho$. 
\end{proof}

Here we collect some identities and properties for $\mathbf{b}_{\alpha,\rho}, \mathbf{f}_{\alpha,\rho},\mathbf{n}_{\alpha,\rho}$, including known results which may bring some insight into relationship between different kinds of stable law. 
\begin{thm} \label{thm3.3} The following relations hold. 
\begin{align}
&(\mathbf{b}_{\beta,1})^{1/\alpha}\circledast \mathbf{b}_{\alpha,\rho}=\mathbf{b}_{\alpha\beta,\rho}, &(\alpha,\rho)\in \mathfrak{A}&,\beta \in(0,1], \label{c boole}\\
&(\mathbf{b}_{\beta,1})^{\boxtimes 1/\alpha}\boxtimes\mathbf{b}_{\alpha,\rho}=\mathbf{b}_{\alpha\beta,\rho}, &(\alpha,\rho)\in \mathfrak{A}&,  \beta \in(0,1], \rho \in \{0,1/2,1\}, \label{f boole}\\
&\mathbf{b}_{\beta,1}\circlearrowright \mathbf{b}_{\alpha,\rho}=\mathbf{b}_{\alpha\beta,\rho}, &(\alpha,\rho)\in \mathfrak{A}&,\beta \in(0,1], 
\label{m boole}\\
 &(\mathbf{n}_{\beta,1})^{1/\alpha}\circledast \mathbf{n}_{\alpha,\rho}=\mathbf{n}_{\alpha\beta,\rho}, &(\alpha,\rho)\in \mathfrak{A}&,\beta \in(0,1],\label{c classical}\\
 &(\mathbf{f}_{\beta,1})^{\boxtimes 1/\alpha}\boxtimes\mathbf{f}_{\alpha,\rho}=\mathbf{f}_{\alpha\beta,\rho}, &(\alpha,\rho)\in \mathfrak{A}&,  \beta \in(0,1], \rho \in \{0,1/2,1\}. \label{f free}
\end{align}
Moreover, we have the following properties: 
\begin{align}
&\label{h boole}  \eta_{ \mu^{1/\alpha}\circledast \mathbf{b}_{\alpha,\rho}}(z)= \eta_\mu(-(e^{i\rho\pi}z)^\alpha),  &z\in \comp^-&,(\alpha,\rho)\in \mathfrak{A}, \\
&\label{h classical}  \M_{ \mu^{1/\alpha}\circledast \mathbf{n}_{\alpha,\rho}}(z)= \M_\mu(-(e^{i\rho\pi}z)^\alpha ), & z\in i(-\infty,0)&, (\alpha,\rho)\in \mathfrak{A}, \\
&\label{cum free} \mathcal{C}^\boxplus_{ \mu^{\boxtimes 1/\alpha}\boxtimes \mathbf{f}_{\alpha,\rho}}(z)= \mathcal{C}_\mu^\boxplus(-(e^{i\rho\pi}z)^\alpha ), &z\in (\Gamma_{a,b})^{-1}&, \alpha\leq1, \rho \in \{0,1/2,1\}
\end{align}
for some $a,b>0$ depending on $\mu, \alpha, \rho$. 
 In particular, the maps $\mathbf{B}_{\alpha,\rho},\mathbf{N}_{\alpha,\rho},\mathbf{F}_{\alpha,\rho}: \mathcal{P}_+\to\mathcal{P}$ defined by 
\begin{align*}
&\mathbf{B}_{\alpha,\rho}(\mu) = \mu^{1/\alpha}\circledast\mathbf{b}_{\alpha,\rho}, &(\alpha,\rho)\in \mathfrak{A}&,  \\ 
&\mathbf{N}_{\alpha,\rho}(\mu) = \mu^{1/\alpha}\circledast \mathbf{n}_{\alpha,\rho}, &(\alpha,\rho)\in \mathfrak{A}&, \\
&\mathbf{F}_{\alpha,\rho}(\mu) = \mu^{\boxtimes 1/\alpha}\boxtimes \mathbf{f}_{\alpha,\rho}, &\alpha\in(0,1]&, \rho\in\{0,1/2,1\}
\end{align*}
are homomorphisms with respect to $\uplus, \ast, \boxplus$, respectively. 
\end{thm}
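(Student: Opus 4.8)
The plan is to reduce the whole statement to the three transform identities (\ref{h boole}), (\ref{h classical}) and (\ref{cum free}); everything else then drops out. Identity (\ref{h boole}) is already available, being exactly (\ref{id3}) of Proposition~\ref{prop31}. Granting the three identities, the homomorphism assertions are immediate, since $\eta$, $\mathcal{C}^\ast=\log\M$ and $\mathcal{C}^\boxplus$ linearize $\uplus$, $\ast$ and $\boxplus$ respectively, and each identity writes the relevant transform of the image as the transform of $\mu$ at the single point $g(z):=-(e^{i\rho\pi}z)^\alpha$; for instance
\[
\mathcal{C}^\boxplus_{\mathbf{F}_{\alpha,\rho}(\mu_1\boxplus\mu_2)}(z)=\mathcal{C}^\boxplus_{\mu_1\boxplus\mu_2}(g(z))=\mathcal{C}^\boxplus_{\mu_1}(g(z))+\mathcal{C}^\boxplus_{\mu_2}(g(z))=\mathcal{C}^\boxplus_{\mathbf{F}_{\alpha,\rho}(\mu_1)\boxplus\mathbf{F}_{\alpha,\rho}(\mu_2)}(z),
\]
and similarly for $\mathbf{B}_{\alpha,\rho}$ and $\mathbf{N}_{\alpha,\rho}$. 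The five convolution identities also follow by specialization: (\ref{c boole}) is Proposition~\ref{prop01}, (\ref{m boole}) is obtained from it via (\ref{c-f11}), (\ref{f boole}) from it via (\ref{c-f1}) with $\mu=\mathbf{b}_{\beta,1}$, while (\ref{c classical}) and (\ref{f free}) result from inserting $\mu=\mathbf{n}_{\beta,1}$ and $\mu=\mathbf{f}_{\beta,1}$ into (\ref{h classical}) and (\ref{cum free}) and using $((-w)^{\alpha})^{\beta}=(-w)^{\alpha\beta}$ on the principal branch.

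For (\ref{h classical}) I would condition on the mixing variable. Writing $\mathbf{N}_{\alpha,\rho}(\mu)$ as the law of $X^{1/\alpha}N_{\alpha,\rho}$ with $X\sim\mu$ and $N_{\alpha,\rho}\sim\mathbf{n}_{\alpha,\rho}$ independent, the homogeneity of (\ref{classical stable}) gives $\mathcal{C}^\ast_{D_{t^{1/\alpha}}\mathbf{n}_{\alpha,\rho}}(z)=\mathcal{C}^\ast_{\mathbf{n}_{\alpha,\rho}}(t^{1/\alpha}z)=-t\,(e^{i\rho\pi}z)^{\alpha}$, so the conditional moment generating function is $\M_{D_{t^{1/\alpha}}\mathbf{n}_{\alpha,\rho}}(z)=\exp(-t\,(e^{i\rho\pi}z)^{\alpha})$, linear in $t$ in the exponent. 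Integrating against $\mu(dt)$ and recognising the resulting Laplace-type integral yields $\M_{\mathbf{N}_{\alpha,\rho}(\mu)}(z)=\M_\mu(-(e^{i\rho\pi}z)^{\alpha})$. The only point to verify is that $g(z)$ lands in the half-plane $\{\re\le0\}$ on which $\M_\mu$ is defined when $z\in i(-\infty,0)$, a short argument-counting check using admissibility of $(\alpha,\rho)$.

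The genuinely hard identity is (\ref{cum free}), because $\mu^{\boxtimes 1/\alpha}\boxtimes\mathbf{f}_{\alpha,\rho}$ is \emph{not} a mixture of dilations of $\mathbf{f}_{\alpha,\rho}$ and the conditioning argument is therefore unavailable. My plan is to route the computation through the $S$-transform, which does factorize over $\boxtimes$. First I would record the elementary link between the free cumulant transform and the $S$-transform: starting from $\mathcal{C}^\boxplus_\nu(s)=sF_\nu^{-1}(1/s)-1$, substituting $s=G_\nu(v)$ with $v=F_\nu^{-1}(1/s)$ and using $\eta_\nu(1/v)=1-F_\nu(v)/v$, one rewrites this as $\mathcal{C}^\boxplus_\nu(s)=\eta_\nu(1/v)/(1-\eta_\nu(1/v))$; putting $w=\eta_\nu(1/v)$ and then $w=y/(1+y)$, and using $\Sigma_\nu(w)=S_\nu(y)$, collapses it to the clean formula
\[
\mathcal{C}^\boxplus_\nu\!\left(y\,S_\nu(y)\right)=y,
\]
that is, $\mathcal{C}^\boxplus_\nu$ is the compositional inverse of $y\mapsto y\,S_\nu(y)$. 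Applying this to $\nu=\mu^{\boxtimes 1/\alpha}\boxtimes\mathbf{f}_{\alpha,\rho}$, the multiplicativity of $S$ together with (\ref{S free}) gives $S_\nu(y)=(S_\mu(y))^{1/\alpha}(-e^{-i\rho\pi})(-y)^{\frac{1-\alpha}{\alpha}}$, and a short simplification of the fractional powers (carried out first on $(-1,0)$, where $S_\mu>0$) produces $y\,S_\nu(y)=e^{-i\rho\pi}\big(-y\,S_\mu(y)\big)^{1/\alpha}$. Writing $s=y\,S_\nu(y)$, this reads $-y\,S_\mu(y)=(e^{i\rho\pi}s)^{\alpha}$, so applying the displayed relation to $\mu$ gives $\mathcal{C}^\boxplus_\nu(s)=y=\mathcal{C}^\boxplus_\mu(y\,S_\mu(y))=\mathcal{C}^\boxplus_\mu(-(e^{i\rho\pi}s)^{\alpha})$, which is (\ref{cum free}).

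The main obstacle is exactly the branch-and-domain bookkeeping in this last step. I must track the principal branches of the fractional powers so that $(S_\mu(y)\cdot(-y))^{1/\alpha}=(S_\mu(y))^{1/\alpha}(-y)^{1/\alpha}$ holds and so that $e^{i\rho\pi}s=(-y\,S_\mu(y))^{1/\alpha}$ inverts to $(e^{i\rho\pi}s)^{\alpha}=-y\,S_\mu(y)$ — arranging this first on the real interval $(-1,0)$, where every factor is positive, and then continuing analytically — and I must check that the domains match, i.e.\ that the truncated cone $(\Gamma_{a,b})^{-1}$ on which (\ref{cum free}) is asserted is carried by $g$ into the domain of $\mathcal{C}^\boxplus_\mu$ and lies inside the region where the displayed $S$-transform relation is valid. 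A secondary, purely formal matter is that $\uplus$ and $\boxplus$ on the source space $\mathcal{P}_+$ must be read as partial operations, applied only when the result again lies in $\mathcal{P}_+$, so that $(\,\cdot\,)^{\boxtimes 1/\alpha}$ makes sense and $\mathbf{B}_{\alpha,\rho},\mathbf{F}_{\alpha,\rho}$ can be evaluated; with that reading the homomorphism identities hold verbatim. For $\rho=0$ the $S$-transform factorization is applied after a reflection $x\mapsto-x$ reducing to the positive case $\rho=1$.
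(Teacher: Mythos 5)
Your proposal tracks the paper's own proof very closely on every core item: (\ref{h boole}) is quoted as (\ref{id3}) of Proposition \ref{prop31}; (\ref{h classical}) is proved by the same conditioning computation on the mixing variable; (\ref{cum free}) is proved by the same $S$-transform route, resting on the fact that $\mathcal{C}^\boxplus_\nu$ is the compositional inverse of $y\mapsto y\,S_\nu(y)$ (which you re-derive from scratch, whereas the paper simply cites \cite{APA,NiSp97}); (\ref{c boole}), (\ref{f boole}), (\ref{m boole}) come from Proposition \ref{prop01} together with (\ref{c-f1}), (\ref{c-f11}), exactly as in the paper; and the homomorphism statements are read off from the three transform identities in both treatments. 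Where you genuinely diverge is in obtaining (\ref{c classical}) and (\ref{f free}) by specializing (\ref{h classical}) and (\ref{cum free}) to $\mu=\mathbf{n}_{\beta,1}$ and $\mu=\mathbf{f}_{\beta,1}$: the paper instead cites \cite[Theorem 3.3.1]{Z86} for (\ref{c classical}) and proves (\ref{f free}) by a direct $S$-transform computation. For (\ref{c classical}) your specialization is perfectly sound (since (\ref{h classical}) covers all of $\mathfrak{A}$, including $\alpha\in(1,2]$) and is more self-contained than an external citation.

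For (\ref{f free}), however, your reduction has a parameter gap. The identity (\ref{f free}) is asserted for all $(\alpha,\rho)\in\mathfrak{A}$ with $\rho\in\{0,1/2,1\}$, and for $\rho=1/2$ admissibility allows $\alpha\in(1,2]$; but (\ref{cum free}) is stated only for $\alpha\le1$, and necessarily so, because for general $\mu\in\mathcal{P}_+$ the power $\mu^{\boxtimes 1/\alpha}$ need not exist when $1/\alpha<1$. Hence plugging $\mu=\mathbf{f}_{\beta,1}$ into (\ref{cum free}) does not cover the case $\alpha\in(1,2]$, $\rho=1/2$ of (\ref{f free}). The repair is immediate and is precisely the paper's direct proof: for the specific measure $\mathbf{f}_{\beta,1}$ the power $(\mathbf{f}_{\beta,1})^{\boxtimes 1/\alpha}$ exists for every admissible $\alpha$ (it is again a positive free stable law, by (\ref{S free})), and then multiplicativity of $S$ for $\mathcal{P}_+\times\mathcal{P}_s$ from \cite{APA} gives, on $(-1,0)$,
\begin{equation*}
S_{(\mathbf{f}_{\beta,1})^{\boxtimes 1/\alpha}}(z)\,S_{\mathbf{f}_{\alpha,\rho}}(z)
=(-z)^{\frac{1-\beta}{\alpha\beta}}\cdot\bigl(-e^{-i\rho\pi}\bigr)(-z)^{\frac{1-\alpha}{\alpha}}
=-e^{-i\rho\pi}(-z)^{\frac{1-\alpha\beta}{\alpha\beta}}
=S_{\mathbf{f}_{\alpha\beta,\rho}}(z),
\end{equation*}
with no branch issues since every factor is a positive real times $-e^{-i\rho\pi}$. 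A parallel caveat attaches to your derivation of (\ref{f boole}) in the range $\alpha\in(1,2]$, $\rho=1/2$, since (\ref{c-f1}) is stated for $\alpha\le1$; but there the Remark following Theorem \ref{thmconvolutions} extends (\ref{c-f1}) exactly when $\mu^{\boxtimes 1/\alpha}$ exists, which holds for $\mu=\mathbf{b}_{\beta,1}$ by (\ref{free power1}), and the paper's own proof relies on the same extension, so that point is not a defect relative to the paper.
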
 
\begin{rem} We can understand that for $\alpha \leq1$, the formulas (\ref{c boole}) and (\ref{f boole}) are consequences of the formulas (\ref{c classical}) and (\ref{f free}) respectively, together with the identities in Proposition \ref{prop990}. This argument is not available for $\alpha>1$ since Proposition \ref{prop990} is no longer true. 
\end{rem}
\begin{proof} 
(\ref{c boole}) was proved in Proposition \ref{prop01}. (\ref{f boole}) and (\ref{m boole}) follow from (\ref{c boole}), (\ref{c-f1}) and (\ref{c-f11}). 
(\ref{c classical}) is known; see \cite[Theorem 3.3.1]{Z86}. (\ref{f free}) is proved by the direct computation of the $S$-transform (\ref{S free}). 

The formula (\ref{h boole}) is exactly (\ref{id3}). For the formula (\ref{h classical}), let $X, N_{\alpha,\rho}$ be independent random variables following the laws $\mu,\mathbf{n}_{\alpha,\rho}$ respectively. By using (\ref{classical stable}), we have the formula 
\[
\begin{split}
\M_{ \mu^{1/\alpha}\circledast \mathbf{n}_{\alpha,\rho}}(z)
&= \E[e^{z X^{1/\alpha} N_{\alpha,\rho}}] = \E[\exp\left(-e^{i\rho\alpha\pi}(z X^{1/\alpha})^\alpha\right)] \\
&= \E[\exp\left(-X (e^{i\rho\pi}z)^\alpha \right)] = \M_\mu(-(e^{i\rho\pi}z)^\alpha ),\qquad z\in i(-\infty,0). 
\end{split}
\] 

For the formula (\ref{cum free}), we compute 
$$
S_{\mu^{\boxtimes 1/\alpha}\boxtimes \mathbf{f}_{\alpha,\rho}}(z)= -e^{-i\rho\pi}(-z)^{\frac{1-\alpha}{\alpha}}S_\mu(z)^{1/\alpha},  
$$
and hence 
$$
z S_{\mu^{\boxtimes 1/\alpha}\boxtimes \mathbf{f}_{\alpha,\rho}}(z)= e^{-i\rho\pi}(-z S_\mu(z))^{1/\alpha}.   
$$
Due to \cite{APA,NiSp97}, the relation $\mathcal{C}_\nu^\boxplus(z S_\nu(z))=z$ holds for $\nu \in \mathcal{P}_+$ or $\nu\in\mathcal{P}_s$ in an open neighborhood $U$ of $(-a,0)$ for some $a>0$. Therefore $f(z)=z S_\nu(z)$ is univalent in $U$ and $\mathcal{C}_\nu^{\boxplus}$ is univalent in $f(U)$ which contains an interval $(-b,0)$ if $\nu\in\mathcal{P}_+$ and an interval $i(0, c)$ if $\nu\in\mathcal{P}_s$.  
Hence we have $z S_\nu(z)=(\mathcal{C}^\boxplus_\nu)^{-1}(z)$ and then 
$$
(\mathcal{C}^\boxplus_{\mu^{\boxtimes 1/\alpha}\boxtimes \mathbf{f}_{\alpha,\rho}})^{-1}(z) = e^{-i\rho\pi} \left(-(\mathcal{C}^\boxplus_\mu)^{-1}(z)\right)^{1/\alpha},\qquad z \in (-s,0)
$$
for some $s>0$. The formula (\ref{cum free}) follows after some computation and by analytic continuation. 
\end{proof}

As a final comment regarding multiplicative properties of stable laws, we want to point out that the formulas (\ref{f boole}) and (\ref{f free}) are relatives of the reproducing properties 
\begin{align}
&\mathbf{f}_{1/(1+t),1}\boxtimes\mathbf{f}_{1/(1+s),\rho}=\mathbf{f}_{1/(1+s+t),\rho},\label{reproducing free}\\
& \mathbf{b}_{1/(1+t),1}\boxtimes\mathbf{b}_{1/(1+s),\rho}=\mathbf{b}_{1/(1+s+t),\rho} \label{reproducing boole}
\end{align}
for $s,t\geq0$, $(\alpha,\rho)\in\mathfrak{A}$, $\rho\in\{0,1/2,1\}$. The formula (\ref{reproducing free}) was established in \cite{BP99} for $\rho=1$ and in \cite{APA} for $\rho=1/2$, and the formula (\ref{reproducing boole}) was established in \cite{AHb}. We expect these formulas, as well as (\ref{sym1}), (\ref{c-f1}), (\ref{b-f1}), (\ref{f boole}) and (\ref{f free}),  to be true for general $\rho$, but the $S$-transform is not yet available in the general case.

\subsection{Classical and Multiplicative Infinite Divisibility}
We prove the ID part of Theorem \ref{main} and the following paragraph.  
Before proving it, let us recall some facts about exponential mixtures. See \cite{STVH} for further details. 

\begin{defi} A measure is said to be an \textbf{exponential mixture} if $\mu$ is distributed as the random variable $X E$, where $E$ follows the exponential distribution with density $e^{-x}1_{(0,\infty)}(x)$ and $X$ is any random variable independent of $X$. If $X$ is positive then $\mu$ is called a \textbf{positive exponential mixture}. We denote by $\EM$ the set of exponential mixtures. 
\end{defi}
Some properties of exponential functions are the following. 

\begin{enumerate}
\item A positive random variable $X$ is an exponential mixture if and only if $X$ has a completely monotone density. 
\item If $X$ is a positive exponential mixture then $X^\alpha$ is also for $\alpha\geq1$.
\item If $X\in\EM$  and $Y$ is independent of $X$, then $XY\in\EM$.
\end{enumerate}

The importance of exponential mixtures in this paper comes from the following theorem.
\begin{thm}
$\EM \subset \id(\ast)$. 
\end{thm}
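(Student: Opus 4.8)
The plan is to pass to the characteristic function, recognise the resulting density as completely monotone on each half-line, and then invoke the Goldie--Steutel circle of ideas, by which a completely monotone density is the density of an infinitely divisible law. Write $\mu$ for the law of $XE$ and $\nu$ for the law of $X$. Since the standard exponential has characteristic function $(1-it)^{-1}$, conditioning on $X$ yields
$$\F_\mu(t)=\E\!\left[\frac{1}{1-itX}\right]=\int_{\real}\frac{\nu(dx)}{1-itx},\qquad t\in\real.$$
If $\nu(\{0\})=0$ this is the law with density equal, on $(0,\infty)$, to $f(t)=\int_{(0,\infty)}x^{-1}e^{-t/x}\,\nu(dx)$ and, on $(-\infty,0)$, to $\int_{(-\infty,0)}|x|^{-1}e^{-|t|/|x|}\,\nu(dx)$; a possible atom $\nu(\{0\})$ only inserts a factor $\delta_0$, which does not affect membership in $\id(\ast)$.

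For nonnegative $X$ I would argue directly. Differentiating under the integral, $(-1)^kf^{(k)}(t)=\int_{(0,\infty)}x^{-(k+1)}e^{-t/x}\,\nu(dx)\ge0$ for every $k$, so $f$ is completely monotone on $(0,\infty)$; equivalently, with $\tilde\nu$ the law of $1/X$, the identity $f(t)=\int_{(0,\infty)}s\,e^{-st}\,\tilde\nu(ds)$ exhibits $f$ as a scale mixture of exponential densities. By the theorem of Goldie and Steutel (see \cite{STVH}), a completely monotone probability density on $(0,\infty)$ is the density of a member of $\id(\ast)$; this settles the case $X\ge0$ and, after the reflection $t\mapsto-t$, the case $X\le0$.

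The genuinely two-sided case, where $X$ takes both signs, is the main obstacle. Here $\mu$ is a convex combination of a positive exponential mixture and a reflected one, and a convex combination of infinitely divisible laws need \emph{not} be infinitely divisible, so conditioning on $\mathrm{sign}(X)$ is not enough; concretely, the candidate obtained by averaging the conditional L\'evy measures of $xE$ against $\nu$ is wrong, because $\log\int e^{\Psi_x(t)}\,\nu(dx)\ne\int\Psi_x(t)\,\nu(dx)$ by Jensen's inequality, where $\Psi_x(t)=-\log(1-itx)$. I would therefore appeal to the bilateral form of the Goldie--Steutel result in \cite{STVH}: a probability measure on $\real$ whose density is completely monotone on $(0,\infty)$ and whose reflection $t\mapsto f(-t)$ is completely monotone on $(0,\infty)$ belongs to $\id(\ast)$. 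The two half-line densities exhibited above are exactly of this form, giving $\mu\in\id(\ast)$ and hence $\EM\subset\id(\ast)$. As a consistency check, the extreme case $X\in\{+1,-1\}$ produces a (possibly asymmetric) Laplace law, whose characteristic function $\frac{1+i(2p-1)t}{1+t^2}$ one can verify has a nonnegative L\'evy density, the positivity coming precisely from the cancellation that complete monotonicity encodes.
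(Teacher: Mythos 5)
Your proposal is, in substance, the same as the paper's treatment: the paper states this theorem with no proof at all, deferring to Steutel and van Harn \cite{STVH} (``See \cite{STVH} for further details''), and the load-bearing step of your argument is likewise a citation of the bilateral Goldie--Steutel theorem from \cite{STVH}. The surrounding reductions you supply are correct: the conditional density computation, the complete monotonicity of the density on each half-line, the Bernstein-type identification with mixtures of exponential densities, and the (accurate) warning that one cannot simply average L\'evy exponents over the mixing law because of Jensen's inequality. So you have correctly isolated exactly which external result the theorem rests on, which is all the paper itself does.

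One step is justified by a false principle, even though its conclusion is true. You claim that an atom of $\nu$ at $0$ ``only inserts a factor $\delta_0$, which does not affect membership in $\id(\ast)$.'' What the atom actually produces is the convex combination $\nu(\{0\})\,\delta_0+(1-\nu(\{0\}))\,\mu'$, not a convolution factor, and mixing an atom at $0$ into an infinitely divisible law is \emph{not} harmless in general: this very paper exploits the failure in the proof of part (3) of Theorem \ref{classical ID}, where the Cauchy mixture $p\,\delta_0+(1-p)\,\mathbf{c}_\rho$ (the scale mixture of $\mathbf{c}_\rho$, $\rho\neq 1/2$, whose mixing law has an atom at $0$) is shown to have a characteristic function with a real zero, hence to lie outside $\id(\ast)$, even though $\mathbf{c}_\rho$ itself is ID. For exponential mixtures the atom is indeed innocuous, but this needs an argument specific to the class: either quote the version of the Goldie--Steutel theorem in \cite{STVH} that explicitly allows mass at $0$, or replace $X$ by $X+\varepsilon 1_{\{X=0\}}$, apply the atom-free case, and let $\varepsilon\downarrow 0$, using the closedness of $\id(\ast)$ under weak convergence. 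With that one-line repair, your proof is complete.
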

Now we are ready to prove part of Theorem \ref{main}. 
\begin{thm}\label{classical ID} 
\begin{enumerate}[\rm(1)]
\item\label{ci} If $\alpha \in (0,1/2], \rho \in[0,1]$, then $\mathcal{B}_{\alpha,\rho}\subset \EM$.  
\item\label{cii} If $\alpha\in (0,1], \rho=1/2$, then $\mathcal{B}_{\alpha,\rho}\subset \id(\ast)$. 
\item\label{ciii} If $\rho \neq 1/2$, then $\mathcal{B}_{1,\rho} \not\subset\id(\ast)$. 
\end{enumerate}
\end{thm}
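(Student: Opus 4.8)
The plan is to prove the three statements separately, exploiting the key transform identities from Proposition \ref{prop31} and the structure of the classes $\EM$ and $\id(\ast)$.

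For part (\ref{ci}), the strategy is to show that every scale mixture in $\mathcal{B}_{\alpha,\rho}$ is in fact a (signed) exponential mixture. The natural route is to write $\mathbf{b}_{\alpha,\rho}$ itself as a scale mixture of exponential and/or signed exponential distributions when $\alpha \leq 1/2$. First I would recall from (\ref{eq}) that $\mathbf{b}_{\alpha,1}$ is the law of a quotient of two i.i.d.\ positive $\alpha$-stable variables; more directly, I would try to exhibit the density $p^+_{\alpha,\rho}$ as a mixture $\int_0^\infty \lambda e^{-\lambda x}\,\sigma(d\lambda)$ with a positive measure $\sigma$. The cleanest way is via complete monotonicity: by property (1) of exponential mixtures, a positive random variable is a positive exponential mixture if and only if its density is completely monotone, and it is known (Jedidi--Simon \cite{JS}) that $\mathbf{b}_{\alpha,1}$ is HCM, hence completely monotone, for $\alpha \leq 1/2$. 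For general $\rho$ I would symmetrize: the negative part uses the asymmetry parameter $1-\rho$ and the same range $\alpha \leq 1/2$ guarantees complete monotonicity of $p^-_{\alpha,\rho}$ on $(-\infty,0)$. Then, using property (3) (that $\EM$ is stable under multiplication by an independent variable) and the fact that $\mathbf{b}_{\alpha,\rho}$ decomposes into positive and negative exponential mixtures, I would conclude that $\mu\circledast\mathbf{b}_{\alpha,\rho}\in\EM$ for any $\mu\in\mathcal{P}_+$. Combined with the theorem $\EM\subset\id(\ast)$, this gives the ID conclusion.

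For part (\ref{cii}), the case $\rho=1/2$ is special because $\mathbf{b}_{\alpha,1/2}$ is symmetric, and the relevant building block is the Cauchy distribution rather than the exponential. The plan is to realize $\mathcal{B}_{\alpha,1/2}$ as a class of scale mixtures of Cauchy distributions. By Remark \ref{Cauchy mixtures}, $\mathcal{B}_{1,1/2}$ consists exactly of scale mixtures of the symmetric Cauchy law $\mathbf{c}_{1/2}$; and from (\ref{c boole}) with $\rho=1/2$ one has $(\mathbf{b}_{\alpha,1})^{1/\beta}\circledast\mathbf{b}_{\beta,1/2}=\mathbf{b}_{\alpha\beta,1/2}$, which lets me factor any symmetric Boolean stable law through the Cauchy law. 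Concretely, I would show $\mathbf{b}_{\alpha,1/2}$ is itself a scale mixture of the Cauchy distribution for $\alpha\leq1$, so that any $\mu\circledast\mathbf{b}_{\alpha,1/2}$ is again a scale mixture of Cauchy. The key classical fact is that every scale mixture of a symmetric stable law (in particular the Cauchy law, which is symmetric $1$-stable) is infinitely divisible—this is a standard result on mixtures of stable laws. Thus $\mathcal{B}_{\alpha,1/2}\subset\id(\ast)$.

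For part (\ref{ciii}), I must produce, for each $\rho\neq1/2$, a specific scale mixture of the Cauchy law $\mathbf{c}_\rho$ that fails to be ID. Since $\mathcal{B}_{1,\rho}$ is the set of scale mixtures of $\mathbf{c}_\rho$ by Remark \ref{Cauchy mixtures}, and $\mathbf{c}_\rho=\delta_{-1}$ or $\delta_1$ at the endpoints while being a genuinely asymmetric Cauchy law for $\rho\in(0,1)\setminus\{1/2\}$, the asymmetry should obstruct infinite divisibility for an appropriately chosen mixing measure. The simplest candidate is to take $\mu$ to be a finite combination of point masses, say $\mu=\delta_a$, giving $D_a\mathbf{c}_\rho$, or a two-atom mixture, and then analyze the classical cumulant transform $\mathcal{C}^\ast$ or the characteristic function directly to exhibit a violation of the L\'evy--Khintchine representation (\ref{levykintchine clasica})—for instance, by showing the candidate L\'evy measure $\nu$ would have to take negative values, or that a cumulant has the wrong sign. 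The main obstacle I anticipate is precisely this last part: constructing an explicit mixing measure and verifying non-ID rigorously, since it requires controlling the characteristic function of an asymmetric Cauchy scale mixture and detecting the failure of complete monotonicity or positivity in the associated L\'evy density. I expect the cleanest counterexample to come from a single dilation $D_a\mathbf{c}_\rho$ combined with a second scale, where the interplay of two scales under an asymmetric Cauchy kernel breaks the sign condition required for an ID law.
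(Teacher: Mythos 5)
Your part (\ref{cii}) is correct and is essentially the paper's own argument: by Proposition \ref{prop01} and Remark \ref{Cauchy mixtures} one has $\mathbf{b}_{\alpha,1/2}=\mathbf{b}_{\alpha,1}\circledast\mathbf{c}_{1/2}$, so every member of $\mathcal{B}_{\alpha,1/2}$ is a scale mixture of the symmetric Cauchy law, and such mixtures are ID by \cite[Theorem IV.10.5]{STVH}. The problems are in parts (\ref{ci}) and (\ref{ciii}).

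In part (\ref{ci}) there are two gaps. First, the inference ``HCM, hence completely monotone'' is false: HCM does not imply complete monotonicity. Gamma densities $x^{\beta-1}e^{-x}/\Gamma(\beta)$ are HCM for every $\beta>0$ (see \cite{B92}), yet for $\beta>1$ they vanish at $x=0$ and are not even monotone, hence not completely monotone. So the Jedidi--Simon result \cite{JS} does not deliver what you need; the fact you actually need---that $\mathbf{b}_{\alpha,1}$ is a positive exponential mixture for $\alpha\le1/2$---is proved directly in \cite{AHb}, which is what the paper cites. Second, and more seriously, for general $\rho$ you merely assert that $p^{\pm}_{\alpha,\rho}$ is completely monotone on its half-line; this is precisely the nontrivial point, and no argument is offered (the parameter entering the density is $\pi\rho\alpha$ rather than $\pi\alpha$, so the $\rho=1$ case does not transfer formally). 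The paper's device that closes this gap is the factorization from Proposition \ref{prop01}: for $\alpha\le1/2$, $\mathbf{b}_{\alpha,\rho}=\mathbf{b}_{2\alpha,\rho}\circledast(\mathbf{b}_{1/2,1})^{1/(2\alpha)}$; since $(\mathbf{b}_{1/2,1})^{1/(2\alpha)}$ is a positive exponential mixture (property (2), as $1/(2\alpha)\ge1$), property (3) gives $\mathbf{b}_{\alpha,\rho}\in\EM$ for every $\rho\in[0,1]$ at once, and then $\mu\circledast\mathbf{b}_{\alpha,\rho}\in\EM$ by property (3) again, with no density computation at all.

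In part (\ref{ciii}) the counterexample is never constructed, and the candidate you call simplest cannot work: $D_a\mathbf{c}_\rho$ is again a Cauchy distribution, hence stable, hence ID, for every $a>0$ and every $\rho$. Moreover, the route you sketch---showing that a would-be L\'evy measure is signed---is the hard way around. The efficient tool is the elementary fact that an ID characteristic function has no zeros (\cite[Proposition IV.2.4]{STVH}). The paper takes the mixing law $p\delta_0+(1-p)\delta_1$ (note the atom at $0$); the resulting mixture has characteristic function $p+(1-p)e^{-\sin(\rho\pi)|z|+i\cos(\rho\pi)z}$, and because $\rho\neq1/2$ the phase $\cos(\rho\pi)z$ is nondegenerate, so at $z=\pi/\cos(\rho\pi)$ the value is $p-(1-p)e^{-\pi\tan(\rho\pi)}$, which vanishes for $p=1/(1+e^{\pi\tan\rho\pi})$. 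Your instinct that two distinct scales suffice is in fact also correct: for atoms at $0<a<b$ the same phase cancellation produces a zero at $z=\pi/((b-a)\cos\rho\pi)$ for a suitable weight $p$. But any such argument must be run through the zero-free criterion to become a proof; as written, your part (\ref{ciii}) is a plan, not a proof.
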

\begin{proof}
(\ref{ci})\,\, Clearly it is enough to show that $\mathbf{b}_{\alpha,\rho}$ itself is an exponential mixture for $\alpha \leq1/2$. It is proved in \cite{AHb} that $\mathbf{b}_{\alpha,1}$ is a positive exponential mixture. We now use the identity in Proposition \ref{prop01}: $\mathbf{b}_{\beta,\rho}\circledast (\mathbf{b}_{1/2,1})^{1/\beta}=\mathbf{b}_{\beta/2,\rho}$. Since $(\mathbf{b}_{1/2,1})^{1/\beta}$ is a positive exponential mixture for $\beta \leq1$, we see that $\mathbf{b}_{\alpha,\rho}$ is an exponential mixture for $\alpha \leq 1/2$. 

(\ref{cii})\,\, 
Any mixture of a \textbf{symmetric} Cauchy distribution is ID from Theorem IV.10.5 in Steutel and van Harn \cite{STVH}. 
From Remark \ref{Cauchy mixtures} and Proposition \ref{prop01} we have $\mathbf{b}_{\alpha,1/2}= \mathbf{b}_{\alpha,1}\circledast \mathbf{c}_{1/2}$ and hence $\mu\circledast\mathbf{b}_{\alpha,1/2}$ is also a mixture of the symmetric Cauchy distribution $\mathbf{c}_{1/2}$.

(\ref{ciii})\,\, 
Let $p\in(0,1)$, $\rho \neq1/2$ and consider the law $(p\delta_0+(1-p)\delta_1)\ast\mathbf{b}_{1,\rho}$. Its Fourier transform can be computed as 
$$
\mathcal{F}_{(p\delta_0+(1-p)\delta_1)\ast\mathbf{b}_{1,\rho}}(z) = p+(1-p)e^{-(\sin\rho\pi) |z| +i(\cos \rho\pi)z}, ~~z\in\real, 
$$
and in particular 
$$
\mathcal{F}_{(p\delta_0+(1-p)\delta_1)\ast\mathbf{b}_{1,\rho}}\left(\frac{\pi}{\cos\rho\pi}\right) = p-(1-p)e^{-\pi\tan\rho\pi}. 
$$
If we take $p:=\frac{1}{1+e^{\pi\tan\rho\pi}}$, then $\mathcal{F}_{\mathbf{b}_{1,\rho} \ast (p\delta_0+(1-p)\delta_1)}\left(\frac{\pi}{\cos\rho\pi}\right)=0$, which implies 
that $(p\delta_0+(1-p)\delta_1)\ast\mathbf{b}_{1,\rho}\notin\id(\ast)$ from Proposition IV.2.4 in \cite{STVH}.
\end{proof}

\begin{prob}
 Determine the possible pairs $(\alpha,\rho)$ completely so that $\mathcal{B}_{\alpha,\rho} \subset \id(\ast)$. 
 \end{prob}

Now, we prove infinite divisibility of Boolean stable laws in the multiplicative case.
\begin{prop}[Multiplicative infinite divisibility] \label{PropMult}
The positive Boolean stable law $\mathbf{b}_{\alpha,1}$ is infinitely divisible  with respect to the convolutions $\circledast,\boxtimes,\circlearrowright$ for any $\alpha\leq1$. 
\end{prop}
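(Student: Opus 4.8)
The plan is to treat the three convolutions separately, observing that the free and monotone cases are immediate consequences of the power formulas already established, while the classical multiplicative case requires genuine analytic work. For $\alpha=1$ everything is trivial since $\mathbf{b}_{1,1}=\delta_0$ shifted to $\delta_1$, so I may assume $\alpha\in(0,1)$. For the free convolution I would invoke (\ref{free power1}): setting $\mu_n=\mathbf{b}_{\beta,1}$ with $\beta=n\alpha/(1-\alpha+n\alpha)\leq1$, the formula (\ref{free power1}) applied with parameter $\beta$ and exponent $n$ gives $\mu_n^{\boxtimes n}=\mathbf{b}_{\beta/(n(1-\beta)+\beta),1}=\mathbf{b}_{\alpha,1}$, so $\mathbf{b}_{\alpha,1}$ is $\boxtimes$-infinitely divisible. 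The monotone case is handled the same way through (\ref{multi monotone}): taking $\mu_n=\mathbf{b}_{\alpha^{1/n},1}$ and using the composition law $\mathbf{b}_{\alpha^s,1}\circlearrowright\mathbf{b}_{\alpha^t,1}=\mathbf{b}_{\alpha^{s+t},1}$ yields $\mu_n^{\circlearrowright n}=\mathbf{b}_{\alpha,1}$, giving $\circlearrowright$-infinite divisibility.

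The substance of the proof lies in the classical multiplicative case. Since $\mathbf{b}_{\alpha,1}\in\mathcal{P}_+$, it is $\circledast$-infinitely divisible if and only if the pushforward of $\mathbf{b}_{\alpha,1}$ under $x\mapsto\log x$ is in $\id(\ast)$ on $\real$; equivalently, writing $M(s)=\int_0^\infty x^s\,\mathbf{b}_{\alpha,1}(dx)$ for the Mellin transform, I must show that the characteristic function $t\mapsto M(it)$ of $\log X$ admits the L\'evy--Khintchine form. I would compute $M$ from the quotient representation (\ref{eq}), $\mathbf{b}_{\alpha,1}=\mathbf{n}_{\alpha,1}\circledast(\mathbf{n}_{\alpha,1})^{-1}$: using the Mellin transform $\E[N^s]=\Gamma(1-s/\alpha)/\Gamma(1-s)$ of the positive stable law and the reflection formula $\Gamma(1-is)\Gamma(1+is)=\pi s/\sinh(\pi s)$, one obtains the clean expression
\begin{equation*}
M(it)=\left|\frac{\Gamma(1-it/\alpha)}{\Gamma(1-it)}\right|^2=\frac{\sinh(\pi t)}{\alpha\sinh(\pi t/\alpha)},\qquad t\in\real.
\end{equation*}

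It then remains to exhibit this as an infinitely divisible characteristic function. I would use the product formula $\sinh(\pi t)=\pi t\prod_{k\geq1}(1+t^2/k^2)$ to write $\log M(it)$ as a sum, and apply the Frullani-type identity $\log(1+t^2/c^2)=\int_\real(1-\cos tx)\frac{e^{-c|x|}}{|x|}\,dx$ to each term. Summing the resulting geometric series gives
\begin{equation*}
\log M(it)=\int_\real(\cos tx-1)\,\nu(dx),\qquad \nu(dx)=\frac{1}{|x|}\left(\frac{1}{e^{\alpha|x|}-1}-\frac{1}{e^{|x|}-1}\right)dx.
\end{equation*}
The point where everything must come together, and the step I expect to be the main obstacle, is verifying that $\nu$ is a genuine positive L\'evy measure: positivity follows from $\alpha<1$, which forces $e^{\alpha|x|}-1<e^{|x|}-1$, and the integrability $\int_\real\min(1,x^2)\,\nu(dx)<\infty$ follows from the behaviour $\nu(dx)\sim\frac{1-\alpha}{\alpha}x^{-2}dx$ near $0$ together with exponential decay at infinity. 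This shows $\log X\in\id(\ast)$, hence $\mathbf{b}_{\alpha,1}$ is $\circledast$-infinitely divisible, completing the proof. As an alternative to this computation one could invoke the fact, due to Jedidi and Simon, that $\mathbf{b}_{\alpha,1}$ has an HCM density, which already forces $\circledast$-infinite divisibility.
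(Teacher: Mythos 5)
Your treatment of $\boxtimes$ and $\circlearrowright$ coincides with the paper's: both infinite divisibilities are read off from the power formulas (\ref{free power1}) and (\ref{multi monotone}), and your explicit $n$-th roots $\mathbf{b}_{n\alpha/(1-\alpha+n\alpha),1}$ and $\mathbf{b}_{\alpha^{1/n},1}$ are precisely $(\mathbf{b}_{\alpha,1})^{\boxtimes 1/n}$ and $(\mathbf{b}_{\alpha,1})^{\circlearrowright 1/n}$; these parts are correct. The genuine divergence is in the $\circledast$ part. The paper settles it in one line: by \cite[Theorem 3.5.1]{Z86} the positive stable law $\mathbf{n}_{\alpha,1}$ is $\circledast$-infinitely divisible, hence so are $(\mathbf{n}_{\alpha,1})^{-1}$ and the independent product $\mathbf{b}_{\alpha,1}=\mathbf{n}_{\alpha,1}\circledast(\mathbf{n}_{\alpha,1})^{-1}$ of (\ref{eq}). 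You use the same quotient representation (\ref{eq}), but only to compute a Mellin transform, and then verify infinite divisibility of the law of $\log \mathbf{b}_{\alpha,1}$ by hand. I checked the computation: $\E[X^{it}]=\sinh(\pi t)/\bigl(\alpha\sinh(\pi t/\alpha)\bigr)$ is correct, and the product expansion of $\sinh$ together with the Frullani identity gives
\begin{equation*}
\log \E[X^{it}]=\int_{\real}(\cos tx-1)\,\frac{1}{|x|}\left(\frac{1}{e^{\alpha|x|}-1}-\frac{1}{e^{|x|}-1}\right)dx,
\end{equation*}
where the sum--integral interchange is legitimate by Tonelli because all terms have a fixed sign; the density in parentheses is nonnegative exactly because $\alpha<1$, and your asymptotics ($\sim\frac{1-\alpha}{\alpha}x^{-2}$ at the origin, exponential decay at infinity) do make it a symmetric L\'evy measure, so the argument is complete. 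As for what each route buys: the paper's is shorter but leans on Zolotarev's nontrivial theorem that stable laws are multiplicatively infinitely divisible, whereas yours needs only the classical Gamma-function formula for the stable Mellin transform, and in exchange it produces the explicit L\'evy measure of $\log\mathbf{b}_{\alpha,1}$ --- strictly more information than the proposition requires.

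One correction to your closing remark: the HCM alternative does not cover the stated range. The Jedidi--Simon result \cite{JS}, as cited in this paper, gives the HCM property of $\mathbf{b}_{\alpha,1}$ only for $\alpha\le1/2$, and it cannot extend further: HCM implies classical additive infinite divisibility, which fails for $\mathbf{b}_{\alpha,1}$ with $\alpha\in(1/2,1)$ by the authors' earlier work \cite{AHb}. So that alternative would establish the $\circledast$ part only for $\alpha\le1/2$; your Mellin computation (or the paper's appeal to \cite{Z86}) is what must carry the full range $\alpha\le1$.
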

\begin{proof}
The stable distribution $\mathbf{n}_{\alpha,1}$ is $\circledast$-infinitely divisible (see \cite[Theorem 3.5.1]{Z86}), and hence so is $(\mathbf{n}_{\alpha,1})^{-1}$. Therefore $\mathbf{b}_{\alpha,1}= (\mathbf{n}_{\alpha,1})^{-1} \circledast\mathbf{n}_{\alpha,1}$ is $\circledast$-infinitely divisible too (see (\ref{eq})).
The $\boxtimes$-infinite divisibility follows from (\ref{free power1}) and the $\circlearrowright$-infinite divisibility follows from (\ref{multi monotone}).
 \end{proof}

\subsection{Free Infinite Divisibility of $\mathcal{B}_{\alpha,\rho}$}
We prove the free part of Theorem \ref{main} and the following paragraph.  
We start from short proofs of the free infinite divisibility of $\mathcal{B}_{\alpha,1}$ and $\mathcal{B}_{\alpha,1/2}$ by using Proposition \ref{Identities}. 

\begin{prop}\label{free compound beta} 
\begin{enumerate}[\rm(1)]
\item For $\alpha \leq 1/2, \rho\in\{0,1/2,1\}$ and $\mu\in \mathcal{P}_+$, the measure $\mu^{1/\alpha}\circledast\mathbf{b}_{\alpha,\rho}$ is a compound free Poisson with rate $1$ and jump distribution 
$\mu^{\boxtimes1/\alpha} \boxtimes\pi^{\boxtimes \frac{1-2\alpha}{\alpha}}\boxtimes\mathbf{f}_{\alpha,\rho}$, and hence $\mu^{1/\alpha}\circledast\mathbf{b}_{\alpha,\rho}$ is FID. 
\item For $\alpha \leq 2/3$ and $\mu\in\mathcal{P}_+$, the probability measure $\mu^{1/\alpha}\circledast\mathbf{b}_{\alpha,1/2}$ is a compound free Poisson with rate 1 and 
jump distribution
$
\mu^{\boxtimes 1/\alpha}\boxtimes\MP^{\boxtimes \frac{2-3\alpha}{2\alpha}}\boxtimes \sym\!\left(\sqrt{\mathbf{f}_{\alpha/2,1}}\right), 
$
and hence it is FID. 
\end{enumerate}
\end{prop}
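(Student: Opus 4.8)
The plan is to reduce both statements to the single observation, recorded in Remark \ref{rem free poisson}(2), that $\MP\boxtimes\nu$ is exactly the compound free Poisson $\MP(1,\nu)$ with rate $1$ and jump distribution $\nu$, and that any compound free Poisson is automatically FID, since its free cumulant transform has the reduced L\'evy--Khintchine form (\ref{eq00}) with zero drift and finite L\'evy measure $\nu$ (see Theorem \ref{thmBV93}). The strategy in each case is therefore identical: rewrite the scale mixture as a free multiplicative convolution, split off \emph{exactly one} free Poisson factor $\MP$, and read off the remaining factor as the jump distribution. The role of the hypotheses $\alpha\leq1/2$ in (1) and $\alpha\leq2/3$ in (2) is precisely to guarantee that there is enough free Poisson mass present to extract one full copy of $\MP$.

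For part (1), I would first invoke (\ref{c-f1}) to write $\mu^{1/\alpha}\circledast\mathbf{b}_{\alpha,\rho}=\mu^{\boxtimes1/\alpha}\boxtimes\mathbf{b}_{\alpha,\rho}$, which is legitimate for $\alpha\in(0,1]$, $\rho\in\{0,1/2,1\}$ since then $1/\alpha\geq1$. Substituting the representation (\ref{sym1}) for $\mathbf{b}_{\alpha,\rho}$ yields
\[
\mu^{1/\alpha}\circledast\mathbf{b}_{\alpha,\rho}=\mu^{\boxtimes1/\alpha}\boxtimes\MP^{\boxtimes\frac{1-\alpha}{\alpha}}\boxtimes\mathbf{f}_{\alpha,\rho}.
\]
Because $\tfrac{1-\alpha}{\alpha}=1+\tfrac{1-2\alpha}{\alpha}$ and $\tfrac{1-2\alpha}{\alpha}\geq0$ exactly when $\alpha\leq1/2$, the free Poisson factor splits as $\MP^{\boxtimes\frac{1-\alpha}{\alpha}}=\MP\boxtimes\MP^{\boxtimes\frac{1-2\alpha}{\alpha}}$, where the nonnegative power makes sense by the $\boxtimes$-infinite divisibility of $\MP$. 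Collecting everything except the single extracted $\MP$ into $\nu=\mu^{\boxtimes1/\alpha}\boxtimes\MP^{\boxtimes\frac{1-2\alpha}{\alpha}}\boxtimes\mathbf{f}_{\alpha,\rho}$ gives $\mu^{1/\alpha}\circledast\mathbf{b}_{\alpha,\rho}=\MP\boxtimes\nu=\MP(1,\nu)$, which is the asserted jump distribution, and hence the measure is FID.

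For part (2) the argument is the same with the symmetric representation in place of (\ref{sym1}). Applying (\ref{c-f1}) for $\rho=1/2$ and then (\ref{sym2}) gives
\[
\mu^{1/\alpha}\circledast\mathbf{b}_{\alpha,1/2}=\mu^{\boxtimes1/\alpha}\boxtimes\MP^{\boxtimes\frac{2-\alpha}{2\alpha}}\boxtimes\sym\!\left(\sqrt{\mathbf{f}_{\alpha/2,1}}\right).
\]
Here $\tfrac{2-\alpha}{2\alpha}=1+\tfrac{2-3\alpha}{2\alpha}$, and $\tfrac{2-3\alpha}{2\alpha}\geq0$ precisely when $\alpha\leq2/3$; splitting off one $\MP$ as before and setting $\nu=\mu^{\boxtimes1/\alpha}\boxtimes\MP^{\boxtimes\frac{2-3\alpha}{2\alpha}}\boxtimes\sym(\sqrt{\mathbf{f}_{\alpha/2,1}})$ shows the measure equals $\MP(1,\nu)$ and is FID.

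There is no genuine obstacle here: the entire content sits in the identities (\ref{c-f1}), (\ref{sym1}), (\ref{sym2}) established earlier together with Remark \ref{rem free poisson}(2). The only point demanding care is the bookkeeping of exponents—one must check that the leftover free Poisson powers $\tfrac{1-2\alpha}{\alpha}$ and $\tfrac{2-3\alpha}{2\alpha}$ are nonnegative, which is exactly what pins down the thresholds $\alpha\leq1/2$ and $\alpha\leq2/3$, and that all remaining $\boxtimes$-powers are well defined, which is automatic since $1/\alpha\geq1$ and $\MP$ is $\boxtimes$-infinitely divisible.
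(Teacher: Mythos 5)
Your proof is correct and is essentially identical to the paper's own argument: the paper proves this proposition by citing exactly (\ref{c-f1}), (\ref{sym1}), (\ref{sym2}) and Remark \ref{rem free poisson}, and your write-up simply spells out the exponent bookkeeping (splitting off one copy of $\MP$ using $\tfrac{1-\alpha}{\alpha}=1+\tfrac{1-2\alpha}{\alpha}$, resp.\ $\tfrac{2-\alpha}{2\alpha}=1+\tfrac{2-3\alpha}{2\alpha}$) that the paper leaves implicit.
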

\begin{proof}
These are obvious from (\ref{c-f1}), (\ref{sym1}), (\ref{sym2}) and Remark \ref{rem free poisson}.  
\end{proof}

The complete determination of the free infinite divisibility of $\mathcal{B}_{\alpha,\rho}$ requires the ideas of \cite{AHb} and \cite{BH}. 
\begin{defi}
A probability measure $\mu$ is said to be in class $\iu$ if $F_\mu^{-1}$, defined in a domain $\Gamma_{\alpha,\beta}$, has an analytic continuation which is univalent  in $\comp^+$. From the Riemann mapping theorem, $\mu\in\iu$ if and only if there exists a domain $\comp^+\subset D \subset \comp$ such that $F_\mu$ extends to an analytic bijection $\tilde{F}_\mu$ from $D$ onto $\comp^+$. 
\end{defi}

The importance of this class is given by the following lemma (implicitly used in \cite{BBLS}).
\begin{lem}\cite{AHa}\label{lem1}
If $\mu \in \iu$ then $\mu$ is FID.
\end{lem}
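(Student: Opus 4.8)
The plan is to verify the FID criterion of Theorem \ref{thmBV93}(2): it suffices to show that $-\phi_\mu$, originally defined on the truncated cone $\Gamma_{\alpha,\beta}$, extends to a Pick function on all of $\comp^+$, i.e.\ an analytic map sending $\comp^+$ into $\comp^+\cup\real$. Since $\phi_\mu(z)=F_\mu^{-1}(z)-z$, this amounts to producing an analytic continuation of $F_\mu^{-1}$ to $\comp^+$ and checking that $\im F_\mu^{-1}(z)\leq\im z$ there. The hypothesis $\mu\in\iu$ is tailored precisely to supply the continuation for free.

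Concretely, I would start from the Riemann-mapping form of the definition: $\mu\in\iu$ furnishes a domain $\comp^+\subset D\subset\comp$ together with an analytic bijection $\tilde F_\mu\colon D\to\comp^+$ extending $F_\mu$, whose inverse $\psi:=\tilde F_\mu^{-1}\colon\comp^+\to D$ is the sought univalent continuation of $F_\mu^{-1}$. Then $-\phi_\mu$ continues analytically to $\comp^+$ via $-\phi_\mu(z)=z-\psi(z)$, and the entire problem collapses to the single inequality $\im\psi(z)\leq\im z$ for every $z\in\comp^+$.

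To establish this inequality I would fix $z\in\comp^+$, set $w:=\psi(z)\in D$ so that $\tilde F_\mu(w)=z$, and split into two cases according to the location of $w$. If $w\in\comp^+$, then $\tilde F_\mu(w)=F_\mu(w)$, and the standard Nevanlinna bound $\im F_\mu(w)\geq\im w$ for the reciprocal Cauchy transform (the very property already invoked in the proof of Proposition \ref{prop0001}) gives $\im z\geq\im w=\im\psi(z)$. If instead $w\in D\setminus\comp^+$, then $\im w\leq 0<\im z$ directly, so again $\im\psi(z)=\im w\leq\im z$. In both cases $\im\bigl(z-\psi(z)\bigr)\geq 0$, whence $-\phi_\mu$ maps $\comp^+$ into $\comp^+\cup\real$, and Theorem \ref{thmBV93} yields $\mu\in\id(\boxplus)$.

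The argument is short, and the only point that needs care is that the Nevanlinna inequality $\im F_\mu\geq\im(\cdot)$ holds only on $\comp^+$; hence one cannot apply it blindly to $w=\psi(z)$, which a priori lies in the larger domain $D$. The case distinction resolves this cleanly, since whenever $w$ escapes $\comp^+$ its imaginary part is automatically nonpositive and the desired comparison is trivial. Thus the main (and rather mild) obstacle is simply to recognize that the univalence-and-extension hypothesis encoded in $\iu$ is exactly what turns the Pick-function criterion into an elementary comparison of imaginary parts.
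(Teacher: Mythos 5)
Your proof is correct and complete: inverting the Riemann--mapping form of the definition of $\iu$ gives the analytic continuation $\psi=\tilde F_\mu^{-1}$ of $F_\mu^{-1}$ to $\comp^+$, and the two-case comparison (using $\im F_\mu(w)\geq \im w$ when $\psi(z)\in\comp^+$, and $\im\psi(z)\leq 0<\im z$ otherwise) establishes precisely the Pick-function criterion of Theorem~\ref{thmBV93}. The paper itself gives no proof of this lemma --- it is quoted from \cite{AHa} --- and your argument is essentially the standard one found in that reference, so the two approaches coincide.
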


The following result was shown in \cite[Proposition 2.1]{BH}. 
\begin{lem}\label{lemBH} A probability measure $\mu$ on $\real$ is in $\mathcal{UI}$ if there exists a simple, continuous curve $\gamma =(\gamma(t))_{t\in\real} \subset \comp^-\cup\real$ with the following properties: 
\begin{enumerate}[\rm(A)]
\item\label{d1} $\displaystyle \lim_{t \to \infty}|\gamma(t)|=\lim_{t \to -\infty}|\gamma(t)|=\infty$; 
\item\label{d4} $F_\mu$ extends to an analytic function $\tilde{F}_\mu$ in $D(\gamma)$ which is continuous on $\overline{D(\gamma)}$, where $D(\gamma)$ denotes the simply connected open set containing $\comp^+$ with boundary $\gamma$;
\item\label{d3} $\tilde{F}_\mu(\gamma) \subset \comp^-\cup\real$; 
\item\label{d5} $\tilde{F}_\mu(z) = z+o(z)$ uniformly as $z \to \infty,~ z \in D(\gamma).$ 
\end{enumerate}
\end{lem}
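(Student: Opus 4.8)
The plan is to prove that $\tilde{F}_\mu$ assumes every value in $\comp^+$ exactly once on $D(\gamma)$, and then to take as the intermediate domain $D$ in the definition of $\iu$ the preimage $V:=\tilde{F}_\mu^{-1}(\comp^+)\cap D(\gamma)$ itself, rather than trying to show that $\tilde{F}_\mu$ maps all of $D(\gamma)$ onto $\comp^+$. First I would record the elementary fact that $F_\mu(\comp^+)\subseteq\comp^+$ (because $G_\mu(\comp^+)\subseteq\comp^-$), so that $\comp^+\subseteq V$ and $\tilde{F}_\mu$ agrees with $F_\mu$ there; this already gives $\comp^+\subset V\subset\comp$ and identifies $\tilde{F}_\mu|_V$ as an extension of $F_\mu$.

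The core object is the counting function $N(w):=\#\{z\in D(\gamma):\tilde{F}_\mu(z)=w\}$, counted with multiplicity, for $w\in\comp^+$. I would first check that, as $w$ ranges over a compact subset of $\comp^+$, all solutions of $\tilde{F}_\mu(z)=w$ lie in one fixed compact subset of $D(\gamma)$: they are bounded in modulus because condition (\ref{d5}) forces $|\tilde{F}_\mu(z)|\to\infty$ as $z\to\infty$ in $D(\gamma)$, and they stay away from the boundary curve $\gamma$ because $\tilde{F}_\mu(\gamma)\subset\comp^-\cup\real$ by (\ref{d3}) while $w\in\comp^+$, so by continuity (\ref{d4}) no solution can approach $\gamma$. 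Consequently $N(w)$ is finite, and the usual Rouch\'e bookkeeping (disjoint small circles around the finitely many solutions, with no solution escaping the fixed compact set as $w$ varies) shows that $N$ is locally constant on $\comp^+$; since $\comp^+$ is connected, $N$ is constant.

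It remains to evaluate this constant, which I would do at $w=iY$ for $Y$ large. Condition (\ref{d5}) makes $\tilde{F}_\mu$ a small perturbation of the identity near infinity, so applying Rouch\'e to $\tilde{F}_\mu(z)-iY$ against $z-iY$ on a circle $|z-iY|=\varepsilon Y$ (which lies in $\comp^+\subset D(\gamma)$) yields exactly one solution there, while the bound $|\tilde{F}_\mu(z)|\geq\tfrac12|z|$ for large $|z|$ together with boundedness of $\tilde{F}_\mu$ on compacts rules out any further solution. Hence $N\equiv1$ on $\comp^+$. This says precisely that $\tilde{F}_\mu$ restricts to a bijection of $V$ onto $\comp^+$ (surjective since each $w$ has a preimage, which by definition lies in $V$; injective since $N=1$). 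Being injective and holomorphic it is conformal, so $V$ is a domain: it is open, and connected because $\tilde{F}_\mu$ is a homeomorphism of $V$ onto the connected set $\comp^+$. Thus $V$ satisfies $\comp^+\subset V\subset\comp$ and $\tilde{F}_\mu|_V$ extends $F_\mu$ to an analytic bijection onto $\comp^+$, whence $\mu\in\iu$ by definition.

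I expect the main obstacle to be the bookkeeping that pins $N$ down to exactly $1$: one must simultaneously control the solutions near infinity using the asymptotics (\ref{d5}) and prevent solutions from leaking out along $\gamma$ using (\ref{d3}). The conceptual shortcut that avoids the delicate global geometry of $D(\gamma)$, and in particular the need to verify $\tilde{F}_\mu(D(\gamma))=\comp^+$, is the observation that one may simply take $D=V=\tilde{F}_\mu^{-1}(\comp^+)$, since the definition of $\iu$ only requires the existence of \emph{some} intermediate domain.
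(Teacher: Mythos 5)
Your strategy is the right one, and in fact it is essentially the argument this lemma rests on: the paper itself gives no proof here but quotes the result from \cite[Proposition 2.1]{BH}, whose proof is likewise a preimage-counting argument via the argument principle. Your organization (show the counting function $N(w)=\#\{z\in D(\gamma):\tilde F_\mu(z)=w\}$ is finite, locally constant on $\comp^+$, and equal to $1$ at one point, then take $V=\tilde F_\mu^{-1}(\comp^+)\cap D(\gamma)$ as the intermediate domain) is a standard equivalent bookkeeping of that computation, and the properness step — solutions cannot reach $\gamma$ by (\ref{d3}) and continuity (\ref{d4}), and cannot escape to infinity by (\ref{d5}) — together with the connectedness of $V$ via the open mapping theorem and the identification of $\tilde F_\mu|_V$ as an analytic bijection onto $\comp^+$, is correct. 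One small point you should make explicit: to conclude that a bounded sequence of solutions accumulating at $\partial D(\gamma)$ has a limit point \emph{on} $\gamma$, you need the image of $\gamma$ to be a closed subset of $\comp$, which follows from (\ref{d1}) and continuity of $\gamma$.

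The only step that does not work as literally stated is the evaluation $N(iY)=1$. The Rouch\'e comparison on the circle $|z-iY|=\varepsilon Y$ is fine and yields exactly one solution inside it, but the two bounds you then invoke — $|\tilde F_\mu(z)|\geq\tfrac12|z|$ for large $|z|$, and boundedness of $\tilde F_\mu$ on compacts — only \emph{confine} any further solution to the region $R_0\leq|z|\leq 2Y$, $|z-iY|>\varepsilon Y$; they do not exclude it, since $|iY|=Y\geq\tfrac12|z|$ is perfectly consistent with $\tilde F_\mu(z)=iY$ there. To rule out solutions in this intermediate region you must use (\ref{d5}) quantitatively once more: choose $R_0$ so large that $\epsilon_0:=\sup\{|\tilde F_\mu(z)-z|/|z| : z\in D(\gamma),\ |z|\geq R_0\}<\varepsilon/(1+\varepsilon)$; then for $z\in D(\gamma)$ with $|z|\geq R_0$ and $|z-iY|\geq\varepsilon Y$ one has $|z|\leq|z-iY|+Y\leq(1+1/\varepsilon)|z-iY|$, hence $|\tilde F_\mu(z)-iY|\geq|z-iY|-\epsilon_0|z|\geq\bigl(1-\epsilon_0(1+1/\varepsilon)\bigr)|z-iY|>0$, while the compact region $|z|\leq R_0$ is handled by boundedness as you say. (The same condition on $\epsilon_0$ is exactly what makes your Rouch\'e step work, so this is a two-line patch, not a new idea.) With that repair the proof is complete.
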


The following result completes the free part of Theorem \ref{main}. 
\begin{thm}\label{FIDB} The following statements hold. 
\begin{enumerate}[\rm(1)] 
\item\label{a} If $\alpha \in (0,1/2]$ and $\rho\in[0,1]$, then $\mathcal{B}_{\alpha,\rho} \subset \mathcal{UI}\subset \id(\boxplus)$. 
\item\label{b} If $\alpha \in (1/2, 2/3]$ and $\rho\in[2-1/\alpha, 1/\alpha-1]$, then $\mathcal{B}_{\alpha,\rho} \subset \id(\boxplus)$. 
\item\label{c} Otherwise, $\mathcal{B}_{\alpha,\rho} \not\subset \id(\boxplus)$.   
\end{enumerate} 
Moreover, if $(\alpha,\rho)$ satisfies the assumptions of $(\ref{a})$ or $(\ref{b})$, then any probability measure $\nu \in \mathcal{B}_{\alpha,\rho}$ has free divisibility indicator infinity. 
\end{thm}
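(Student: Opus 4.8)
The plan is to establish the three inclusions separately from the explicit transform formulas of Proposition \ref{prop31}, and then to read off the divisibility-indicator statement from closure of $\mathcal{B}_{\alpha,\rho}$ under Boolean convolution powers. First I note that, since $\nu\mapsto\nu^\alpha$ is a bijection of $\mathcal{P}_+$, every element of $\mathcal{B}_{\alpha,\rho}$ has the form $\nu=\mu^{1/\alpha}\circledast\mathbf{b}_{\alpha,\rho}$ for a unique $\mu\in\mathcal{P}_+$. By (\ref{cauchy1}) its reciprocal Cauchy transform is
\[
F_\nu(z)=\frac{-z}{(e^{-i\rho\pi}z)^\alpha\,G_\mu\!\left(-(e^{-i\rho\pi}z)^\alpha\right)},
\]
and by (\ref{id3}) its $\eta$-transform is $\eta_\nu(z)=\eta_\mu(-(e^{i\rho\pi}z)^\alpha)$. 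The governing geometric fact is that the map $\kappa(z):=-(e^{-i\rho\pi}z)^\alpha$ sends $\comp^+$ into a sector of angular width $\pi\alpha$ which, since $\alpha\le1$, lies inside $\comp\setminus\real_+$, exactly where $G_\mu$ is analytic; all three parts amount to tracking the argument of $\kappa$ and composing with the known mapping behaviour of $G_\mu$ (equivalently $\eta_\mu$) from Proposition \ref{prop0}.

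For (1), with $\alpha\le1/2$, I would prove membership in $\iu$ through Lemma \ref{lemBH}. The only obstruction to continuing $G_\mu\circ\kappa$ is the cut of $G_\mu$ along $\real_+$, met when $\kappa(z)\in\real_+$; since $\alpha\le1/2$ gives $\pi/\alpha\ge2\pi$, this happens only for $\arg z$ at distance at least $\pi$ below and above the strip $\arg z\in(0,\pi)$, so $F_\nu$ continues analytically past $\real$ into a region containing $\overline{\comp^+}$ together with a curve $\gamma\subset\comp^-$. Conditions (A), (B) and (D) of Lemma \ref{lemBH} are then routine, the asymptotics $F_\nu(z)=z+o(z)$ following from $G_\mu(w)\sim w^{-1}$ as $w\to\infty$. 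The substantive point is (C): on $\gamma$ one must verify $\tilde F_\nu(\gamma)\subset\comp^-\cup\real$, which reduces to an argument estimate combining the range of $\arg G_\mu$ for $\mu\in\mathcal{P}_+$ with the amplification factor $\alpha\le1/2$. Lemma \ref{lem1} then gives $\nu\in\id(\boxplus)$, so $\mathcal{B}_{\alpha,\rho}\subset\iu\subset\id(\boxplus)$.

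For (2), with $1/2<\alpha\le2/3$, membership in $\iu$ may fail, so instead I would apply the Pick-function criterion of Theorem \ref{thmBV93}(2): it suffices that $-\phi_\nu$, where $\phi_\nu(z)=F_\nu^{-1}(z)-z$, extend analytically to $\comp^+$ with values in $\comp^-\cup\real$. Inverting the formula above and tracking arguments, the requirement that this extension stay in $\comp^-\cup\real$ throughout $\comp^+$ translates into the pair of inequalities defining $2-1/\alpha\le\rho\le1/\alpha-1$, with the threshold $\alpha=2/3$ arising as the largest $\alpha$ for which this $\rho$-window is nonempty (it is nonempty exactly when $2-1/\alpha\le1/\alpha-1$, i.e.\ $\alpha\le2/3$). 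Part (3) is then immediate: taking $\mu=\delta_1$ shows $\mathbf{b}_{\alpha,\rho}\in\mathcal{B}_{\alpha,\rho}$, and by the characterization of free infinite divisibility of Boolean stable laws in \cite{AHb} this measure fails to lie in $\id(\boxplus)$ whenever $(\alpha,\rho)$ is outside the ranges of (1) and (2); hence $\mathcal{B}_{\alpha,\rho}\not\subset\id(\boxplus)$.

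For the divisibility-indicator claim I would use that $\mathcal{B}_{\alpha,\rho}$ is closed under Boolean powers. Multiplying $\eta_\mu$ by $t>0$ preserves every argument condition in Proposition \ref{prop0}, so $\mu^{\uplus t}\in\mathcal{P}_+$; then
\[
\eta_{\nu^{\uplus t}}(z)=t\,\eta_\mu(-(e^{i\rho\pi}z)^\alpha)=\eta_{\mu^{\uplus t}}(-(e^{i\rho\pi}z)^\alpha),
\]
whence $\nu^{\uplus t}=(\mu^{\uplus t})^{1/\alpha}\circledast\mathbf{b}_{\alpha,\rho}\in\mathcal{B}_{\alpha,\rho}$ for every $t>0$. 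When $(\alpha,\rho)$ satisfies (1) or (2), parts (1)--(2) give $\nu^{\uplus t}\in\id(\boxplus)$ for all $t>0$, so by the characterization (\ref{free div ind char}) the indicator $\phi(\nu)$ equals $\infty$. I expect the main obstacle to be the sharp angular analysis in (1) and (2): verifying boundary condition (C) of Lemma \ref{lemBH} \emph{uniformly} over all $\mu\in\mathcal{P}_+$ in (1), and pinning down the exact $\rho$-window together with the threshold $\alpha=2/3$ from the Voiculescu-transform estimate in (2).
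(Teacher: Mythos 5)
Your overall architecture coincides with the paper's: Lemma \ref{lemBH} for part (1), the Pick-function criterion of Theorem \ref{thmBV93} for part (2), the results of \cite{AHb} for part (3), and Boolean-power closure plus (\ref{free div ind char}) for the indicator claim (this last argument is complete and is exactly the paper's). However, part (3) contains a genuine error. Your witness $\mu=\delta_1$ fails when $\alpha=1$: in that case $\mathbf{b}_{1,\rho}=\mathbf{c}_\rho$ is a Cauchy distribution, which \emph{is} freely infinitely divisible, and the characterization in \cite{AHb} excludes only $\alpha>1$ and $\alpha\in(1/2,1)$ with $\rho$ outside the window --- it does not exclude $\alpha=1$. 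Since $\alpha=1$ falls under ``otherwise'' for every $\rho$, your argument proves nothing there. The paper needs a separate construction for precisely this case: Proposition \ref{exacauchy} shows that $\lambda_{t,\rho}=((1-t)\delta_0+t\delta_1)\circledast\mathbf{c}_\rho\in\mathcal{B}_{1,\rho}$ is not FID for suitable $t$ (e.g.\ $t\in(0,1/2)$), by locating a zero of $z^2+2(2t-1)e^{i\rho\pi}z+e^{2i\rho\pi}$ in $\comp^+$, so that $\phi_{\lambda_{t,\rho}}$ admits no analytic extension to $\comp^+$. Without something of this kind, (3) is unproved for $\alpha=1$.

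In (1) and (2) you have correctly located where the work lies, but what is missing is not routine. For (1), Lemma \ref{lemBH} needs a \emph{specific} curve: the paper takes $\gamma=\ell_{\theta_{\alpha,\rho}}\cup\{0\}\cup\ell_{\phi_{\alpha,\rho}}$ with $\theta_{\alpha,\rho}=-\tfrac{\alpha\rho\pi}{1-\alpha}$ and $\phi_{\alpha,\rho}=\tfrac{(1-\rho\alpha)\pi}{1-\alpha}$, exactly the rays on which $e^{i\alpha\rho\pi}z^{1-\alpha}$ is real, so that for every $x>0$ the integrand $1/(z+e^{i\alpha\rho\pi}x^\alpha z^{1-\alpha})$ lies in $\comp^+\cup\real$ and condition (\ref{d3}) holds for \emph{all} mixing measures simultaneously; your unspecified $\gamma$ and ``argument estimate'' is precisely this choice, and it cannot be made uniformly for an arbitrary curve. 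The paper also first reduces to $X$ discrete with values in $[a,b]\subset(0,\infty)$ (legitimate since $\iu$ and $\id(\boxplus)$ are weakly closed); this reduction is what gives continuity of $\tilde F$ at $0$ in condition (\ref{d4}), the uniform $o(z)$ estimate in condition (\ref{d5}), and the boundary case $\alpha=1/2$, where your sector picture degenerates (for $\rho=1$ the singular ray $\arg z=\rho\pi-\pi/\alpha$ touches $\ell_{\theta_{\alpha,\rho}}$). For (2) the gap is more serious: ``inverting the formula and tracking arguments'' cannot be carried out in a planar domain at all, because for $\alpha>1/2$ the relevant sector has angular width $\phi_{\alpha,\rho}-\theta_{\alpha,\rho}=\pi/(1-\alpha)>2\pi$. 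The paper must pass to a Riemann surface: it continues $G$ separately to $\comp_{(\theta_{\alpha,\rho},\rho\pi)}$, $\comp_{(\rho\pi,\phi_{\alpha,\rho})}$ and a neighborhood of $\ell_{\rho\pi}$, inverts each piece on its own domain, glues the three local inverses into one analytic map $\tilde F^{-1}$ on $\comp^+$, and only then verifies $\tilde F^{-1}(z)-z\in\comp^-\cup\real$. The window $\rho\in[2-1/\alpha,1/\alpha-1]$ enters to keep the argument inequalities (\ref{estimateA})--(\ref{estimateB}) valid, i.e.\ to make the continuations exist at all, not merely to make the window nonempty as you suggest.
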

\begin{proof} 
Let $B_{\alpha,\rho}$ and $X$ be classically independent random variables following $\mathbf{b}_{\alpha,\rho}$ and a probability measure 
$\mu \in \mathcal{P}_+$, respectively. We may assume that $X$ is discrete and takes only a finitely many number of positive values, so in particular $a\leq X \leq b$ for some $0<a<b$. The general case follows from approximation since the set $\iu$ (resp.\ $\id(\boxplus)$) is closed with respect to the weak convergence \cite{AHa} (resp.\ see e.g.\ \cite[Theorem 3.8]{BNT02} and \cite[Lemma 7.8]{Sa99}).

Using (\ref{eq802}), we have
\begin{equation}\label{eq01}
\begin{split}
G_{X B_{\alpha,\rho}}(z)=\E\!\left[\frac{1}{z+e^{i\alpha\rho \pi} X^\alpha z^{1-\alpha}}\right],\qquad z\in\comp^+. 
\end{split}
\end{equation}
We define
\[
\begin{split}
&\theta_{\alpha,\rho} := -\frac{\alpha \rho \pi}{1-\alpha},~~~\phi_{\alpha,\rho} := \frac{(1-\rho\alpha)\pi}{1-\alpha}, \\ 
&\ell_\theta := \{re^{i\theta}: r> 0\},\qquad \theta\in \real, \\
&\gamma:=\ell_{\theta_{\alpha,\rho}}\cup\{0\}\cup \ell_{\phi_{\alpha,\rho}}.  
\end{split}
\]

(\ref{a})\,\, 
Assume moreover that $\alpha\in(0,1/2)$; the case $\alpha=1/2$ follows by approximation. It then holds that $\theta_{\alpha,\rho} \in (-\pi, 0]$, $\phi_{\alpha,\rho} \in [\pi, 2\pi)$ and $\phi_{\alpha,\rho} - \theta_{\alpha,\rho} \in(\pi, 2\pi).$ Note that $D(\gamma)=\comp_{(\theta_{\alpha,\rho}, \phi_{\alpha,\rho})}$. 

We will show that the curve $\gamma$ satisfies the assumptions in Lemma \ref{lemBH}. Condition (\ref{d1}) is clear. 
For condition (\ref{d4}), we first show that 
\begin{itemize}
\item[($\ast$)] $G_{X B_{\alpha,\rho}}$ extends analytically to a function $\tilde{G}$ in $D(\gamma)$ which is continuous on $\overline{D(\gamma)}\setminus\{0\}$, and also $\tilde{G}$ does not have a zero in $\overline{D(\gamma)}\setminus\{0\}$. 
\end{itemize}
In view of (\ref{eq01}), it suffices to show that for any $x>0$ and $z\in \overline{D(\gamma)}\setminus\{0\}$, the point $w(x,z):=z+x^\alpha e^{i\alpha\rho \pi} (z)_{(\theta_{\alpha,\rho},\phi_{\alpha,\rho})}^{1-\alpha}$ is not zero. Indeed, when $z\in\comp^+$, $w(x,z)$ is not zero since $w(x,z)=F_{x B_{\alpha,\rho}}(z)\in\comp^+$. When $z=re^{i\theta}$, $\theta \in[\theta_{\alpha,\rho},0]$, we compute the difference of the arguments of the points $z, x^\alpha e^{i\alpha\rho \pi} (z)_{(\theta_{\alpha,\rho},\phi_{\alpha,\rho})}^{1-\alpha}$: 
\begin{equation}\label{estimateA}
0<(1-\alpha)\theta + \alpha\rho\pi -\theta \leq \frac{\alpha}{1-\alpha} \rho\pi < \pi. 
\end{equation}
This shows, for each $z=re^{i\theta}$, $\theta \in[\theta_{\alpha,\rho},0]$, there exists a line $L_z$ passing 0 such that for any $x>0$ the points $z, x^\alpha e^{i\alpha\rho \pi} (z)_{(\theta_{\alpha,\rho},\phi_{\alpha,\rho})}^{1-\alpha}$ lie in the same open half-plane $H_z$ with boundary $L_z$, and hence $w(x,z)$ lies in $H_z$ too, so $w(x,z)\neq0$. 
When $z=re^{i\theta}, \theta\in[\pi, \phi_{\alpha,\rho}]$ we get similarly 
\begin{equation}\label{estimateB}
0>(1-\alpha)\theta + \alpha\rho\pi -\theta \geq -\frac{\alpha}{1-\alpha} (1-\rho)\pi >-\pi. 
\end{equation}
From a similar reasoning, $w(x,z)\neq0$. Since $w(x,z)$ is continuous with respect to $x$, we get $\inf_{x\in[a,b]}|w(x,z)|>0$. Hence we can define the analytic continuation of $G_{X B_{\alpha,\rho}}$ by 
$$
\tilde{G}(z)=\E\!\left[\frac{1}{z+e^{i\alpha\rho \pi} X^\alpha (z)_{(\theta_{\alpha,\rho},\phi_{\alpha,\rho})}^{1-\alpha}}\right], \qquad z\in D(\gamma). 
$$
This extends continuously to $\overline{D(\gamma)}\setminus\{0\}$. The arguments around (\ref{estimateA}) and (\ref{estimateB}) actually show that $\tilde{G}(z)\neq0$ for $z\in \overline{D(\gamma)}\setminus\{0\}$, because $1/w(x,z)$ lies in the half-plane $(H_z)^{-1}$ for any $x>0$ and so $\tilde{G}(z)= \E[1/w(X,z)]\in (H_z)^{-1}$ too. Thus we established $(\ast)$. 

Let $\tilde{F}(z):=1/\tilde{G}(z)$. Then $\tilde{F}$ is analytic in $D(\gamma)$ and continuous on $\overline{D(\gamma)}\setminus\{0\}$ from $(\ast)$. Moreover, since $X$ takes only finitely many values, it is easy to see that $\lim_{z\to0, z\in D(\gamma)}\tilde{G}(z)=\infty$, and hence $\tilde{F}$ extends to a continuous function on $\overline{D(\gamma)}$.  This is condition (\ref{d4}).

For condition (\ref{d3}), take  $r>0$ and then  
\[
\begin{split}
&\frac{1}{re^{i\theta_{\alpha,\rho}}+e^{i\alpha\rho \pi} X^\alpha (re^{i\theta_{\alpha,\rho}})_{(\theta_{\alpha,\rho},\phi_{\alpha,\rho})}^{1-\alpha}}= \frac{1}{re^{i\theta_{\alpha,\rho}}+X^\alpha r^{1-\alpha}} \in \comp^+\cup(0,\infty),\\
&\frac{1}{re^{i\phi_{\alpha,\rho}}+e^{i\alpha\rho \pi} X^\alpha (re^{i\phi_{\alpha,\rho}})_{(\theta_{\alpha,\rho},\phi_{\alpha,\rho})}^{1-\alpha}}=\frac{1}{re^{i\phi_{\alpha,\rho}}-X^{\alpha}r^{1-\alpha}} \in \comp^+\cup(-\infty,0).  
\end{split}
\]  
We take the expectation and use (\ref{eq01}) to obtain $\tilde{G}(\gamma\setminus\{0\})\subset \comp^+\cup\real\setminus\{0\}$. Recall that $\tilde{F}(0)=0$ and so we have condition (\ref{d3}). 

Finally, since $X$ is bounded, it is easy to show that 
\begin{equation}\label{eq1}
z\left(\tilde{G}(z)-\frac{1}{z}\right) =- \E\!\left[\frac{e^{i\alpha\rho \pi} X^\alpha (z)_{(\theta_{\alpha,\rho},\phi_{\alpha,\rho})}^{-\alpha}}{1+e^{i\alpha\rho \pi} X^\alpha (z)_{(\theta_{\alpha,\rho},\phi_{\alpha,\rho})}^{-\alpha}}\right] =o(1)
\end{equation}
uniformly as $z\to\infty$, $z\in D(\gamma)$.   This shows condition (\ref{d5}). From Lemma \ref{lemBH}, the law of $X B_{\alpha,\rho}$ is in $\iu$.

(\ref{b})\,\, Assume moreover that $\rho\in(2-1/\alpha, 1/\alpha-1)$. Note now that $\theta_{\alpha,\pi}\in(-\pi,0), \phi_{\alpha,\rho}\in(\pi,2\pi)$.  Since now $\phi_{\alpha,\rho}-\theta_{\alpha,\rho}>2\pi$, the sector $\comp_{(\theta_{\alpha,\rho}, \phi_{\alpha,\rho})}$ coincides with $\comp\setminus \{0\}$ as a subset of $\comp$, we have to modify Lemma \ref{lemBH}. We use the Riemannian surface corresponding to the interval $(\theta_{\alpha,\rho}, \phi_{\alpha,\rho})$ of arguments and divide the domain into three parts:  $\comp_{(\theta_{\alpha,\rho},\rho\pi)}$, $ \comp_{(\rho\pi,\phi_{\alpha,\rho})}$ and an open neighborhood of $\ell_{\rho\pi}$. We denote by $\tilde{G}_1,\tilde{G}_2,\tilde{G}_3$ analytic maps in these three domains respectively such that each coincides with $G_{X B_{\alpha,\rho}}$ in the intersection of each domain and $\comp^+$, and we denote by $\tilde{F}_i$ their reciprocals. Note that we can define the analytic continuations $\tilde{G}_1,\tilde{G}_2$ along the same line of the previous case (\ref{a}); the inequalities (\ref{estimateA}), (\ref{estimateB}) are still true thanks to the assumption $\rho\in(2-1/\alpha, 1/\alpha-1)$, and so the functions $w_1(x,z):=z+x^\alpha e^{i\alpha\rho \pi} (z)_{(\theta_{\alpha,\rho},\rho\pi)}^{1-\alpha}$ and $w_2(x,z):=z+x^\alpha e^{i\alpha\rho \pi} (z)_{(\rho\pi,\phi_{\alpha,\rho})}^{1-\alpha}$ do not vanish.  Hence we have the expression for $\tilde{G}_1$ as 
$$
\tilde{G}_1(z)=\E\!\left[\frac{1}{z+e^{i\alpha\rho \pi} X^\alpha (z)_{(\theta_{\alpha,\rho},\rho\pi)}^{1-\alpha}}\right] 
$$
and similarly for $\tilde{G}_2$. The map $\tilde{G}_3$ is just the restriction of $G_{X B_{\alpha,\rho}}$. 
Note that $\tilde{F}_3(\ell_{\rho\pi})=\ell_{\rho\pi}$ and  $\tilde{F}_3$ is univalent in an open neighborhood $D_3$ of $\ell_{\rho\pi}$ from a direct computation of derivative of $\tilde{G}_3$. So the left compositional inverse $(\tilde{F}_3|_{D_3})^{-1}$ exists in an open neighborhood of $\ell_{\rho\pi}$. 

We want to define a univalent inverse of $\tilde{F}_1$ in $\comp_{(0,\rho\pi)}$. Now we take the curve  $\gamma_1=\ell_{\theta_{\alpha,\rho}}\cup \{0\}\cup \ell_{\rho\pi}$ as the curve $\gamma$ in Lemma \ref{lemBH}. We can check the conditions in Lemma \ref{lemBH} similarly to (\ref{a}) except that we understand that $D(\gamma_1)= \comp_{(\theta_{\alpha,\rho},\rho\pi)}$ and we replace condition (\ref{d3}) by $\tilde{F}_1(\gamma_1)\subset (\comp_{(0,\rho\pi)})^c$.  Accordingly to these modifications, the conclusion of the lemma changes to: there is a domain $D_1\subset \comp_{(\theta_{\alpha,\rho},\rho\pi)}$ such that $\tilde{F}_1$ is a bijection from $D_1$ onto $\comp_{(0,\rho\pi)}$. The proof of this fact is almost the same as \cite[Proposition 2.1]{BH}.  Hence its inverse map $(\tilde{F}_1|_{D_1})^{-1}$ exists in $\comp_{(0,\rho\pi)}$. Similarly, the inverse $(\tilde{F}_2|_{D_2})^{-1}$ exists in $\comp_{(\rho\pi,\pi)}$ for some $D_2$. Finally we define an analytic map $\tilde{F}^{-1}$ in $\comp^+$ by 
\[
\tilde{F}^{-1}(z)=
\begin{cases} 
 (\tilde{F}_1|_{D_1})^{-1}(z), &z\in \comp_{(0, \rho\pi)}, \\
 (\tilde{F}_3|_{D_3})^{-1}(z), &z\in \ell_{\rho\pi}, \\ 
  (\tilde{F}_2|_{D_2})^{-1}(z), &z\in \comp_{(\rho\pi,\pi)}. 
\end{cases}
\] 
This map is not necessarily univalent, but we can show that $\tilde{\phi}(z):=\tilde{F}^{-1}(z)-z$ for $z\in\comp^+$ takes values in $\comp^-\cup\real$; see the arguments  in \cite[Lemma 2.7]{H} or in \cite[Proposition 3.6]{AHb}. Since $\tilde{\phi}$ is the analytic continuation of the Voiculescu transform $\phi_{X B_{\alpha,\rho}}$, the law of $X B_{\alpha,\rho}$ is FID from Theorem \ref{thmBV93}.

(\ref{c})\,\, As proved in \cite{AHb}, $\mathbf{b}_{\alpha,\rho}\notin \id(\boxplus)$ in the following cases: $\alpha>1$; $\alpha\in(1/2,1)$ and $\rho \in [0,\frac{2\alpha-1}{\alpha})\cup(\frac{1-\alpha}{\alpha},1]$. The remaining case is $\alpha=1$ when $\mathbf{b}_{\alpha,\rho}$ is a Cauchy distribution, which itself is FID. However we can show $\mathcal{B}_{1,\rho} \not\subset \id(\boxplus)$; see Proposition \ref{exacauchy}.  

For the final statement, take $\nu \in \mathcal{B}_{\alpha,\rho}$, which may be written as $\nu=\mu^{1/\alpha}\circledast\mathbf{b}_{\alpha,\rho}$. For any $t>0$, from (\ref{id3}) we have 
$$
\nu^{\uplus t}= (\mu^{\uplus t})^{1/\alpha}\circledast\mathbf{b}_{\alpha,\rho} \in \mathcal{B}_{\alpha,\rho} \subset \id(\boxplus), 
$$
and hence $\phi(\nu)=\infty$ from (\ref{free div ind char}). 
\end{proof}
\begin{rem}
In the context of complex analysis, the map $F_{\mu\circledast\mathbf{b}_{\alpha,\rho}}$ may be useful because it has the invariant half line $\ell_{\rho\pi}=\{re^{i\rho\pi}: r>0\}$. 
\end{rem}

For a nonnegative finite measure $\sigma$ on $(0,1/2]$, the continuous Boolean convolution \cite{AHb} is the probability measure  $\mathbf{b}(\sigma)$ defined by 
$$
\eta_{\textbf{b}(\sigma)}(z) = -\int_{(0,1/2]} (-z)^{\alpha}\, \sigma(d\alpha).  
$$ 
We can similarly prove the free infinite divisibility for the scale mixture $\mu\circledast\mathbf{b}(\sigma)$. 
However it turns out that $\mathbf{b}(\sigma)$ belongs to $\mathcal{B}_{1/2,1}$ as we see in Proposition \ref{Boolean integral}.

\section{Examples}\label{Examples}

\subsection{Explicit Densities of Probability Measures in $\mathcal{B}_{1/2,1}$}
The probability density function (\ref{density mixture}) of $\mu \circledast \mathbf{b}_{1/2,1}$ is in particular simply written as
\begin{equation}\label{density 1/2,1}
\frac{x^{-1/2}}{\pi}\int_{0}^{\infty} \frac{\sqrt{y}}{x+y}\, \mu(d y),\qquad x>0. 
\end{equation}
By introducing the measure $\tau(d y)=\sqrt{y}\,\mu(dy)$, the density has the expression 
\begin{equation}\label{stieltjes density}
-\frac{x^{-1/2}}{\pi} G_\tau(-x),\qquad x>0. 
\end{equation}
We will find explicit probability densities of this form. 

\begin{prop}\label{gen beta1}
Let $\alpha \in(1/2, 1]$ and $\alpha\beta\in(0,1]$. The generalized beta distribution of the second kind with density function 
\begin{equation}\label{eq001}
c_{\alpha,\beta}\cdot\frac{x^{\alpha-3/2}}{(x^{\alpha\beta}+1)^{1/\beta}}1_{(0,\infty)}(x) 
\end{equation}
belongs to the class $\mathcal{B}_{1/2,1}$, and hence it is in $\id(\boxplus)\cap \EM$ from Theorems \ref{classical ID} and \ref{FIDB}.  Note that $c_{\alpha,\beta}>0$ is a normalizing constant. 
\end{prop}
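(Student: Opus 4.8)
The plan is to realize the density (\ref{eq001}) explicitly as a classical scale mixture of $\mathbf{b}_{1/2,1}$ by producing the mixing measure, and then to read off the two infinite-divisibility conclusions from the results already established for $\mathcal{B}_{1/2,1}$. First I would reduce membership in $\mathcal{B}_{1/2,1}$ to a statement about a Stieltjes transform. By (\ref{density 1/2,1}) together with (\ref{stieltjes density}), a probability density $p$ on $(0,\infty)$ lies in $\mathcal{B}_{1/2,1}$ exactly when the function $g(x):=x^{1/2}p(x)$ has the form $\frac1\pi\int_{(0,\infty)}\frac{\tau(dy)}{x+y}$ for a positive measure $\tau$ on $[0,\infty)$ with $\int_{(0,\infty)}y^{-1/2}\,\tau(dy)=1$; the mixing measure is then $\mu(dy)=y^{-1/2}\tau(dy)$. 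For (\ref{eq001}) this amounts to showing that
\[
g(x)=c_{\alpha,\beta}\,\frac{x^{\alpha-1}}{(x^{\alpha\beta}+1)^{1/\beta}},\qquad x>0,
\]
is, up to the factor $1/\pi$, the Stieltjes transform of such a $\tau$.

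Next I would establish the Stieltjes property by a direct estimate on arguments. I extend $g$ to $\comp\setminus(-\infty,0]$ using the principal branches of the powers; this is well defined because $\alpha\beta\le1$ forces $\arg(z^{\alpha\beta})=\alpha\beta\arg z\in(-\pi,\pi)$, so that $z^{\alpha\beta}+1$ never meets $(-\infty,0]$. For $z\in\comp^+$ one computes $\arg g(z)=(\alpha-1)\arg z-\tfrac1\beta\arg(z^{\alpha\beta}+1)$, and since $z^{\alpha\beta}\in\comp^+$ the term $\arg(z^{\alpha\beta}+1)$ lies in $(0,\alpha\beta\arg z)$. Using $\alpha\le1$ for the upper bound and the resulting cancellation for the lower bound yields $\arg g(z)\in(-\pi,0)$, hence $g(\comp^+)\subset\comp^-$. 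Combined with $g>0$ on $(0,\infty)$, the asymptotics $g(x)\sim c_{\alpha,\beta}\,x^{\alpha-1}$ as $x\downarrow0$ (no $1/x$ term, since $\alpha-1>-1$) and $g(x)=O(x^{-1})\to0$ as $x\to\infty$ (no additive constant), this identifies $g$ as $\tfrac1\pi$ times the Stieltjes transform of a positive measure $\tau$ on $[0,\infty)$ with $\tau(\{0\})=0$.

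Then I would pin down $\tau$ and check that $\mu=y^{-1/2}\tau$ is a probability measure. By Stieltjes--Perron inversion, $\tau$ is absolutely continuous with density $-\im\tilde g(-t+i0)$ for $t>0$, where $\tilde g$ is the continuation above; writing $z^{\alpha\beta}+1=R(t)e^{i\Theta(t)}$ with $\Theta(t)\in(0,\alpha\beta\pi)$ gives the explicit density $c_{\alpha,\beta}\,t^{\alpha-1}R(t)^{-1/\beta}\bigl(-\sin[(\alpha-1)\pi-\Theta(t)/\beta]\bigr)$, whose argument $(\alpha-1)\pi-\Theta(t)/\beta$ stays in $(-\pi,0)$ (again by $\alpha\le1$ and $\alpha\beta\le1$), confirming positivity independently. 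This density is of order $t^{\alpha-1}$ as $t\downarrow0$ and of order $t^{-1-\alpha\beta}$ as $t\to\infty$, so $\int y^{-1/2}\tau(dy)<\infty$ precisely because $\alpha>1/2$ controls integrability at $0$ (while $\alpha\beta>0$ controls it at $\infty$); thus $\mu$ is a finite measure. Finally, since $x^{-1/2}g(x)$ is by construction the density (\ref{eq001}), which integrates to $1$ by the choice of $c_{\alpha,\beta}$ and equals the density of $\mu\circledast\mathbf{b}_{1/2,1}$, the total mass of that convolution equals $\mu(\real_+)$, forcing $\mu(\real_+)=1$. Hence $\mu\in\mathcal{P}_+$ and (\ref{eq001}) lies in $\mathcal{B}_{1/2,1}$. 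The remaining claims follow at once: Theorem \ref{classical ID}(\ref{ci}) (with $\alpha=1/2$, $\rho=1$) gives membership in $\EM$, and Theorem \ref{FIDB}(\ref{a}) gives membership in $\id(\boxplus)$.

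The main obstacle is not the positivity of the mixing measure but the verification that $\mu$ is \emph{finite}: the recovered $\tau$ has density of order $t^{\alpha-1}$ near the origin, so $y^{-1/2}\tau(dy)$ is integrable at $0$ exactly when $\alpha>1/2$, which is precisely the standing hypothesis; this estimate is therefore both the crux of the argument and the place where the admissible range of $\alpha$ is genuinely needed. A secondary technical point is the boundary case $\alpha\beta=1$, where $z^{\alpha\beta}+1$ has a zero at $z=-1$ on the cut and the inversion produces a $\tau$ supported in $[0,1]$ (for instance $\tau=\pi c_{\alpha,\beta}\,\delta_1$ when $\alpha=\beta=1$, recovering $\mu=\delta_1$ and $\mathbf{b}_{1/2,1}$ itself); here the uniform argument estimate of the second paragraph still yields the Stieltjes property, and the finiteness and normalization of $\mu$ are obtained either directly or by letting $\alpha\beta\uparrow1$.
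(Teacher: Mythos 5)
Your proposal is correct and follows essentially the same route as the paper's proof: both reduce membership in $\mathcal{B}_{1/2,1}$ via (\ref{density 1/2,1})--(\ref{stieltjes density}) to showing that $x^{\alpha-1}(x^{\alpha\beta}+1)^{-1/\beta}$ is (a multiple of) a Stieltjes transform, prove this by the same argument estimate (your lower bound $\arg g(z)>(\alpha-1)\arg z-\alpha\arg z=-\arg z$ is precisely the paper's cancellation $(1-\alpha)(\pi-\theta)-\varphi(re^{i\theta})/\beta<\pi-\theta$ after the reflection $z\mapsto -z$), and then use Stieltjes inversion plus the $t^{\alpha-1}$ behaviour of the recovered density near $0$ --- exactly where $\alpha>1/2$ enters --- to conclude that $y^{-1/2}\tau(dy)$ is finite. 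The remaining differences are cosmetic: you work with $g$ on $\comp\setminus(-\infty,0]$ rather than $G(z)=-g(-z)$ on $\comp\setminus\real_+$, you invoke the Stieltjes-function representation (so you must also check the $t^{-1-\alpha\beta}$ tail, which the paper avoids by obtaining $\tau$ directly as a probability measure from the Cauchy-transform criterion), and you fix the normalization of $\mu$ by mass balance instead of an explicit normalizing constant.
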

\begin{proof} From (\ref{density 1/2,1}) and (\ref{stieltjes density}), it suffices to find a measure $\tau$ such that $y^{-1/2}\tau(dy)$ is a finite measure and that 
$-G_\tau(-x)= \frac{x^{\alpha-1}}{(x^{\alpha\beta}+1)^{1/\beta}}$; then we may define $\mu=c\cdot y^{-1/2}\tau(dy)$ for a normalizing constant $c>0$. 

We define an analytic map
$$
G(z):=-\frac{(-z)^{\alpha-1}}{((-z)^{\alpha\beta}+1)^{1/\beta}},\qquad z\in\comp\setminus\real_+
$$
and we show that this is the Cauchy transform of a probability measure. 
Since $zG(z) =o(1)$ uniformly as $z\to\infty, z\in\comp^+$, it suffices to show that $G$ maps $\comp^+$ into $\comp^-$. 
For $z=r e^{i\theta}$, $r>0,\theta \in (0,\pi)$, we have 
\[
\begin{split}
\im(G(r e^{i\theta}))
&= -\text{Im}\!\left( \frac{(r e^{i(\theta-\pi)})^{\alpha-1}}{((r e^{i(\theta-\pi)})^{\alpha\beta}+1)^{1/\beta}}\right)  
=-r^{\alpha-1} \text{Im}\!\left( \frac{e^{i(1-\alpha)(\pi-\theta)} }{(r^{\alpha\beta}e^{-i\alpha\beta(\pi-\theta)}+1)^{1/\beta}}\right). 
\end{split}
\]
Let $\varphi(r e^{i\theta}):=\arg(r^{\alpha\beta}e^{-i\alpha\beta(\pi-\theta)}+1)$ and $R(r e^{i\theta}):=|r^{\alpha\beta}e^{-i\alpha\beta(\pi-\theta)}+1|$. Since $\alpha\beta \in(0,1]$, 
it is easy to see that $r^{\alpha\beta}e^{-i\alpha\beta(\pi-\theta)}+1\in\comp^-$ and $\varphi(r e^{i\theta})\in (-\alpha\beta(\pi-\theta),0)$. We have the expression
\begin{equation}\label{x near 0}
\im(G(r e^{i\theta}))=-r^{\alpha-1}R(r e^{i\theta})^{-1/\beta} \sin\left((1-\alpha)(\pi-\theta)-\varphi(r e^{i\theta})/\beta\right). 
\end{equation}
Since $\varphi(r e^{i\theta})\in (-\alpha\beta(\pi-\theta),0)$, we get $0<(1-\alpha)(\pi-\theta)-\varphi(r e^{i\theta})/\beta<\pi-\theta <\pi$, and hence 
$\im(G(r e^{i\theta}))<0$. 

Now we know that there exists $\tau\in\mathcal{P}$ such that $G=G_\tau$. Since $G$ takes real values on $(-\infty,0)$, it follows from the Stieltjes inversion that $\tau\in\mathcal{P}_+$.  Both $\varphi$ and $R$ extend continuously to $\comp^+\cup \real\setminus\{1\}$ ($R$ extends to $\comp^+\cup \real$. $\varphi$ also extends to $\comp^+\cup \real$ if $\alpha\beta<1$). Therefore (\ref{x near 0}) gives us
\begin{equation}\label{x near 01}
\lim_{y\downarrow0}\im(G(x+i y))=-x^{\alpha-1}R(x)^{-1/\beta}\sin\left(\alpha\pi+\varphi(x)/\beta\right), \qquad x>0, x\neq1. 
\end{equation}
By the Stieltjes inversion, $\tau$ has a density which behaves as $\frac{1+o(1)}{\pi}\sin(\alpha\pi) x^{\alpha-1}$ as $x\downarrow0$, and hence $x^{-1/2}\tau(d x)$ is a finite measure for $\alpha \in(1/2,1]$. 
Thus $\mu\circledast\mathbf{b}_{1/2,1}$ has the density (\ref{eq001}) thanks to the arguments in the first paragraph of the proof. 
\end{proof}

\begin{prop} Let $-1<a< 1/2$. The probability measure  with density
$$
c_a\cdot \frac{(1+x)^a-1}{a x^{3/2}}1_{(0,\infty)}(x)
$$
 belongs to $\mathcal{B}_{1/2,1}$, where $c_a>0$ is a normalizing constant.  If $a=0$, this measure is understood as 
 $$
 c_0\cdot\frac{\log(1+x)}{x^{3/2}}1_{(0,\infty)}(x)\,dx. 
 $$
 \end{prop}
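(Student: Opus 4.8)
The plan is to follow the method of Proposition \ref{gen beta1}: by (\ref{density 1/2,1}) and (\ref{stieltjes density}) it suffices to produce a positive measure $\tau$ on $\real_+$ for which $y^{-1/2}\tau(dy)$ is a finite measure and
$$-G_\tau(-x)=\frac{(1+x)^a-1}{ax},\qquad x>0$$
(read as $x^{-1}\log(1+x)$ when $a=0$). Indeed, setting $\mu:=\big(\int y^{-1/2}\tau(dy)\big)^{-1}\,y^{-1/2}\tau(dy)\in\mathcal{P}_+$, formula (\ref{stieltjes density}) shows that $\mu\circledast\mathbf{b}_{1/2,1}$ has density $-\tfrac{x^{-1/2}}{\pi}G_{\sqrt{y}\mu}(-x)$, which is a constant multiple of $\frac{(1+x)^a-1}{ax^{3/2}}$; since $\mu\circledast\mathbf{b}_{1/2,1}$ is automatically a probability measure, that constant is forced to be the normalizing constant $c_a$.

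First I would introduce the candidate Cauchy transform
$$G(z):=\frac{(1-z)^a-1}{az},\qquad z\in\comp\setminus\real_+$$
(and $G(z)=z^{-1}\log(1-z)$ for $a=0$), where $(1-z)^a$ is the principal power; this is legitimate since $z\notin\real_+$ forces $1-z\notin(-\infty,1]\subset-\real_+$. Note $G(-x)=-\frac{(1+x)^a-1}{ax}$, so the required identity is exactly $G=G_\tau$. One also checks $G$ is analytic across $[0,1)$ and $G(0)=-1$, so that the support of $\tau$ will sit in $[1,\infty)$.

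The crucial and hardest step is to show that $G$ is the Cauchy transform of a positive measure on $\real_+$. Using the representation $-G(z)=\int_0^1(1-sz)^{a-1}\,ds$, obtained by integrating $(1-t)^{a-1}$ along the segment $[0,z]$, observe that for $z\in\comp^+$ each factor $1-sz$ lies in $\comp^-$, so $\arg(1-sz)\in(-\pi,0)$ and hence $\arg\big((1-sz)^{a-1}\big)=(1-a)\big(-\arg(1-sz)\big)\in\big(0,(1-a)\pi\big)$. When $a\in[0,1/2)$ this lies in $(0,\pi)$, so $\im\big((1-sz)^{a-1}\big)>0$ for every $s$, giving $\im G(z)<0$ on $\comp^+$; combined with $G\to0$ at $\infty$ and $G$ real on $(-\infty,0)$, the Stieltjes representation produces a positive $\tau$ with $G=G_\tau$, and Stieltjes inversion yields the explicit density $g(y)=\frac{\sin(\pi a)}{\pi a}\,\frac{(y-1)^a}{y}\,1_{(1,\infty)}(y)$ (and $\frac1y1_{(1,\infty)}(y)$ for $a=0$). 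For $a\in(-1,0)$ the per-point estimate can fail, but then $\tau$ is finite and I would instead verify $G=G_\tau$ directly for the same $g$: substituting $u=y-1$, using the partial fraction $\frac{1}{(u+1)((z-1)-u)}=\frac1z\big(\frac{1}{u+1}+\frac{1}{(z-1)-u}\big)$, and the beta-type integral $\int_0^\infty\frac{u^a}{u+c}\,du=-\frac{\pi c^a}{\sin\pi a}$ (valid for $a\in(-1,0)$) reproduces $G(z)=\frac{(1-z)^a-1}{az}$ exactly. (Alternatively, both ranges follow at once by analytic continuation in the parameter $a$.) Positivity of $g$ is clear since $\frac{\sin(\pi a)}{\pi a}>0$ on $(-1,1/2)\setminus\{0\}$.

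Finally I would verify the moment condition $\int_0^\infty y^{-1/2}\tau(dy)=\frac{\sin(\pi a)}{\pi a}\int_1^\infty\frac{(y-1)^a}{y^{3/2}}\,dy<\infty$: convergence at $y=1$ uses $a>-1$ and at $y=\infty$ uses $a-\tfrac32<-1$, i.e.\ $a<\tfrac12$, so finiteness holds precisely on $(-1,1/2)$. Thus $\mu$ is a genuine probability measure, and the first paragraph gives $\mu\circledast\mathbf{b}_{1/2,1}$ the asserted density, the $a=0$ case appearing as the limit (where $\frac{\sin(\pi a)}{\pi a}\to1$ and $(y-1)^a\to1$). The classical and free infinite divisibility then follow from Theorems \ref{classical ID} and \ref{FIDB}, since the measure lies in $\mathcal{B}_{1/2,1}$.
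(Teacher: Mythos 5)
Your proof is correct, and its skeleton is the same as the paper's: reduce via (\ref{density 1/2,1})--(\ref{stieltjes density}) to exhibiting a positive measure $\tau$ on $[1,\infty)$ with $-G_\tau(-x)=\frac{(1+x)^a-1}{ax}$ and $\int y^{-1/2}\tau(dy)<\infty$; your $\tau$, with density $\frac{\sin(\pi a)}{\pi a}\cdot\frac{(y-1)^a}{y}$ on $(1,\infty)$, and the resulting mixing law (density proportional to $(t-1)^a t^{-3/2}$ on $(1,\infty)$) are exactly the paper's $\tau_a$ and $\mu_a$. Where you differ is in how the key Stieltjes-transform identity is established. The paper quotes it from \cite[Example 3.3(4)]{H} for $a\in(-1,0)$ and then extends the integral identity (\ref{eq891}) to all $a\in(-1,1)$ by real-analyticity in the parameter $a$; you instead prove it self-containedly, with a case split that is genuinely forced: for $a\in(-1,0)$ your partial-fraction plus beta-integral computation is correct (and is essentially what underlies the cited example), but it cannot cover $a\in[0,1/2)$ since the two beta integrals then diverge separately, so there you switch to the Pick-function argument via $-G(z)=\int_0^1(1-sz)^{a-1}\,ds$ plus Stieltjes inversion. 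This buys a proof independent of the external reference, at the cost of one subtlety you pass over quickly: for $a\in(0,1/2)$ the representing measure $\tau$ is \emph{infinite} (its density $\sim y^{a-1}$ at infinity), so $zG(z)\not\to\mathrm{const}$ and the finite-measure Nevanlinna criterion does not apply; what you need is the representation theorem for Pick functions that are real on $(-\infty,1)$ and vanish at $-\infty$ (equivalently, the Krein--Stieltjes class after the substitution $w=1-z$), which yields a positive $\tau$ with only $\int(1+y)^{-1}\tau(dy)<\infty$. That is standard, and harmless here because the finiteness that actually matters, $\int y^{-1/2}\tau(dy)<\infty$, is exactly what you verify (and holds precisely for $a<1/2$), but it should be said explicitly. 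Your parenthetical remark that both ranges also follow by analytic continuation in $a$ is precisely the paper's route.
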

 \begin{proof}
First consider $-1<a<0$ and let $\tau_a$ be the shifted beta distribution of the second kind with density $\frac{1}{B(1+a,-a)}\cdot\frac{(t-1)^a}{t}$ on $(1,\infty)$. 
From Example 3.3(4) in \cite{H}, we get $-G_{\tau_a}(-x)=\frac{1-(1+x)^a}{x}$, which can be written as 
\begin{equation}\label{eq891}
\int_1^\infty \frac{1}{x+t} \cdot \frac{(t-1)^a}{t}\,dt= \frac{\pi a}{\sin(\pi a)}\cdot \frac{(1+x)^a-1}{a x},\qquad a \in(-1,0). 
\end{equation}
This identity extends to $a\in(-1,1)$ since the integral in the LHS exists and the both hands sides are real analytic functions of $a\in(-1,1)$. 
Let  $\mu_a$ be the probability measure on $(1,\infty)$ with density $\frac{1}{B(1+a,1/2-a)}\cdot\frac{(t-1)^a}{t^{3/2}}$ for $a\in(-1,1/2)$. From (\ref{stieltjes density}) and (\ref{eq891}), up to the multiplication of a constant the measure $\mu_a \circledast\mathbf{b}_{1/2,1}$ has the density 
$ \frac{(1+x)^{a}-1}{a x^{3/2}}.$    
\end{proof}

We present the third example without a proof. 
\begin{exa}
Let $\mu$ be the beta distribution with density $\frac{1}{2\sqrt{t}}1_{(0,1)}(t)\,dt$. Then the measure $\mu\circledast\mathbf{b}_{1/2,1}$ is given by 
$$
\frac{\log\left(1+1/x\right)}{2\pi\sqrt{x}}1_{(0,\infty)}(x)\,dx. 
$$
\end{exa}

\subsection{Limit Distributions of Multiplicative Free Laws of Large Numbers}
Tucci investigated free multiplicative laws of large numbers for measures with compact support in \cite{T10}
and then Haagerup and M\"{o}ller proved the general case as follows \cite{HM}: 
If $\mu\in\mathcal{P}_+$, then the law
$$
(\mu ^{\boxtimes n})^{1/n}
$$
weakly converges to a probability measure on $\real_+$, which we denote by $\Phi(\mu).$ A striking fact is that the limit law $\Phi(\mu)$ is not a delta measure unless $\mu$ is a delta measure. In fact the map $\Phi$ is even injective. The distribution function of this limit measure can be described in terms of 
the $S$-transform as follows: 
$$
\Phi(\mu)(\{0\})=\mu(\{0\}), \qquad\Phi(\mu)\left(\left[0, \frac{1}{S_\mu(x-1)}\right]\right)=x,\qquad x\in(\mu(\{0\}),1).  
$$
We compute $\Phi(\mu)$ when $\mu$ is a scale mixture of positive Boolean stable laws. 
\begin{thm}\label{large} Let $\mu\in\mathcal{P}_+$ and $\alpha\in(0,1/2].$ 
\begin{enumerate}[\rm(1)]

\item It holds that \begin{equation}\label{eq0989}
\Phi(\mu\boxtimes\mathbf{b}_{\alpha,1})= (\mu\boxtimes\mathbf{b}_{\frac{\alpha}{1-\alpha},1})\circledast\mathbf{Pa}(1), 
\end{equation}
where $\mathbf{Pa}( r)$ is the Pareto distribution 
$$
\mathbf{Pa}( r)(dx)= r(1+x)^{-r-1}\,1_{(0,\infty)}(x)\,dx.  
$$ 
In particular, 
\begin{equation}\label{eq0991}
\Phi(\mu\boxtimes\mathbf{b}_{1/2,1})=\mu\circledast \mathbf{Pa}(1). 
\end{equation}

\item We have 
\begin{equation}\label{eq0990} 
\Phi(\mu^{1/\alpha}\circledast\mathbf{b}_{\alpha,1})= (\mu^{\boxtimes \frac{1}{1-\alpha}})^{\frac{1-\alpha}{\alpha}} \circledast \mathbf{Pa}(1)\circledast\mathbf{b}_{\frac{\alpha}{1-\alpha},1}. 
\end{equation}
This implies that $\Phi(\mathcal{B}_{\alpha,1}) \subset \mathcal{B}_{\frac{\alpha}{1-\alpha},1} \cap \EM$ since $\mathbf{Pa}(1)\in\EM$. In particular, we have
$\Phi(\mathcal{B}_{1/3,1}) \subset \id(\boxplus) \cap \EM.$

\end{enumerate}
\end{thm}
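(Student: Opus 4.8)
The plan is to collapse the entire theorem onto a single \emph{base identity}, namely the special case (\ref{eq0991}), and then to bootstrap from it using the reproducing property (\ref{reproducing boole}) of positive Boolean stable laws together with (\ref{c-f1}). Throughout I would use the Haagerup--M\"oller description of $\Phi$: for $\lambda\in\mathcal{P}_+$ the map $x\mapsto 1/S_\lambda(x-1)$ is the quantile function of $\Phi(\lambda)$ on $(\lambda(\{0\}),1)$. The conceptual point is that, since $S$ is multiplicative under $\boxtimes$, the operation $\Phi$ turns $\boxtimes$ into the comonotone (quantile-wise) product. Concretely, because $S_{\mathbf{b}_{\alpha,1}}(z)=(-z/(1+z))^{(1-\alpha)/\alpha}$ by (\ref{S boole}) (here $-e^{-i\pi}=1$), the quantile function of $\Phi(\mu\boxtimes\mathbf{b}_{\alpha,1})$ is
\[
x\longmapsto \frac{1}{S_\mu(x-1)}\left(\frac{x}{1-x}\right)^{\frac{1-\alpha}{\alpha}},\qquad x\in(0,1).
\]

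First I would prove the base identity (\ref{eq0991}), i.e.\ $\nu\circledast\mathbf{Pa}(1)=\Phi(\nu\boxtimes\mathbf{b}_{1/2,1})$ for every $\nu\in\mathcal{P}_+$. Here $S_{\mathbf{b}_{1/2,1}}(x-1)=(1-x)/x$, so the quantile function of $\Phi(\nu\boxtimes\mathbf{b}_{1/2,1})$ is $x\mapsto S_\nu(x-1)^{-1}\,x/(1-x)$ and hence
\[
\E\!\left[\Phi(\nu\boxtimes\mathbf{b}_{1/2,1})^{s}\right]=\int_0^1\Big(\tfrac{1}{S_\nu(x-1)}\cdot\tfrac{x}{1-x}\Big)^{s}\,dx.
\]
The decisive simplification is that, writing the $S$-transform through the inverse moment transform $S_\nu(z)=\frac{1+z}{z}\psi_\nu^{-1}(z)$ with $\psi_\nu(v)=\int \frac{tv}{1-tv}\,\nu(dt)$, the product $S_\nu(x-1)^{-1}x/(1-x)$ collapses to $-1/\psi_\nu^{-1}(x-1)$. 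Substituting $v=\psi_\nu^{-1}(x-1)$, interchanging integration using $\psi_\nu'(v)=\int t(1-tv)^{-2}\nu(dt)$, and then substituting $w=-tv$ turns the inner integral into the Beta integral $\int_0^\infty w^{-s}(1+w)^{-2}\,dw=\pi s/\sin(\pi s)=\E[\mathbf{Pa}(1)^{s}]$, leaving $\int t^{s}\nu(dt)\cdot \E[\mathbf{Pa}(1)^{s}]=\E[(\nu\circledast\mathbf{Pa}(1))^{s}]$. Reading this for purely imaginary $s$ (comparing characteristic functions of the logarithms) identifies the two measures. \textbf{This computation is the main obstacle}: it carries the real analytic content, since a priori $\Phi(\nu\boxtimes\mathbf{b}_{1/2,1})$ is a \emph{comonotone} product while $\nu\circledast\mathbf{Pa}(1)$ is an \emph{independent} product, and only the special arithmetic of $\mathbf{Pa}(1)$ makes them coincide; one must take care with signs, branch choices, the domain of $\psi_\nu^{-1}$, possible atoms at $0$, and convergence of the substitutions.

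Granting the base identity, part (1) follows by applying it to $\nu=\mu\boxtimes\mathbf{b}_{\gamma,1}$ with $\gamma=\alpha/(1-\alpha)$: by commutativity of $\boxtimes$ on $\mathcal{P}_+$ and the reproducing property (\ref{reproducing boole}) in the form $\mathbf{b}_{\gamma,1}\boxtimes\mathbf{b}_{1/2,1}=\mathbf{b}_{\alpha,1}$ (take $1/(1+t)=\gamma$ and $1/(1+s)=1/2$, so $1/(1+s+t)=\alpha$), one obtains
\[
(\mu\boxtimes\mathbf{b}_{\gamma,1})\circledast\mathbf{Pa}(1)=\Phi\big(\mu\boxtimes\mathbf{b}_{\gamma,1}\boxtimes\mathbf{b}_{1/2,1}\big)=\Phi(\mu\boxtimes\mathbf{b}_{\alpha,1}),
\]
which is (\ref{eq0989}). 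Part (2) is then formal: rewrite $\mu^{1/\alpha}\circledast\mathbf{b}_{\alpha,1}=\mu^{\boxtimes1/\alpha}\boxtimes\mathbf{b}_{\alpha,1}$ via (\ref{c-f1}), apply (\ref{eq0989}) with $\mu$ replaced by $\mu^{\boxtimes1/\alpha}$, and convert the free factor $\mu^{\boxtimes1/\alpha}\boxtimes\mathbf{b}_{\gamma,1}$ back into a classical scale mixture by (\ref{c-f1}) with index $\gamma$, giving $(\mu^{\boxtimes1/(1-\alpha)})^{(1-\alpha)/\alpha}\circledast\mathbf{b}_{\gamma,1}$; the $\boxtimes$-powers and pushforward powers exist since $1/\alpha\ge1$, $1/(1-\alpha)\ge1$ and $(1-\alpha)/\alpha\ge1$ when $\alpha\le1/2$. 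Commutativity of $\circledast$ then rearranges the three factors into (\ref{eq0990}).

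Finally, the inclusion $\Phi(\mathcal{B}_{\alpha,1})\subset\mathcal{B}_{\alpha/(1-\alpha),1}\cap\EM$ is read off from (\ref{eq0990}): its right-hand side is a positive measure convolved $\circledast$ with $\mathbf{b}_{\alpha/(1-\alpha),1}$, hence lies in $\mathcal{B}_{\alpha/(1-\alpha),1}$ (well defined since $\alpha/(1-\alpha)\le1$ for $\alpha\le1/2$), while membership in $\EM$ holds because $\mathbf{Pa}(1)$ has the completely monotone density $(1+x)^{-2}$, so $\mathbf{Pa}(1)\in\EM$, and $\EM$ is stable under multiplication by an independent factor. Specializing $\alpha=1/3$ gives $\alpha/(1-\alpha)=1/2$, so $\Phi(\mathcal{B}_{1/3,1})\subset\mathcal{B}_{1/2,1}\cap\EM$; since $\mathcal{B}_{1/2,1}\subset\id(\boxplus)$ by Theorem \ref{FIDB}, this yields $\Phi(\mathcal{B}_{1/3,1})\subset\id(\boxplus)\cap\EM$.
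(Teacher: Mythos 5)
Your proposal is correct, and its outer architecture coincides with the paper's: both reduce the whole theorem to the base identity (\ref{eq0991}), then get (\ref{eq0989}) by substituting $\mu\boxtimes\mathbf{b}_{\alpha/(1-\alpha),1}$ and invoking $\mathbf{b}_{1/2,1}\boxtimes\mathbf{b}_{\alpha/(1-\alpha),1}=\mathbf{b}_{\alpha,1}$ from (\ref{reproducing boole}), and get (\ref{eq0990}) by replacing $\mu$ with $\mu^{\boxtimes 1/\alpha}$ and translating between $\boxtimes$ and $\circledast$ via (\ref{c-f1}). Where you genuinely diverge is in the proof of the base identity itself. The paper proves (\ref{eq0991}) dynamically, from the defining limit of $\Phi$: using (\ref{free power1}) and (\ref{c-f1}) it computes exactly $\bigl((\mu\boxtimes\mathbf{b}_{1/2,1})^{\boxtimes n}\bigr)^{\frac{1}{1+n}}=\mu^{\boxtimes\frac{n}{1+n}}\circledast\bigl(\mathbf{b}_{\frac{1}{1+n},1}\bigr)^{\frac{1}{1+n}}$, and then observes that the density of $(\mathbf{b}_{q,1})^{q}$, namely $\frac{\sin(\pi q)}{\pi q}\cdot\frac{1}{x^2+2x\cos(\pi q)+1}$, converges uniformly to the Pareto density $(1+x)^{-2}$ as $q\to0$, so the right-hand side converges weakly to $\mu\circledast\mathbf{Pa}(1)$. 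Your proof is static: it uses only the Haagerup--M\"oller $S$-transform description of $\Phi$ (which the paper also quotes) and identifies Mellin transforms, the Pareto factor emerging from the Beta integral $\int_0^\infty w^{-s}(1+w)^{-2}\,dw$. I checked your key manipulations --- the collapse $S_\nu(x-1)^{-1}\,x/(1-x)=-1/\psi_\nu^{-1}(x-1)$, the substitutions $v=\psi_\nu^{-1}(x-1)$ and $w=-tv$, the Fubini step (legitimate on the imaginary axis since $|(-v)^{-i\tau}|=1$ and $\int_{-\infty}^0\psi_\nu'(v)\,dv\le 1$), and the atom bookkeeping ($\Phi(\lambda)(\{0\})=\lambda(\{0\})$ matches $(\nu\circledast\mathbf{Pa}(1))(\{0\})=\nu(\{0\})$) --- and they are sound. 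As to what each route buys: the paper's limit argument stays inside weak convergence and explains why $\mathbf{Pa}(1)$ arises as the limit of the powered Boolean stable laws $(\mathbf{b}_{q,1})^q$, at the cost of an implicit weak-convergence-of-independent-products step and the harmless $1/(1+n)$ versus $1/n$ exponent adjustment; your argument avoids limits entirely and turns the appearance of $\mathbf{Pa}(1)$ into an exact Beta-integral identity, at the cost of the $\psi^{-1}$-substitution and Mellin-injectivity bookkeeping. Your spelled-out treatment of the final inclusions (membership in $\mathcal{B}_{\alpha/(1-\alpha),1}$, the closure of $\EM$ under independent multiplication, and the $\alpha=1/3$ specialization via Theorem \ref{FIDB}) is exactly what the paper leaves implicit.
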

\begin{proof}
First we show (\ref{eq0991}) as follows:  
 \[
 \begin{split}
\left( (\mu\boxtimes\mathbf{b}_{1/2,1})^{\boxtimes n}\right)^{\frac{1}{1+n}} 
&= \left(\mu^{\boxtimes n}\boxtimes(\mathbf{b}_{1/2,1})^{\boxtimes n}\right)^{\frac{1}{1+n}} \\
&= \left(\mu^{\boxtimes \frac{n}{1+n}})^{\boxtimes (1+n)}\boxtimes\mathbf{b}_{\frac{1}{1+n},1}\right)^{\frac{1}{1+n}} \\ 
&= \left((\mu^{\boxtimes \frac{n}{1+n}})^{1+n}\circledast\mathbf{b}_{\frac{1}{1+n},1}\right)^{\frac{1}{1+n}} \\ 
&= \mu^{\boxtimes \frac{n}{1+n}}\circledast\left(\mathbf{b}_{\frac{1}{1+n},1}\right)^{\frac{1}{1+n}},  
 \end{split}
 \]
 where we used (\ref{free power1}) on the second line. 
If a measure $\nu \in \mathcal{P}_+$ has a density $p(x)$, the measure $\nu^q$ has the density $\frac{1}{q}x^{\frac{1}{q}-1}p(x^{\frac{1}{q}})$. It then follows that 
the density of $(\mathbf{b}_{q,1})^{q}$ is given by 
$$
\frac{\sin(\pi q)}{\pi q}\cdot \frac{1}{x^2+2x\cos(\pi q)+1}, 
$$
which converges to $(1+x)^{-2}$ uniformly on $[0,\infty)$ as $q\to0$. Hence (\ref{eq0991}) has been proved. 

Recall from (\ref{reproducing boole}) that the identity $\mathbf{b}_{1/2,1} \boxtimes \mathbf{b}_{\frac{\alpha}{1-\alpha},1} = \mathbf{b}_{\alpha,1}$ holds. By replacing $\mu$ by $\mathbf{b}_{\frac{\alpha}{1-\alpha},1} \boxtimes \mu$ in (\ref{eq0991}), we obtain (\ref{eq0989}). 

By replacing $\mu$ by $\mu^{\boxtimes 1/\alpha}$ in (\ref{eq0989}), we have 
 \[
 \begin{split}
\Phi(\mu^{1/\alpha}\circledast\mathbf{b}_{\alpha,1}) 
&=\Phi(\mu^{\boxtimes 1/\alpha}\boxtimes\mathbf{b}_{\alpha,1}) \\
&= \left( (\mu^{\boxtimes \frac{1}{1-\alpha}})^{\boxtimes\frac{1-\alpha}{\alpha}}\boxtimes\mathbf{b}_{\frac{\alpha}{1-\alpha},1}\right)\circledast \mathbf{Pa}(1)\\
&= \left( (\mu^{\boxtimes \frac{1}{1-\alpha}})^{\frac{1-\alpha}{\alpha}}\circledast\mathbf{b}_{\frac{\alpha}{1-\alpha},1}\right)\circledast \mathbf{Pa}(1). 
 \end{split}
 \] 
\end{proof}

\begin{exa}\label{gen beta2} Theorem \ref{large} in particular implies that $\Phi(\mathcal{B}_{\alpha,1}) \subset \id(\boxplus)$ for $\alpha \leq1/3$. The constant $1/3$ is optimal as shown in the following example. 
Take $\mu$ to be the Boolean stable law $\mathbf{b}_{\alpha,1}$ itself for $\alpha \in (0,1)$. Then 
$$
S_\mu(z)= \Sigma_\mu\left(\frac{z}{1+z}\right) = \left(\frac{-z}{1+z}\right)^{\frac{1-\alpha}{\alpha}}=\left(\frac{-z}{1+z}\right)^{1/\beta}, 
$$
where the new parameter $\beta= \frac{\alpha}{1-\alpha}\in(0,\infty)$ is introduced for simplicity. The compositional inverse function of $\frac{1}{S_\mu(x-1)}$ is now equal to 
$\frac{x^\beta}{1+x^\beta}$, and so we have 
$\Phi(\mu)([0,x])=\frac{x^\beta}{1+x^\beta}$ for $x\in[0,\infty)$. The density function is given by 
$$
\frac{d\Phi(\mu)}{dx}(x) = \frac{\beta x^{\beta-1}}{(x^\beta+1)^2},\qquad x\in(0,\infty),  
$$
which is a generalized beta distribution of second kind but a different one from Proposition \ref{gen beta1}. This measure is in $\id(\boxplus)\cap\EM$ for $\beta \in(0,1/2]$ from Theorem \ref{large}. From \cite[Theorem 5.1]{H}, the measure $\Phi(\mu)$ is not in $\id(\boxplus)$ for $\beta\in(1/2, 2/3)$, 
and so the number $1/3$ is optimal. 
\end{exa}

\subsection{Continuous Boolean Convolution}
The \textbf{continuous Boolean convolution} $\mathbf{b}(\sigma)$ of Boolean stable laws is defined by 
$$
\eta_{\mathbf{b}(\sigma)}(z)= -\int_{(0,1]} (-z)^{\alpha}\, \sigma(d\alpha),\qquad z\in\comp^-, 
$$
 for nonnegative finite measure $\sigma$ supported on $(0,1]$ (see \cite{AHb}). 
Symbolically this measure may be written as 
$$
\textbf{b}(\sigma) = \int^{\uplus}_{(0,1]} \mathbf{b}_{\alpha,1}\, \sigma(d\alpha). 
$$
The density is given by 
$$
\frac{1}{\pi}\cdot\frac{\int_{(0,1]}\sin(\alpha\pi)x^{1-\alpha}\,\sigma(d\alpha)}{\left(x+\int_{(0,1]}\cos(\alpha\pi)x^{1-\alpha}\,\sigma(d\alpha)\right)^2 +\left(\int_{(0,1]}\sin(\alpha\pi)x^{1-\alpha}\,\sigma(d\alpha)\right)^2},~~~x>0. 
$$

\begin{prop}\label{Boolean integral} For nonnegative finite measure $\sigma$ on $(0,1/2]$, we have 
$$
\mathbf{b}(\sigma)=\mathbf{b}(D_2\sigma)^2\circledast\mathbf{b}_{1/2,1}  \in \mathcal{B}_{1/2,1}. 
$$
\end{prop}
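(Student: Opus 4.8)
The plan is to verify the claimed identity at the level of $\eta$-transforms and then invoke the fact that an $\eta$-transform determines its measure. Recall first that $\mathbf{b}(D_2\sigma)$ is a continuous Boolean convolution of \emph{positive} Boolean stable laws, so that $\mathbf{b}(D_2\sigma)\in\mathcal{P}_+$ (this is the same positivity recorded for continuous Boolean convolutions in \cite{AHb}, and it can also be read off from the characterization in Proposition \ref{prop0}). Consequently $\mathbf{b}(D_2\sigma)^2\in\mathcal{P}_+$, so the right-hand side is a genuine scale mixture and the membership $\mathbf{b}(\sigma)\in\mathcal{B}_{1/2,1}$ will be automatic once the equality of measures is established.

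First I would apply formula (\ref{id3}) (equivalently (\ref{id4})) with $\alpha=1/2$, $\rho=1$ and $\mu=\mathbf{b}(D_2\sigma)$. Since here $\mu^{1/\alpha}=\mu^2$, this yields, for $z\in\comp^-$,
\[
\eta_{\mathbf{b}(D_2\sigma)^2\circledast\mathbf{b}_{1/2,1}}(z)=\eta_{\mathbf{b}(D_2\sigma)}\!\left(-(-z)^{1/2}\right).
\]
Next I would expand the right-hand side using the definition of the continuous Boolean convolution together with the change of variables $\beta=2\alpha$ pushing $\sigma$ forward to $D_2\sigma$:
\[
\eta_{\mathbf{b}(D_2\sigma)}(w)=-\int_{(0,1]}(-w)^{\beta}\,(D_2\sigma)(d\beta)=-\int_{(0,1/2]}(-w)^{2\alpha}\,\sigma(d\alpha),\qquad w\in\comp^-.
\]
Substituting $w=-(-z)^{1/2}$, so that $-w=(-z)^{1/2}$, and using the power identity $\left((-z)^{1/2}\right)^{2\alpha}=(-z)^{\alpha}$ would give
\[
\eta_{\mathbf{b}(D_2\sigma)}\!\left(-(-z)^{1/2}\right)=-\int_{(0,1/2]}(-z)^{\alpha}\,\sigma(d\alpha)=\eta_{\mathbf{b}(\sigma)}(z),
\]
which is exactly the $\eta$-transform of $\mathbf{b}(\sigma)$. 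Since the $\eta$-transform characterizes the measure, this proves $\mathbf{b}(D_2\sigma)^2\circledast\mathbf{b}_{1/2,1}=\mathbf{b}(\sigma)$, and the stated inclusion follows immediately.

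The only step requiring genuine care — and the one I would treat as the main, though minor, obstacle — is the bookkeeping of the principal branches in the power manipulations. For $z\in\comp^-$ one has $-z\in\comp^+$, so $(-z)^{1/2}$ has argument in $(0,\pi/2)$; hence $w=-(-z)^{1/2}$ has argument in $(-\pi,-\pi/2)\subset\comp^-$, which is precisely where $\eta_{\mathbf{b}(D_2\sigma)}$ is defined, while $-w=(-z)^{1/2}$ has argument in $(0,\pi/2)\subset(-\pi,\pi)$ so that the principal power $(-w)^{2\alpha}$ is unambiguous. The identity $\left((-z)^{1/2}\right)^{2\alpha}=(-z)^{\alpha}$ then holds because $2\alpha\cdot\tfrac12\arg(-z)=\alpha\arg(-z)$ stays in $(0,\pi)$ and never crosses the branch cut; this is exactly where the hypothesis that $\sigma$ is supported on $(0,1/2]$ enters, both to make $\beta=2\alpha\in(0,1]$ so that $\mathbf{b}(D_2\sigma)$ is a well-defined continuous Boolean convolution and to keep the above powers on the principal branch. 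Once this branch check is carried out, the remainder is a purely formal rewriting.
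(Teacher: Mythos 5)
Your proof is correct and follows essentially the same route as the paper's: both apply the scale-mixture formula (\ref{id4}) (Corollary \ref{cor03}) with $\alpha=1/2$, $\rho=1$, $\mu=\mathbf{b}(D_2\sigma)$, and then rewrite $\eta_{\mathbf{b}(D_2\sigma)}(-(-z)^{1/2})$ via the defining integral and the change of variables induced by $D_2$. Your additional checks — that $\mathbf{b}(D_2\sigma)\in\mathcal{P}_+$ (needed to invoke (\ref{id3})/(\ref{id4})) and that the principal-branch power identity $\bigl((-z)^{1/2}\bigr)^{2\alpha}=(-z)^{\alpha}$ holds — are points the paper leaves implicit, and they are verified correctly.
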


\begin{proof}
We compare the $\eta$-transforms using Corollary \ref{cor03}: 
\begin{eqnarray*}
\eta_{\mathbf{b}(D_2\sigma)^2\circledast\mathbf{b}_{1/2,1} }(z)&=&\eta_{\mathbf{b}(D_2\sigma)}(-(-z)^{1/2})\\
&=&-\int_0^1(-z)^{\frac{1}{2}\alpha}\,D_2\sigma(d\alpha) \\
&=&-\int_0^{1/2}(-z)^{\alpha }\,\sigma(d\alpha)\\
&=&\eta_{\mathbf{b}(\sigma)}(z). 
\end{eqnarray*}
\end{proof}

\begin{exa} A particularly interesting case comes when $\sigma:= \sum_{k=1}^n \binom{n}{k}\delta_{\frac{k}{n}\alpha}$. In this case we get
$$
\mathbf{b}(\sigma)=  (\mathbf{m}_{1/n,1})^{1/\alpha} \circledast \mathbf{b}_{\alpha,1} \in \mathcal{B}_{\alpha,1}. 
$$ 
This can be proved by computing the $\eta$-transform (see (\ref{etamonotone})). 
\end{exa}

\subsection{Probability Measures in $\mathcal{B}_{1,\rho}\setminus\id(\boxplus)$}
We present a two-parameter family of probability measures, some of which belong to $\mathcal{B}_{1,\rho}\setminus\id(\boxplus)$. This completes the proof of Theorem \ref{FIDB}. 
For $t,\rho\in[0,1]$, let $\lambda_ {t,\rho} \in \mathcal{B}_{1,\rho}$ be the probability measure 
$$
\lambda_ {t,\rho}:=((1- t)\delta_0 +  t\delta_1)\circledast \mathbf{c}_{\rho}=  t\delta_0 + \frac{1- t}{\pi}\cdot \frac{\sin\rho\pi}{(x+\cos \rho\pi)^2+\sin ^2\rho\pi}\,1_{\real}(x)\,dx, 
$$
which appeared in the proof of Theorem \ref{classical ID}. 
The measures $\lambda_{0, t}$ and $\lambda_{1, t}$ are understood to be $ t\delta_0+(1- t)\delta_{-1}$ and $ t\delta_0+(1- t)\delta_{1}$ respectively.  

\begin{prop}\label{exacauchy}
The measure $\lambda_ {t,\rho}$ is FID if and only if: $(1)$ $ t=0$; $(2)$ $ t \in[1/2,1],~ |\cos \rho\pi| \leq 2 t-1$. 
In particular, $\mathcal{B}_{1,\rho} \not\subset \id(\boxplus)$. 
\end{prop}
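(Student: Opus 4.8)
The plan is to invoke the Voiculescu‑transform characterization of free infinite divisibility (Theorem \ref{thmBV93}(2)): $\lambda_{t,\rho}$ is FID if and only if $-\phi_{\lambda_{t,\rho}}$ extends to a Pick function on $\comp^+$. Writing $c=e^{i\rho\pi}$, I would first record the relevant transforms. From the displayed density, $G_{\lambda_{t,\rho}}(z)=\frac{t}{z}+\frac{1-t}{z+c}=\frac{z+tc}{z(z+c)}$, hence $F_{\lambda_{t,\rho}}(z)=\frac{z(z+c)}{z+tc}$, and solving $w=F(z)$ for $z$ gives
\[
\phi_{\lambda_{t,\rho}}(w)=F^{-1}(w)-w=\tfrac{1}{2}\big(-(w+c)+\sqrt{P(w)}\big),\qquad P(w)=w^2+2c(2t-1)w+c^2,
\]
where $\sqrt{P}$ is the branch with $\sqrt{P(w)}\sim w$ at infinity. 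The decisive observation is the factorization $P(w)=(w-w_+)(w-w_-)$ with $w_\pm=e^{i(\rho\pi\pm\beta)}$ and $\beta=\arccos(1-2t)\in[0,\pi]$, so that the branch points of $\sqrt P$ lie on the unit circle.

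Next I would dispose of the degenerate cases $t\in\{0,1\}$, where $P$ is a perfect square and $\sqrt P$ is rational: then $-\phi$ is the constant $e^{i\rho\pi}$ (for $t=0$, i.e.\ $\lambda_{0,\rho}=\mathbf{c}_\rho$) or $0$ (for $t=1$, i.e.\ $\lambda_{1,\rho}=\delta_0$), both Pick, so these measures are FID, in agreement with the stated conditions. For $t\in(0,1)$ the roots $w_\pm$ are distinct and hence genuine branch points. The core of the argument is then the equivalence, via $\cos\beta=1-2t$ and the elementary identity $\cos(\max(\rho\pi,\pi-\rho\pi))=-|\cos\rho\pi|$, that both $w_+$ and $w_-$ lie in $\comp^-\cup\real$ precisely when $\beta\ge\max(\rho\pi,\pi-\rho\pi)$, i.e.\ precisely when $|\cos\rho\pi|\le 2t-1$ (which forces $t\ge 1/2$).

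The necessity direction is then clean. If $|\cos\rho\pi|>2t-1$, at least one branch point $w_0$ lies in the open upper half‑plane, and since $w_0$ is a simple zero of $P$ it is a genuine branch point of $\sqrt P$. Were $\lambda_{t,\rho}$ FID, $-\phi$ would extend to a single‑valued analytic (Pick) function on $\comp^+$ agreeing with the formula above on a truncated cone near infinity; but analytic continuation of $\tfrac{1}{2}(w+c-\sqrt{P(w)})$ around a loop encircling $w_0$ flips the sign of $\sqrt P$, so no single‑valued continuation exists, a contradiction. Hence $\lambda_{t,\rho}\notin\id(\boxplus)$ throughout this range; taking any $t\in(0,1/2)$ already produces a non‑FID member of $\mathcal{B}_{1,\rho}$, which yields $\mathcal{B}_{1,\rho}\not\subset\id(\boxplus)$ and thereby completes Theorem \ref{FIDB}.

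For sufficiency, when $|\cos\rho\pi|\le 2t-1$ the branch points avoid $\comp^+$, so $\sqrt P$ is single‑valued and analytic on the simply connected domain $\comp^+$ and $-\phi$ is analytic there; what remains is the image condition $\im(-\phi)\ge 0$, and this is the step I expect to be the main obstacle. The plan is to reduce it, by the minimum principle for the harmonic function $\im(-\phi)$, to the boundary $\real\cup\{\infty\}$: at infinity $-\phi(w)\to(1-t)e^{i\rho\pi}$ has imaginary part $(1-t)\sin\rho\pi\ge 0$, while for real $x$ the inequality becomes $\im\sqrt{P(x)}\le \sin\rho\pi$. The delicate point is to pin down the correct branch of $\sqrt{P(x)}$ along the real axis; here the hypothesis $w_\pm\in\comp^-\cup\real$ gives $x-w_\pm\in\comp^+\cup\real$, which controls $\arg P(x)$ and, after tracking the continuation from infinity, yields the required bound (sharp, with equality possible at isolated points such as $x=0$). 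Combining the two directions gives the stated characterization.
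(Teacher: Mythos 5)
Your setup and your necessity half follow, in essence, the paper's own route: the same explicit formula $\phi_{\lambda_{t,\rho}}(w)=\tfrac{1}{2}\bigl(-(w+c)+\sqrt{P(w)}\bigr)$ with $P(w)=w^2+2c(2t-1)w+c^2$, $c=e^{i\rho\pi}$; the same identification of the branch points $w_\pm=e^{i(\rho\pi\pm\beta)}$, $\cos\beta=1-2t$; the same dichotomy that both lie in $\comp^-\cup\real$ exactly when $|\cos\rho\pi|\le 2t-1$; and non-extendability of $\phi_{\lambda_{t,\rho}}$ to $\comp^+$ when a branch point lies in $\comp^+$ (your monodromy argument just makes explicit what the paper leaves implicit). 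The degenerate cases and the deduction of $\mathcal{B}_{1,\rho}\not\subset\id(\boxplus)$ are fine as well.

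The genuine gap is in the sufficiency direction, at exactly the step you flagged as the main obstacle, and the mechanism you propose for it would not close it. Knowing $w_\pm\in\comp^-\cup\real$ gives $x-w_\pm\in\comp^+\cup\real$ for $x\in\real$, hence $\arg P(x)\in[0,2\pi]$; tracking the branch from infinity, this tells you only that $\im\sqrt{P(x)}\ge 0$. That is branch/sign information, and no control of the \emph{argument} of $P(x)$ alone can yield the required quantitative bound $\im\sqrt{P(x)}\le\sin\rho\pi$, since $|P(x)|$ grows like $x^2$: what is needed is a bound on the \emph{modulus}. Concretely, writing $P(x)=re^{i\theta}$ with $\theta\in[0,2\pi]$, squaring the target inequality $\sqrt{r}\sin(\theta/2)\le\sin\rho\pi$ and using $\sin^2(\theta/2)=(1-\cos\theta)/2$ together with $r\cos\theta=\re P(x)$ reduces it to the modulus estimate $r\le A(x):=x^2+2(2t-1)\cos(\rho\pi)x+1$. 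This is the actual content of the paper's proof, established via the algebraic identity $A(x)^2-r^2=16\,t(1-t)\sin^2(\rho\pi)\,x^2\ge 0$ (together with the observation that $A(x)\ge 0$, its discriminant being $\le 0$). Note that this identity vanishes at $x=0$, matching your remark that the bound is attained there, which is precisely why a soft argument cannot work: the inequality is sharp and depends on the exact coefficients of $P$, not merely on the half-plane location of its roots. Once this boundary estimate is supplied, the remainder of your plan (harmonic-function maximum principle on large half-disks, the limit $-\phi_{\lambda_{t,\rho}}\to(1-t)e^{i\rho\pi}$ at infinity) coincides with the paper's argument and is correct.
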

\begin{proof}
Assume that $t,\rho\in(0,1)$; the other cases are Bernoulli distributions and are well known. The Cauchy transform of $\lambda_ {t,\rho}$ is given by $G_{\lambda_ {t,\rho}}(z)=\frac{ t}{z}+\frac{1- t}{z+e^{i\rho\pi}}$, and so for some $\alpha,\beta>0$
$$
\phi_{\lambda_ {t,\rho}}(z)=\frac{1}{2}\left(-z-e^{i\rho\pi}+\left(z^2+2(2 t-1)e^{i\rho\pi}z +e^{2i\rho\pi}\right)_{(0,2\pi)}^{1/2}\right),\qquad z\in\Gamma_{\alpha,\beta}. 
$$
Note that the polynomial $z^2+2(2 t-1)e^{i\rho\pi}z +e^{2i\rho\pi}$ has the zeros $z_{\pm}=e^{i(\rho\pi \pm \phi)}$, where $\cos \phi=1-2 t$ and $\phi\in(0,\pi)$. If $ t\in(0,1/2)$, then $z_{+}$ or $z_-$ is contained in $\comp^+$ and so $\phi_{\lambda_ {t,\rho}}$ does not extend to $\comp^+$ analytically, and so $\lambda_ {t,\rho} \notin \id(\boxplus)$ from Theorem \ref{thmBV93}. 
 If $ t\in[1/2,1)$, then both $z_{+}=e^{i(\rho\pi+\phi)}$ and $z_-=e^{i(\rho\pi-\phi)}$ are in $\comp^- \cup \real$ if and only if $\pi-\phi \leq \rho\pi \leq \phi$ or equivalently $|\cos\rho\pi| \leq 2 t-1.$ 
 
 Now suppose that $|\cos\rho\pi| \leq 2 t-1$. Then $\phi_{\lambda_ {t,\rho}}$ extends analytically to $\comp^+$ and continuously to $\comp^+\cup \real$, so from Theorem \ref{thmBV93}
we only have to show that 
\begin{equation}\label{eqphi}
\text{Im}(\phi_{\lambda_ {t,\rho}}(z)) \leq 0,\qquad z\in\comp^+\cup\real. 
\end{equation}
First we are going to prove that $\text{Im}(\phi_{\lambda_ {t,\rho}}(x+i0)) \leq 0$ for $x\in\real$. 
Let 
\[
\begin{split}
re^{i\theta}:&=x^2+2(2 t-1)e^{i\rho\pi}x+e^{2i\rho\pi}\\
&=x^2+2(2 t-1)(\cos\rho\pi)x +\cos2\rho\pi +2\sin\rho\pi(\cos\rho\pi+(2 t-1)x)i.
\end{split}
\]
 The inequality (\ref{eqphi}) on $\real$ is equivalent to $\sqrt{r}\sin(\theta/2) \leq \sin \rho\pi$ and from the formula $\sin^2(\theta/2)=(1-\cos\theta)/2$, it is also equivalent to $r \leq r\cos \theta + 2\sin^2\rho\pi =x^2+2(2 t-1)(\cos\rho\pi)x +1$.  The difference $(x^2+2(2 t-1)(\cos\rho\pi)x +1)^2-r^2$ turns out to be $16 t(1- t)(\sin^2 \rho\pi)x^2 \geq0$, showing the desired conclusion (\ref{eqphi}) for $z\in\real$. 

Next, consider the bounded domain $D_R$ surrounded by the boundary $[-R,R]\cup \{z\in\comp^+: |z|=R\}$. 
We can easily show the estimate $\phi_{\lambda_ {t,\rho}}(z) = -(1- t)e^{i\rho\pi} +o(1)$ uniformly as $z\to \infty$, $z\in\comp^+$. Hence, (\ref{eqphi}) is valid on the boundary of $D_R$ for large $R>0$. From the maximum principle for (sub-) harmonic functions, the inequality (\ref{eqphi}) holds for any $z \in D_R$, and hence for $z\in\comp^+$ by taking the limit $R\to \infty$. 
\end{proof}

Moreover we can explicitly calculate the free divisibility indicator of $\lambda_ {t,\rho}$.

\begin{prop}
Let $ t,\rho\in[0,1]$. Then 
$$
\phi\left(\lambda_{ t,\rho} \right)
=
\begin{cases}
\frac{ t}{1- t}\tan^2\left(\frac{\rho\pi}{2}\right), & t\in(0,1),~ \rho\in[0,1/2], \\[8pt]
\frac{ t}{1- t}\tan^2\left(\frac{(1-\rho)\pi}{2}\right), & t\in(0,1),~\rho\in[1/2,1],\\[8pt]
\infty, & t\in\{0,1\}. 
\end{cases}
$$
\end{prop}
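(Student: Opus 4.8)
The plan is to reduce the computation of $\phi(\lambda_{t,\rho})$ to the free infinite divisibility criterion already obtained in Proposition \ref{exacauchy}, via the characterization $\phi(\mu)=\sup\{s\geq0:\mu^{\uplus s}\in\id(\boxplus)\}$ from (\ref{free div ind char}). The first step is to record an explicit formula for the Boolean cumulant transform. Starting from $G_{\lambda_{t,\rho}}(z)=\frac{t}{z}+\frac{1-t}{z+e^{i\rho\pi}}$ and using (\ref{etaf}), a short computation gives
\[
\eta_{\lambda_{t,\rho}}(z)=-\frac{(1-t)e^{i\rho\pi}z}{1+te^{i\rho\pi}z},
\]
a M\"obius-type expression that behaves transparently under Boolean powers.

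The key structural observation I would exploit is that the family $\{\lambda_{t,\rho}\}_t$, enlarged by positive dilations, is stable under $\uplus$-powers. Since $\eta_{\lambda_{t,\rho}^{\uplus s}}=s\,\eta_{\lambda_{t,\rho}}$, directly manipulating the displayed formula shows that
$\lambda_{t,\rho}^{\uplus s}=D_{c}\,\lambda_{t',\rho}$ with $c=t+s(1-t)>0$ and $t'=\frac{t}{t+s(1-t)}\in(0,1)$ for $t\in(0,1)$ and $s>0$; this is checked by comparing $\eta$-transforms and matching the numerator and denominator coefficients. Because dilation by a positive constant preserves free infinite divisibility (if $-\phi_\mu$ is a Pick function then so is $z\mapsto c(-\phi_\mu(z/c))$, as $z\mapsto z/c$ and multiplication by $c>0$ both map $\comp^+$ into itself), it follows that $\lambda_{t,\rho}^{\uplus s}\in\id(\boxplus)$ if and only if $\lambda_{t',\rho}\in\id(\boxplus)$.

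Next I would invoke Proposition \ref{exacauchy}: for $t'\in(0,1)$, the measure $\lambda_{t',\rho}$ is FID precisely when $|\cos\rho\pi|\leq 2t'-1$, i.e.\ $t'\geq\frac{1+|\cos\rho\pi|}{2}$. Since $t'=t'(s)$ is strictly decreasing in $s$ with $t'(0^+)=1$ and $t'(\infty)=0$, the set $\{s>0:\lambda_{t,\rho}^{\uplus s}\in\id(\boxplus)\}$ is an interval $(0,s_*]$, so the supremum in (\ref{free div ind char}) equals the critical value $s_*$ determined by $t'(s_*)=\frac{1+|\cos\rho\pi|}{2}$. Solving this equation for $s$ yields
\[
s_*=\frac{t}{1-t}\cdot\frac{1-|\cos\rho\pi|}{1+|\cos\rho\pi|},
\]
and the half-angle identity $\frac{1-\cos\theta}{1+\cos\theta}=\tan^2(\theta/2)$, applied with $\theta=\rho\pi$ when $\rho\leq1/2$ and with $\theta=(1-\rho)\pi$ when $\rho\geq1/2$ (so that $|\cos\rho\pi|=\cos\theta$ in each range), produces the two stated expressions. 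For the boundary cases $t\in\{0,1\}$ one has $\lambda_{0,\rho}=\delta_0$ and $\lambda_{1,\rho}=\mathbf{c}_\rho$; both satisfy $\mu^{\uplus s}\in\id(\boxplus)$ for every $s$ (the former trivially, the latter since $\mathbf{c}_\rho=\mathbf{f}_{1,\rho}$ is FID and $\mathbf{c}_\rho^{\uplus s}=D_s\mathbf{c}_\rho$ by (\ref{etaboole})), whence $\phi=\infty$.

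The main obstacle I anticipate is the reduction step: correctly identifying the Boolean power $\lambda_{t,\rho}^{\uplus s}$ as a positive dilate $D_c\lambda_{t',\rho}$ of another member of the family, and verifying the monotonicity direction of $t'(s)$ so that the supremum is genuinely the finite transition point $s_*$ (rather than being infinite, as in the boundary cases). Once this structural fact and the dilation-invariance of $\id(\boxplus)$ are in place, the remainder is the algebra of solving for $s_*$ together with the half-angle identity.
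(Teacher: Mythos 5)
Your proposal is correct and takes essentially the same route as the paper's own proof: both verify via $\eta$-transforms that $(\lambda_{t,\rho})^{\uplus s} = D_{t+s(1-t)}\bigl(\lambda_{\frac{t}{t+s(1-t)},\rho}\bigr)$, then combine Proposition \ref{exacauchy} with the characterization (\ref{free div ind char}) and solve for the critical value of $s$. You simply make explicit some steps the paper leaves implicit (the dilation-invariance of $\id(\boxplus)$, the monotonicity of $t'(s)$, the half-angle identity, and the boundary cases $t\in\{0,1\}$), all of which check out.
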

\begin{proof}
We assume $t\in(0,1)$; otherwise $\lambda_{t,\rho}$ is a delta measure or a Cauchy distribution whose free divisibility indicator is infinity. 
By computing $\eta$-transforms, we get 
 $$
 (\lambda_{t,\rho})^{\uplus u} = D_{(1-t)u +t}\! \left(\lambda_{\frac{t}{(1-t)u+t}, \rho}\right),\qquad u>0.  
 $$ 
 From Proposition \ref{exacauchy}, this is FID if and only if $|\cos\rho\pi| \leq \frac{2t}{(1-t)u+t}-1$. From (\ref{free div ind char}), $\phi(\lambda_{t,\rho})$ is the solution $u$ of the equation $|\cos\rho\pi| = \frac{2t}{(1-t)u+t}-1$, giving the assertion of the proposition. 
\end{proof}

\subsection{Free Jurek Class and $\mathcal{B}_{\alpha,\rho}$}
The second-named author and Thorbj{\o}rnsen established the free analogue of Yamazato's theorem, saying that any freely selfdecomposable distribution is unimodal \cite{HT}. We want to find examples of freely selfdecomposable distributions from measures in $\mathcal{B}_{\alpha,\rho}$, but such is not possible at least for positive and symmetric cases. This is because scale mixtures of Boolean stable laws are compound free Poisson distributions (see Proposition \ref{free compound beta}), but a nontrivial freely selfdecomposable distribution does not have a finite  L\'evy measure. Instead, we will consider a class called the free Jurek class that is larger than the class of freely selfdecomposable distributions. We consider scale mixtures of Boolean stable laws which belong to the free Jurek class. 

The classical Jurek class was studied in \cite{J85} and it coincides with all the distributions of stochastic integrals of the form $\int_0^1 t\, d X_t$, where $(X_t)_{t\geq0}$ is a L\'evy process starting at 0. 
\begin{defi}
An FID distribution $\mu$ is said to be \textbf{freely s-selfdecomposable} if 
the L\'evy measure $\nu_\mu$ is unimodal with mode 0. The set $\U$ of all freely s-selfdecomposable distributions is called the \textbf{free Jurek class}. 
\end{defi}

We quote a special case of \cite[Theorem 4]{HS}. 
\begin{lem}\label{unimodal boole}
\begin{enumerate}[\rm(1)]
\item The positive Boolean stable law $\mathbf{b}_{\alpha,1}$ is unimodal with mode 0 if and only if $\alpha\in(0,\alpha_0]$, where $\alpha_0=0.7364\dots$ is the unique solution $x\in(0,1)$ of the equation $\sin \pi x =x$. 
\item The symmetric Boolean stable law $\mathbf{b}_{\alpha,1/2}$ is unimodal with mode 0 if and only if $\alpha\in(0,1]$. 
\end{enumerate}
\end{lem}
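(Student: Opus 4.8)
The plan is to establish both unimodality statements directly from the explicit densities, reducing each to the sign of a single quadratic after a power substitution. Because $\mathbf{b}_{\alpha,1}$ is supported on $(0,\infty)$ and $\mathbf{b}_{\alpha,1/2}$ is symmetric, in both cases ``unimodal with mode $0$'' is equivalent to the density being non-increasing on $(0,\infty)$. First I would start from the density $p^+_{\alpha,\rho}(x)=\frac{\sin(\pi\rho\alpha)}{\pi}\cdot\frac{x^{\alpha-1}}{x^{2\alpha}+2x^{\alpha}\cos(\pi\rho\alpha)+1}$ and differentiate. Writing the denominator as $g(x)$ and clearing the positive factor $x^{\alpha-2}/g(x)^{2}$, the sign of $(p^+_{\alpha,\rho})'(x)$ equals the sign of
\[
P(x)=-(\alpha+1)x^{2\alpha}-2\cos(\pi\rho\alpha)\,x^{\alpha}+(\alpha-1).
\]

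Next I would substitute $u=x^{\alpha}>0$, turning $P$ into the downward-opening quadratic $P(u)=-(\alpha+1)u^{2}-2\cos(\pi\rho\alpha)\,u+(\alpha-1)$, so that the density is non-increasing on $(0,\infty)$ if and only if $P(u)\le 0$ for all $u>0$. Since the parabola opens downward with vertex at $u^{\ast}=-\cos(\pi\rho\alpha)/(\alpha+1)$, this is governed either by the boundary value $P(0^{+})=\alpha-1$ or by the maximum value $P(u^{\ast})$, according to the sign of $u^{\ast}$. The decisive computation is the closed form
\[
P(u^{\ast})=\frac{\alpha^{2}-\sin^{2}(\pi\rho\alpha)}{\alpha+1}=\frac{(\alpha-\sin\pi\rho\alpha)(\alpha+\sin\pi\rho\alpha)}{\alpha+1}.
\]
For part (1) ($\rho=1$): if $\alpha\le 1/2$ then $\cos\pi\alpha\ge 0$, so $u^{\ast}\le 0$ and $P$ decreases from $P(0)=\alpha-1<0$, giving unimodality immediately; if $\alpha\in(1/2,1)$ then $u^{\ast}>0$ and the density is non-increasing iff $P(u^{\ast})\le 0$, i.e.\ iff $\alpha\le\sin\pi\alpha$. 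For part (2) ($\rho=1/2$): if $\alpha\le 1$ then $\cos(\pi\alpha/2)\ge 0$, so $u^{\ast}\le 0$ and $P(0)=\alpha-1\le 0$ force $P<0$ on $(0,\infty)$; if $\alpha\in(1,2]$ the mass sits away from the origin (the density vanishes at $0$ for $\alpha<2$, while $\mathbf{b}_{2,1/2}=\tfrac12(\delta_{-1}+\delta_{1})$), so $0$ cannot be the mode.

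It then remains to locate the threshold $\alpha_{0}$ of part (1). Setting $h(\alpha)=\sin\pi\alpha-\alpha$, I would note that $h(0^{+})>0$ because $h'(0)=\pi-1>0$, that $h(1)=-1<0$, and that $h'(\alpha)=\pi\cos\pi\alpha-1$ vanishes exactly once on $(0,1)$; hence $h$ rises to a single interior maximum and then decreases, so it has a unique zero $\alpha_{0}\in(1/2,1)$. Consequently $\alpha\le\sin\pi\alpha$ holds precisely for $\alpha\in(0,\alpha_{0}]$, which together with the automatic case $\alpha\le 1/2$ yields exactly the stated range and the value $\alpha_{0}=0.7364\dots$; at $\alpha=\alpha_{0}$ one has $P(u^{\ast})=0$, so the density is still non-increasing and the endpoint is included.

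The main obstacle is the closed-form evaluation of the vertex value $P(u^{\ast})$ and spotting its factorization through $\alpha\mp\sin\pi\rho\alpha$; this is precisely what converts an unwieldy derivative-sign condition into the clean transcendental criterion $\sin\pi\alpha=\alpha$. Once this identity is in hand, the differentiation, the orientation of the parabola, and the uniqueness of $\alpha_{0}$ are all routine.
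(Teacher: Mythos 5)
Your proof is correct, but note that the paper itself contains no proof of this lemma: it is quoted verbatim as a special case of Theorem 4 of Hasebe and Sakuma \cite{HS}, which characterizes unimodality (with arbitrary mode location) of Boolean and monotone stable laws for all admissible $(\alpha,\rho)$. Your argument is therefore a genuinely different and more self-contained route: differentiate the explicit density, substitute $u=x^{\alpha}$, and control the sign of the downward parabola $P(u)=-(\alpha+1)u^{2}-2\cos(\pi\rho\alpha)u+(\alpha-1)$. I verified the key steps: the numerator of $(p^{+}_{\alpha,\rho})'(x)$ is indeed $x^{\alpha-2}P(x^{\alpha})$ up to the positive factor $g(x)^{-2}$, the vertex is $u^{\ast}=-\cos(\pi\rho\alpha)/(\alpha+1)$, and
\begin{equation*}
P(u^{\ast})=(\alpha-1)+\frac{\cos^{2}(\pi\rho\alpha)}{\alpha+1}
=\frac{\alpha^{2}-\sin^{2}(\pi\rho\alpha)}{\alpha+1}
=\frac{(\alpha-\sin\pi\rho\alpha)(\alpha+\sin\pi\rho\alpha)}{\alpha+1},
\end{equation*}
so the criterion $\alpha\le\sin\pi\alpha$ for $\rho=1$, $\alpha\in(1/2,1)$, the automatic cases where $u^{\ast}\le 0$ and $P(0^{+})=\alpha-1\le 0$, and the uniqueness of $\alpha_{0}$ via the single zero of $h'(\alpha)=\pi\cos\pi\alpha-1$ are all sound; the reduction of ``unimodal with mode $0$'' to ``density non-increasing on $(0,\infty)$'' is also legitimate because the measures have no atoms in the ranges where you invoke it. Two trivial loose ends you should tie: in part (1) the endpoint $\alpha=1$ must be disposed of by hand, since the density formula degenerates there and $\mathbf{b}_{1,1}=\delta_{1}$ has mode $1$ (consistent with $\alpha_{0}<1$); and for $\alpha\in(\alpha_{0},1)$ it is worth stating explicitly that $P(u^{\ast})>0$ forces the density to increase strictly on an interval, so mode $0$ fails. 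As for what each approach buys: the paper's citation imports the complete picture (all $\rho$, modes not at $0$) at no cost within the paper, whereas your computation keeps the lemma elementary and self-contained, showing that the two special cases actually needed here require nothing beyond one-variable calculus.
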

The following result follows from Khintchine's characterization of unimodality (see \cite{K38} or \cite[Theorem 2.7.3]{Z86}): a probability measure $\mu$ is unimodal with mode 0 if and only if $\mu=\mathbf{u}\circledast \rho$ for some $\rho\in\mathcal{P}$, where $\mathbf{u}$ is the uniform distribution on $[0,1]$. 
\begin{lem}\label{unimodal}
If $\mu \in\mathcal{P}$ is unimodal with mode 0, then so is $\nu\circledast\mu$ for any $\nu\in\mathcal{P}$. 
\end{lem}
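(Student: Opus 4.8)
The plan is to reduce the statement directly to the Khintchine characterization quoted immediately before the lemma, exploiting the fact that the multiplicative classical convolution $\circledast$ is both commutative and associative. This holds because $\circledast$ corresponds to the product of independent real-valued random variables, and multiplication on $\real$ is commutative and associative.

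First I would apply Khintchine's characterization to $\mu$ itself: since $\mu$ is unimodal with mode $0$, there exists $\rho\in\mathcal{P}$ such that $\mu=\mathbf{u}\circledast\rho$, where $\mathbf{u}$ is the uniform distribution on $[0,1]$. I would then rewrite the target measure using associativity and commutativity of $\circledast$:
$$
\nu\circledast\mu=\nu\circledast(\mathbf{u}\circledast\rho)=\mathbf{u}\circledast(\nu\circledast\rho).
$$
In probabilistic terms, if $X\sim\nu$, $U\sim\mathbf{u}$ and $Z\sim\rho$ are mutually independent, then $\nu\circledast\mu$ is the law of $X(UZ)=U(XZ)$, exhibiting it as the product of the uniform variable $U$ with the variable $XZ$, whose law is $\nu\circledast\rho\in\mathcal{P}$.

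Finally, since $\nu\circledast\rho\in\mathcal{P}$, I would apply the Khintchine characterization in the reverse direction to conclude that $\mathbf{u}\circledast(\nu\circledast\rho)$ is unimodal with mode $0$, which is exactly the desired conclusion. There is no substantial obstacle here: the entire content lies in the quoted Khintchine characterization. The only point requiring care is that this characterization must be invoked for an arbitrary factor in $\mathcal{P}$ (not merely for positive measures), both when extracting $\rho$ from $\mu$ and when reassembling $\mathbf{u}\circledast(\nu\circledast\rho)$; once that is noted, the argument is immediate.
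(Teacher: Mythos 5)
Your proof is correct and is precisely the argument the paper intends: the paper derives this lemma directly from the quoted Khintchine characterization, writing $\mu=\mathbf{u}\circledast\rho$ and regrouping $\nu\circledast\mu=\mathbf{u}\circledast(\nu\circledast\rho)$ exactly as you do. Your added remark that the characterization must hold for arbitrary $\rho\in\mathcal{P}$ (not just positive measures) is a sensible point of care, and it is indeed satisfied by Khintchine's theorem on $\real$.
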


\begin{thm}\label{thm909} Let $\alpha_1=\alpha_0/(1+\alpha_0) =0.4241\dots$, where $\alpha_0$ is the number defined in Lemma \ref{unimodal boole}. The following statements hold. 
\begin{enumerate}[\rm(1)]
\item $\mathcal{B}_{\alpha,1} \subset \U$ for $\alpha \in(0,\alpha_1]$. 
\item $\mathcal{B}_{\alpha,1/2} \subset \U$ for $\alpha \in(0,1/2]$. 
\end{enumerate}
\end{thm}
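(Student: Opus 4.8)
The plan is to verify the two defining properties of the free Jurek class $\U$ separately: that every $\nu\in\mathcal{B}_{\alpha,\rho}$ is FID, and that its free L\'evy measure is unimodal with mode $0$. The first is immediate from Theorem \ref{FIDB}(\ref{a}), since $\alpha_1<1/2$ and $\alpha\le1/2$ in both cases. The substance of the proof is therefore the identification and unimodality of the L\'evy measure, and the key claim I would isolate is that the L\'evy measure of any $\nu\in\mathcal{B}_{\alpha,\rho}$ is itself a classical scale mixture of the Boolean stable law $\mathbf{b}_{\frac{\alpha}{1-\alpha},\rho}$.

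To establish this, write $\nu=\mu^{1/\alpha}\circledast\mathbf{b}_{\alpha,\rho}$. By Proposition \ref{free compound beta}, $\nu$ is a compound free Poisson with rate $1$, so its free L\'evy measure coincides with the jump distribution $J$; for $\rho=1$ this is $J=\mu^{\boxtimes1/\alpha}\boxtimes\MP^{\boxtimes\frac{1-2\alpha}{\alpha}}\boxtimes\mathbf{f}_{\alpha,1}$, and for $\rho=1/2$ it is $J=\mu^{\boxtimes1/\alpha}\boxtimes\MP^{\boxtimes\frac{2-3\alpha}{2\alpha}}\boxtimes\sym(\sqrt{\mathbf{f}_{\alpha/2,1}})$. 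The first step is to split off the free stable factor at the correct index: applying (\ref{f free}) with parameters chosen so that $(1-\alpha)\cdot\frac{\alpha}{1-\alpha}=\alpha$ yields
\begin{equation*}
\mathbf{f}_{\alpha,1}=(\mathbf{f}_{1-\alpha,1})^{\boxtimes\frac{1-\alpha}{\alpha}}\boxtimes\mathbf{f}_{\frac{\alpha}{1-\alpha},1}.
\end{equation*}
Combined with $\mathbf{b}_{\frac{\alpha}{1-\alpha},1}=\MP^{\boxtimes\frac{1-2\alpha}{\alpha}}\boxtimes\mathbf{f}_{\frac{\alpha}{1-\alpha},1}$ from (\ref{sym1}) (note $\frac{1-\alpha/(1-\alpha)}{\alpha/(1-\alpha)}=\frac{1-2\alpha}{\alpha}$), this turns the $\MP$- and $\mathbf{f}$-factors of $J$ into $(\mathbf{f}_{1-\alpha,1})^{\boxtimes\frac{1-\alpha}{\alpha}}\boxtimes\mathbf{b}_{\frac{\alpha}{1-\alpha},1}$. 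For $\rho=1/2$ the same computation is run after squaring: I would show $\sym(\sqrt{\mathbf{f}_{\alpha/2,1}})=(\mathbf{f}_{1-\alpha,1})^{\boxtimes\frac{1-\alpha}{\alpha}}\boxtimes\sym(\sqrt{\mathbf{f}_{\frac{\alpha}{2(1-\alpha)},1}})$ by checking via (\ref{square}) that both sides square to $\mathbf{f}_{\alpha/2,1}$ and invoking the injectivity of $s\mapsto s^2$ on $\mathcal{P}_s$, and then use (\ref{sym2}).

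The second step converts the resulting free mixture into a classical scale mixture. Writing $\mu^{\boxtimes1/\alpha}=(\mu^{\boxtimes\frac{1}{1-\alpha}})^{\boxtimes\frac{1-\alpha}{\alpha}}$ (legitimate since $\frac{1}{1-\alpha}\ge1$), the entire positive free factor becomes a single $\frac{1-\alpha}{\alpha}$-th $\boxtimes$-power $\sigma^{\boxtimes\frac{1-\alpha}{\alpha}}$ with $\sigma=\mu^{\boxtimes\frac{1}{1-\alpha}}\boxtimes\mathbf{f}_{1-\alpha,1}\in\mathcal{P}_+$. Applying (\ref{c-f1}) with $\beta=\frac{\alpha}{1-\alpha}\le1$ then gives $J=\sigma^{\frac{1-\alpha}{\alpha}}\circledast\mathbf{b}_{\frac{\alpha}{1-\alpha},\rho}$, a classical scale mixture as claimed. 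Finally, since $\frac{\alpha}{1-\alpha}\le\alpha_0\Leftrightarrow\alpha\le\alpha_1$ and $\frac{\alpha}{1-\alpha}\le1\Leftrightarrow\alpha\le1/2$, Lemma \ref{unimodal boole} makes $\mathbf{b}_{\frac{\alpha}{1-\alpha},1}$ (resp.\ $\mathbf{b}_{\frac{\alpha}{1-\alpha},1/2}$) unimodal with mode $0$ in the required ranges, and Lemma \ref{unimodal} propagates this to the scale mixture $J$. Hence $\nu$ is FID with unimodal L\'evy measure, i.e.\ $\nu\in\U$.

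I expect the main obstacle to be the symmetric case: matching the free stable indices there is not a direct application of (\ref{f free}), because the relevant object is the symmetrized square root $\sym(\sqrt{\mathbf{f}_{\alpha/2,1}})$ rather than a one-sided free stable law. The squaring identity (\ref{square}) is valid only for a positive measure freely multiplied by a symmetric one, so one must track carefully which factors lie in $\mathcal{P}_+$ versus $\mathcal{P}_s$, and the descent from the squared identity back to the measures themselves relies on the (easy but essential) injectivity of $s\mapsto s^2$ on symmetric measures. A secondary point requiring care is that every fractional $\boxtimes$-power invoked has exponent $\ge1$ — which holds precisely because $\alpha\le1/2$ forces $\frac{1-\alpha}{\alpha}\ge1$ and $\frac{1}{1-\alpha}\ge1$ — so that each power genuinely exists as a probability measure.
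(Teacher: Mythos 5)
Your proof is correct and follows essentially the same route as the paper: both arguments exhibit $\mu^{1/\alpha}\circledast\mathbf{b}_{\alpha,\rho}$ as a rate-one compound free Poisson whose L\'evy measure is the classical scale mixture $\left(\mu^{\boxtimes\frac{1}{1-\alpha}}\boxtimes\mathbf{f}_{1-\alpha,1}\right)^{\frac{1-\alpha}{\alpha}}\circledast\mathbf{b}_{\frac{\alpha}{1-\alpha},\rho}$, and then conclude via Lemma \ref{unimodal boole} (using $\frac{\alpha}{1-\alpha}\leq\alpha_0$, resp.\ $\leq1$) together with Lemma \ref{unimodal}. The only real divergence is how the underlying factorization is obtained: the paper proves $\mathbf{b}_{\alpha,\rho}=\MP\boxtimes(\mathbf{f}_{1-\alpha,1})^{\boxtimes\frac{1-\alpha}{\alpha}}\boxtimes\mathbf{b}_{\frac{\alpha}{1-\alpha},\rho}$ by a single $S$-transform computation that covers $\rho=1$ and $\rho=1/2$ simultaneously (the symmetric $S$-transform of \cite{APA} being available), so your separate squaring-plus-injectivity treatment of the symmetric case, while valid, is an avoidable detour rather than a necessary obstacle.
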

\begin{proof} Let $\mu\in\mathcal{P}_+$, $\beta=\alpha/(1-\alpha)$ and assume that $\rho=1/2$ or $1$.  Note first that $\mu^{1/\alpha}\circledast\mathbf{b}_{\alpha,\rho}$  is FID if $\alpha \leq \alpha_1< 1/2, \rho=1$ or  if $\alpha\leq1/2 <2/3,\rho=1/2$ from Theorem \ref{FIDB}. 
The computation of $S$-transforms gives us the formula
\begin{equation}\label{eq0901}
\mathbf{b}_{\alpha,\rho}=\MP\boxtimes (\mathbf{f}_{1-\alpha,1})^{\boxtimes 1/\beta} \boxtimes  \mathbf{b}_{\beta,\rho}. 
\end{equation}
Then we have
\begin{equation}
\begin{split}
\mu^{1/\alpha}\circledast\mathbf{b}_{\alpha,\rho}
&=\mu^{\boxtimes 1/\alpha}\boxtimes\mathbf{b}_{\alpha,\rho} \\
&=\mu^{\boxtimes 1/\alpha}\boxtimes\MP\boxtimes(\mathbf{f}_{1-\alpha,1})^{\boxtimes 1/\beta}\boxtimes  \mathbf{b}_{\beta,\rho}\\
&=\MP\boxtimes  \left((\mu^{\boxtimes \frac{1}{1-\alpha}}\boxtimes \mathbf{f}_{1-\alpha,1})^{\boxtimes 1/\beta} \boxtimes  \mathbf{b}_{\beta,\rho}\right)\\
&= \MP\boxtimes  \left((\mu^{\boxtimes \frac{1}{1-\alpha}}\boxtimes \mathbf{f}_{1-\alpha,1})^{1/\beta} \circledast  \mathbf{b}_{\beta,\rho}\right), 
\end{split}
\end{equation}
where we used (\ref{c-f1}) on the first and last lines. This means that the L\'evy measure of $\mu^{1/\alpha}\circledast\mathbf{b}_{\alpha,\rho}$ is given by the probability measure 
\begin{equation}\label{levy boole}
(\mu^{\boxtimes \frac{1}{1-\alpha}}\boxtimes \mathbf{f}_{1-\alpha,1})^{1/\beta} \circledast  \mathbf{b}_{\beta,\rho}; 
\end{equation}
see Remark \ref{rem free poisson}. 
If $\alpha \leq \alpha_1$ and $\rho=1$, then $\beta \leq \alpha_0$ and $\mathbf{b}_{\beta,1}$ is unimodal with mode 0 from Lemma \ref{unimodal boole}, and so is the L\'evy measure from Lemma \ref{unimodal}. The symmetric case $\rho=1/2$ is similar. 
\end{proof}

\begin{rem} \begin{enumerate}[\rm(1)]
\item As usual, we cannot use the $S$-transform for general $\rho$, and we cannot extend Theorem \ref{thm909} for $\rho \neq 0,1/2,1$. 

\item The L\'evy measure (\ref{levy boole}) can be written as 
$$
\mathbf{B}_{\alpha/(1-\alpha),\rho}(\mathbf{F}_{1-\alpha,1}(\mu)) 
$$
with notations in Theorem \ref{thm3.3}. 
\end{enumerate}
\end{rem}

\section{Existence of Free Bessel Laws}

In this section, we prove Theorem  \ref{Thm3} which also settles the problem of definition of free Bessel laws stated in Banica et al.\ \cite{BBCC}.

\subsection{Free Powers of Free Poisson}
Given $\mu\in\mathcal{P}_+$, one can ask whether the convolution powers $\mu^{\boxtimes s}$ and $\mu^{\boxplus t}$ exist for various values of $s, t > 0$. Specifically, the question is whether $S_\mu(z)^s$ and $t \mathcal{C}_\mu^\boxplus(z)$ are the $S$- and free cumulant transforms of some probability measures. 
It is known that for $s>1$ or $t>1$, the convolution powers  $\mu^{\boxtimes s}$ and $\mu^{\boxplus t}$ always exist as probability measures.

Furthermore, one can ask whether the convolution powers $(\mu^{\boxtimes s})^{\boxplus t}$ or $(\mu^{\boxplus t})^{\boxtimes s}$ exist, for different values of $s, t > 0$. Since we have the following ``commutation" relation,
$$(\mu^{\boxplus t})^{\boxtimes s} = D_{t^{s-1}}(\mu^{\boxtimes s})^{\boxplus t},$$ for $t\geq 1$ and $s \geq 1$
then both questions are equivalent. 

 We answer this question for the case when $\mu=\MP$. Since the free Poisson distribution $\MP$ is $\boxtimes$-infinitely divisible and free regular (the latter meaning that $\MP^{\boxplus t}\in\mathcal{P}_+$ for any $t>0$; see \cite{AHS}), then the double power $\tilde{\MP}_{st}=(\MP^{\boxtimes s})^{\boxplus t}$ exists as a probability measure when $\max(s,t)\geq1$.

 The moments and cumulants of $\tilde{\MP}_{st}$  were studied by Hinz and M{\l}otkowski \cite{HinMlo}. In particular, they state the following conjecture which is closely related to the question of the possible parameters of free Bessel laws in Banica et al.\ \cite{BBCC} as we explain below.

\begin{conj}[\cite{HinMlo}]\label{conj} $\tilde{\MP}_{st}$ is a probability measure if and only if $\max(s,t)\geq1$. Equivalently, the sequence given by $\tilde{m}_0(s,t)=1$ and
\begin{equation}\label{moments1}
\tilde{m}_n(s,t)=\sum_{k=1}^n\frac{t^k}{n}\binom{n}{k-1}\binom{ns}{n-k}
\end{equation}
is positive definite if and only if $\max(s,t)\geq1$. 
\end{conj}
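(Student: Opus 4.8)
The plan is to reduce the whole statement to a single quantity, the free divisibility indicator $\phi(\MP^{\boxtimes s})$, and to compute it with the help of the multiplicative representation (\ref{01}) of $\mathbf{b}_{\alpha,1}$. The ``if'' direction ($\max(s,t)\geq1$) needs almost nothing beyond what is already recorded in this section. If $t\geq1$, then $\MP^{\boxtimes s}$ exists for every $s>0$ because $\MP$ is $\boxtimes$-infinitely divisible, and any probability measure admits a $\boxplus t$-power once $t\geq1$. If $s\geq1$, then since $\MP$ is free regular and free regularity is preserved by $\boxtimes$ \cite{AHS}, the measure $\MP^{\boxtimes s}$ is free regular, whence $(\MP^{\boxtimes s})^{\boxplus t}\in\mathcal{P}_+$ for every $t>0$; equivalently $\phi(\MP^{\boxtimes s})\geq1$ and one invokes Theorem \ref{thm001}(\ref{div i}).

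The substance is the ``only if'' direction: for $0<s<1$ and $0<t<1$ the measure $(\MP^{\boxtimes s})^{\boxplus t}$ does not exist. I would set $\alpha=\tfrac{1}{1+s}$, so that $\alpha\in(1/2,1)$ and $s=\tfrac{1-\alpha}{\alpha}$, and recall from (\ref{01}) that $\mathbf{b}_{\alpha,1}=\MP^{\boxtimes s}\boxtimes\mathbf{f}_{\alpha,1}$. Arguing by contradiction, assume $(\MP^{\boxtimes s})^{\boxplus t}\in\mathcal{P}_+$. As $\mathbf{f}_{\alpha,1}$ is strictly stable, $(\mathbf{f}_{\alpha,1})^{\boxplus t}=D_{t^{1/\alpha}}\mathbf{f}_{\alpha,1}$ exists for every $t>0$. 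Feeding these two powers into the dilation identity (\ref{dilation}) then produces
\[
(\mathbf{b}_{\alpha,1})^{\boxplus t}=\bigl(\MP^{\boxtimes s}\boxtimes\mathbf{f}_{\alpha,1}\bigr)^{\boxplus t}=D_{1/t}\bigl((\MP^{\boxtimes s})^{\boxplus t}\boxtimes(\mathbf{f}_{\alpha,1})^{\boxplus t}\bigr),
\]
a genuine probability measure. But $\phi(\mathbf{b}_{\alpha,1})=0$ for $\alpha\in(1/2,1)$ \cite{AHb}, so $1-t>0=\phi(\mathbf{b}_{\alpha,1})$ and Theorem \ref{thm001}(\ref{div i}) forbids $(\mathbf{b}_{\alpha,1})^{\boxplus t}$ from existing, a contradiction. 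The same computation, applied to every $t$ for which $(\MP^{\boxtimes s})^{\boxplus t}$ exists, in fact gives $\phi(\MP^{\boxtimes s})\leq\phi(\mathbf{b}_{\alpha,1})=0$, i.e. $\phi(\MP^{\boxtimes s})=0$ for all $s\in(0,1)$, and it is precisely this vanishing indicator that blocks the $\boxplus t$-power for every $t<1$.

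The main obstacle is that (\ref{dilation}) is stated only for $t\geq1$, while the argument uses it with $t<1$. The hypothesis $t\geq1$ in \cite{BN08} serves only to guarantee that the three powers $\mu^{\boxplus t}$, $\nu^{\boxplus t}$, $(\mu\boxtimes\nu)^{\boxplus t}$ exist; the identity itself is an equality of $\Sigma$- and free cumulant transforms. The real work is therefore to check that, once $(\MP^{\boxtimes s})^{\boxplus t}$ and $(\mathbf{f}_{\alpha,1})^{\boxplus t}$ are assumed to be probability measures, the transform computation of \cite[Proposition 3.5]{BN08} goes through verbatim and forces $D_{1/t}((\MP^{\boxtimes s})^{\boxplus t}\boxtimes(\mathbf{f}_{\alpha,1})^{\boxplus t})$ to be the measure with Voiculescu transform $t\phi_{\mathbf{b}_{\alpha,1}}$, namely $(\mathbf{b}_{\alpha,1})^{\boxplus t}$. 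This is the delicate point, because for $t<1$ existence can no longer be taken for granted and must be imported entirely from the contradiction hypothesis.

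Finally, the two reformulations are corollaries. The numbers $\tilde m_n(s,t)$ are the moment images, under the moment--cumulant bijection, of the formal free cumulants $t\,\kappa_n(\MP^{\boxtimes s})$ (as computed by Hinz and M\l otkowski); hence $\{\tilde m_n(s,t)\}$ is positive definite, that is a moment sequence of a probability measure, exactly when $(\MP^{\boxtimes s})^{\boxplus t}$ exists, which by the above holds iff $\max(s,t)\geq1$. Since these moments are of Fuss--Narayana type and grow at most exponentially, the representing measure is unique and coincides with $\tilde{\MP}_{st}$. For the free Bessel laws, the defining relation $\MP_{st}=(1-t)\delta_0+D_t((\MP^{\boxtimes s})^{\boxplus 1/t})$, together with its natural extension to $t>1$, makes $\MP_{st}$ a probability measure precisely when $(\MP^{\boxtimes s})^{\boxplus 1/t}$ exists, i.e. when $\max(s,1/t)\geq1$. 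This fails exactly when $s<1$ and $1/t<1$, that is on $(0,1)\times(1,\infty)$, which yields the stated parameter region $(0,\infty)\times(0,\infty)-(0,1)\times(1,\infty)$.
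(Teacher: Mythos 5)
Your strategy is the paper's own in all its ingredients: the representation $\mathbf{b}_{\alpha,1}=\MP^{\boxtimes s}\boxtimes\mathbf{f}_{\alpha,1}$ with $\alpha=\frac{1}{1+s}\in(1/2,1)$, the vanishing indicator $\phi(\mathbf{b}_{\alpha,1})=0$ (Lemma \ref{lem100}), and the Belinschi--Nica identity (\ref{dilation}). But your contradiction pivots on applying (\ref{dilation}) at the power $t<1$, and that is a genuine gap. You acknowledge that (\ref{dilation}) is only stated for powers $\geq 1$ and then assert that ``the transform computation of \cite[Proposition 3.5]{BN08} goes through verbatim'' once the individual powers are assumed to exist; that assertion is not a proof, and your proposed justification is doubtful as stated. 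The identity (\ref{dilation}) holds for $\mu\in\mathcal{P}_+$ and \emph{arbitrary} $\nu\in\mathcal{P}$, where it cannot be reduced to ``an equality of $\Sigma$- and free cumulant transforms'': as the paper notes right after (\ref{eq324}), no $S$- or $\Sigma$-transform is even defined for general $\nu\in\mathcal{P}$ (this is an open problem). This matters in your situation, because the contradiction hypothesis only gives that $\tilde{\MP}_{st}=(\MP^{\boxtimes s})^{\boxplus t}$ is a probability measure on $\real$, not that it lies in $\mathcal{P}_+$ or $\mathcal{P}_s$; so you cannot run the positive-measure $S$-transform calculus on $(\MP^{\boxtimes s})^{\boxplus t}\boxtimes(\mathbf{f}_{\alpha,1})^{\boxplus t}$, and whether Bel\-in\-schi--Nica's actual argument survives below power $1$ is exactly what has to be established.

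The gap is fillable, and the fix is precisely how the paper organizes the computation: never invoke (\ref{dilation}) below power $1$. From the hypothetical measure $\tilde{\MP}_{st}$ form $b(s,t):=\tilde{\MP}_{st}\boxtimes\mathbf{f}_{\alpha,1}$ (legitimate since $\mathbf{f}_{\alpha,1}\in\mathcal{P}_+$), and take its $\boxplus 1/t$ power, which exists because $1/t\geq 1$. Applying (\ref{dilation}) with the power $1/t$, together with $\tilde{\MP}_{st}^{\boxplus 1/t}=\MP^{\boxtimes s}$ and the stability relation $(\mathbf{f}_{\alpha,1})^{\boxplus 1/t}=D_{t^{-1/\alpha}}\mathbf{f}_{\alpha,1}$, yields $b(s,t)^{\boxplus 1/t}=D_{K}\bigl(\MP^{\boxtimes s}\boxtimes\mathbf{f}_{\alpha,1}\bigr)=D_{K}(\mathbf{b}_{\alpha,1})$ with $K=t^{1-1/\alpha}$. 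Thus $D_K(\mathbf{b}_{\alpha,1})$ admits $b(s,t)$ as its $\boxplus t$ power, contradicting $\phi(\mathbf{b}_{\alpha,1})=0$ via Theorem \ref{thm001}(\ref{div i}) (the indicator being dilation invariant). Note that this inversion, applied to $(\MP^{\boxtimes s})^{\boxplus t}$ and $(\mathbf{f}_{\alpha,1})^{\boxplus t}$, is also the correct \emph{proof} of the $t<1$ extension of (\ref{dilation}) that you wanted to use; as written, however, your proposal rests that crucial step on an unverified claim rather than on this argument, so the central contradiction is not yet established.
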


We solve the conjecture of Hinz and M{\l}otkowski in the affirmative. We will use the relation between free Poisson, free stable and Boolean stable laws
$$
\mathbf{b}_{\alpha,1}=\MP^{\boxtimes \frac{1-\alpha}{\alpha} }\boxtimes \mathbf{f}_{\alpha,1}
$$
proved in (\ref{sym1}) together with the following lemma. 

\begin{lem}\label{lem100}
For $1/2<\alpha<1$ and $t<1$, $(\mathbf{b}_{\alpha,1})^{\boxplus t}$ does not exist as a probability measure.
\end{lem}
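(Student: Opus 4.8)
The plan is to reduce the statement to a computation of the free divisibility indicator $\phi$. By Theorem \ref{thm001}(\ref{div i}), the power $\mu^{\boxplus t}$ exists as a probability measure if and only if $\phi(\mu)\geq 1-t$. Applying this with $\mu=\mathbf{b}_{\alpha,1}$, the assertion that $(\mathbf{b}_{\alpha,1})^{\boxplus t}$ fails to exist for \emph{every} $t<1$ is exactly equivalent to the single equality $\phi(\mathbf{b}_{\alpha,1})=0$. Thus the whole lemma is a corollary of the fact, announced in the abstract, that the positive Boolean stable law has divisibility indicator $0$ in the range $1/2<\alpha$, and it is this indicator that I would compute.

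To evaluate $\phi(\mathbf{b}_{\alpha,1})$ I would use the characterization (\ref{free div ind char}), namely $\phi(\mu)=\sup\{t\geq0:\mu^{\uplus t}\in\id(\boxplus)\}$. The key observation is that a Boolean power of $\mathbf{b}_{\alpha,1}$ is merely a dilation of the same law. Indeed, from (\ref{etaboole}) with $\rho=1$ one has $\eta_{\mathbf{b}_{\alpha,1}}(z)=-(-z)^\alpha$, so that $\eta_{(\mathbf{b}_{\alpha,1})^{\uplus t}}(z)=t\,\eta_{\mathbf{b}_{\alpha,1}}(z)=-t(-z)^\alpha=\eta_{\mathbf{b}_{\alpha,1}}(t^{1/\alpha}z)$, whence $(\mathbf{b}_{\alpha,1})^{\uplus t}=D_{t^{1/\alpha}}\mathbf{b}_{\alpha,1}$ for every $t>0$. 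Since free infinite divisibility is invariant under dilation (if $\mu=\mu_n^{\boxplus n}$ then $D_c\mu=(D_c\mu_n)^{\boxplus n}$), the measure $(\mathbf{b}_{\alpha,1})^{\uplus t}$ is FID if and only if $\mathbf{b}_{\alpha,1}$ itself is FID.

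Now I would invoke the known non-divisibility of $\mathbf{b}_{\alpha,1}$ in this range: for $1/2<\alpha<1$ we have $\rho=1\in(\tfrac{1-\alpha}{\alpha},1]$, which is precisely one of the cases recorded in the proof of Theorem \ref{FIDB}(\ref{c}) (from \cite{AHb}) where $\mathbf{b}_{\alpha,\rho}\notin\id(\boxplus)$. Consequently $(\mathbf{b}_{\alpha,1})^{\uplus t}\notin\id(\boxplus)$ for all $t>0$, so no positive $t$ lies in the set in (\ref{free div ind char}); this forces $\phi(\mathbf{b}_{\alpha,1})=0$, and the lemma follows from the reduction in the first paragraph.

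I do not expect a serious obstacle: the argument is short, and the only points needing care are the dilation identity for the $\uplus$-power and the dilation-invariance of FID, both of which are routine once $\eta_{\mathbf{b}_{\alpha,1}}$ is written down explicitly. It is worth emphasizing that this is exactly where the hypothesis $1/2<\alpha<1$ is used: for $\alpha\leq1/2$ the law $\mathbf{b}_{\alpha,1}$ is itself FID (in fact has indicator $\infty$), so the conclusion genuinely breaks down outside the stated range.
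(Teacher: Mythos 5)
Your proof is correct and follows essentially the same route as the paper: both reduce the lemma, via Theorem \ref{thm001}(\ref{div i}), to the equality $\phi(\mathbf{b}_{\alpha,1})=0$, and both derive it from the fact (\cite{AHb}, quoted in the proof of Theorem \ref{FIDB}(\ref{c})) that $\mathbf{b}_{\alpha,1}\notin\id(\boxplus)$ for $\alpha\in(1/2,1)$. The one difference is that where the paper cites Subsection 3.3 of \cite{AH2} for the dichotomy that $\phi(\mathbf{b}_{\alpha,1})$ is either $0$ or $\infty$, you prove the needed instance inline: the stability identity $(\mathbf{b}_{\alpha,1})^{\uplus t}=D_{t^{1/\alpha}}\mathbf{b}_{\alpha,1}$ plus the dilation-invariance of $\id(\boxplus)$ shows that no $t>0$ enters the supremum in (\ref{free div ind char}); this is exactly the argument behind the cited dichotomy, so your version is self-contained but not a different method.
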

\begin{proof}
The Boolean stable law $\mathbf{b}_{\alpha,1}$ is not FID for $\alpha\in(1/2,1)$ \cite{AHb}, and so $\phi(\mathbf{b}_{\alpha,1})<1$ from Theorem \ref{thm001}(\ref{div ii}). From the arguments in Subsection 3.3 of \cite{AH2}, the free divisibility indicator $\phi(\mathbf{b}_{\alpha,1})$ is either 0 or $\infty$, but since it is smaller than 1, it is 0. Then Theorem \ref{thm001}(\ref{div i}) implies the conclusion. 
\end{proof}

Now, we are in position to prove the first part of Theorem \ref{Thm3}. That is, we prove the conjecture by Hinz and M{\l}otkowski.

\begin{proof}[Proof of Conjecture \ref{conj}]
Let $0<s,t<1$, $\alpha=\frac{1}{1+s}$ and $K=t^{1-1/\alpha}$. Suppose that $\tilde{\MP}_{st}$ is a probability measure. Then so is $b(s,t):=\tilde{\MP}_{st}\boxtimes \mathbf{f}_{\alpha,1}$. Since $t<1$, we can take the $1/t$ free additive power, yielding
\begin{eqnarray*}b(s,t)^{\boxplus 1/t}&=&(\tilde{\MP}_{st}\boxtimes \mathbf{f}_{\alpha,1})^{\boxplus 1/t}\\&=&D_{t}\left((\MP^{\boxtimes s})^{\boxplus t}\right)^{\boxplus 1/t} \boxtimes (\mathbf{f}_{\alpha,1})^{\boxplus 1/t}\\
&=&D_{t}(\MP^{\boxtimes s})\boxtimes D_{t^{-1/\alpha}}(\mathbf{f}_{\alpha,1})\\
&=&D_{K}(\MP^{\boxtimes s}\boxtimes  \mathbf{f}_{\alpha,1}), 
\end{eqnarray*}
where we used (\ref{dilation}) in the second equality and the stability property of $\mathbf{f}_{\alpha,1}$ in the third. Since $\alpha=\frac{1}{s+1}$, then $s=(1-\alpha)/\alpha$ and $s\in(0,1)$. Thus, we have proved that, for $1/2<\alpha<1$,
$$
b(s,t)^{\boxplus 1/t}=D_{K}(\MP^{\boxtimes \frac{1-\alpha}{\alpha} }\boxtimes \mathbf{f}_{\alpha,1})=D_{K}(\mathbf{b}_{\alpha,1}).
$$
This means $(\mathbf{b}_{\alpha,1})^{\boxplus t}$ exists as a probability measure but this is impossible by Lemma \ref{lem100}.
\end{proof}

\subsection{Free Bessel Laws}
Let us recall the definition of free Bessel laws, together with some basic facts. The free Bessel laws were introduced in Banica et al.\  \cite{BBCC} as a two-parameter family of probability measures on $\real_+$ generalizing the free Poisson $\MP$. They studied connections with random matrices, quantum groups and $k$-divisible non-crossing partitions. 

The original definition of the free Bessel law is the following.

\begin{defi}
The free Bessel law is the probability measure $\MP_{st}$ with $(s,t)\in(0,\infty)\times(0,\infty)-(0,1)\times(1,\infty)$, defined as follows:
\begin{enumerate}[\rm(1)]
\item For $s\geq1$ we set $\MP_{st}=\MP^{\boxtimes(s-1)}\boxtimes\MP^{\boxplus t}$; 

\item For $t\leq1$ we set $\MP_{st}=((1-t)\delta_0+t\delta_1)\boxtimes\MP^{\boxtimes s}.$
\end{enumerate}
\end{defi}
The compatibility between (1) and (2) comes from the following identity valid for $s\geq1$ and $0<t\leq1$: 
$$
\MP^{\boxtimes (s-1)}\boxtimes\MP^{\boxplus t}=((1-t)\delta_0+t\delta_1)\boxtimes\MP^{\boxtimes s}. 
$$
Special important cases are $t=1$ for which 
$\MP_{s1}=\MP^{\boxtimes s}$ and $s=1$ for which $\MP_{1t}=\MP^{\boxplus t}$.

The moments of free Bessel law $\MP_{st}$ are calculated as follows \cite{BBCC}: 
\begin{equation}\label{fussnarayana} 
m_n(s,t)=\sum_{k=1}^n\frac{t^k}{k}\binom{n-1}{k-1}\binom{ns}{k-1}. 
\end{equation}
In the particular case where $t=1$ and $s$ is an integer we obtain the Fuss-Catalan numbers $m_n=\frac{1}{sn+1}\binom{sn+n}{n},$  known to appear in several contexts. In particular, they count the number of $s$-divisible and $(s+1)$-equal non-crossing partitions. For details on $s$-divisible non-crossing partitions, see  Edelman \cite{Ed}, Stanley \cite{Stan}, Arizmendi \cite{Ar} and Armstrong \cite{Arm}.

Banica et al.\ \cite{BBCC} considered the question of whether $\MP_{st}$ exists as a probability measure for certain points in the critical rectangle $(0, 1)\times (1,\infty)$. The precise range of the parameters $(s, t)$ was an open problem. We can determine it from Conjecture \ref{conj}. Indeed, one recognizes the moments in (\ref{fussnarayana}) as the moments of  $\tilde{\MP}_{s,1/t}$ multiplied by $t^{n+1}$, and  
thus we get the following (or see the proof of \cite[Theorem 3.1]{BBCC}). 
\begin{lem} Let $s,t>0$. We have 
\begin{equation}
\MP_{st}=(1-t)\delta_0+tD_t(\tilde{\MP}_{s,1/t}),
\end{equation}
or equivalently
\begin{equation}\label{eq156}
\tilde{\MP}_{st}=(1-t)\delta_0+tD_t(\MP_{s,1/t}), 
\end{equation}
where equalities are in the sense of linear functionals on the polynomial ring $\comp[x]$, e.g.\ $\MP_{st}(x^n)=m_n(s,t)$, $n\geq0$. 
\end{lem}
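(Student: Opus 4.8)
The plan is to prove everything at the level of moments, treating $\MP_{st}$ and $\tilde{\MP}_{st}$ as the linear functionals on $\comp[x]$ determined by the explicit formulas (\ref{fussnarayana}) and (\ref{moments1}); these formulas make sense for all $s,t>0$, so no analytic or positivity input is needed. I would reduce both displayed identities to the single combinatorial identity
\begin{equation*}
m_n(s,t)=t^{\,n+1}\,\tilde{m}_n(s,1/t),\qquad n\geq1.
\end{equation*}
Substituting $t\mapsto1/t$ rewrites this as $\tilde{m}_n(s,t)=t^{\,n+1}m_n(s,1/t)$, so the statement is symmetric under $t\leftrightarrow1/t$ with the roles of $m$ and $\tilde{m}$ interchanged. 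This symmetry is exactly what makes the two equations of the lemma equivalent, so it suffices to establish the displayed identity once.

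Next I would match moments. Using $\delta_0(x^n)=1_{\{n=0\}}$ and the dilation rule $(D_t\nu)(x^n)=t^n\nu(x^n)$, the functional $(1-t)\delta_0+tD_t(\tilde{\MP}_{s,1/t})$ assigns the value $(1-t)+t=1=m_0(s,t)$ to $x^0$ and the value $t^{\,n+1}\tilde{m}_n(s,1/t)$ to $x^n$ for $n\geq1$; hence the first equation of the lemma is precisely the displayed identity, and (\ref{eq156}) is its $t\leftrightarrow1/t$ form. To verify the identity, I would pull the prefactor into the sum in (\ref{moments1}) to get
\begin{equation*}
t^{\,n+1}\tilde{m}_n(s,1/t)=\sum_{k=1}^{n}\frac{t^{\,n+1-k}}{n}\binom{n}{k-1}\binom{ns}{n-k},
\end{equation*}
and then reindex by $j=n+1-k$. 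Under this substitution $t^{\,n+1-k}=t^{\,j}$, $\binom{n}{k-1}=\binom{n}{n-j}=\binom{n}{j}$ and $\binom{ns}{n-k}=\binom{ns}{j-1}$, turning the sum into $\sum_{j=1}^{n}\frac{t^{\,j}}{n}\binom{n}{j}\binom{ns}{j-1}$. The elementary identity $\frac1n\binom{n}{j}=\frac1j\binom{n-1}{j-1}$ then produces $\sum_{j=1}^{n}\frac{t^{\,j}}{j}\binom{n-1}{j-1}\binom{ns}{j-1}$, which is exactly $m_n(s,t)$ as given in (\ref{fussnarayana}).

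I do not expect a genuine obstacle: the entire argument is a formal comparison of coefficients, and the only real work is the careful bookkeeping in the reindexing $j=n+1-k$ together with the binomial identity $\frac1n\binom{n}{j}=\frac1j\binom{n-1}{j-1}$. I would stress that, because everything takes place among linear functionals on $\comp[x]$, the computation holds for all $s,t>0$ and is insensitive to whether $\MP_{st}$ or $\tilde{\MP}_{st}$ is a genuine probability measure; this is why the lemma is phrased for functionals, and it is exactly this positivity-free identity that lets one transfer the existence and non-existence statements between $\tilde{\MP}_{st}$ and $\MP_{st}$ in order to obtain the free Bessel conclusion of Theorem \ref{Thm3}.
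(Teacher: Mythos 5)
Your proposal is correct and follows the same route as the paper, which simply observes that the moments $m_n(s,t)$ in (\ref{fussnarayana}) are the moments $\tilde{m}_n(s,1/t)$ of $\tilde{\MP}_{s,1/t}$ multiplied by $t^{n+1}$ (deferring details to the proof of Theorem 3.1 in Banica et al.). You have merely written out explicitly the reindexing $j=n+1-k$ and the identity $\tfrac{1}{n}\binom{n}{j}=\tfrac{1}{j}\binom{n-1}{j-1}$ that the paper leaves implicit, which checks out correctly.
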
\label{final}
From the previous lemma we directly get the following, proving the last part of Theorem \ref{Thm3}. 
\begin{cor} $\MP_{st}$ is not a probability measure for $t>1$ and $s<1$.
\end{cor}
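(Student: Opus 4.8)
The plan is to argue by contradiction, feeding the identity of the preceding lemma into the already-established Conjecture \ref{conj}. Fix $t>1$ and $s<1$. Since $t>1$ forces $1/t<1$, we have $\max(s,1/t)<1$, so Conjecture \ref{conj} (now proven) guarantees that $\tilde{\MP}_{s,1/t}$ is \emph{not} a probability measure. The entire task is therefore to show that the existence of $\MP_{st}$ as a probability measure would force $\tilde{\MP}_{s,1/t}$ to be one as well.

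To do this I would rewrite the first identity of the preceding lemma,
\[
\MP_{st}=(1-t)\delta_0+tD_t(\tilde{\MP}_{s,1/t}),
\]
in the equivalent form
\[
tD_t(\tilde{\MP}_{s,1/t})=\MP_{st}+(t-1)\delta_0,
\]
both understood as equalities of linear functionals on $\comp[x]$. This is exactly where the hypothesis $t>1$ enters: the coefficient $t-1$ of $\delta_0$ is strictly positive. Consequently, if $\MP_{st}$ were a genuine probability measure, the right-hand side would be a positive finite measure of total mass $1+(t-1)=t$. Dividing by $t$ and then applying the dilation $D_{1/t}$, which sends probability measures to probability measures since $1/t>0$, would exhibit $\tilde{\MP}_{s,1/t}$ as an honest probability measure, contradicting the previous paragraph. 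Hence $\MP_{st}$ cannot be a probability measure.

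The one point requiring care, and the main (if modest) obstacle, is that the lemma's identities hold a priori only as matchings of moment sequences, not as equalities of measures; one must check that positivity transfers correctly under the rearrangement. The argument goes through precisely because of the sign produced by $t>1$: adding the positive atom $(t-1)\delta_0$ keeps the functional a genuine positive measure, so no cancellation can occur, and the rearranged identity really does realize $D_t(\tilde{\MP}_{s,1/t})$, and hence $\tilde{\MP}_{s,1/t}$, as a positive measure with the correct total mass. By contrast, for $t<1$ the atom would be subtracted, the positivity transfer would break down, and indeed $\MP_{st}$ does exist in that regime, so the restriction $t>1$ is genuinely essential. Together with the constructions recalled at the start of the section, which provide $\MP_{st}$ as a probability measure on the complement of $(0,1)\times(1,\infty)$, this pins down the exact parameter range and completes the proof of Theorem \ref{Thm3}.
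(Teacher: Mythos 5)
Your proof is correct and is essentially the paper's own argument: both feed the lemma's moment identity into the proven conjecture to derive a contradiction, the only difference being that you use the identity $\MP_{st}=(1-t)\delta_0+tD_t(\tilde{\MP}_{s,1/t})$ at the original parameters and rearrange the atom to the other side, whereas the paper invokes the equivalent reparametrized identity $\tilde{\MP}_{st}=(1-t)\delta_0+tD_t(\MP_{s,1/t})$ with $0<t<1$, where positivity transfers immediately as a convex combination. The two are the same computation organized differently, and your care about positivity of linear functionals versus measures is exactly the right point to flag.
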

\begin{proof}
Suppose $\MP_{s,1/t}$ is a probability measure for some $0<s,t<1$. Then from  (\ref{eq156}), $\tilde{\MP}_{st}$ is a probability measure too. This is a contradiction to Conjecture \ref{conj} which we proved to be true. 
\end{proof}

\end{document}